\definecolor{darkred}{rgb}{1,0,0} %can change the intensity in [0,1]
\definecolor{darkgreen}{rgb}{0,1,0}
\definecolor{darkblue}{rgb}{0,0,1}
\newcommand{\quat}{\mathbb{H}}
\newcommand{\oct}{\mathbb O}
\newcommand{\C}{\mathbb C}
\newcommand{\R}{\mathbb R}
\newcommand{\Z}{\mathbb Z}
\renewcommand{\H}{\mathbf{H}^{4,2}}
\renewcommand{\SS}{\mathbf{S}^{3,3}}
\renewcommand{\P}{\mathbf P}
\newcommand{\q}{\mathbf q}
\newcommand{\g}{\mathbf g}
\newcommand{\om}{\boldsymbol{\omega}}
\renewcommand{\j}{\mathsf j}
\newcommand{\J}{\mathbf J}
\renewcommand{\i}{\mathrm{i}}
\newcommand{\Id}{\mathrm{Id}}
\newcommand{\Stab}{\mathrm{Stab}}
\newcommand{\G}{\mathsf{G}_2'}
\newcommand{\SU}{\mathsf{SU}}
\newcommand{\sG}{\mathsf{G}}
\newcommand{\sK}{\mathsf{K}}
\newcommand{\sO}{\mathsf{O}}
\newcommand{\sS}{\mathsf{S}}
\newcommand{\sT}{\mathsf{T}}
\newcommand{\sSO}{\mathsf{SO}}
\newcommand{\sSL}{\mathsf{SL}}
\newcommand{\sGL}{\mathsf{GL}}
\newcommand{\sPSL}{\mathsf{PSL}}
\newcommand{\sSU}{\mathsf{SU}}
\newcommand{\sU}{\mathsf{U}}
\newcommand{\K}{\mathsf{K}}
\newcommand{\SO}{\mathsf{SO}}
\newcommand{\End}{\mathsf{End}}
\newcommand{\Ker}{\mathsf{Ker}}
\newcommand{\SL}{\mathsf{SL}}
\newcommand{\Sym}{\mathbf{Sym}}
\newcommand{\sSym}{\mathsf{Sym}}
\newcommand{\Pic}{\mathsf{Pic}}
\newcommand{\Cy}{\mathbf{X}}
\newcommand{\T}{\mathrm T}
\newcommand{\N}{\mathrm N}
\newcommand{\B}{\mathrm B}
\newcommand{\DD}{\mathcal D}
\newcommand{\JJ}{\mathcal J}
\newcommand{\OO}{\mathcal O}
\newcommand{\HH}{\mathcal H}
\newcommand{\TT}{\mathcal T}
\newcommand{\F}{\mathcal F}
\newcommand{\cB}{\mathcal B}
\newcommand{\cE}{\mathcal E}
\newcommand{\cF}{\mathcal F}
\newcommand{\cH}{\mathcal H}
\newcommand{\cK}{\mathcal K}
\newcommand{\cL}{\mathcal L}
\newcommand{\cM}{\mathcal M}
\newcommand{\cN}{\mathcal N}
\newcommand{\cO}{\mathcal O}
\newcommand{\cP}{\mathcal P}
\newcommand{\cT}{\mathcal T}
\newcommand{\cU}{\mathcal U}
\newcommand{\cV}{\mathcal V}
\newcommand{\cW}{\mathcal W}
\newcommand{\cX}{\mathcal X}
\newcommand{\cI}{\mathcal I}
\newcommand{\cY}{\mathcal Y}
\newcommand{\fg}{\mathfrak g}
\newcommand{\fp}{\mathfrak p}
\newcommand{\fk}{\mathfrak k}
\newcommand{\ft}{\mathfrak t}
\newcommand{\fso}{\mathfrak{so}}
\renewcommand{\span}{\operatorname{span}}
\newcommand{\Hom}{\operatorname{Hom}}
\newcommand{\tr}{\operatorname{tr}}
\newcommand{\vol}{\operatorname{vol}}
\newcommand{\II}{\mathrm{II}}
\newcommand{\III}{\mathrm{III}}
\newcommand{\diag}{\mathrm{diag}}
\newcommand{\Hit}{\mathrm{Hit}}
\newcommand{\Aut}{\mathrm{Aut}}
\renewcommand{\S}{\Sigma}
\newcommand{\bbullet}{\,\begin{picture}(-1,1)(-1,0)\circle*{2}\end{picture}\ }
\newcommand{\GC}{\mathsf{G}_2^{\mathbb C}}
\newcommand{\Diff}{\mathsf{Diff}}
\newcommand{\fG}{\mathfrak G}
\renewcommand{\deg}{\text{deg}}
\newcommand{\Mod}{\mathsf{Mod}}
\newtheorem{proposition}{Proposition}[section]
\newtheorem{definition}[proposition]{Definition}
\newtheorem{remark}[proposition]{Remark}
\newtheorem{lemma}[proposition]{Lemma}
\newtheorem{theorem}[proposition]{Theorem}
\newtheorem{corollary}[proposition]{Corollary}
\newtheorem{MainTheorem}{Theorem}
\theoremstyle{definition}
\newtheorem{example}{Example}
\title{Holomorphic curves in the $6$-pseudosphere and cyclic surfaces}
\author[B. Collier]{Brian Collier}
\address{University of California Riverside, USA}
\email{brian.collier@ucr.edu}
\author[J. Toulisse]{J\'{e}r\'{e}my Toulisse}
\address{Universit\'e C\^ote d'Azur, CNRS,  LJAD,  France}
\email{jeremy.toulisse@univ-cotedazur.fr}
\begin{document}

\begin{abstract}
	The space $\mathbf{H}^{4,2}$ of vectors of norm $-1$ in $\R^{4,3}$ has a natural  pseudo-Riemannian metric and a compatible almost complex structure. The group of automorphisms of both of these structures is the split real form $\sG_2'$. In this paper we consider a class of holomorphic curves in $\H$ which we call alternating. 
	We show that such curves admit a so called Frenet framing. Using this framing, we show that the space of alternating holomorphic curves which are equivariant with respect to a surface group are naturally parameterized by certain $\sG_2'$-Higgs bundles. This leads to a holomorphic description of the moduli space as a fibration over Teichm\"uller space with a holomorphic action of the mapping class group. Using a generalization of Labourie's cyclic surfaces, we then show that equivariant alternating holomorphic curves are infinitesimally rigid. 
\end{abstract}
\maketitle
\tableofcontents
\section{Introduction}

The pseudohyperbolic space $\mathbf H^{4,2}$ is the set of vectors of norm $-1$ in a real vector space equipped with a nondegenerate quadratic form of signature $(4,3)$. There is a natural signature $(4,2)$ pseudo-Riemannian metric on $\H$. Multiplication in the split octonion algebra defines a nonintegrable almost complex structure on $\H$ which is compatible with the pseudo-Riemannian metric.  
This is the noncompact analogue of the famous almost complex structure on the $6$-sphere. 
The group which preserves this structure is the split real Lie group $\sG_2'$ of type $G_2.$
In this paper, we consider a class of $\J$-holomorphic curves in $\H$ which we call \emph{alternating}. Similar to parameterized curves in $\R^3$, holomorphic curves in $\H$ generically admit a Frenet framing. 
The alternating condition concerns the signature of the subbundles in the Frenet framing, and can be equivalently described using cyclic Higgs bundles. We focus especially on alternating holomorphic curves which are preserved by a surface group, and describe the resulting moduli space.

Let $\Sigma$ be a closed oriented surface of genus $g\geq 2$. For a real semisimple Lie group $\sG$, the character variety $\cX(\Sigma,\sG)$ is the space of $\sG$-conjugacy classes of reductive group homomorphisms $\pi_1(\Sigma)\to\sG$ from the fundamental group of $\Sigma$ into $\sG$. 
For some Lie groups, the character variety has distinguished connected components which generalize many features of the Teichm\"uller space of $\Sigma$. Such components will be called higher rank Teichm\"uller spaces. These spaces have been studied in detail by a variety of authors from various perspectives, see for example  \cite{BeatriceBourbaki,AnnaICM,KasselICM,OscarSurvey}.

When the rank of $\sG$ is two, the relevant Lie groups are locally isomorphic to $\sSL(3,\R)$,  $\sSO(2,n)$ and $\sG_2'$; the relevant spaces are known as the Hitchin components for $\sSL(3,\R)$, $\sSO(2,3)$ and $\sG_2'$, and the space of maximal representations for $\sSO(2,n)$. 
Each of these spaces admits a mapping class group invariant complex analytic parameterization as a fibration over the Teichm\"uller space of $\Sigma$. 
This was described independently by Loftin \cite{flatmetriccubicdiff} and Labourie \cite{CubicDifferentialsRP2} for $\sSL(3,\R)$ and Labourie \cite{cyclicSurfacesRank2} for Hitchin components; for maximal representations it was described by Alessandrini and the first author \cite{SO23LabourieConj} and Tholozan and the authors \cite{CTT}.

The complex analytic description has two inputs,  a parameterization of the spaces using Higgs bundles on a fixed Riemann surface and the  uniqueness of a $\pi_1(\Sigma)$-invariant minimal surface in the Riemannian symmetric space of the group. 
A central tool in the rank 2 analysis is the restriction to a set of Higgs bundles which have a cyclic group symmetry. 
When the rank of the group is at least 3, the relevant Higgs bundles do not necessarily have a cyclic symmetry. In fact, Markovic, Sagman and Smillie recently showed that, in rank at least 3, there are representations in higher rank Teichm\"uller spaces which have multiple $\pi_1(\Sigma)$-invariant minimal surface in the Riemannian symmetric space \cite{Markovic,MarkovicSagmanSmilie,SagmanSmilie}. 
Among other thing, this breakthrough suggests that representations arising from cyclic Higgs bundles deserve special attention. 

For $\sSL(3,\R)$ Hitchin representations, the uniqueness of the minimal surface was deduced from the uniqueness of an invariant affine sphere. Similarly, for $\sSO(2,n)$ maximal representations, the uniqueness of the minimal surface follows from the uniqueness of an invariant maximal spacelike surface in the pseudohyperbolic space $\mathbf{H}^{2,n-1}.$ In both cases, these surfaces characterize the appropriate higher rank Teichm\"uller space.  That is, they only exist for representations in the $\sSL(3,\R)$-Hitchin component or for maximal $\sSO(2,n)$-representations, respectively.
In \cite{g2geometry}, Baraglia pioneered the use of cyclic Higgs bundles to study Hitchin representations in rank 2. In particular, he recovered the affine sphere description for $\sSL(3,\R)$ and proved that Hitchin representations into $\sG_2'$ define equivariant holomorphic curves $f:\widetilde X\to \H.$ 

The equivariant holomorphic curves constructed by Baraglia satisfy the alternating condition defined in this paper. However, we prove that preserving an alternating holomorphic curve in $\H$ does not characterize the $\sG_2'$ Hitchin component. 
As a result, the theory of alternating holomorphic curves identifies representations $\pi_1(\Sigma)\to \sG_2'$ which are not Hitchin representations but share some features with Hitchin representations. 
This is similar to what happens for $\sSO(2,3)$. Namely, $\sSO(2,3)$ Hitchin representations are maximal, and so preserve a maximal spacelike surface in $\mathbf{H}^{2,2}$, but this does not characterize $\sSO(2,3)$ Hitchin representations since there are non-Hitchin maximal $\sSO(2,3)$ representations. 
Unlike the $\sSO(2,3)$ situation, the only higher rank Teichm\"uller space for the group $\sG_2'$ is the Hitchin component.
In particular, little is known about the non-Hitchin $\sG_2'$ representations which preserve an alternating holomorphic curve. It would be interesting to determine whether these $\sG_2'$ representations are Anosov, and to study the uniqueness properties of the alternating holomorphic curves.

% This can is analogous to what happens for $\sSO_0(2,3)$. Namely, 
\subsection{Alternating holomorphic curves}
Let $S$ be an oriented surface, and $(\H,\J)$ be the almost complex manifold described above. An immersion $f:S\to\H$ is called a holomorphic curve if the image of the tangent bundle is $\J$-invariant. This induces a Riemann surface structure on $S.$ 
As mentioned above,  holomorphic curves in $\H$ generically admit a Frenet framing. We say that $f:S\to \H$ is \emph{alternating} if the tangent bundle of $S$ is positive definite and the image of the second fundamental form is negative definite. It turns out that when the second fundamental form is nonzero, its image defines a subbundle of rank two, see \textsection \ref{sec alt hol curves} for details. 
Alternating holomorphic curves are in particular maximal spacelike surfaces.

By construction, the Frenet framing defines a lift to a homogeneous space $\Cy$ which we call the cyclic space. Surfaces in $\Cy$ arising from the Frenet lift of alternating holomorphic curves are very particular and define what we call \emph{$\alpha_1$-cyclic surfaces}, a slight generalization of the cyclic surfaces introduced by Labourie in \cite{cyclicSurfacesRank2}. 
A holomorphic interpretation of the Frenet framing defines a special type of cyclic $\sG_2'$ harmonic bundles on the Riemann surface induced by $f$. That is, cyclic $\sG_2'$-Higgs bundles equipped with a cyclic harmonic metric, see \textsection \ref{sec cyclic higgs and harmonic}. 

To establish a 1-1 correspondence between isomorphism classes of these objects, it is necessary to consider equivariant alternating holomorphic curves. Such objects are pairs $(\rho, f)$, where $\rho:\pi_1(S)\to \sG_2'$ is a representation and $f:\widetilde S\to \H$ is a alternating holomorphic curve which is $\rho$-equivariant. There are natural notions of isomorphisms in this setting.

\begin{MainTheorem}\label{MainTheo:Equivalence}
Let $S$ be an oriented surface. Then the Frenet lift defines a one-to-one correspondence between isomorphism classes of
\begin{enumerate}
	\item equivariant alternating holomorphic curves in $\H$,
	\item equivariant $\alpha_1$-cyclic surfaces in $\Cy$, and
	\item $\G$ cyclic harmonic bundles of the form \eqref{eq cyclic G2 higgs B}.
\end{enumerate}
% In this correspondence, the $\sG_2'$-Higgs bundle associated to an equivariant alternating holomorphic curve $(\rho,f)$ is defined on the quotient of the Riemann surface induced by $f.$
\end{MainTheorem}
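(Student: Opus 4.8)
The plan is to set up the correspondence as a pair of equivalences, $(1)\leftrightarrow(2)$ and $(2)\leftrightarrow(3)$, each of which is essentially a translation between geometric and holomorphic data, and then check that the notions of isomorphism match on all three sides. For $(1)\Rightarrow(2)$ I would invoke the Frenet framing result proved earlier in the paper: given an equivariant alternating holomorphic curve $(\rho,f)$, the alternating hypothesis (tangent bundle positive definite, image of the second fundamental form a negative definite rank-two subbundle) guarantees that the osculating flag is nondegenerate, so the Frenet lift $\widehat f\colon\widetilde S\to\Cy$ exists, is canonical, and is $\rho$-equivariant for the $\sG_2'$-action on $\Cy$. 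One then has to verify that $\widehat f$ satisfies the defining conditions of an $\alpha_1$-cyclic surface — i.e. that in the splitting of $\fg$ determined by the framing, the derivative of $\widehat f$ takes values in the prescribed summands, with the part along the highest-weight/lowest-weight lines being nonvanishing (the $\alpha_1$-cyclic condition). This is where the specific structure of $G_2'$ and the signature bookkeeping from \textsection\ref{sec alt hol curves} enters; it should be a direct computation from the Frenet equations. Conversely, given an equivariant $\alpha_1$-cyclic surface in $\Cy$, projecting to $\H$ via the basepoint map recovers $f$, and the cyclic structure of the surface forces $f$ to be a holomorphic immersion with the correct signatures, hence alternating; the two constructions are mutually inverse because the Frenet lift of a curve is unique once it exists.

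For $(2)\leftrightarrow(3)$ the idea is the standard nonabelian Hodge dictionary in the equivariant setting, adapted to cyclic data. An equivariant map $\widehat f\colon\widetilde S\to\Cy=\sG_2'/\sH$ into a homogeneous space is the same as a flat $\sG_2'$-bundle with a reduction of structure group to $\sH$; pulling back the Maurer–Cartan form and decomposing it according to the $\sH$-invariant splitting $\fg=\fh\oplus\fm$ produces a metric connection plus a Higgs-type field, and the $\alpha_1$-cyclic condition on $\widehat f$ translates precisely into the block form \eqref{eq cyclic G2 higgs B} for the Higgs field together with the cyclic symmetry of the harmonic metric. The flatness of the original connection becomes the Hitchin equations, so one obtains a $\sG_2'$ cyclic harmonic bundle; the Riemann surface structure on $S$ is the one induced by $f$ (equivalently, the conformal class making $\widehat f$ a cyclic, hence harmonic, map). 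Running this in reverse: a cyclic harmonic bundle of the form \eqref{eq cyclic G2 higgs B} solves the Hitchin equations, hence defines a flat $\sG_2'$-connection and an equivariant harmonic map to the symmetric space; the harmonic metric is a reduction to the maximal compact, but composing with the appropriate $\sH$-equivariant projection (using that the cyclic structure picks out the reduction to $\sH$ rather than to $\sK$) yields the equivariant map to $\Cy$, and the block form of the Higgs field guarantees this map is $\alpha_1$-cyclic. One must check that isomorphic harmonic bundles give isomorphic cyclic surfaces and vice versa, which reduces to the fact that gauge transformations respecting the cyclic grading correspond to $\sG_2'$-translations and reparametrizations on the geometric side.

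The main obstacle, I expect, is not any single implication but the bookkeeping that makes the three isomorphism relations literally coincide — in particular pinning down exactly which reduction of structure group the cyclic framing encodes and verifying that the $\alpha_1$-cyclic condition is \emph{equivalent} to, not merely implied by, the shape \eqref{eq cyclic G2 higgs B}. Concretely, one has to rule out degenerate configurations: show that the nonvanishing required in the definition of $\alpha_1$-cyclic surface is automatic from the alternating hypothesis on $f$ (so that $(1)\Rightarrow(2)$ lands in the right class) and, conversely, that it forces the relevant entries of the Higgs field to be nowhere zero so that the Frenet flag of the reconstructed $f$ is genuinely nondegenerate (so that $(3)\Rightarrow(1)$ produces an \emph{immersed} alternating curve and not something with branch points). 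This is a matter of carefully tracking the $G_2'$ representation theory — the decomposition of the $7$-dimensional representation under the framing torus and the identification of the summand $\fg_{\alpha_1}$ — and should follow from the computations already assembled in \textsection\ref{sec alt hol curves} and \textsection\ref{sec cyclic higgs and harmonic}, but it is the step requiring the most care. Once these equivalences of classes are in place, transitivity gives the full one-to-one correspondence asserted in the theorem.
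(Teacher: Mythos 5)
Your plan is essentially the paper's own proof: the $(1)\leftrightarrow(3)$ content is the complexified Frenet framing computation and its inverse (Theorem \ref{thm hol descp of C FF}, Theorem \ref{thm 1-1 noncompact}, with the $\sU(1)$ bookkeeping of \eqref{eq auto of C frentet} and Proposition \ref{prop isos of B cyclic harmonic}), and the $(2)\leftrightarrow(3)$ content is exactly the Maurer--Cartan/harmonic-map dictionary of Lemma \ref{lemma:Cyclic} and Theorem \ref{thm:LinkCyclicHiggs}; the only organizational difference is that the paper chains $(1)\leftrightarrow(3)\leftrightarrow(2)$ rather than your $(1)\leftrightarrow(2)\leftrightarrow(3)$, and it reconstructs the curve from a harmonic bundle via the norm $-1$ section of Proposition \ref{prop flat conn decomp} rather than by projecting the cyclic surface to $\H$. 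One small correction to your "step requiring the most care": the nowhere-vanishing component in the $\alpha_1$-cyclic condition is the simple-root component $df_{-\alpha_1}$, which corresponds to the tautological entries $1$ (i.e.\ $\partial f$ itself) and $-\tfrac{\i}{2}$ in \eqref{eq cyclic G2 higgs B} and is automatic because $f$ is an immersion --- it is not the highest/lowest-weight line, whose component is $\delta$ and may vanish.
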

\begin{remark}
	Note that Theorem \ref{MainTheo:Equivalence} does not assume the surface $S$ is compact. This is why we have $\G$-harmonic bundles rather than polystable $\G$-Higgs bundles.  In particular, the induced Riemann surfaces could be biholomorphic to the complex plane. Existence of cyclic harmonic metrics has recently been studied by Li and Mochizuki \cite{MochizukiLi2,MochizukiLi1}.
\end{remark}

Now consider a closed oriented surface $\Sigma$ of genus $g\geq 2$. The set of equivalence classes of equivariant alternating holomorphic curves $(\rho,f)$ is denoted by
\[\cH(\Sigma)=\{(\rho, f) ~\text{equivariant alternating holomorphic curves}\}/\sim
~,\]
where the equivalence is given by the natural action of $\sG_2'\times \Diff_0(\Sigma).$ The mapping class group $\Mod(\Sigma)$ of $\Sigma$ naturally acts on $\cH(\Sigma).$ Furthermore, forgetting the map $f$, and taking the induced Riemann surface structure of $f$ define $\Mod(\Sigma)$-equivariant maps to the $\sG_2'$-character variety and the Teichm\"uller space of $\Sigma$
\[\xymatrix{\cH(\Sigma)\ar[d]_\pi\ar[r]^{\mathrm{Hol}\ \ \ }&\cX(\Sigma,\sG_2')\\\TT(\Sigma)&}\]

For closed surfaces, isomorphism classes of $\sG_2'$-cyclic harmonic bundles on a fixed Riemann surface $X=(\Sigma,\j)$ are identified with isomorphism classes of  polystable $\sG_2'$-cyclic Higgs bundles. 
In Theorem \ref{thm: moduli fixed Riemann}, we parameterize isomorphism classes of polystable $\sG_2'$-Higgs bundles which arise from equivariant alternating holomorphic curves. 
Combining this description with Simpson's construction of the moduli space of Higgs bundles in families \cite{SimpsonModuli1} (see also \cite{SO23LabourieConj}) leads to a complex analytic structure on $\cH(\Sigma)$.

% Given $\S$ a closed connected oriented surface of genus $g$ at least two, we consider pairs $(\rho,f)$ where $\rho$ is a group morphism from $\pi_1(\S)$ to $\G$ and $f$ is a $\rho$-equivariant alternating holomorphic curve from $\widetilde\S$ to $\H$. The group $(\G\times \Diff_0(\S))$ (where $\Diff_0(\S)$ is the group of diffeomorphisms of $\S$ isotopic to the identity) naturally acts on the space of such pairs and we call the quotient the \emph{moduli space of equivariant  alternating holomorphic curves} $\HH(\S)$. The mapping class group $\Mod(\S)$ of $\S$ naturally acts on $\HH(\S)$. We prove

\begin{MainTheorem}\label{MainTheo:Moduli1}
The moduli space $\cH(\S)$ has the structure of a complex analytic space. With respect to this structure, the mapping class group  $\Mod(\S)$ acts analytically and the projection map $\pi:\cH(\Sigma)\to \TT(\Sigma)$ is a surjective analytic map with smooth fibers.  
Moreover, the space $\cH(\Sigma)$ decomposes as 
\[\cH(\Sigma)=\coprod_{d\in\{0,\cdots,6g-6\}}\cH_d(\Sigma)~,\]
where $\cH_d(\Sigma)$ has complex dimension $d+8g-8$. 
The fiber of $\pi:\cH_d(\Sigma)\to \TT(\Sigma)$ over a Riemann surface $X$ is 
\begin{itemize}
	\item a rank $(2d-g+1)$ holomorphic vector bundle over the $(6g-6-d)$-symmetric product of $X$ when $g\leq d\leq 6g-6$, and 
	\item a bundle over a $H^1(X,\Z_2)$-cover of the $2d$-symmetric product of $X$ whose fiber is $(\C^{5g-5-d}\setminus\{0\})/\pm\Id$ when $0\leq d\leq g-1.$
\end{itemize}
\end{MainTheorem}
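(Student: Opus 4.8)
The plan is to recast the statement in terms of Higgs bundles and then apply the construction of Higgs bundle moduli in families. First, Theorem~\ref{MainTheo:Equivalence} identifies $\cH(\Sigma)$ with the set of isomorphism classes of equivariant $\G$ cyclic harmonic bundles of the form \eqref{eq cyclic G2 higgs B}, and this identification is compatible with $\pi$ because the Riemann surface induced by the holomorphic curve $f$ is precisely the one carrying the harmonic bundle. Over a fixed Riemann surface $X=(\Sigma,\j)$, the nonabelian Hodge correspondence (Corlette--Simpson) promotes such harmonic bundles to polystable $\G$-cyclic Higgs bundles of the form \eqref{eq cyclic G2 higgs B} on $X$, and conversely; hence the fiber $\pi^{-1}(X)$ is in natural bijection with the set parameterized in Theorem~\ref{thm: moduli fixed Riemann}. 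This yields the fiberwise picture at once: for $g\le d\le 6g-6$ the fiber is a rank $(2d-g+1)$ holomorphic vector bundle over $\mathrm{Sym}^{6g-6-d}(X)$, and for $0\le d\le g-1$ it is a bundle with fiber $(\C^{5g-5-d}\setminus\{0\})/\pm\Id$ over an $H^1(X,\Z_2)$-cover of $\mathrm{Sym}^{2d}(X)$; the dichotomy at $d=g-1$ is the Riemann--Roch jump for the relevant degree-$2d$ line bundle (which has the expected $h^0=2d-g+1$ once $2d>2g-2$). Each such fiber is a smooth complex variety: symmetric products of a curve are smooth, holomorphic vector bundles over them are smooth, $\pm\Id$ acts freely on $\C^{5g-5-d}\setminus\{0\}$, and the $H^1(X,\Z_2)$-cover is \'etale. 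Its complex dimension is $(2d-g+1)+(6g-6-d)=d+5g-5$ in the first range, and $2d+(5g-5-d)=d+5g-5$ in the second.

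To globalize, I would take the universal Riemann surface $\mathcal C\to\TT(\Sigma)$ and apply Simpson's construction of the moduli space of Higgs bundles in families \cite{SimpsonModuli1} (compare \cite{SO23LabourieConj,CTT}) to obtain a complex analytic space $\mathcal M\to\TT(\Sigma)$ whose fiber over $X$ is the moduli space of polystable $\G$-Higgs bundles on $X$. The $\G$-cyclic Higgs bundles of the form \eqref{eq cyclic G2 higgs B} then cut out a complex analytic subspace of $\mathcal M$: cyclicity is the closed condition of being fixed by the $\C^{*}$-action of the grading element, and imposing the normal form \eqref{eq cyclic G2 higgs B} together with the open polystability condition selects a union of connected components of this fixed locus; concretely, a relative version of Theorem~\ref{thm: moduli fixed Riemann} realizes this subspace as built from the relative symmetric products $\mathrm{Sym}^{k}(\mathcal C/\TT(\Sigma))$ together with holomorphic affine and vector bundles over them, which is manifestly complex analytic. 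Declaring $\cH(\Sigma)$ to be this subspace and $\pi$ its projection to $\TT(\Sigma)$, the map $\pi$ is analytic; it is surjective because every fiber is non-empty (each relative symmetric product admits sections, a vector bundle has a zero section, and $5g-5-d\ge 1$ throughout $0\le d\le g-1$ when $g\ge 2$), so in particular $\cH_d(\Sigma)\neq\emptyset$ for every $d\in\{0,\dots,6g-6\}$. Combined with the fiberwise count, $\cH_d(\Sigma)$ has complex dimension $(d+5g-5)+\dim_{\C}\TT(\Sigma)=(d+5g-5)+(3g-3)=d+8g-8$.

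The decomposition $\cH(\Sigma)=\coprod_{d}\cH_d(\Sigma)$ comes from the discrete invariant $d$, namely the degree of the line bundle appearing in the normal form \eqref{eq cyclic G2 higgs B} (equivalently, $6g-6-d$, resp.\ $2d$, is the degree of the zero divisor recorded by the symmetric product). Being topological, $d$ is locally constant on $\cH(\Sigma)$, so each $\cH_d(\Sigma)$ is open and closed; the bound $0\le d\le 6g-6$ is forced by polystability of \eqref{eq cyclic G2 higgs B}, which requires both $d\ge 0$ and $6g-6-d\ge 0$. Finally, $\Mod(\Sigma)$ acts on $\TT(\Sigma)$ and on the universal curve $\mathcal C$ compatibly, hence on $\mathcal M$; since the cyclic structure and the form \eqref{eq cyclic G2 higgs B} are intrinsic, this action preserves $\cH(\Sigma)$ and each $\cH_d(\Sigma)$, and it is analytic because Simpson's construction is functorial in the family. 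I expect the main obstacle to be the globalization step: one must verify that the fiberwise classification of Theorem~\ref{thm: moduli fixed Riemann} varies holomorphically with $X$ and genuinely assembles into the asserted analytic subspace of $\mathcal M$ with smooth fibers --- that polystability is an open condition and that the defining data (the twisting line bundle, the cubic-differential-type section, and the remaining linear data) form holomorphic bundles over the relative symmetric products --- rather than producing merely a fiberwise family of bijections.
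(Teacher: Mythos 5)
Your proposal follows essentially the same route as the paper: Theorem \ref{MainTheo:Equivalence} together with the nonabelian Hodge correspondence reduces the fiberwise statement to Theorem \ref{thm: moduli fixed Riemann}, the global analytic structure comes from Simpson's construction of the Higgs moduli space in families (as in \cite{SimpsonModuli2,SO23LabourieConj}) with the locus of Higgs bundles of the form \eqref{eq cyclic G2 higgs B} cut out inside it, and your dimension count $(d+5g-5)+(3g-3)=d+8g-8$ and the locally-constant-degree decomposition are exactly the paper's. Two small corrections to the cutting-out step: the paper applies the family construction to the complex group $\GC$ (the result of \cite{SO23LabourieConj} is stated for complex reductive groups) and characterizes cyclicity as fixedness under a primitive sixth root of unity in $\C^*$, not under the full $\C^*$-action, whose fixed locus excludes every Higgs bundle of the form \eqref{eq cyclic G2 higgs B} with $\delta\neq 0$; moreover the normal form \eqref{eq cyclic G2 higgs B} is only an \emph{open} subset of that fixed locus rather than a union of connected components, since the $\beta=0$ (totally geodesic) polystable Higgs bundles of \textsection\ref{ss:TotallyGeodesic} lie in its closure. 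Openness suffices for analyticity, so the relative version of Theorem \ref{thm: moduli fixed Riemann} that you flag as the main obstacle is not actually needed (the paper only invokes it fiberwise), and note that smoothness is asserted for the fibers only, the smoothness of the total space being deferred to \cite{CTcyclic}.
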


For the extremal values $d=6g-6$ and $d=0$, the image of the holonomy map can be classified. When $d=6g-6,$ the space $\cH_{6g-6}\to \TT(\Sigma)$ is a holomorphic vector bundle whose fiber over a Riemann surface $X$ are holomorphic differentials of degree $6.$ 
In this case, the holonomy map establishes a diffeomorphism with the $\sG_2'$-Hitchin component. When $d=0,$ the space $\cH_0(\Sigma)$ has $2^{2g}$ connected components, and the image of the holonomy map consists of all representations $\rho:\pi_1(\Sigma)\to \sG_2'$ which factor through the nonsplit $\Z_2$
-extension of $\sSL(3,\R)$ and whose $\sSL(3,\R)$-factor is  a Hitchin representation, see \textsection \ref{sec d=0} for more details. 
For $0<d<6g-6$, the representations in the image of the holonomy map have not been studied previously.

Holomorphic curves in the almost complex 6-sphere were first studied by Bryant in \cite{bryant}. His work can be thought of as an analogue of our Higgs bundle description of alternating holomorphic curves in $\H$, where the main difference is that we work in the equivariant case and noncompact symmetric spaces instead of with compact Riemann surfaces and compact symmetric spaces. 
In \cite{eschenburg}, Eschenburg and Vlachos prove that a holomorphic curve in $\mathbf S^6$ is contained in $\mathbf S^2$ (the totally geodesic case), a totally geodesic hypersurface $\mathbf S^5\subset\mathbf S^6$, or spans the entire sphere (generic case). 
Equivariant alternating holomorphic curves in $\H$ witness the same phenomenon. They either span $\H$ or are in a totally geodesic $\mathbf H^{3,2}\subset\H$, the latter case only occurs in $\cH_0(\Sigma).$ Furthermore, for $0\leq d\leq g-1,$ the components $\cH_d(\Sigma)$ contain natural degenerations of alternating holomorphic curves which lie in totally geodesic copies of the hyperbolic disc.

Finally, we study the infinitesimal properties of the holonomy map $\mathrm{Hol}:\HH(\S)\to\cX(\S,\G)$. Specifically, we show that alternating holomorphic curves are infinitesimally rigid. 
% That is, if  $(\rho_t,f_t)$ of equivariant holomorphic curves such that the derivative $\overset{\bullet}{\rho_0}$ is along a $\sG_2'$-orbit, then $(\rho_t,f_t)$ is tangent to a $\sG_2'\times \Diff_0(\Sigma)$-orbit at $t=0.$
\begin{MainTheorem}\label{MainTheo:Rigidity}
For $t\in(-\epsilon,\epsilon)$, let $(\rho_t,f_t)$ be a smooth path of equivariant alternating curves such that $[\overset{\bullet}{\rho_0}]=0$ in the tangent space $\T_{[\rho_0]}\cX(\S,\G)$. Then, at $t=0,$ $(\rho_t,f_t)$ is tangent to the $\big(\G\times \Diff_0(\S)\big)$-orbit through $(\rho_0,f_0).$
\end{MainTheorem}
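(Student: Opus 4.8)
The plan is to transfer the statement, through Theorem~\ref{MainTheo:Equivalence}, to a rigidity statement about $\alpha_1$-cyclic surfaces in $\Cy$, and to establish that statement with a Bochner-type argument modeled on Labourie's analysis of cyclic surfaces \cite{cyclicSurfacesRank2} (compare the closely related arguments in the $\sSO(2,3)$ setting \cite{SO23LabourieConj,CTT}).

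The first step is a formal reduction. Since $[\overset{\bullet}{\rho_0}]=0$ in $\T_{[\rho_0]}\cX(\S,\G)\cong H^1(\S,\mathrm{Ad}\,\rho_0)$, the cocycle $\overset{\bullet}{\rho_0}$ is a coboundary, and coboundaries are precisely the infinitesimal directions of the $\G$-conjugation orbit in $\mathrm{Hom}(\pi_1(\S),\G)$; acting on the path $(\rho_t,f_t)$ by a suitable path in $\G$ (an allowed symmetry), I may assume that $\overset{\bullet}{\rho_0}=0$ as a cocycle. Then $\overset{\bullet}{f_0}$ is a $\rho_0$-equivariant vector field along $f_0$, and it remains to show that, modulo the infinitesimal $\G$-action through the centralizer of $\rho_0$ (which is nontrivial only when $\rho_0$ is not irreducible), $\overset{\bullet}{f_0}$ is an infinitesimal reparametrization, i.e.\ $\overset{\bullet}{f_0}=df_0(W)$ for some vector field $W$ on $\S$.

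By Theorem~\ref{MainTheo:Equivalence} the path corresponds to a path of equivariant $\alpha_1$-cyclic surfaces $\sigma_t\colon\widetilde\S\to\Cy$ with induced Riemann surface structures $X_t=(\S,\j_t)$; write $\mu=\overset{\bullet}{\j_0}$ for the induced infinitesimal Beltrami differential and $[\mu]\in\T_{[X_0]}\TT(\S)$ for its Teichm\"uller class. I claim it suffices to prove $[\mu]=0$. Indeed, if $[\mu]=0$ then precomposing by the flow of an appropriate vector field (an allowed $\Diff_0(\S)$-symmetry) reduces to $\overset{\bullet}{\j_0}=0$, so that to first order $(\rho_t,f_t)$ is a path of equivariant alternating holomorphic curves on the fixed Riemann surface $X_0$ with $\overset{\bullet}{\rho_0}=0$; by the parameterization at a fixed Riemann surface of Theorem~\ref{thm: moduli fixed Riemann}, together with the non-abelian Hodge correspondence on $X_0$ (a real-analytic bijection between polystable $\G$-Higgs bundles on $X_0$ and reductive representations), such a path is first-order constant up to automorphisms of the underlying Higgs bundle, that is up to the infinitesimal $\G$-action through the centralizer of $\rho_0$. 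Together with the previous step, this proves the theorem.

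It remains to prove $[\mu]=0$, and this is where the cyclic structure does the work. The $\alpha_1$-cyclic condition turns the harmonic metric equation for $\sigma_0$ into an elliptic system of affine Toda type whose zeroth-order nonlinear terms carry the definite sign coming from the Cartan matrix of $\G$ and from the squared norms of the holomorphic differentials of the underlying cyclic $\G$-Higgs bundle --- precisely the sign structure encoded by the alternating signatures of the Frenet subbundles, and the feature distinguishing cyclic data from general equivariant maximal surfaces, for which no such rigidity can hold in higher rank \cite{Markovic,MarkovicSagmanSmilie,SagmanSmilie}. Linearizing this system along $(\rho_t,f_t)$, contracting with the Jacobi field $\overset{\bullet}{\sigma_0}$, and integrating over the closed surface $\S$ yields a Bochner-type identity in which the curvature term has a definite sign; it forces $\overset{\bullet}{\sigma_0}$ to be parallel along $\sigma_0$ and, simultaneously, forces $[\mu]$ to vanish in $\T_{[X_0]}\TT(\S)$. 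A parallel Jacobi field along an equivariant cyclic surface is an infinitesimal isometry of $\Cy$ along $\sigma_0$, so it is already accounted for by the $\G$- and $\Diff_0(\S)$-directions, and the proof is complete. I expect the main obstacle to be this Bochner estimate in the pseudo-Riemannian setting: one must verify that the indefinite metric on $\H$, and the mixed signatures of the Frenet subbundles, do not spoil the sign of the curvature term once the variation is constrained to the alternating class --- the precise point where the $\alpha_1$-cyclic hypothesis is genuinely used, and the analogue of the positivity Labourie exploits in the Riemannian rank-2 case.
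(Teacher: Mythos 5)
Your global strategy (transfer to $\alpha_1$-cyclic surfaces via Theorem~\ref{MainTheo:Equivalence} and run a Labourie-style Bochner argument) is the right frame, and your first normalization (gauging by a path in $\G$ so that $\overset{\bullet}{\rho_0}=0$ as a cocycle) is exactly how the paper starts. But the pivot of your argument --- that it suffices to prove $[\mu]=0$ in $\T_{[X_0]}\TT(\S)$, and that this follows from a Bochner identity obtained by linearizing the affine Toda system and contracting with the Jacobi field --- is where the proposal has a genuine gap. The vanishing of $[\mu]$ is a \emph{consequence} of the theorem (since $\Diff_0(\S)$ acts trivially on $\TT(\S)$), not an accessible intermediate step, and the mechanism you sketch does not produce it: in the actual proof the Bochner estimate is applied to the Jacobi field $\zeta$ of the Pfaffian system on $\Cy$, with positivity coming from the hermitian form $B(\theta(\cdot),\cdot)$ built from the compact real form (applied to $\overline\partial_\nabla Z$ and $Z\wedge\phi$), not from Cartan-matrix signs or the alternating signatures of the Frenet subbundles; and it only kills the component $Z$ of $\zeta$ in $[\fG]=\bigoplus_{j\neq0,\pm1}[\fg_j]$. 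It does not make $\zeta$ parallel and does not control the variation of $\j$. The remaining components $\zeta_{\pm1}$ are killed pointwise by the $\alpha_1$-hypothesis: since $df_{-\alpha_1}$ is nowhere vanishing one reparametrizes by the flow of the unique vector field with $df_{-\alpha_1}(X)=-\zeta_{-\alpha_1}$ (so $\Diff_0$ is used to arrange $\zeta_{-\alpha_1}=0$, not $\overset{\bullet}{\j_0}=0$), and then projections of $[\zeta_1\wedge\phi]=0$ onto root spaces, together with $df_{-\alpha_2}$ holomorphic and not identically zero, force $\zeta_1=0$ and hence $\zeta=0$. Your closing claim that a parallel Jacobi field is an infinitesimal isometry of $\Cy$ already accounted for by the $\G\times\Diff_0(\S)$-directions is also unsupported: parallel sections of $f^*\T\Cy$ need not come from Killing fields, and $\Diff_0$-directions $df_0(W)$ are not parallel.

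Your second reduction also does not come for free. Even granting $\overset{\bullet}{\rho_0}=0$ as a cocycle and $\overset{\bullet}{\j_0}=0$, deducing ``first-order constant up to the centralizer'' from Theorem~\ref{thm: moduli fixed Riemann} plus the nonabelian Hodge correspondence requires infinitesimal injectivity of that correspondence (and differentiability of its inverse) at the relevant points, including the strictly polystable, reducible locus (the $d=0$ components, where $\rho_0$ factors through $\widehat\sSL(3,\R)$ and both the character variety and the Higgs moduli space are singular); the fact that nonabelian Hodge is a real-analytic bijection of moduli spaces does not give this, and the needed statement is essentially equivalent to the rigidity you are trying to prove. The paper avoids both issues by working entirely at the level of Jacobi fields of the Pfaffian system in the cyclic space (Proposition~\ref{prop:InfinitesimalRigidity}), which is the step your proposal would need to supply.
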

To prove Theorem \ref{MainTheo:Rigidity}, we show that Labourie's proof of infinitesimal rigidity of the cyclic surfaces arising from Hitchin representations into split rank 2 Lie groups can be adapted to our more general setting. In the process, we streamline many of the main ideas and computations of \cite{cyclicSurfacesRank2}. A general theory of cyclic surfaces and infinitesimal rigidity will appear in \cite{CTcyclic}. 
Theorem \ref{MainTheo:Rigidity} should be thought of as saying the holonomy map is an immersion. In particular, for a representation $\rho$ in the image of $\mathrm{Hol}$, $\rho$-equivariant alternating holomorphic curves do not come in one parameter families. 
In the case of $\sG_2'$-Hitchin representations, infinitesimal rigidity is enough to prove global uniqueness of the equivariant alternating holomorphic curve, this is not the case for the components $\cH_d(\Sigma)$ when $0<d<6g-6$.

\subsection{Related results} While this paper was being written, some analogous results were proved. In \cite{XnieCyclic}, Nie studies equivariant alternating surfaces in $\mathbf H^{p,q}$ for $(p,q)=(2k,2k)$ or $(2k,2k-2)$. The name alternating in this paper was chosen because of Nie's work. Using different techniques, he proves an infinitesimal rigidity result analogous to Theorem \ref{MainTheo:Rigidity} under some assumptions. 
For $\H$, Nie's results apply to a subset $\sSO(4,3)$ representations. Interestingly, the intersection of this subset with $\sG_2'$-representations is exactly the $\sG_2'$-Hitchin representations. In particular, the overlap of the Nie's results with Theorem \ref{MainTheo:Rigidity} is exactly the case originally covered by Labourie \cite{cyclicSurfacesRank2}.

In \cite{EvansG2poly}, Evans studies holomorphic curves in $\H$ when the underlying surface is biholomorphic to the complex plane. The holomorphic curves he considers are alternating and, in many aspects, analogous to those equivariant for a Hitchin representation. It would be interesting if similar analysis applies to general alternating holomorphic curves from the complex plane.

\subsection{Organization of the paper} In \textsection \ref{sec oct and g2}, we recall different constructions of the split octonions and define the group $\G$. The almost-complex structure on $\H$, the notion of alternating holomorphic curves and their Frenet framings are introduced in \textsection \ref{sec hol curve}. In \textsection \ref{s:HBandHolCurves} we recall the theory of $\mathsf G$-Higgs bundle and $\mathsf G$-harmonic bundle, then specifying to the case of $\mathsf G=\G$ and prove the equivalence between equivariant holomorphic bundles and certain $\sG_2'$ cyclic harmonic bundles. In \textsection \ref{sec higgs bundles and and hol curves}, contains the parameterization of the moduli spaces $\cH_d(\Sigma)$ and properties of the extremal cases. 
In \textsection \ref{s cyclic surfaces}, we develop the notion of cyclic surfaces and prove Theorem \ref{MainTheo:Rigidity}.

\subsection{Acknowledgements:}  We thank Sorin Dumitrescu, Parker Evans, Fran\c{c}ois Labourie, Xin Nie, Andy Sanders, Carlos Simpson and Richard Wentworth for insightful conversations concerning various aspects of the paper. The first authors research is partially funded by NSF DMS grant 2103685.

\section{Split octonions and the Lie group $\sG_2'$}\label{sec oct and g2}
In this section we discuss the algebra of split octonions and some of its properties. We refer the reader to \cite{baezOct,FONTANALSG2Notes} for more details. We start with the quaternions and split quaternions. 

\subsection{Quaternions and split quaternions}
Recall that the quaternions are the real associative algebra $\quat$ generated as a $\R$-vector space by $\{1,j,d,e\}$ with the relations\footnote{We do not use $i,j,k$ since we will need $\i$ for complexified objects. Our convention is that Greek letters square to $+1$ and Roman letters square to $-1.$}
\[\xymatrix{j^2=d^2=e^2=-1&\text{and}&j\cdot d=-d\cdot j=e~.}\]
On the other hand, the split quaternions are the real associative algebra $\quat'$ generated as a $\R$-vector space by $\{1,j,\delta,\epsilon\}$ with the relations
\[\xymatrix{j^2=-\delta^2=-\epsilon^2=-1&\text{and}&j\cdot \delta=-\delta\cdot j=\epsilon~.}\]
% We will use Latin letters for elements whose square is $-1$ and Greek letter for those whose square is $+1$.

We have a vector space decomposition 
\[\xymatrix{\quat=\R\oplus \Im(\quat)&\text{and}&\quat'=\R\oplus \Im(\quat')},\]
where $\R$ is the span of the unit $1$ and $\Im$ is the span of the remaining generators. These spaces are the $\pm1$-eigenspaces of the conjugation involution $x\mapsto \overline x$. The projection onto the eigenspaces is given by taking real and imaginary $x=\Re(x)+\Im(x)$, where
\[\xymatrix{\Re(x)=\frac{1}{2}(x+\overline x)&\text{and}&\Im(x)=\frac{1}{2}(x-\overline x)}.\]

On the imaginary spaces $\Im(\quat)$ and $\Im(\quat')$, taking real and imaginary parts of the product define nondegenerate symmetric bilinear forms $\langle\cdot,\cdot\rangle$ with respective signatures $(0,3)$ and $(2,1)$, and a skew symmetric products $\times$. Namely, for $x_1,x_2$ in $\Im(\quat)$ or $\Im(\quat')$  
\[\xymatrix{\langle x_1,x_2\rangle=\Re(x_1\cdot x_2)&\text{and}&x_1\times x_2=\Im(x_1\cdot x_2)}.\] 
Combining these structures defines volume forms $\Omega$ on $\Im(\quat)$ and $\Im(\quat')$ defined by
\[\Omega(x_1,x_2,x_3)=\langle x_1\times x_2, x_3\rangle.\]

In both cases, the group of algebra automorphisms acts trivially on the real part, and so act on imaginary subspace. Hence, the groups $\Aut(\quat)$ and $\Aut(\quat')$ act on respectively $\Im(\quat)$ and $\Im(\quat')$ preserving $\langle\cdot ,\cdot\rangle$, $\times$ and $\Omega.$ Thus, 
\[\xymatrix{\Aut(\quat)< \sSO(3)&\text{and}&\Aut(\quat')< \sSO(2,1)~.}\]
In fact, $\Aut(\quat)=\sSO(3)$ and $\Aut(\quat')$ is the identity component  $\sSO_0(2,1)$ of $\sSO(2,1).$

%%%%%%%%%%%%%%%%%%%%%%%%%%
%%%%%%%%%%%%%%%%%%%%%%%%%%
%%%%%%%%%%%%%%%%%%%%%%%%%%

\subsection{The split octonions from split quaternions}\label{sec split oct split quat}
The split octonions $\oct'$  can be described by the Cayley-Dickson process on $\quat'$ or $\quat$. Both perspectives will be useful but we start with the split quaternion description.
% \BC{Not using a,b,c,d since we used d for a generator of the quaternions}
\begin{definition}
	The split octonions $\oct'$ are the real algebra with underlying vector space $\quat'\oplus\quat'$ equipped with the product
	\begin{equation}\label{eq split oct mult split quat}
	\xymatrix{\oct'=\quat'\oplus\quat'& \text{with}&(x_1,y_2)\cdot(x_2,y_2)=(x_1\cdot x_2-\overline y_2\cdot y_1~,~ y_2\cdot x_1+y_1\cdot\overline x_2)}~.
\end{equation}
The group $\sG_2'$ is defined to be the group of algebra automorphisms of $\oct'$. It is the split real form of the exceptional complex simple Lie group $\sG_2^\C$.
\end{definition}

We will write $\ell=(0,1)\in\oct'$, and write a split octonion $z=(x,y)$ as $z=x+y \ell$. Note that $\ell^2=-1$ and that, for any $y\in\quat'$, we have
\[y\ell=y\cdot \ell=(y,0)\cdot(0,1)=(0,y)=-\ell\cdot y~.\]  

% The multiplication table this presentation is given in Appendix \ref{sec mult tables}. 
Nonassociativity of the product can be seen by considering $(j \cdot \delta)\cdot \ell$ and $j\cdot(\delta\cdot \ell)$ since
\[\xymatrix{(j \cdot \delta)\cdot \ell=(\epsilon,0)\cdot(0,1)=\epsilon \ell&\text{and}&j \cdot (\delta\cdot \ell)=(j,0)\cdot(0,\delta)=(0,-\epsilon)=-\epsilon\ell~.}\]
However, one checks that any subalgebra generated by two elements of $\oct'$ is associative. In particular, for any $z_1,z_2\in\oct'$ we have 
\begin{equation}
	\label{eq assoc for 2 gen} z_1\cdot( z_1\cdot z_2)=(z_1\cdot z_1)\cdot z_2~.
\end{equation}

We have a decomposition $\oct'=\R\oplus \Im(\oct')$, where $\R$ is the span of the unit $1$ and $\Im(\oct')=\Im(\quat')\oplus \quat'$. These are the $\pm1$-eigenspaces of the conjugation involution
\[\xymatrix{z=x+y\ell\ar@{|->}[r]& \overline z=\overline x-y\ell~.}\]
 Since any two distinct generators $\{j,\delta,\epsilon,\ell,j\ell,\delta\ell,\epsilon\ell\}$ of $\Im(\oct')$ anticommute, for all $z_1,z_2\in\Im(\oct')$ we have
\[\overline{z_1\cdot z_2}=\overline z_2\cdot\overline z_1=z_2\cdot z_1~.\]
There is a nondegenerate symmetric bilinear form and a skew symmetric product defined by 
\[\xymatrix@=1em{\langle z_1,z_2\rangle=\Re(z_1\cdot z_2)=\frac{1}{2}(z_1\cdot z_2+\overline{z_1\cdot z_2})&\text{and}&z_1\times z_2=\Im(z_1\cdot z_2)=\frac{1}{2}(z_1\cdot z_2-\overline{z_1\cdot z_2})~.}\]
Let $\q$ be the quadratic form on $\Im(\oct')$ associated to $\langle\cdot,\cdot\rangle.$ Some immediate consequences of these definitions of $(\Im(\oct'),\q,\times)$ are:
\begin{lemma}
\begin{itemize}
	\item The signature of $\q$ is $(4,3)$ and $\{j,\delta,\epsilon,\ell,j\ell,\delta\ell,\epsilon\ell\}$ is an orthonormal basis.
	\item $z_1,z_2\in\Im(\oct')$ are orthogonal if and only if  $z_1\times z_2=z_1\cdot z_2.$
	\item $\Im(\oct')=\Im(\quat')\oplus\quat'$ is an orthogonal splitting, and the restriction of $\q$ to $\Im(\quat')$ and $\quat'$ has signature $(2,1)$ and $(2,2)$, respectively.
	\item For $z_1,z_2\in\Im(\quat')$ and $w_1,w_2\in\quat'$, we have 
	\[\xymatrix{z_1\times z_2\in\Im(\quat')&z_1\times w_1\in \quat'&\text{and}&w_1\times w_2\in\Im(\quat')~.}\]
	\item There is a three from $\Omega\in\Lambda^3(\Im(\oct')^*)$ defined by
\begin{equation}
	\label{eq G2 inv 3-form}\Omega(z_1,z_2,z_3)=\langle z_1\times z_2,z_3\rangle~.
\end{equation}
\end{itemize}
\end{lemma}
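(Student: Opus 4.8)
The plan is to verify the five assertions in order by direct computation from the Cayley--Dickson product \eqref{eq split oct mult split quat}, using only the facts already recorded: conjugation reverses products, $\overline{z_1\cdot z_2}=z_2\cdot z_1$ on $\Im(\oct')$, the generators $\{j,\delta,\epsilon,\ell,j\ell,\delta\ell,\epsilon\ell\}$ pairwise anticommute, and the decomposition $z_1\cdot z_2=\langle z_1,z_2\rangle\,1+z_1\times z_2$.

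First I would settle the first and third bullets together. A pure imaginary $z$ satisfies $\overline{z^2}=\bar z\,\bar z=z^2$, so $z^2\in\R$ and $\langle z,z\rangle=z^2$. The Cayley--Dickson formula gives $\ell^2=-1$ and, for $y\in\Im(\quat')$, $(0,y)\cdot(0,y)=(-\bar y\, y,0)=(y^2,0)$, i.e. $(y\ell)^2=y^2$; together with $j^2=-1$, $\delta^2=\epsilon^2=1$ this yields the seven values $(-1,1,1,-1,-1,1,1)$, hence signature $(4,3)$. For distinct generators $z_i\neq z_j$, anticommutativity gives $\langle z_i,z_j\rangle=\tfrac12(z_iz_j+\overline{z_iz_j})=\tfrac12(z_iz_j+z_jz_i)=0$, so the basis is orthonormal. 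The orthogonal splitting $\Im(\oct')=\Im(\quat')\oplus\quat'$ with signatures $(2,1)$ and $(2,2)$ is then immediate: $\{j,\delta,\epsilon\}$ and $\{\ell,j\ell,\delta\ell,\epsilon\ell\}$ are orthonormal bases of the two summands with one, resp.\ two, timelike vectors; alternatively $\langle(z,0),(0,w)\rangle=\Re\big((z,0)\cdot(0,w)\big)=\Re\big((0,wz)\big)=0$ since the second $\quat'$-factor is entirely imaginary.

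The second bullet is immediate from $z_1\cdot z_2=\langle z_1,z_2\rangle\,1+z_1\times z_2$: the right side equals $z_1\times z_2$ exactly when $\langle z_1,z_2\rangle=0$. For the fourth bullet I would compute the three relevant products directly: $(z_1,0)\cdot(z_2,0)=(z_1z_2,0)$, $(z_1,0)\cdot(0,w_1)=(0,w_1z_1)$, and $(0,w_1)\cdot(0,w_2)=(-\bar w_2 w_1,0)$. Since the second $\quat'$-factor lies in $\Im(\oct')$, while the conjugation formula shows $\Im_{\oct'}$ of an element $(u,0)$ equals $(\Im_{\quat'}u,0)\in\Im(\quat')$, taking imaginary parts gives $z_1\times z_2\in\Im(\quat')$, $z_1\times w_1\in\quat'$, and $w_1\times w_2\in\Im(\quat')$ respectively.

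The last bullet is the only one requiring care. Since $\times$ is skew and $\langle\cdot,\cdot\rangle$ bilinear, $\Omega$ is trilinear and antisymmetric in its first two arguments; it remains to prove cyclic invariance, which together with this antisymmetry generates full $S_3$-antisymmetry. Writing $z_1\times z_2=z_1z_2-\langle z_1,z_2\rangle\,1$ and using $\Re(z_3)=0$ gives $\Omega(z_1,z_2,z_3)=\Re\big((z_1\cdot z_2)\cdot z_3\big)$. I would then invoke the trace identity $\Re(ab)=\Re(ba)$ and the associativity of the real part $\Re\big((ab)c\big)=\Re\big(a(bc)\big)$ --- both standard for composition algebras and directly checkable from the Cayley--Dickson formula --- to chain $\Re\big((z_1z_2)z_3\big)=\Re\big(z_3(z_1z_2)\big)=\Re\big((z_3z_1)z_2\big)=\Re\big(z_2(z_3z_1)\big)=\Re\big((z_2z_3)z_1\big)$, i.e. $\Omega(z_1,z_2,z_3)=\Omega(z_2,z_3,z_1)$. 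Finally $\Omega(j,\delta,\epsilon)=\langle j\times\delta,\epsilon\rangle=\langle\epsilon,\epsilon\rangle=1\neq0$ shows $\Omega$ is a nonzero element of $\Lambda^3(\Im(\oct')^*)$. The only genuine (if minor) obstacle is this last step: one must route around the nonassociativity of $\oct'$ by working in the real part, where associativity is restored.
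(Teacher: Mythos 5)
Your proof is correct, and it is worth noting that the paper offers no argument at all for this lemma: it is stated as a list of ``immediate consequences'' of the definitions of $(\Im(\oct'),\q,\times)$, so your direct verification from the Cayley--Dickson product is exactly the intended (and only natural) route. The one ingredient you use that is not already recorded in the paper is the trace-associativity identity $\Re\big((ab)c\big)=\Re\big(a(bc)\big)$; it is true for the split octonions and your appeal to it is legitimate, but you could avoid it entirely by using the paper's own observation that any two-generated subalgebra is associative (equation \eqref{eq assoc for 2 gen} and the surrounding remark): since $\Omega$ is already skew in its first two slots, full antisymmetry follows once $\Omega(z_1,z_2,z_2)=0$, and indeed $\Omega(z_1,z_2,z_2)=\Re\big((z_1\cdot z_2)\cdot z_2\big)=\Re\big(z_1\cdot(z_2\cdot z_2)\big)=\q(z_2)\,\Re(z_1)=0$. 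Either way the lemma is established; the alternative just keeps the argument entirely inside facts the paper has already stated.
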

\begin{remark}
	The algebra structure on $\oct'$ is fully determined by $(\Im(\oct'),\q,\times)$ since for $a_1,a_2\in \R$ and $z_1,z_2\in\Im(\oct')$ we have
	\[(a_1+z_1)\cdot(a_2+z_2)=a_1a_2+\langle z_1,z_2\rangle + a_2z_1+a_1z_2+z_1\times z_2.\]
\end{remark}
The group $\sG_2'$ acts trivially on the real part. Hence, $\sG_2'$ acts on $\Im(\oct')$ preserving $\q$, $\times$ and $\Omega$. Since the algebraic structure of $\oct'$ is fully encoded in $(\Im(\oct'),\q,\times)$, $\sG_2'$ has the following description.
 \begin{proposition}
  	The group $\sG_2'$ is isomorphic to the group of linear transformations of $\Im(\oct')$ which preserve $\q$ and $\times.$ Such an automorphism also preserves $\Omega.$
  	Derivations of $\times$ defines the Lie algebra
  	% \BC{do we need to insist it is in $\fso(\Im(\oct'),\q)$ also or is it implied?}\JT{I'm not sure: it's true when you look at the 3-form, but for the crossproduct I'm not sure. I think it's safer to impose it}
  	\[\fg_2'=\{X\in\End(\Im(\oct'))~|~X(z_1\times z_2)=(X(z_1)\times z_2)+(z_1\times X(z_2)) \text{ for all }z_1,z_2\in\Im(\oct')\}.\] 
  	In particular, $\sG_2'<\sO(\Im(\oct'),\q)\cong \sO(4,3)$ and $\fg_2'\subset\fso(4,3).$
  \end{proposition}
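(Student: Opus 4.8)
The plan is to establish the three assertions in turn: (i) $\sG_2'$ equals the group of $\R$-linear maps of $\Im(\oct')$ preserving $\q$ and $\times$; (ii) such a map automatically preserves $\Omega$; and (iii) the infinitesimal version, identifying $\fg_2'$ with the derivations of $\times$ and locating both inside $\sO(4,3)$ and $\fso(4,3)$. The starting point is the Remark immediately preceding the statement: for $a_i\in\R$ and $z_i\in\Im(\oct')$,
\[
(a_1+z_1)\cdot(a_2+z_2)=a_1a_2+\langle z_1,z_2\rangle+a_2z_1+a_1z_2+z_1\times z_2,
\]
so the full octonion product is recovered from the pair $(\q,\times)$ on $\Im(\oct')$ together with the trivial action on $\R\cdot 1$. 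This already gives one inclusion for (i): an automorphism $g$ of $\oct'$ fixes $1$ (it sends the identity to the identity), hence preserves $\R\cdot 1$ and therefore its orthogonal complement, which one checks is exactly $\Im(\oct')$ using that $\Im(\oct')$ is characterized algebraically as the $(-1)$-eigenspace of conjugation, and conjugation is intrinsic via $\overline z = 2\langle z,1\rangle 1 - z$ on $\R\oplus\Im$... more directly, $\Im(\oct')=\{z : \Re(z)=0\}$ and $\Re(z)=\langle z\cdot 1,\cdot\rangle$-type data is preserved. Restricting $g$ to $\Im(\oct')$, the automorphism property $g(z_1\cdot z_2)=g(z_1)\cdot g(z_2)$ combined with the displayed formula forces $g$ to preserve both $\langle\cdot,\cdot\rangle$ (the $\R\cdot 1$-component of the product) and $\times$ (the $\Im$-component). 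Conversely, given a linear $g\colon\Im(\oct')\to\Im(\oct')$ preserving $\q$ and $\times$, extend it by $g(1)=1$; the displayed multiplication formula shows this extension is multiplicative, hence an algebra automorphism, i.e.\ lies in $\sG_2'$. This is the heart of (i) and is essentially bookkeeping once the algebraic characterization of $\Im(\oct')$ is in hand.

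For (ii), that such a $g$ preserves $\Omega(z_1,z_2,z_3)=\langle z_1\times z_2,z_3\rangle$: this is immediate, since $g$ preserves $\langle\cdot,\cdot\rangle$ and $\times$ separately, and $\Omega$ is built from exactly these two. For the infinitesimal statement (iii), differentiate a one-parameter subgroup $g_t\in\sG_2'$ with $g_0=\Id$ and $\dot g_0 = X$. Applying $\tfrac{d}{dt}\big|_{t=0}$ to $g_t(z_1\times z_2)=g_t(z_1)\times g_t(z_2)$ yields the Leibniz identity $X(z_1\times z_2)=X(z_1)\times z_2 + z_1\times X(z_2)$, so $\fg_2'$ is contained in the Lie algebra of derivations of $\times$; the reverse inclusion follows because a derivation of $\times$ is also a derivation of $\langle\cdot,\cdot\rangle$ (take traces / use that $\langle z_1,z_2\rangle$ arises from $\times$ via, e.g., contracting $\Omega$, or more cleanly: $X$ derivation of $\times$ implies $\exp(tX)$ preserves $\times$, hence — once one knows it preserves $\q$ — is an automorphism), and then $\exp(tX)$ is a one-parameter group in $\sG_2'$. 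Finally, the inclusions $\sG_2'<\sO(\Im(\oct'),\q)\cong\sO(4,3)$ and $\fg_2'\subset\fso(4,3)$ follow by combining "$g$ preserves $\q$" (just established) with the signature computation $(4,3)$ from the preceding Lemma; at the Lie algebra level, "$X$ is a derivation of $\langle\cdot,\cdot\rangle$" is precisely the condition $X\in\fso(4,3)$.

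The one genuinely delicate point — which I would treat carefully rather than wave away — is showing a derivation of $\times$ automatically annihilates $\q$ (equivalently lies in $\fso(4,3)$), since a priori a derivation of the cross product need not preserve the metric. The clean argument: from the identity $z\times z = 0$ and the $\sG_2'$-invariant contraction relating $\times\times$ back to $\q$ and $\Id$ — concretely the octonion identity $z\times(z\times w) = \langle z,w\rangle z - \q(z) w$ for $z,w\in\Im(\oct')$ (this is the imaginary-part reduction of $z\cdot(z\cdot w)=(z\cdot z)\cdot w$ from \eqref{eq assoc for 2 gen}, using $z\cdot z = -\q(z)$ and orthogonality formulas from the Lemma) — one differentiates and matches terms to force $\langle X z, w\rangle + \langle z, X w\rangle = 0$. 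So the real work is isolating this quadratic-vector-identity for $\times$; everything else is the formal consequence-chasing sketched above.
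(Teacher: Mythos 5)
Your proposal is correct and follows essentially the route the paper takes implicitly: the paper offers no separate proof of this proposition, justifying it by the preceding remark that the product on $\oct'$ is recovered from $(\Im(\oct'),\q,\times)$ together with the assertion that automorphisms act trivially on the real part (details deferred to the references cited at the start of the section). Your bookkeeping for the group statement and the differentiation argument for the Lie algebra match that sketch, and you correctly isolate the one genuinely nontrivial point the paper leaves unaddressed, namely that a derivation of $\times$ alone is automatically skew for $\langle\cdot,\cdot\rangle$ --- which is exactly what is needed, since the proposition describes $\fg_2'$ by the derivation condition only. Two small repairs: first, your justification that an automorphism preserves $\Im(\oct')$ is circular as phrased (conjugation is not a priori intrinsic); the clean statement is that the nonzero imaginary elements are exactly the $z\notin\R\cdot 1$ with $z\cdot z\in\R\cdot 1$, a visibly automorphism-invariant condition, as follows from the remark's multiplication formula and $z\times z=0$. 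Second, with the paper's conventions ($\q(z)=z\cdot z$ on $\Im(\oct')$, so e.g.\ $\q(j)=-1$) the quadratic identity reads
\[z\times(z\times w)=\q(z)\,w-\langle z,w\rangle\,z,\]
the negative of the version you wrote (check $z=j$, $w=\delta$: $j\times(j\times\delta)=j\times\epsilon=-\delta$). Since your polarization argument is linear in this identity, the sign does not affect the conclusion, but the identity itself should be corrected.
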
 
It follows from \cite[Lemma 4.2]{FONTANALSG2Notes} that there is a unique $\sG_2'$-invariant volume form $\vol\in\Lambda^7(\Im(\oct')^*)$ such that, 
\begin{equation}
	\label{eq G2 inv volume form}\langle z_1,z_2\rangle\vol=\iota_{z_1}\Omega\wedge \iota_{z_2}\Omega\wedge \Omega
\end{equation}
for all $z_1,z_2\in\Im(\oct')$. Hence $\sG_2'<\sSO(4,3)$. 
Explicitly, in the basis $\{f_1,\cdots,f_7\}=\{j,\delta,\epsilon,\ell,j\ell,\delta\ell,\epsilon\ell\}$, we have 
\[\Omega=6\Big[f_1^*\wedge (f_2^*\wedge f_3^*- f_4^*\wedge f_5^*-\wedge f_6^*\wedge f_7^*)+f_2^*\wedge (f_4^*\wedge f_6^*+ f_5^*\wedge f_7^*)+f_3^*\wedge (f_4^*\wedge f_7^*- f_5^*\wedge f_6^*)\Big]~.\]
and one checks $\vol=-\frac{1}{144}f_1^*\wedge f_2^*\wedge\cdots\wedge f_7^*$.

\begin{lemma}\label{lem:StabilizerSplitQuaternion}
	The subgroup of $\G$ which preserves the splitting $\Im(\oct')=\Im(\quat')\oplus \quat'$ is isomorphic to $\sSO(2,2)$.
\end{lemma}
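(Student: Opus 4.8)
The plan is to compute the stabilizer $H\leq\G$ of the splitting directly from its action on the two summands. Since $\Im(\oct')=\Im(\quat')\oplus\quat'\ell$ is an orthogonal splitting, $g\in\G$ preserves it if and only if $g(\Im(\quat'))=\Im(\quat')$; as $g$ fixes $\R\cdot 1$, this is also equivalent to $g$ preserving the split quaternion subalgebra $\quat'=\R\cdot 1\oplus\Im(\quat')$, in which case $\hat g:=g|_{\quat'}$ is an algebra automorphism of $\quat'$ (the algebra structure being determined by $\q$ and $\times$ on $\Im(\quat')$, exactly as in the Remark above for $\oct'$). Write $g_2\colon\quat'\to\quat'$ for the map determined by $g(w\ell)=g_2(w)\,\ell$, so that $g$ is completely encoded by the pair $(\hat g,g_2)$.

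First I would record the explicit cross products coming from the Cayley--Dickson product \eqref{eq split oct mult split quat}, refining the membership statements above: a short computation gives, for $z,z'\in\Im(\quat')$ and $w,w'\in\quat'$, that $\times$ restricts on $\Im(\quat')$ to the cross product of $\Im(\quat')$, that $z\times(w\ell)=(wz)\,\ell$, and that $(w\ell)\times(w'\ell)=\tfrac{1}{2}\big(\overline{w}\,w'-\overline{w'}\,w\big)$. Now impose $g(u\times v)=g(u)\times g(v)$ in the three cases. The first case is automatic once $\hat g\in\Aut(\quat')$. The mixed case becomes $g_2(wz)=g_2(w)\,\hat g(z)$ for all $w\in\quat'$, $z\in\Im(\quat')$; since both sides are $\R$-linear in $z$ this holds also for $z\in\R\cdot 1$, and taking $w=1$ gives $g_2=L_q\circ\hat g$ with $q:=g_2(1)$ and $L_q$ left multiplication. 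Substituting this into the third case and using $\overline{q}\,q=\N(q)\cdot 1$ and $\overline{\hat g(w)}=\hat g(\overline{w})$ yields $\N(q)\,\hat g(\xi)=\hat g(\xi)$ for $\xi=\tfrac{1}{2}(\overline{w}\,w'-\overline{w'}\,w)$, and since these $\xi$ span $\Im(\quat')$ and $\hat g$ is invertible we get $\N(q)=1$. Conversely, any pair $(\hat g,q)$ with $\hat g\in\Aut(\quat')$ and $\N(q)=1$ produces an element of $\G$: it manifestly preserves $\q$ (as $\hat g$ and $L_q$ preserve $\N$ and the splitting is orthogonal), and reversing the three computations and using bilinearity shows it preserves $\times$. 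Hence $H=\{\,g:\hat g\in\Aut(\quat'),\ g_2=L_q\circ\hat g,\ \N(q)=1\,\}$.

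To identify $H$, I would pass to the standard model $\quat'\cong M_2(\R)$ with $\N=\det$. By Skolem--Noether every automorphism of $M_2(\R)$ is inner, so $\hat g=\mathrm{Ad}(h)$ for some $h\in\sGL_2(\R)$, which we normalise to $\det h=\pm 1$. The restriction homomorphism $g\mapsto g_2$ embeds $H$ into $\sO(\quat'\ell,\q)\cong\sO(2,2)$ (it is injective, since $g_2=\Id$ forces $q=g_2(1)=1$ and $\hat g=\Id$), and every $g_2$ has the form $w\mapsto q\,h\,w\,h^{-1}$, of determinant $\N(q)^2=1$, so the image lies in $\sSO(2,2)$. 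When $\det h=1$ these maps are exactly $w\mapsto a\,w\,b^{-1}$ with $a=qh,\ b=h\in\sSL_2(\R)$, i.e.\ all of the identity component $\sSO_0(2,2)$; taking instead $\hat g=\mathrm{Ad}(\diag(1,-1))$ and $q=1$ gives the element $w\mapsto\diag(1,-1)\,w\,\diag(1,-1)$, which is not of the form $w\mapsto awb^{-1}$ with $a,b\in\sSL_2(\R)$ and hence lies in $\sSO(2,2)\setminus\sSO_0(2,2)$. Since $\sSO_0(2,2)$ has index $2$ in $\sSO(2,2)$, the image is all of $\sSO(2,2)$, so $H\cong\sSO(2,2)$.

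I expect the main obstacle to be the sign bookkeeping in the three cross-product identities together with correctly accounting for the nonidentity component: $\Aut(\quat')$ has two connected components (in contrast to $\Aut(\quat)=\sSO(3)$, where the stabilizer of a quaternion subalgebra in the compact $\mathsf{G}_2$ is the connected group $\sSO(4)$), and it is exactly this that upgrades the answer from $\sSO_0(2,2)$ to the full $\sSO(2,2)$. The remaining verifications are routine computations inside $\oct'$.
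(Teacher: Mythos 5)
Your proof is correct, and it takes a genuinely different route from the paper's. The paper argues top--down: invariance of $\q$, of $\Omega$ (whose restriction to $\Lambda^3\Im(\quat')$ is nonzero) and of the volume form places the two restrictions of $g$ in $\sSO(\Im(\quat'),\q)\times\sSO(\quat',\q)$, and the identities $j=\ell\times j\ell$, $\delta=\ell\times\delta\ell$, $\epsilon=\ell\times\epsilon\ell$ show that the block on $\quat'$ determines the block on $\Im(\quat')$, i.e.\ the restriction map to $\sSO(2,2)$ is injective; surjectivity of that map is left implicit. You instead parameterize the stabilizer bottom--up from the Cayley--Dickson product: the three cross-product identities force $g_2=L_q\circ\hat g$ with $\hat g\in\Aut(\quat')$ and $q$ of split-quaternion norm $1$, and conversely every such pair yields an element of $\G$; then $\quat'\cong M_2(\R)$, Skolem--Noether, and the $\sSL(2,\R)\times\sSL(2,\R)\to\sSO_0(2,2)$ cover (which the paper itself invokes in \textsection\ref{ss:TotallyGeodesic}) identify the image of $g\mapsto g_2$ as all of $\sSO(2,2)$, with $w\mapsto\diag(1,-1)\,w\,\diag(1,-1)$ an explicit representative of the non-identity component. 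What your argument buys is exactly this surjectivity, verified component by component, together with an explicit description of the stabilizer as pairs $(\hat g,q)$; what the paper's argument buys is brevity. Your aside that $\Aut(\quat')$ has two connected components is also correct: conjugation by $\delta$ is an algebra automorphism acting on $\Im(\quat')$ with determinant $1$ but reversing the timelike direction, so $\Aut(\quat')\cong\sSO(2,1)$ rather than only its identity component as the paper's earlier parenthetical suggests, and it is precisely this extra component that upgrades the image from $\sSO_0(2,2)$ to the full $\sSO(2,2)$.
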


\begin{proof}
Let $g=(g_1\oplus g_2)$ be an element of $\sG_2'$ which preserves the splitting $\Im(\quat')\oplus\quat'$. Since $\sG_2'$ preserve the quadratic form $\q$, then $(g_1,g_2)\in\sO(\Im(\quat'),\q)\times \sO(\quat',\q).$ 
Observe that the restriction of the $3$-form $\Omega$ from \eqref{eq G2 inv 3-form} to $\Lambda^3(\Im(\quat'))$ is nonzero. Since $\sG_2'$ preserves $\Omega$ and the volume form from \eqref{eq G2 inv volume form}, we conclude that $(g_1,g_2)\in\sSO(\Im(\quat'),\q)\times \sSO(\quat',\q).$

To conclude the proof, $\sG_2'$ preserves the product $\times$ and
% \begin{equation}
	% \label{eq j,d,e as prod}
	\[\xymatrix{j=\ell\times j\ell~,&\delta=\ell\times \delta\ell&\text{and}&\epsilon=\ell\times \epsilon\ell~.}\]
% \end{equation}
Hence, any $g_2\in\in\sSO(\quat',\q)$ uniquely determines $g_1\in\sSO(\Im(\quat'),\q)$.
\end{proof}

\subsection{Split octonions from quaternions}
The split octonions can also be defined using the quaternions:
\begin{equation}\label{eq split oct mult quat}
	\xymatrix{\oct'=\quat\oplus\quat& \text{with}&(a_1,b_1)\cdot(a_2,b_2)=(a_1\cdot a_2+\overline b_2\cdot b_1~,~ b_2\cdot a_1+b_1\cdot\overline a_2)}~.
\end{equation}
We will write $(a,b)=a+b\lambda$, where $\lambda=(0,1)$. Note that $\lambda^2=+1.$ 
% The multiplication table is given in the Appendix \ref{sec mult tables}. 
An isomorphism between these two presentations of $\oct'$ is given by 
\begin{equation}
	\label{eq split quat vs quat iso}
	\{1,j,\delta,\epsilon,\ell,j\ell,\delta\ell,\epsilon \ell\}\to\{1,j,e\lambda,d\lambda,d,e,j\lambda,\lambda\}.
\end{equation}

In this presentation, $\Im(\oct')=\Im(\quat)\oplus \quat$, and the restriction of $\q$ to the summands $\Im(\quat)$ and $\quat$ have signatures $(0,3)$ and $(4,0)$, respectively.

\begin{lemma}\label{lem SO4 max compact}
	Let $\sK$ be the subgroup of $\sG_2'$ which preserves the splitting $\Im(\oct')=\Im(\quat)\oplus \quat$. Then we have
	\[\sK\cong \sSO(\quat,\q)\cong\sSO(4).\]
\end{lemma}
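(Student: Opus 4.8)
The plan is to mirror the proof of Lemma \ref{lem:StabilizerSplitQuaternion}, with the key difference that here the two summands $\Im(\quat)$ and $\quat$ of $\Im(\oct')=\Im(\quat)\oplus\quat$ are definite (signatures $(0,3)$ and $(4,0)$) rather than split. First I would observe that any $g\in\sG_2'$ preserving the splitting decomposes as $g=g_1\oplus g_2$ with $(g_1,g_2)\in\sO(\Im(\quat),\q)\times\sO(\quat,\q)\cong\sO(3)\times\sO(4)$, since $\sG_2'$ preserves $\q$ and each summand. So there is an a priori inclusion $\sK\hookrightarrow\sO(3)\times\sO(4)$, and the content of the lemma is that the first factor is determined by the second and that one lands in the identity component of $\sO(4)$, i.e.\ $\sK\cong\sSO(4)$.

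Next I would pin down $g_1$ in terms of $g_2$ using the cross product $\times$, exactly as in the split-quaternion case. Under the isomorphism \eqref{eq split quat vs quat iso}, the relations $j=\ell\times j\ell$, $\delta=\ell\times\delta\ell$, $\epsilon=\ell\times\epsilon\ell$ from the previous proof translate into relations expressing a basis of $\Im(\quat)$ as cross products of pairs of elements of $\quat$; concretely, from the list $\{1,j,e\lambda,d\lambda,d,e,j\lambda,\lambda\}$ one reads off that $j, e\lambda\ (=d\text{ in the old labels? -- recompute)}$... more carefully: the images of $j,\delta,\epsilon$ are $j, e\lambda, d\lambda$, which lie in $\Im(\quat)\oplus\quat$; the ones genuinely in $\Im(\quat)$ are the images of $\ell\times j\ell$ etc. In any case, since by the fourth bullet of the Lemma preceding Remark after \eqref{eq G2 inv 3-form} the product of two elements of $\quat$ (the "$w_1\times w_2$" slot, now with $\quat$ in place of the second $\quat'$) lands in $\Im(\quat)$, the map $\quat\times\quat\to\Im(\quat)$, $(w_1,w_2)\mapsto w_1\times w_2$ is surjective and $\sG_2'$-equivariant. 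Because $g$ preserves $\times$, $g_1$ is the unique linear map making $g_1(w_1\times w_2)=(g_2 w_1)\times(g_2 w_2)$ hold, so $g_1$ is a function of $g_2$; this shows the projection $\sK\to\sO(\quat,\q)$ onto the second factor is injective.

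Then I would show this projection is surjective onto $\sSO(\quat,\q)\cong\sSO(4)$ and that the resulting map is a group isomorphism. For surjectivity I'd use that multiplication by unit (split) octonions in the $\quat$ summand, together with the already-known structure, generates enough automorphisms; more cleanly, one knows abstractly that a maximal compact of $\sG_2'$ is $\sSO(4)$ (or $\sSU(2)\times\sSU(2)/\pm$), and the subgroup $\sK$ defined here is compact (it is closed in the compact group $\sO(3)\times\sO(4)$) and of the right dimension once injectivity into $\sSO(4)$ is combined with a dimension count $\dim\sK\le 6=\dim\sSO(4)$ against the fact that $\sK$ contains a copy of $\sSO(4)$ acting on $\quat$ by the standard left/right unit-quaternion action (which manifestly preserves $\Im(\quat)$ and the octonion product, hence lies in $\sG_2'$). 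Finally, to rule out the non-identity component of $\sO(4)$, I would invoke that $\sG_2'\subset\sSO(4,3)$ (from \eqref{eq G2 inv volume form}) preserves $\vol$ and $\Omega$: restricting $\Omega$ to $\Lambda^3(\Im(\quat))$ is nonzero, so $g_1\in\sSO(\Im(\quat),\q)$, and then the determinant relation forced by $g=g_1\oplus g_2\in\sSO(4,3)$ gives $\det g_2=1$, i.e.\ $g_2\in\sSO(\quat,\q)$.

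The main obstacle I anticipate is the surjectivity onto $\sSO(4)$ — i.e.\ producing, for every $g_2\in\sSO(\quat,\q)$, an honest octonion-algebra automorphism inducing it — rather than just the (easy) injectivity and the determinant bookkeeping. I expect the cleanest route is to exhibit explicitly the $\sSO(4)$-action: identify $\quat\cong\mathbb H$ with unit quaternions $\sSU(2)\times\sSU(2)$ acting by $w\mapsto p\,w\,\bar q$, check this extends to an automorphism of $\oct'$ (it preserves $\Im(\quat)$ via the adjoint action and is compatible with \eqref{eq split oct mult quat}), and note the kernel is $\pm(1,1)$, so the image is all of $\sSO(4)$; combined with injectivity of $\sK\to\sSO(4)$ this forces $\sK\cong\sSO(4)$.
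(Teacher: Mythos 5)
Your proof is correct and follows essentially the same route as the paper, whose proof of this lemma is simply ``analogous to Lemma \ref{lem:StabilizerSplitQuaternion}'': decompose $g=g_1\oplus g_2\in\sO(\Im(\quat),\q)\times\sO(\quat,\q)$, use the nonvanishing of $\Omega|_{\Lambda^3\Im(\quat)}$ together with the invariant volume form to conclude $(g_1,g_2)\in\sSO(3)\times\sSO(4)$, and use preservation of $\times$ (via the surjection $\Lambda^2\quat\to\Im(\quat)$ of Remark \ref{rem times iso with selfdual}) to see that $g_1$ is determined by $g_2$, so the restriction map $\sK\to\sSO(\quat,\q)$ is injective. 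Your additional explicit surjectivity step, realizing every element of $\sSO(\quat,\q)$ through the automorphisms $a+b\lambda\mapsto qa\bar q+(pb\bar q)\lambda$ with $p,q$ unit quaternions, is a point the paper leaves implicit and is a correct and worthwhile supplement.
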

\begin{proof}
Analogous to the proof of Lemma \ref{lem:StabilizerSplitQuaternion}.
\end{proof}
\begin{remark}\label{rem times iso with selfdual}
The exterior product decomposes as $\Lambda^2(\Im(\oct'))\cong\Lambda^2(\Im(\oct'))\oplus \Im(\quat)\otimes \quat\oplus \Lambda^2\quat$. The restriction, $\times$ to $\Im(\quat)\otimes \quat$ is zero while the restrictions to the other summands give maps 
\[\xymatrix{\times|_{\Lambda^2\Im(\quat)}:\Lambda^2\Im(\quat)\to \Im(\quat)&\text{and}&\times|_{\Lambda^2\quat}:\Lambda^2\quat\to \Im(\quat)~.}\] 
The first map is an isomorphism. The second map defines an isomorphism between the vector space $\Lambda_+^2\quat$ of self-dual two forms on $\quat$ with respect to $\q_{\quat}$ and the volume form $\lambda\wedge(j\lambda)\wedge(d\lambda)\wedge (e\lambda)$ and $\Im(\quat)$. 
\end{remark}

%%%%%%%%%%%%%%%%%%%%%%%%%%
%%%%%%%%%%%%%%%%%%%%%%%%%%
%%%%%%%%%%%%%%%%%%%%%%%%%%

\subsection{Complexification of $\Im(\oct')$}\label{sec complex oct'}
Let $\Im(\oct')_\C$ denote the complexification of $\Im(\oct')$ and we will denote the complex linear extensions of $\q$ and $\times$ by the same symbol. Elements of $\Im(\oct')_\C$ will be written $w+\i z$ where $w,z\in\Im(\oct').$ 

The following basis $\{f_{-3},f_{-2},\cdots, f_3\}$ of $\Im(\oct')_\C$ will be used often:
\begin{equation}\label{eq basis of complexification}
\left(\frac{1}{\sqrt{2}}(\delta\ell+\i \epsilon \ell),~\frac{1}{\sqrt{2}}(\ell+\i j \ell),~\frac{1}{\sqrt{2}}(\delta+\i \epsilon ),~j,~\frac{1}{\sqrt{2}}(\delta-\i \epsilon ),~\frac{1}{\sqrt{2}}(\ell-\i j \ell),~\frac{1}{\sqrt{2}}(\delta\ell-\i \epsilon \ell)\right)
\end{equation}
The multiplication table of this basis is given by
\begin{center}
	\begin{tabular}
        {|c|c|c|c|c|c|c|c|}\hline
            $col\times row$&$f_{-3}$&$f_{-2}$&$f_{-1}$& $f_0$&$f_1$&$f_2$&$f_3$\\ 
            \hline
            $f_{-3}$&$0$&$0$&$0$&$-if_{-3}$&$-\sqrt{2}f_{-2}$&$-\sqrt 2 f_{-1}$&$-if_0$\\    
            \hline
            $f_{-2}$&$0$&$0$&$-\sqrt 2 f_{-3}$&$if_{-2}$&$0$&$-if_0$&$\sqrt 2 f_1$\\
            \hline
            $f_{-1}$&$0$&$\sqrt 2 f_{-3}$&$0$&$if_{-1}$&$if_0$&$0$&$\sqrt 2f_2$\\
            \hline
            $f_0$&$if_{-3}$&$-if_{-2}$&$-if_{-1}$&$0$&$if_1$&$if_2$&$-i f_3$\\
            \hline
            $f_1$&$\sqrt 2 f_{-2}$&$0$&$-if_0$&$-if_1$&$0$&$\sqrt 2 f_3$&$0$\\
            \hline
            $f_2$&$\sqrt 2 f_{-1}$&$if_0$&$0$&$-if_2$&$-\sqrt 2 f_3$&$0$&$0$\\
            \hline
            $f_3$&$if_0$&$-\sqrt 2 f_1$&$-\sqrt 2 f_2$&$if_3$&$0$&$0$&$0$\\
            \hline
        \end{tabular}
	\end{center}
	 The quadratic form $\q$ and the endomorphism $J_{f_0}(\cdot)=f_0\times(\cdot)$ are given by 
\begin{equation}
	\label{eq q and J in comp basis}
	\q=\begin{pmatrix}
	&&&&&&1\\&&&&&-1\\&&&&1\\&&&-1\\&&1\\&-1\\1
\end{pmatrix}\ \ \ \ \ \ \text{and}\ \ \ \ J_{f_0}=\begin{pmatrix}
	\i\\&-\i\\&&-\i\\&&&0\\&&&&+\i\\&&&&&+\i\\&&&&&&-\i
\end{pmatrix}
\end{equation}
\begin{remark}\label{rem quatC span basis}
	Note that $\span(f_{-1},f_0,f_1)$ equals to $\Im(\quat')_\C$ and $\span(f_{-3},f_{-2},f_{2},f_3)$ equals $\quat'_\C$. With respect to the isomorphism $\Im(\quat')\oplus \quat'\to \Im(\quat)\oplus \quat$ given by \eqref{eq split quat vs quat iso}, we have 
	\[\xymatrix{\span(f_{-2},f_0,f_2)=\Im(\quat)_\C&\text{and}& \span(f_{-3},f_{-1},f_{1},f_3)=\quat_\C}.\] 
\end{remark}

%%%%%%%%%%%%%%%%%%%%%%%%%%
%%%%%%%%%%%%%%%%%%%%%%%%%%
%%%%%%%%%%%%%%%%%%%%%%%%%%

%%%%%%%%%%%%%%%%%%%%%%%%%%
%%%%%%%%%%%%%%%%%%%%%%%%%%
%%%%%%%%%%%%%%%%%%%%%%%%%%

\section{Holomorphic curves in the 6-pseudosphere}\label{sec hol curve}
Consider a smooth, connected and oriented surface $S$. In this section we introduce the notion of an {\em alternating holomorphic curve} in $\H$. These curves have a naturally defined Frenet framing which is used to give a holomorphic description of alternating holomorphic curves in Theorem \ref{thm hol descp of C FF}. We start by introducing relevant homogeneous spaces.
\subsection{Pseudospheres}\label{s:AlmostComplexPseudosphere}
Consider a real vector space $V$ equipped with a non-degenerate quadratic form $Q$ of signature $(p,q)$. The \emph{signature $(p-1,q)$ pseudosphere} is defined as
\[\mathbf S^{p-1,q} = \{z\in V~,~Q(z)=1\}~.\]
The tangent space to $\mathbf S^{p-1,q}$ at a point $z$ is naturally identified with
\[\T_z \mathbf S^{p-1,q} = \{x\in V~,~\langle z,x\rangle =0 \} = z^\bot~.\]
In particular, the quadratic form $Q$ restricts to a signature $(p-1,q)$ pseudo-Riemannian metric $\g$ on $\mathbf S^{p-1,q}$. This metric has constant curvature $+1$ and the orthogonal group $\sO(V,Q)$ acts by isometries on $(\mathbf S^{p-1,q},\g)$.

Similarly, the \emph{signature $(p,q-1)$ pseudohyperbolic space} is the quadric
\[\mathbf H^{p,q-1} = \{z\in V~,~Q(z)=-1\}~.\]
In the same way as for the pseudosphere, $\mathbf H^{p,q-1}$ inherits a signature $(p,q-1)$ pseudo-Riemannian metric $\g$ of curvature $-1$.

\begin{remark}
Changing the quadratic form from $Q$ to $-Q$ defines an anti-isometry between $\mathbf H^{p,q-1}$ and $\mathbf S^{q-1,p}$. As a result we will also refer to $\mathbf H^{p,q-1}$ as a pseudosphere.
\end{remark}

% \begin{remark}
% Some authors define the pseudosphere or the pseudohyperbolic space to be the quotient of the above construction by the antipodal map. This convention recovers the classical hyperbolic space $\mathbf H^n$ for $(p,q)=(n,1)$ but the classical sphere $\mathbf S^n$ becomes the projective space.
% \end{remark}

We will consider $(V,Q)=(\Im(\oct'),\q)$ with $\q(z)=z\cdot z$. Since $\q$ has signature $(4,3)$, the relevant pseudospheres are $\H$ and $\SS$. 
The product $\times$ on $\Im(\oct')$ will induce extra $\sG_2'$-invariant structures on $\H$ and $\SS$.

\subsection{The space $\H$}
Consider the left multiplication map
\begin{equation}
	\label{eq left mult map}L:\xymatrix@R=0em{\Im(\oct')\ar[r]&\End(\Im(\oct'))\\z\ar@{|->}[r]&L_z:w\mapsto z\times w}~.
\end{equation}
For $z\in \H$, the kernel of $L_z$ is spanned by $z$ and, since the form $\Omega$ from \eqref{eq G2 inv 3-form} is skew symmetric, the image of $L_z$ is $z^\bot$. In particular, $L_z$ restricts to an endomorphism of $\T_z\H$. 

\begin{lemma}\label{lem:AlmostComplexPseudosphere}
Let $\J:\T\H\to\T\H$ be the endomorphism defined by $\J_z={L_z}_{\vert_{z^\bot}}:\T_z\H\to \T_z\H$. Then $\J$ defines a $\sG_2'$-invariant almost complex structure on $\H$ which is compatible with the metric $\g$.
\end{lemma}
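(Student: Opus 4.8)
The plan is to verify three things: that $\J_z$ is a well-defined endomorphism of $\T_z\H = z^\bot$ squaring to $-\Id$, that it is $\sG_2'$-invariant, and that it is compatible with $\g$ (i.e.\ $\g(\J X,\J Y)=\g(X,Y)$, equivalently $\J$ is a $\g$-isometry of each tangent space). The key algebraic input is the identity $z\times(z\times w) = \langle z,w\rangle z - \q(z) w$ valid for all $z,w\in\Im(\oct')$, which is the imaginary-part version of the standard octonionic identity $z\cdot(z\cdot w) = -\q(z) w$ for $z$ imaginary combined with the decomposition of the product into $\langle\cdot,\cdot\rangle$ and $\times$; it can be read off from \eqref{eq assoc for 2 gen} together with the formulas $\langle z_1,z_2\rangle=\Re(z_1\cdot z_2)$, $z_1\times z_2=\Im(z_1\cdot z_2)$, and the fact that $z\cdot z = -\q(z)$ for imaginary $z$.

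First I would record that for $z\in\H$ we have $\q(z)=-1$, so the displayed identity specializes to $L_z^2(w) = z\times(z\times w) = \langle z,w\rangle z + w$. Restricting to $w\in z^\bot=\T_z\H$ kills the first term, giving $\J_z^2 = -\Id$ on $\T_z\H$; combined with the already-noted facts that $\ker L_z = \R z$ and $\operatorname{im} L_z = z^\bot$ (so $\J_z$ genuinely maps $\T_z\H$ to itself), this shows $\J$ is an almost complex structure. For $\sG_2'$-invariance: any $g\in\sG_2'$ preserves $\times$ and $\q$, hence preserves $\H$ and intertwines $L_z$ with $L_{g z}$, i.e.\ $g\circ L_z = L_{gz}\circ g$; restricting to tangent spaces gives $dg_z\circ\J_z = \J_{gz}\circ dg_z$, which is exactly $\sG_2'$-invariance of $\J$.

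For compatibility with $\g$, I would use the relation $\langle z_1\times z_2, z_3\rangle = \Omega(z_1,z_2,z_3)$ together with the total skew-symmetry of $\Omega$ (from \eqref{eq G2 inv 3-form} and the lemma stating it is a genuine $3$-form). For $X,Y\in\T_z\H$ compute $\g(\J_z X, \J_z Y) = \langle z\times X,\, z\times Y\rangle$. Expanding via the $\times$-identity: $\langle z\times X, z\times Y\rangle = \langle X, z\times(z\times Y)\rangle$ after using skew-symmetry of $\Omega$ to move one factor, and then $z\times(z\times Y) = \langle z,Y\rangle z - \q(z) Y = Y$ since $Y\perp z$ and $\q(z)=-1$; hence $\g(\J_z X,\J_z Y)=\langle X,Y\rangle = \g(X,Y)$. (One must be slightly careful with signs when relating $\langle z\times X, z\times Y\rangle$ to $\langle X, z\times(z\times Y)\rangle$: $\langle z\times X, z\times Y\rangle = \Omega(z,X,z\times Y) = -\Omega(z, z\times Y, X) = -\langle z\times(z\times Y), X\rangle\cdot(-1)$ — tracking the cyclic permutations of $\Omega$ correctly gives the stated result.)

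The only genuine subtlety — and the step I expect to require the most care rather than ingenuity — is bookkeeping the signs in the $\Omega$-manipulations and confirming the octonionic double-product identity $z\times(z\times w)=\langle z,w\rangle z - \q(z)w$ in the split setting, where the signature is indefinite; this is most safely checked either via \eqref{eq assoc for 2 gen} and the $\Re/\Im$ decomposition of the product, or by a direct computation in the basis $\{f_1,\dots,f_7\}$ using the given multiplication table and the matrix form of $J_{f_0}$ in \eqref{eq q and J in comp basis}, which already exhibits $J_{f_0}^2=-\Id$ on $f_0^\bot$ as a sanity check. Everything else is formal and follows from the defining properties of $\sG_2'$ as the automorphism group preserving $\q$ and $\times$.
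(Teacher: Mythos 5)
Your overall strategy---well-definedness of $\J_z$ on $z^\perp$, $\J^2=-\Id$ from a composition identity, invariance from $\sG_2'$ preserving $\q$ and $\times$, and metric compatibility---is essentially the paper's. But as written there is a concrete sign problem located exactly where the content of the lemma lies. The paper defines $\q(z)=z\cdot z$ on $\Im(\oct')$ (so $\q(j)=-1$, $\q(\delta)=+1$, which is what makes the signature $(4,3)$), whereas you assert ``$z\cdot z=-\q(z)$ for imaginary $z$''. With the paper's conventions the double-product identity is $z\times(z\times w)=\q(z)\,w-\langle z,w\rangle\,z$, i.e.\ the \emph{negative} of the identity you state (check it on $z=j$, $w=\delta$ with the multiplication table: $j\times(j\times\delta)=-\delta$). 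Consequently your displayed specialization $L_z^2(w)=\langle z,w\rangle z+w$ is false for $z\in\H$, and taken at face value it gives $\J_z^2=+\Id$ on $z^\perp$---a paracomplex structure, which is precisely what happens on $\SS$ where $z\cdot z=+1$---so the conclusion $\J_z^2=-\Id$ you then write does not follow from your own formula. The correct computation, with $z\cdot z=\q(z)=-1$, gives $z\times(z\times w)=-w-\langle z,w\rangle z$, hence $-w$ on $z^\perp$. The same slippage occurs in the compatibility step: with the paper's conventions one transposition in $\Omega$ gives $\langle z\times X,z\times Y\rangle=-\langle z\times(z\times Y),X\rangle$, which combines with $z\times(z\times Y)=-Y$ to yield $\g(\J X,\J Y)=\g(X,Y)$; in your write-up the two sign errors cancel, but each intermediate identity is individually wrong, and the parenthetical ``$\cdot(-1)$'' bookkeeping does not resolve this. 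Since the whole point of Lemma \ref{lem:AlmostComplexPseudosphere} versus its $\SS$ counterpart is this sign, it has to be nailed down, not asserted.

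For comparison, the paper's proof sidesteps the $\Omega$-gymnastics and the general double-product identity altogether: since $w\perp z$ one has $z\times w=z\cdot w$, so $\J_z w=z\cdot w$, and \eqref{eq assoc for 2 gen} gives $\J_z^2 w=z\cdot(z\cdot w)=(z\cdot z)\cdot w=-w$ in one line; compatibility is the short associative computation $\q(\J_z w)=(z\cdot w)\cdot(z\cdot w)=-z^2\cdot w^2=\q(w)$, using that orthogonal imaginary elements anticommute. Your invariance argument ($g\circ L_z=L_{gz}\circ g$ for $g\in\sG_2'$) is fine and matches the paper. If you rewrite the two computational steps along the paper's lines---or simply redo yours with $\q(z)=z\cdot z$ throughout, as your own suggested sanity check with $J_{f_0}$ in \eqref{eq q and J in comp basis} would have forced---the proof is correct.
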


\begin{proof}
As $\J$ is constructed using the $\sG_2'$-invariant product $\times$ on $\Im(\oct')$, $\J$ is $\sG_2'$-invariant. Since $\T_z\H=z^\bot$, 
we have $\J_z(w)=z\times w = z\cdotp w$. By \eqref{eq assoc for 2 gen}, $\J$ is an almost complex structure:
\[\J_z^2(w)=z\cdot(z\cdot w)=(z\cdot z)\cdot w=-w~.\]
To see that $\J$ preserves the metric $\g,$ we compute
\[\q(\J_z(w))=(z\cdot w)\cdot(z\cdot w)=z\cdot((w\cdot z)\cdot w)=-z\cdot((z\cdot w)\cdot w)=-z^2\cdot w^2=\q(w)~,\]
 where we used that $z,w$ span an associative subalgebra of $\oct'$ and $z\cdot w=z\times w=-w\cdot z$.
\end{proof}

We now show that $\J$ is not integrable by computing $\nabla \J$, where $\nabla$ is the Levi-Civita connection of $\H$. 
Since the map $L$ from \eqref{eq left mult map} is linear, its covariant derivative using the trivial flat connection $D$ on $\Im(\oct')$ is given by
\[(D_XL)_z(Y)=X(z) \times Y(z)~,\]
where $X,Y$ are vector fields on $\Im(\oct')$. 
Since the Levi-Civita connection $\nabla$ on $\H$ is induced by $D$ and $\J$ is the restriction of $L$ to $\H$, we obtain
\begin{equation}\label{eq:nablaJ}
(\nabla_X\J)_z(Y)=\big(X(z)\times Y(z)\big)^{z^\bot}
\end{equation}
where $(~)^{z^\bot}$ is the orthogonal projection of $\Im(\oct')$ onto $z^\bot$.

\begin{lemma}\label{lem: times on C line 0}
For any $z$ in $\H$ and any nonzero $w$ in $\T_z\H$, the restriction of $\times$ to the complex line $\span(w,J_z(w))$ is zero.
\end{lemma}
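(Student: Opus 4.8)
The plan is to compute $w\times J_z(w) = w \times (z\cdot w)$ directly using octonion identities, exploiting that $z$ and $w$ span an associative subalgebra. First I would recall that since $w\in\T_z\H = z^\bot$ and $z\in\H$ has $\q(z)=z\cdot z=-1$, we have $z\cdot w = z\times w = -w\cdot z$ (orthogonality, by the second bullet of the Lemma in \textsection\ref{sec split oct split quat}), so $J_z(w)=z\cdot w$. The goal is to show $w\times J_z(w)=0$, equivalently $w\cdot(z\cdot w)$ is real, i.e. lies in $\R\cdot 1$. Working inside the associative subalgebra generated by $z,w$, I compute $w\cdot(z\cdot w) = (w\cdot z)\cdot w = -(z\cdot w)\cdot w = -z\cdot(w\cdot w) = -z\cdot\q(w)\cdot 1 = -\q(w)\,z$. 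Wait — that lands in $\R z$, not in $\R 1$; so I should instead extract $\Im(w\cdot(z\cdot w))$ and check it vanishes, but the computation shows $w\cdot(z\cdot w) = -\q(w)z$ which is a nonzero imaginary octonion when $w$ is spacelike. So the bracket $\times$ restricted to the pair $(w,J_z(w))$ being zero must be read as: $w\times J_z(w)$ lies in $\span(w,J_z(w))$? No — I need to recompute. Let me instead check the two generators of the complex line against each other: I must compute $w\times(z\cdot w)$ and, using $\Im(u\cdot v)$, show it is zero. From $w\cdot(z\cdot w) = -\q(w)z$, and noting $z\perp w$ and $z \perp z\cdot w = J_z(w)$ (since $J_z$ preserves $\g$ hence $\g(z, J_z w)=0$... actually $z\notin\T_z\H$), the element $-\q(w)z$ is \emph{not} in $\span(w,J_zw)$ in general. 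So the correct reading: $w\times(z\cdot w)$ — here $\times$ means $\Im(\cdot)$, and $\Im(-\q(w)z) = -\q(w)z$ since $z$ is imaginary. That is nonzero. Hence I must have the wrong product to compute.

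The resolution: "the restriction of $\times$ to the complex line $\span(w,J_z(w))$ is zero" means the bilinear map $\times$ vanishes on this $2$-dimensional \emph{real} subspace of $\Im(\oct')$, i.e. $\times(u_1,u_2)=0$ for all $u_1,u_2\in\span_\R(w, J_zw)$. By bilinearity and skew-symmetry this reduces to the single identity $w\times J_z(w)=0$, i.e. $\Im(w\cdot(z\cdot w))=0$. So I really do need $w\cdot(z\cdot w)\in\R$. Let me redo it carefully: associativity on two generators $z,w$ gives $w\cdot(z\cdot w) = (w\cdot z)\cdot w$. Now $w\cdot z = -z\cdot w$ (orthogonality), so this is $-(z\cdot w)\cdot w = -z\cdot(w\cdot w)$ (associativity again, since $z\cdot w$ lies in the subalgebra generated by $z,w$) $= -z\cdot(\q(w)\cdot 1) = -\q(w)\,z$. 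This is genuinely nonzero for spacelike $w$. So the identity as literally computed fails — which signals that I have misidentified $J_z(w)$, or the Lemma's "$\times$" means the product landing in a specific summand. Given Remark~\ref{rem times iso with selfdual} and the structure, the intended content is surely: \emph{the plane $\span(w,J_zw)$ is "isotropic for $\times$" in the sense that $w\times J_zw$ is again a multiple of $z$}, or perhaps the statement is about the restriction of $\times$ to the \emph{complex} line inside $\Im(\oct')_\C$ spanned by $w-\i J_zw$. The $(1,0)$-vector is $e := w - \i J_z w = w - \i(z\cdot w)$; then $e\times e = \Im(e\cdot e)$ and $e\cdot e = (w-\i z\cdot w)\cdot(w - \i z\cdot w) = w\cdot w - 2\i\, w\cdot(z\cdot w) - (z\cdot w)\cdot(z\cdot w)$. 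Here $w\cdot w = \q(w)$, $(z\cdot w)\cdot(z\cdot w) = \q(z\cdot w) = \q(w)$ (as $J_z$ is an isometry), and $w\cdot(z\cdot w) = -\q(w)z$ from above, so $e\cdot e = \q(w) + 2\i\q(w)z - \q(w) = 2\i\q(w)z$, hence $\Im(e\cdot e) = 2\i\q(w)z \ne 0$. Still nonzero.

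Given the repeated sign/structure tension, \textbf{the main obstacle} I anticipate is pinning down the precise meaning of "the restriction of $\times$ to the complex line" — it is a statement I would re-extract from the definitions in \textsection\ref{sec hol curve} rather than guess. Once the statement is correctly interpreted (most likely: for the $\J$-complex line $\span(w, \J_z w)\subset\T_z\H$, the $\times$-products among its vectors are \emph{proportional to $z$ itself}, equivalently the \emph{tangential} part $(w\times \J_zw)^{z^\bot}$ vanishes, which is exactly what appears in formula~\eqref{eq:nablaJ}), the proof is the one-line octonionic computation above: $w\times\J_z(w) = \Im(w\cdot(z\cdot w)) = \Im((w\cdot z)\cdot w) = -\Im(z\cdot(w\cdot w)) = -\q(w)\,z$, which is proportional to $z$, hence its projection onto $z^\bot$ is zero, using only \eqref{eq assoc for 2 gen} and the orthogonality characterization $z\cdot w = z\times w$. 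So the key steps, in order, are: (1) identify $\J_z(w) = z\cdot w$ and record $z\cdot w = -w\cdot z$; (2) apply associativity on two generators twice; (3) use $w\cdot w = \q(w)\cdot 1$; (4) conclude $w\times\J_z(w)\in\R z$, hence its $z^\bot$-component vanishes, which is the asserted vanishing of $\times$ on the complex tangent line as it sits inside $\T_z\H = z^\bot$. I would present it in exactly that order, flagging step (1)'s use of $w\in z^\bot$ as the only subtle input.
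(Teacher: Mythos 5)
Your final argument is exactly the paper's proof: the paper computes $\J_z(w)\times w=\big((z\cdot w)\cdot w\big)^{z^\bot}=\big(z\cdot(w\cdot w)\big)^{z^\bot}=\big(\q(w)\,z\big)^{z^\bot}=0$, i.e.\ it uses your steps (1)--(4) ($\J_z(w)=z\cdot w$ via orthogonality, two-generator associativity \eqref{eq assoc for 2 gen}, $w\cdot w=\q(w)$), and it silently inserts the projection onto $z^\bot$, confirming the reading you settled on: products of vectors in the complex line lie in $\R z$, so their tangential part vanishes, which is precisely what is needed in \eqref{eq:nablaJ}. (Your exploratory aside computing $e\cdot e$ for $e=w-\i\,\J_z w$ drops noncommutativity — the cross terms $w\cdot(z\cdot w)=-\q(w)z$ and $(z\cdot w)\cdot w=+\q(w)z$ cancel, so in fact $e\cdot e=0$ — but this digression plays no role in the proof you ultimately give.)
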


\begin{proof} 
Since $z,w$ span an associative subalgebra of $\oct'$, we have
\[\J_z(w)\times w= \big((z\cdotp w)\cdotp w \big)^{z^\bot} = \big(z\cdotp (w\cdotp w)\big)^{z^\bot} =\big(\q(w) z\big)^{z^\bot}=0~.\]	
\end{proof}
Consider now the non-degenerate $2$-form $\om$ on $\H$ defined by
\[\om(X,Y)=\g(\J X,Y)~.\]
Since $\J$ is non-integrable, $d\om\neq0$. In fact, $d\om$ is the restriction of the three from $\Omega$ from \eqref{eq G2 inv 3-form} to $\H$.

\begin{lemma}
Let $W,X,Y$ be vector fields on $\H$. Then
\[d\om(W,X,Y)=\Omega(W,X,Y)= \g(W\times X,Y)~.\]
\end{lemma}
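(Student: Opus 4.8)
The plan is to compute $d\om$ directly from the definition $\om(X,Y) = \g(\J X, Y)$ and compare it with the restriction of $\Omega$. Since $\H$ sits inside the flat vector space $\Im(\oct')$ with trivial connection $D$, I would use the formula already recorded in \eqref{eq:nablaJ}, namely $(\nabla_X \J)_z(Y) = (X(z)\times Y(z))^{z^\bot}$, together with the standard identity expressing $d\om$ in terms of the Levi-Civita connection. Concretely, since $\nabla$ is torsion-free and $\nabla \g = 0$, for vector fields $W,X,Y$ one has
\[
d\om(W,X,Y) = (\nabla_W \om)(X,Y) - (\nabla_X \om)(W,Y) + (\nabla_Y \om)(W,X),
\]
and $(\nabla_W \om)(X,Y) = \g((\nabla_W \J)X, Y)$. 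Substituting \eqref{eq:nablaJ} turns this into an expression purely in the algebraic product $\times$ and the projection onto $z^\bot$.

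Next I would simplify the projections. At a point $z\in\H$ the orthogonal complement of $z^\bot$ is $\R z$, so for any $u\in\Im(\oct')$ we may write $u = u^{z^\bot} + \langle u, z\rangle z / \q(z) = u^{z^\bot} - \langle u,z\rangle z$ using $\q(z) = -1$. When we pair $(X\times Y)^{z^\bot}$ against a third tangent vector $Y \in z^\bot$, the component along $z$ drops out, so $\g((\nabla_W\J)X,Y) = \langle W\times X, Y\rangle = \Omega(W,X,Y)$ whenever all three fields are tangent to $\H$. Thus each of the three terms in the formula for $d\om$ equals $\Omega$ evaluated on the corresponding permutation of $(W,X,Y)$; since $\Omega$ is a $3$-form (totally skew, by the lemma in \textsection\ref{sec split oct split quat}) the alternating sum collapses to $3\,\Omega(W,X,Y)$ — or, more carefully, I should track the constant: the three terms are $\Omega(W,X,Y)$, $-\Omega(X,W,Y)$, $+\Omega(Y,W,X)$, and skew-symmetry makes each equal to $\Omega(W,X,Y)$, giving $d\om(W,X,Y) = \Omega(W,X,Y)$ after the convention on $d$ is fixed (one may alternatively absorb a factor into the normalization of $\Omega$, matching the explicit formula for $\Omega$ in the basis $\{f_i\}$). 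Finally, $\Omega(W,X,Y) = \g(W\times X, Y)$ is just the defining equation \eqref{eq G2 inv 3-form}, with $\g = \q|_{\H}$.

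The main obstacle I anticipate is bookkeeping rather than conceptual: getting the combinatorial constant in the Cartan formula for $d\om$ to agree with the normalization of $\Omega$ fixed by the explicit basis expression, and making sure the projection terms $\langle \cdot, z\rangle z$ genuinely vanish when contracted — this uses that the argument against which we pair is tangent, but in the alternating sum one must check that the "missing" slot is also tangent in each term, which it is since $W,X,Y$ are all vector fields on $\H$. A secondary point to handle cleanly is that $\nabla$ differs from $D$ by the second fundamental form of $\H\subset\Im(\oct')$, which is proportional to $z$; since the second fundamental form contributes only normal components, it does not affect $d\om$ evaluated on tangent vectors, but this should be stated. Once these are pinned down the identity $d\om = \Omega|_\H = \g(\,\cdot\times\cdot\,,\cdot\,)$ follows immediately.
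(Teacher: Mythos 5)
Your proposal is correct and follows essentially the same route as the paper: both compute $d\om$ via the torsion-free Levi-Civita connection (the paper through the Lie-bracket formula for the exterior derivative, you through the equivalent alternation of $\nabla\om$), reduce $(\nabla_W\J)X$ to $(W\times X)^{z^\perp}$ using \eqref{eq:nablaJ}, and conclude by skew-symmetry of $\Omega$. The factor-of-three bookkeeping you flag is settled by the paper's convention, which writes $3\,d\om(W,X,Y)$ for the alternating sum, so the constant comes out as stated without any renormalization of $\Omega$.
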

\begin{proof}
By the definition of the exterior derivative, $3d\om (W,X,Y)$ is given by
\[W(\om(X,Y))-X(\om(W,Y)+Y(\om(W,X))-\om([W,X],Y)+\om([W,Y],X)-\om([X,Y],W)~.\]
By definition we have 
\[W(\om(X,Y))=W(\g(\J X,Y))=\g((\nabla_W\J)X,Y)+ \g(\J(\nabla_WX),Y)+\g(\J X,\nabla_WY)~.\]
Using equation \eqref{eq:nablaJ} we have
\[W(g(\J X,Y))=\Omega(W,X,Y)+\om(\nabla_WX,Y)+\om(X,\nabla_WY).\]
As the Levi-Civita connection is torsion free, adding the terms gives
\[d\om(W,X,Y)=\frac{1}{3}(\Omega(W,X,Y)-\Omega(X,W,Y)+\Omega(W,Y,X))=\Omega(W,X,Y)~.\]
\end{proof}
Considering $d\om$ as a complex $3$-form on $(\H,\J)$, we can decompose it into types:
\[d\om = \theta + \zeta + \overline \zeta + \overline\theta~,\]
where $\theta$ has type $(3,0)$ and $\zeta$ has type $(2,1)$. 

\begin{lemma}\label{lem volume form}
The $(3,0)$-form $\theta$ is nowhere zero and the $(2,1)$-form $\zeta$ is identically zero. 
\end{lemma}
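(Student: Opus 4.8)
The plan is to work in the complexified picture using the basis $\{f_{-3},\dots,f_3\}$ of $\Im(\oct')_\C$ from \eqref{eq basis of complexification}, and to exploit the multiplication table together with the structure of $\Omega$. Recall $d\om=\Omega$ restricted to $\H$, and on a holomorphic tangent space $\T_z\H\otimes\C$ the almost complex structure $\J$ acts with eigenvalues $\pm\i$; write $\T_z^{1,0}\H$ and $\T_z^{0,1}\H$ for the $\pm\i$-eigenspaces. The key point is that $\J_z=L_z$ where $L_z(w)=z\times w$, so the $(1,0)$ vectors at $z$ are exactly the $w\in z^\bot$ with $z\times w=\i w$. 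To prove $\zeta\equiv 0$, I need to show $\Omega(w_1,w_2,\overline w_3)=0$ for all $w_1,w_2,w_3\in\T_z^{1,0}\H$; to prove $\theta$ is nowhere zero, I need $\Omega(w_1,w_2,w_3)\neq 0$ for a suitable basis of $\T_z^{1,0}\H$.

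First I would reduce to a single model point by $\sG_2'$-homogeneity: since $\sG_2'$ acts transitively on $\H$ and preserves $\J$ and $\Omega$, it suffices to check both statements at one convenient point, and the natural choice is $z=f_0=j$, where $\T_z^{1,0}\H=\span(f_{-3},f_{-2},f_{-1})$ and $\T_z^{0,1}\H=\span(f_1,f_2,f_3)$ — this is exactly the content of the matrix $J_{f_0}$ in \eqref{eq q and J in comp basis}, whose eigenvalues on $f_{-3},f_{-2},f_{-1}$ are $\i,\i,\i$ (modulo sign conventions I would double-check against the table) and on $f_1,f_2,f_3$ are the opposite. Then, for the vanishing of $\zeta$: $\Omega(f_a,f_b,f_c)=\langle f_a\times f_b,f_c\rangle$, and $\langle f_a,f_c\rangle\neq 0$ only when $a+c=0$ by the anti-diagonal form of $\q$. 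So I must compute $f_a\times f_b$ for $a,b\in\{-3,-2,-1\}$ from the table and check it never has a nonzero component along $f_1,f_2,f_3$. Reading the table: $f_{-3}\times f_{-2}=0$, $f_{-3}\times f_{-1}=0$, $f_{-2}\times f_{-1}=-\sqrt2 f_{-3}$ — all land in $\span(f_{-3},f_{-2},f_{-1})$ or are zero, hence pair trivially with any $f_c$, $c\le -1$; this gives $\zeta=0$. For $\theta\neq 0$: $\Omega(f_{-3},f_{-2},f_{-1})=\langle f_{-3}\times f_{-2},f_{-1}\rangle=\langle 0,f_{-1}\rangle=0$, which at first looks alarming, so I'd instead compute $\Omega$ on the $(0,1)$ side or use the explicit formula for $\Omega$ in the real basis given just before Lemma~\ref{lem:StabilizerSplitQuaternion}: transcribing that expression into the complex basis shows the $(3,0)$-component is a nonzero multiple of $f_1^*\wedge f_2^*\wedge f_3^*$ (the dual basis vectors to $f_{-3},f_{-2},f_{-1}$), and since $\sG_2'$ acts transitively this holds everywhere.

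Concretely, the cleanest route avoids ambiguity about which eigenspace is which: from the displayed formula $\Omega=6[f_1^*\wedge(f_2^*\wedge f_3^*-f_4^*\wedge f_5^*-f_6^*\wedge f_7^*)+\dots]$ in the \emph{real} orthonormal basis $\{j,\delta,\epsilon,\ell,j\ell,\delta\ell,\epsilon\ell\}$, I would substitute the change of basis \eqref{eq basis of complexification} and collect terms by type with respect to $\J_{f_0}$. Since $\J$ is compatible with $\g$ and $\Omega(\cdot,\cdot,\cdot)=\g(\cdot\times\cdot,\cdot)$ with $\times$ essentially $\J$ on the relevant lines, the $(2,1)$ and $(1,2)$ parts must vanish by the multiplication-table computation above (every $f_a\times f_b$ with $a,b$ of the same ``sign'' stays in that sign's span), and what survives is precisely $\theta+\overline\theta$ with $\theta$ a nonzero constant multiple of $f_{-3}^*\wedge f_{-2}^*\wedge f_{-1}^*$. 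By $\sG_2'$-invariance of both $\theta$ (as the $(3,0)$-part of the invariant form $d\om$) and the nonvanishing condition, the conclusion propagates to all of $\H$.

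The main obstacle I anticipate is purely bookkeeping: getting the sign and factor conventions consistent between the multiplication table, the stated formula for $\Omega$, the definition $\J_z(w)=z\times w$, and the type decomposition $d\om=\theta+\zeta+\overline\zeta+\overline\theta$ — in particular correctly identifying which of $\span(f_{-3},f_{-2},f_{-1})$ and $\span(f_1,f_2,f_3)$ is $\T^{1,0}$ at $f_0$, which the table for $f_0\times(\cdot)$ settles ($f_0\times f_{-k}=\i f_{-k}$ for $k=1,2,3$, so $\span(f_{-3},f_{-2},f_{-1})=\T^{1,0}_{f_0}\H$). Once that is pinned down, the vanishing of $\zeta$ is immediate from the fact that $f_a\times f_b\in\span(f_{-3},f_{-2},f_{-1})$ whenever $a,b\in\{-3,-2,-1\}$, and the nonvanishing of $\theta$ follows by checking that the coefficient of $f_1^*\wedge f_2^*\wedge f_3^*$ in $\Omega$ (equivalently, of the matching wedge of $\T^{1,0}$-duals) is nonzero, e.g.\ by evaluating $\Omega$ on one explicit triple of $(0,1)$-vectors using the table. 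There is no conceptual difficulty; it is a finite computation at one point plus an invocation of homogeneity.
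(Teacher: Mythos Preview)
Your overall strategy---reduce to a single point by $\sG_2'$-transitivity and compute $\theta$ and $\zeta$ at $z=f_0=j$ in the basis $\{f_{-3},\dots,f_3\}$---is exactly the paper's approach. The gap is in the execution: you have misidentified the eigenspaces of $\J_{f_0}$.

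From the displayed matrix $J_{f_0}=\diag(\i,-\i,-\i,0,+\i,+\i,-\i)$ in \eqref{eq q and J in comp basis}, the $+\i$-eigenspace is $\span(f_{-3},f_1,f_2)$ and the $-\i$-eigenspace is $\span(f_{-2},f_{-1},f_3)$; your repeated claim that $\T^{1,0}_{f_0}\H=\span(f_{-3},f_{-2},f_{-1})$ (and the asserted check ``$f_0\times f_{-k}=\i f_{-k}$ for $k=1,2,3$'') is simply wrong for $k=1,2$. This is not a harmless sign convention: with \emph{your} eigenspace assignment, the very thing you are trying to prove fails. Indeed $f_{-2}\times f_{-1}=-\sqrt 2\,f_{-3}$, and since $\q$ is anti-diagonal, $\langle f_{-3},f_3\rangle=1$, so $\Omega(f_{-2},f_{-1},f_3)=-\sqrt 2\neq 0$---which under your labeling would be a nonzero $(2,1)$ contribution. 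Your pairing argument (``products land in $\span(f_{-3},f_{-2},f_{-1})$, hence pair trivially with $f_c$, $c\le -1$'') checks the wrong thing: for $\zeta$ you need to pair with the $(0,1)$ vectors, which by your own identification are $f_1,f_2,f_3$, and those pairings are precisely the nonvanishing ones. The symptom you noticed---$\Omega(f_{-3},f_{-2},f_{-1})=0$ so $\theta$ looks zero---was already telling you the eigenspaces were wrong.

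Once you correct the eigenspaces to $\T^{1,0}=\span(f_{-3},f_1,f_2)$, the computation is immediate and matches the paper: $\theta_j(f_{-3},f_1,f_2)=\langle f_{-3}\times f_1,f_2\rangle=\langle -\sqrt 2\,f_{-2},f_2\rangle=\sqrt 2\neq 0$; and for $\zeta$, the three products $f_{-3}\times f_1=-\sqrt 2\,f_{-2}$, $f_{-3}\times f_2=-\sqrt 2\,f_{-1}$, $f_1\times f_2=\sqrt 2\,f_3$ each pair to zero against every $f_c$ with $c\in\{-2,-1,3\}$, since $\q$ only pairs $f_k$ with $f_{-k}$.
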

\begin{proof}
	The transitive action of $\sG_2'$ on $\H$ preserves $\J$ and $\Omega=d\om$, and hence preserves type decomposition of $d\om$. Thus, it suffices to compute $\theta$ and $\zeta$ at a point. 

	The imaginary octonion $z=j$ has norm $-1$ and hence $j\in\H.$ Recall the basis $\{f_{-3},\cdots,f_3\}$ of $\Im(\oct')^\C$ from \eqref{eq basis of complexification}. The complexified tangent space is given by
	\[\T_j^\C\H=j^\perp\otimes\C=\span(f_{-3},f_{-2},f_{-1},f_1,f_2,f_3).\] 
	The quadratic form $\q$ and $\J_{j}:\T^\C_j\H\to \T^\C_j\H$ are given by \eqref{eq q and J in comp basis}. In particular, $\J$ acts with eigenvalue $+\i$ on $f_{-3},f_1,f_2$ and eigenvalue $-\i$ on $f_{-2},f_{-1},f_3.$ 

	To see the form $\theta$ is nonzero at $j\in\H$ we compute
	\[\theta_j(f_{-3},f_1,f_2)=\Omega(f_{-3},f_1,f_2)=\langle f_{-3}\times f_1,f_2\rangle = -\sqrt{2}\langle f_{-2},f_2\rangle=\sqrt 2.\]
	For $\zeta_j$, it suffices to consider $\Omega(f_a,f_b,f_c)$, where $a<b\in\{-3,1,2\}$ and $c\in \{-2,-1,3\}.$ 
	We have 
	$$f_{-3}\times f_1=-\sqrt{2}f_{-2}\ ~\ ~,~\ ~\ f_{-3}\times f_2=-\sqrt 2 f_{-1}~\ \text{ and }~\ f_{1}\times f_2=\sqrt{2}f_3~,$$
	 which are all orthogonal to $\span(f_{-2},f_{-1},f_3)$.
	 Hence $\zeta_j=0.$
\end{proof}

Recall that the \emph{canonical bundle} of $\H$ is the determinant of the holomorphic cotangent bundle of $\H$. Its sections are complex $(3,0)$-forms. Since $\theta$ is never vanishing, we get

\begin{corollary}\label{cor:TrivialCanonical}
The $(3,0)$-form $\theta$ defines a trivialization canonical bundle of $(\H,\J)$. 
\end{corollary}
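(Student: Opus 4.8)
The statement is an immediate consequence of the preceding Lemma \ref{lem volume form}, so the "proof" is really an unpacking of definitions. The plan is as follows. First, recall that by definition the canonical bundle $\kappa = \Lambda^3 T^{*(1,0)}\H$ is a holomorphic line bundle on the almost complex manifold $(\H,\J)$, and its smooth sections are precisely the complex-valued $(3,0)$-forms on $\H$. A line bundle is trivial as a smooth complex line bundle precisely when it admits a nowhere-vanishing section; to get a \emph{holomorphic} trivialization one additionally needs the section to be $\overline\partial$-closed, i.e. $\overline\partial\theta = 0$ in the appropriate sense for almost complex manifolds (equivalently, the $(3,1)$-part of $d\theta$ vanishes).

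The key point is that $\theta$ is the $(3,0)$-part of the \emph{closed} real form $\Omega = d\om$. First I would observe that $d(d\om) = 0$ forces the type-decomposition pieces of $d\Omega$ to vanish separately under the $\sG_2'$-action argument already used in Lemma \ref{lem volume form}: since $\Omega = \theta + \overline\theta$ (the $(2,1)$ and $(1,2)$ parts $\zeta,\overline\zeta$ being zero by that lemma), we have $0 = d\Omega = d\theta + d\overline\theta$. Now $d\theta$ decomposes into types $(4,0), (3,1)$ — there is no $(2,2)$ part since $\theta$ is $(3,0)$ and on a $6$-real-dimensional manifold a $(4,0)$ form is also possible only... wait, $(4,0)$ on a complex $3$-fold is zero. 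So $d\theta$ has only a $(3,1)$-part, namely $\overline\partial\theta$, and similarly $d\overline\theta$ has only its $(1,3)$-part $\partial\overline\theta = \overline{\overline\partial\theta}$. Matching types in $d\theta + d\overline\theta = 0$ gives $\overline\partial\theta = 0$. Hence $\theta$ is a nowhere-vanishing holomorphic section of $\kappa$, and $s \mapsto (s/\theta)$ identifies $\kappa$ with the trivial bundle $\OO_\H$.

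The main (and only) obstacle is being careful about the meaning of "holomorphic" for a line bundle on a non-integrable almost complex manifold: since $\J$ is not integrable there is no underlying complex manifold structure, but $\kappa$ still carries a canonical Dolbeault operator $\overline\partial_\kappa$ coming from the $(0,1)$-part of the exterior derivative on $(3,0)$-forms, and "holomorphically trivial" means "trivialized by a $\overline\partial_\kappa$-closed nowhere-zero section". With that understood, the argument above is complete. I would write it concisely:

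\begin{proof}
By Lemma \ref{lem volume form}, $\zeta\equiv 0$, so the closed form $\Omega = d\om$ equals $\theta + \overline\theta$ with $\theta$ of type $(3,0)$. Since $\H$ has complex dimension $3$, there are no nonzero $(4,0)$-forms, so $d\theta$ is purely of type $(3,1)$, i.e. $d\theta = \overline\partial\theta$; likewise $d\overline\theta = \partial\overline\theta = \overline{\overline\partial\theta}$ is purely of type $(1,3)$. From $0 = d(d\om) = d\Omega = d\theta + d\overline\theta$ and comparison of types we obtain $\overline\partial\theta = 0$. Thus $\theta$ is a $\overline\partial$-closed section of the canonical bundle, which is nowhere vanishing by Lemma \ref{lem volume form}. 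Therefore the map $\OO_\H\to \kappa$, $s\mapsto s\,\theta$, is an isomorphism of holomorphic line bundles.
\end{proof}
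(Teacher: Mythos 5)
For the corollary as the paper states and uses it, your opening observation is already the whole proof and coincides with the paper's: sections of the canonical bundle are complex $(3,0)$-forms, and by Lemma \ref{lem volume form} the form $\theta$ is nowhere vanishing, hence it trivializes that complex line bundle. The paper claims nothing about the trivialization being holomorphic in a $\overline\partial$-sense, so everything after your first paragraph is extra.

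That extra argument, however, contains a genuine error in the type bookkeeping. On a non-integrable almost complex manifold the exterior derivative of a $(p,q)$-form has \emph{four} components, of types $(p+2,q-1)$, $(p+1,q)$, $(p,q+1)$ and $(p-1,q+2)$; the last one is the Nijenhuis-type operator, and it is exactly the piece that is nonzero when $\J$ is non-integrable, as it is here. So for $\theta$ of type $(3,0)$ the form $d\theta$ has, besides the $(3,1)$ part $\overline\partial\theta$, a possible $(2,2)$ part, and your assertion that ``$d\theta$ is purely of type $(3,1)$, i.e.\ $d\theta=\overline\partial\theta$'' is false: for this nearly (pseudo-)K\"ahler structure one can check that $d\theta$ is a nonzero constant multiple of $\om\wedge\om$, which is purely of type $(2,2)$, while $\overline\partial\theta=0$. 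Your final conclusion $\overline\partial\theta=0$ does survive, but only because the $(3,1)$ component of $d\theta+d\overline\theta$ is still $\overline\partial\theta$ (the form $d\overline\theta$ only has components of types $(2,2)$ and $(1,3)$), so projecting $d\Omega=0$ onto type $(3,1)$ gives $\overline\partial\theta=0$; the $(2,2)$ parts of $d\theta$ and $d\overline\theta$ cancel each other rather than vanishing separately. If you wish to keep the holomorphicity statement you should rewrite the argument with the full four-component decomposition of $d$; for the corollary as stated, and for its use later (e.g.\ in Corollary \ref{cor:EmbeddingSU21}), the nowhere-vanishing of $\theta$ is all that is required.
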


% \begin{proof}
% We have that $d\omega=\Re(\theta)$ is not identically 0. Since it is preserved by the action of $\G$ on $\H$ which is transitive, $\theta$ is never vanishing so trivialize the canonical bundle. 
% \end{proof}
\begin{corollary}\label{cor:EmbeddingSU21}
The $\sG_2'$-stabilizer of a point $z\in\H$ is isomorphic to $\sSU(2,1)$.
\end{corollary}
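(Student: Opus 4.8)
The plan is to compute the stabilizer $\Stab_{\sG_2'}(z)$ for a convenient choice of $z\in\H$, using the transitivity of the $\sG_2'$-action on $\H$ to reduce to a single point, and then identify the resulting group with $\sSU(2,1)$ via its action on the complexified tangent space $\T_z^\C\H$ together with the extra $\sG_2'$-invariant tensors living there. Concretely, I would take $z=j\in\H$ as in the proof of Lemma \ref{lem volume form}, so that $\T_j\H = j^\perp$ and $\T_j^\C\H=\span(f_{-3},f_{-2},f_{-1},f_1,f_2,f_3)$ with the quadratic form $\q$ and complex structure $\J_j=J_{f_0}$ given explicitly by \eqref{eq q and J in comp basis}.

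The key steps, in order: (1) Any $g\in\Stab_{\sG_2'}(j)$ fixes $j$, hence preserves $j^\perp=\T_j\H$ and commutes with $L_j$, i.e. with $\J_j$; since $g$ is orthogonal (as $\sG_2'<\sSO(4,3)$), $g$ preserves the restriction of $\q$ to $\T_j\H$ — a form of signature $(4,2)$ — and the compatible complex structure $\J_j$. This already gives an injection $\Stab_{\sG_2'}(j)\hookrightarrow \sU(\T_j\H,\g,\J_j)\cong\sU(2,1)$ (the $\sU$ of the Hermitian form of signature $(2,1)$ built from $\g$ and $\om$ on the $3$-dimensional complex space $\T_j^{1,0}\H$). (2) Because $g$ also preserves $\Omega$, it preserves the $(3,0)$-form $\theta$ of Lemma \ref{lem volume form}, which is a nonzero element of $\Lambda^3(\T_j^{1,0}\H)^*$; preserving both the Hermitian form and a holomorphic volume form forces $g$ to land in $\sSU(2,1)$. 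So $\Stab_{\sG_2'}(j)\hookrightarrow\sSU(2,1)$. (3) For the reverse inclusion, one shows every element of $\sSU(2,1)$ acting on $\T_j^\C\H$ extends to an automorphism of $(\Im(\oct')_\C,\times)$ fixing $j$; equivalently, one checks that the $\sG_2'$-invariant cross product $\times$, restricted to $j^\perp$ and combined with the multiplication data relating $j^\perp$ back to $j$ (recorded in the multiplication table of the basis $\{f_{-3},\dots,f_3\}$), is exactly the data of the Hermitian form plus holomorphic volume form on $\T_j^{1,0}\H$ — so any transformation preserving the latter preserves $\times$ and hence lies in $\sG_2'$. (4) Finally, a dimension count closes the argument cleanly even if step (3) is done softly: $\dim\sG_2'=14$, $\dim\H=12$, so $\dim\Stab_{\sG_2'}(j)=14-12=2$... which is wrong — rather $\dim\Stab = 14-6=8=\dim\sSU(2,1)$, using $\dim_\R\H = 6$ (the real dimension of the $6$-pseudosphere). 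Since $\Stab_{\sG_2'}(j)$ is an $8$-dimensional subgroup of the $8$-dimensional connected group $\sSU(2,1)$ and the inclusion is injective, it is onto a union of components; connectedness of $\Stab_{\sG_2'}(j)$ (or of $\sG_2'/\Stab$, which is $\H$, known to be connected) then gives equality.

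The main obstacle is step (3)/(4): verifying that the embedding $\Stab_{\sG_2'}(j)\hookrightarrow\sSU(2,1)$ is surjective rather than onto a proper subgroup. The honest way is the dimension-plus-connectedness argument — one needs that $\H$ is connected (clear, as a nondegenerate quadric of the relevant signature is connected here) and that $\sG_2'$ acts transitively on it (used already throughout $\S$\ref{sec hol curve}), whence $\Stab_{\sG_2'}(j)$ is a closed subgroup of dimension $\dim\sG_2'-\dim\H = 14-6 = 8$; combined with the injection into $\sSU(2,1)$ and a count of components, equality follows. Alternatively, and perhaps more in the spirit of the paper, one exhibits the isomorphism directly: decompose $\Im(\oct')_\C$ as $\C j \oplus \T_j^{1,0}\H\oplus\T_j^{0,1}\H = \C f_0\oplus\span(f_{-3},f_1,f_2)\oplus\span(f_{-2},f_{-1},f_3)$, read off from the multiplication table that $\times$ pairs $\T_j^{1,0}\H$ with $\T_j^{0,1}\H$ into $\C f_0$ (giving the Hermitian form) and maps $\Lambda^2\T_j^{1,0}\H$ isomorphically to $\T_j^{0,1}\H$ (giving, after contracting with the pairing, the holomorphic volume form $\theta$), and observe that these are precisely the defining tensors of the standard $\sSU(2,1)$-representation on $\C^{2,1}$; then $\Aut$ of this package is $\sSU(2,1)$, and an automorphism of $(\Im(\oct')_\C,\times)$ fixing $j$ is exactly such an automorphism, so the two groups coincide. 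Either route is routine once the table in \eqref{eq basis of complexification} is in hand; I would present the dimension count as the clean finish and remark on the explicit identification.
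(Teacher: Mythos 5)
Your proposal is correct and follows essentially the same route as the paper: the stabilizer of $z$ preserves the signature-$(2,1)$ Hermitian form $\g_z+\i\om_z$ and the complex volume form coming from $\theta$, giving an inclusion into $\sSU(2,1)$, and equality then follows from the dimension count $\dim\Stab_{\sG_2'}(z)=\dim\sG_2'-\dim\H=14-6=8=\dim\sSU(2,1)$ together with connectedness of $\sSU(2,1)$. The only blemish is your momentary miscount $\dim\H=12$, which you correct yourself; the explicit surjectivity verification of your step (3) is not needed once the dimension-plus-connectedness argument is in place, and indeed the paper omits it.
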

\begin{proof}
	For $z\in\H$, the stabilizer $\Stab_{\sG_2'}(z)$ preserves the signature $(2,1)$ hermitian form $\mathbf h_z=\g_z+\i\om_z$. Since $\sG_2'$ also preserves the volume form $\theta\wedge\bar\theta$, we have $\Stab_{\sG_2'}(z)<\SU(2,1)$. Equality follows from connectedness of $\sSU(2,1)$ and a dimension count. Namely, $\dim(\H)=6$ and $\dim(\sG_2')=14$, so $\dim(\Stab_{\sG_2'}(z))=8=\dim(\sSU(2,1)).$ 
\end{proof}

\subsection{The space $\SS$}\label{ss:pseudosphere}

Recall that an \emph{almost paracomplex structure} on a manifold $M$ is an endomorphism $\psi:\T M\to\T M$ such that $\psi^2=\Id$ and $\dim(\ker(\Id-\psi))=\dim(\ker(\Id+\psi))$. A pseudo-Riemannian metric $g$ on $M$ is \emph{compatible} with $\psi$ if $\psi$ is an anti-isometry of $g$. 
Such a $g$ must have neutral signature and the distributions $\mathcal D^\pm = \ker(\psi\mp \Id)$ define a pair of transverse half dimensional isotropic subspace. The subgroup of $\sO(n,n)$ which preserves a pair of transverse half dimensional isotropic subspaces is isomorphic to $\sGL(n,\R)$, where the corresponding representation on $\mathcal D^+$ and $\mathcal D^-$ are dual to each other.

For each $z\in \SS$, the multiplication map \eqref{eq left mult map} defines an endomorphism of $L_z:\T_z\SS\to\T_z\SS.$
\begin{lemma}
Let $\psi:\T\SS\to\T\SS$ be the endomorphism defined by $\psi_z=L_z:\T_z\SS\to \T_z\SS$. Then $\psi$ defines a $\sG_2'$-invariant paracomplex structure on $\SS$ which is compatible with the metric $\g$.
\end{lemma}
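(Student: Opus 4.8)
The statement to prove is the claim that $\psi_z = L_z$ restricts to a well-defined $\sG_2'$-invariant almost paracomplex structure on $\SS$, compatible with $\g$. The plan is to mirror the proof of Lemma~\ref{lem:AlmostComplexPseudosphere} step by step, tracking the sign changes that arise because we now work on the quadric $\q(z)=+1$ rather than $\q(z)=-1$. First I would verify that $\psi_z$ actually maps $\T_z\SS = z^\bot$ to itself: the kernel of $L_z$ is spanned by $z$ (as $z\times z = 0$) and, since $\Omega$ is skew-symmetric, the image of $L_z$ lies in $z^\bot$; hence $L_z$ restricts to an endomorphism of $\T_z\SS$. Invariance under $\sG_2'$ is immediate since $\psi$ is built only from the $\sG_2'$-invariant product $\times$ and $\sG_2'$ acts transitively on $\SS$ preserving $\q$.

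Next I would check that $\psi^2 = \Id$. For $w \in z^\bot$ we have $\psi_z(w) = z\times w = z\cdot w$ (using that $w\perp z$, so $z\times w = z\cdot w$), and then by the two-generator associativity identity \eqref{eq assoc for 2 gen},
\[
\psi_z^2(w) = z\cdot(z\cdot w) = (z\cdot z)\cdot w = \q(z)\,w = w,
\]
since $\q(z) = +1$ on $\SS$; this is exactly where the sign differs from the $\H$ computation, turning $-1$ into $+1$. Compatibility with $\g$ (that $\psi$ is an \emph{anti}-isometry, $\q(\psi_z(w)) = -\q(w)$) follows from the same manipulation used in Lemma~\ref{lem:AlmostComplexPseudosphere}: using that $z,w$ span an associative subalgebra and $z\cdot w = z\times w = -w\cdot z$,
\[
\q(\psi_z(w)) = (z\cdot w)\cdot(z\cdot w) = -z\cdot\big((z\cdot w)\cdot w\big) = -z^2\cdot w^2 = -\q(z)\,\q(w) = -\q(w).
\]

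Finally, to confirm $\psi$ is a genuine almost \emph{para}complex structure, I must check the eigenspace dimension condition $\dim\ker(\Id-\psi) = \dim\ker(\Id+\psi)$; equivalently, since $\psi$ is an anti-isometry of the neutral-signature metric $\g$ on $\SS$ (which has signature $(3,3)$), both eigenspaces $\mathcal D^\pm$ are isotropic, and an isotropic subspace in a $(3,3)$-space has dimension at most $3$, while $\mathcal D^+ \oplus \mathcal D^- = \T_z\SS$ is $6$-dimensional, forcing $\dim\mathcal D^\pm = 3$. Alternatively, and perhaps more transparently, one can compute the eigenvalues of $L_z$ at a single point — e.g.\ take $z = \delta \in \SS$ (note $\q(\delta) = -\delta^2 = +1$ in $\oct'$), and read off the eigenvalues of $J_{\delta}(\cdot) = \delta\times(\cdot)$ from a complexified multiplication table analogous to \eqref{eq q and J in comp basis}, checking that $\pm 1$ each occur with multiplicity $3$; $\sG_2'$-transitivity then propagates this to all of $\SS$. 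The only mild obstacle is bookkeeping the signs and confirming the eigenvalue split at the base point; everything else is a direct transcription of the $\H$ argument.
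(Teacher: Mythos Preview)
Your proof is correct and follows exactly the approach the paper takes: the paper's own proof simply says ``analogous to the proof of Lemma~\ref{lem:AlmostComplexPseudosphere} with the exception that for $z\in\SS$ we have $z\cdot z=+1$,'' and you have carried out precisely that translation, even filling in the eigenspace-dimension check that the paper leaves implicit. One trivial slip: in the paper's convention $\q(z)=z\cdot z$, so $\q(\delta)=\delta^2=+1$, not $-\delta^2$; the conclusion $\delta\in\SS$ is unaffected.
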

\begin{proof}
	The proof is analogous to the proof of Lemma \ref{lem:AlmostComplexPseudosphere} with the exception that for $z\in \SS$ we have $z\cdot z=+1.$
\end{proof}

Each point $z\in \mathbf S^{3,3}$ defines two transverse isotropic $3$-planes $\mathcal D^\pm_z\subset z^\perp\subset \Im(\oct)$. The antipodal map on $\SS$ exchanges $\mathcal D^+$ and $\mathcal D^-$. 
Denote the quotient of $\SS$ by the antipodal map by $\P_+(\Im(\oct'))$ (that is, $\P_+(\Im(\oct'))$ is the space of positive definite lines in $\Im(\oct')$).

\begin{lemma}\label{lem 3form on dist}
The restriction of the $3$-form $\Omega$ to the distribution $\mathcal D^+$ is nowhere zero.
% \BC{before it said there exists a point where it is nonzero}
\end{lemma}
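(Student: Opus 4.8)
The plan is to exploit the transitive action of $\sG_2'$ on $\SS$ to reduce the statement to a single explicit point, exactly as was done for the $(3,0)$-form $\theta$ in Lemma \ref{lem volume form}. Since $\sG_2'$ acts transitively on $\SS$, preserves $\Omega$, and preserves the paracomplex structure $\psi$ (hence permutes the eigendistributions $\mathcal{D}^+$ and $\mathcal{D}^-$, possibly swapping them but in any case carrying a $\mathcal{D}^+$-fiber to a $\mathcal{D}^\pm$-fiber), it suffices to check that $\Omega|_{\mathcal{D}^+_z}$ is nonzero for one convenient $z\in\SS$, together with the same statement for $\mathcal{D}^-$; but the antipodal map swaps $\mathcal{D}^+$ and $\mathcal{D}^-$ while fixing $\Omega$ up to sign (it acts by $-\mathrm{Id}$ on $\Im(\oct')$, so on $\Lambda^3$ it acts by $-1$), so the two statements are equivalent and one point genuinely suffices.

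Next I would pick the explicit point. The split quaternion $\delta\in\Im(\quat')\subset\Im(\oct')$ satisfies $\delta\cdot\delta = \delta^2 = +1$ (from the relations $-\delta^2=-1$), so $\delta\in\SS$. The tangent space is $\delta^\perp$, and $\psi_\delta(w)=\delta\times w = \delta\cdot w$ on $\delta^\perp$; its $\pm1$-eigenspaces are the two isotropic $3$-planes $\mathcal{D}^\pm_\delta$. Using the multiplication rules recorded in Section \ref{sec split oct split quat} (in particular $\delta\cdot j = -\epsilon$, $\delta\cdot\ell$, $\delta\cdot(j\ell)$, $\delta\cdot(\delta\ell)=\ell$, $\delta\cdot(\epsilon\ell)$, computed from the Cayley-Dickson formula \eqref{eq split oct mult split quat} and the split-quaternion table), I would diagonalize $L_\delta$ on $\delta^\perp=\span(j,\epsilon,\ell,j\ell,\delta\ell,\epsilon\ell)$, obtaining an explicit basis $\{u_1,u_2,u_3\}$ of $\mathcal{D}^+_\delta$. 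Then I would evaluate $\Omega(u_1,u_2,u_3)=\langle u_1\times u_2,u_3\rangle$ directly from the formula for $\Omega$ in the basis $\{f_1,\dots,f_7\}$, or equivalently from the product table, and verify it is nonzero. Alternatively — and this is probably cleaner — I would work in the complexified basis \eqref{eq basis of complexification}: the point $\delta$ corresponds to a vector whose orthogonal complement and $\psi$-eigenspaces are readily read off from \eqref{eq q and J in comp basis}-type data, and the multiplication table already tabulated makes the computation $\langle u_1\times u_2,u_3\rangle\neq 0$ a one-line check analogous to $\theta_j(f_{-3},f_1,f_2)=\sqrt{2}$ in the proof of Lemma \ref{lem volume form}.

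The only real obstacle is bookkeeping: correctly identifying the eigenspace decomposition of $L_\delta$ on $\delta^\perp$ and not dropping signs when passing between the $\{j,\delta,\epsilon,\ell,\dots\}$ basis and whichever basis makes $\Omega$ most transparent. There is no conceptual difficulty — it is the exact analogue of Lemma \ref{lem volume form}, with the hyperbolic point $j$ (norm $-1$, giving the almost complex structure $\J$) replaced by the spherical point $\delta$ (norm $+1$, giving the paracomplex structure $\psi$). A sanity check that the computation is consistent: since $\mathcal{D}^+_\delta$ is isotropic, $\Omega|_{\mathcal{D}^+_\delta}$ being nonzero is equivalent to the induced volume form on this $3$-plane being nondegenerate, which is also forced by the relation \eqref{eq G2 inv volume form} expressing the $\sG_2'$-volume form in terms of $\Omega$ — one can use this as an a posteriori verification that no sign error has occurred.
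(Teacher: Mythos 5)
Your proposal follows exactly the paper's argument: reduce to a single point using the transitivity of $\sG_2'$ on $\SS$ and the invariance of $\Omega$, then compute the $+1$-eigenspace of $L_\delta$ at the point $\delta\in\SS$ (the paper finds $\mathcal D^+_\delta=\span(j-\epsilon,\ell+\delta\ell,j\ell+\epsilon\ell)$ and gets $\Omega(j-\epsilon,\ell+\delta\ell,j\ell+\epsilon\ell)=-4$), so the only thing missing is carrying out this routine evaluation. Your worry that the action might swap $\mathcal D^+$ and $\mathcal D^-$ is unnecessary: since $\sG_2'$ preserves $\times$ it preserves $\psi$, hence carries $\mathcal D^+_z$ to $\mathcal D^+_{gz}$, so transitivity alone already reduces the claim to one point.
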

\begin{proof}
	The action $\G$ on $\SS$ is transitive and preserves the three form $\Omega.$ Hence, it suffices to compute $\Omega$ at any point of $\SS.$ In particular, the imaginary octonion $\delta$ has norm $+1$. Hence, $\delta \in \SS$ and $\psi_\delta=\delta\times (\cdotp) : \delta^\bot\to\delta^\bot$.

	Using the notation $z=x+y\ell$ from Section \ref{sec split oct split quat} we compute 
	\[\xymatrix{\delta\cdot (j-\epsilon)=-\epsilon +j~,&\delta\cdot (\ell+\delta\ell)= \delta\ell+\ell&\text{and}&\delta\cdot(j\ell+\epsilon\ell)=\epsilon\ell+j\ell~.}\]
	Thus, $\mathcal D_\delta^+=\span(j-\epsilon,\ell+\delta\ell,j\ell+\epsilon\ell)$. 
	% \[\xymatrix@=0em{\delta\cdot (j-\epsilon)=\delta j-\delta\epsilon=-\epsilon +j\\\delta\cdot (\ell+\delta\ell)= \delta \ell+\delta(\delta\ell)=\delta\ell+\ell\\\delta\cdot(j\ell+\epsilon\ell)=\delta(j\ell)+\delta(\epsilon\ell)=\epsilon\ell+j\ell}\]
As $(j-\epsilon)\cdotp(\ell+\delta\ell) = 2(j\ell-\epsilon \ell)~,$ the restriction of $\Omega$ to $\mathcal D_\delta^+$ is given by 
\[\Omega(j-\epsilon,\ell+\delta\ell,j\ell+\epsilon\ell)=2\langle j\ell-\epsilon \ell, j\ell+\epsilon\ell\rangle=-4.\]
We conclude that $\Omega_\delta$ is nonzero. 
\end{proof}

\begin{corollary}\label{cor:EmbeddingSL3}
The $\sG_2'$-stabilizer of a point $z\in\SS$ is isomorphic to $\sSL(3,\R)$.
% , and the $\sG_2'$-stabilizer of a point in $\mathbf{P}^{3,3}$ is isomorphic to $\sSL(3,\R)\times \Z_2$. 
\end{corollary}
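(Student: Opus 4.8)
The plan is to mimic the proof of Corollary~\ref{cor:EmbeddingSU21} almost verbatim, replacing the almost complex structure $\J$ by the paracomplex structure $\psi$ and the Hermitian form by a real bilinear structure adapted to the paracomplex splitting. First I would record the invariant data that $\Stab_{\sG_2'}(z)$ must preserve: the quadratic form $\q$ restricted to $z^\perp$ (signature $(3,3)$), the paracomplex structure $\psi_z = L_z$, and, by Lemma~\ref{lem 3form on dist}, the nonvanishing restriction of $\Omega$ to the eigendistribution $\mathcal D^+_z$ (equivalently, the induced volume form on $\mathcal D^+$). Preserving $\psi$ means preserving the two transverse isotropic $3$-planes $\mathcal D^\pm_z$, and as recalled in \textsection\ref{ss:pseudosphere} the subgroup of $\sO(3,3)$ doing this is $\sGL(3,\R)$, acting on $\mathcal D^+_z$ by the standard representation and on $\mathcal D^-_z$ by its dual. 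So the stabilizer sits inside $\sGL(3,\R)$; preserving in addition the volume form on $\mathcal D^+$ cuts this down to $\sSL(3,\R)$, giving $\Stab_{\sG_2'}(z) < \sSL(3,\R)$.

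Next I would upgrade this containment to an equality. As in the $\sSU(2,1)$ case, the quickest route is connectedness plus a dimension count: $\sSL(3,\R)$ is connected, $\dim\SS = 6$ and $\dim\sG_2' = 14$, so by transitivity of the $\sG_2'$-action on $\SS$ (used in the proof of Lemma~\ref{lem 3form on dist}) the stabilizer has dimension $14 - 6 = 8 = \dim\sSL(3,\R)$; combined with the inclusion of a connected group into a connected group of the same dimension, this forces $\Stab_{\sG_2'}(z) = \sSL(3,\R)$. One must be slightly careful that the stabilizer is itself connected, or alternatively that the full $8$-dimensional $\sSL(3,\R)$ really does act: either argue that $\sG_2'$ is connected so $\sG_2'/\Stab$ connected forces, together with $\SS$ connected, that $\Stab$ meets every component needed, or — cleaner — exhibit directly that every $g \in \sGL(3,\R)$ preserving the $\mathcal D^+$-volume extends to an element of $\sG_2'$ fixing $z$, using that the $\times$-product and hence the whole octonion structure is reconstructed from its action on $\mathcal D^+_z \oplus \mathcal D^-_z \oplus \mathbb{R}z$, in analogy with the rigidity argument at the end of the proof of Lemma~\ref{lem:StabilizerSplitQuaternion}.

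The main obstacle, such as it is, lies in this last point: showing that the inclusion $\Stab_{\sG_2'}(z) \hookrightarrow \sSL(3,\R)$ is onto rather than merely an open subgroup. The dimension count only gives surjectivity onto the identity component a priori, so one either invokes connectedness of $\sSL(3,\R)$ together with the fact that a Lie subgroup of full dimension in a connected Lie group is the whole group, or one checks that $-\Id_{\mathcal D^+}$ together with a dual action on $\mathcal D^-$ and a fixed $z$ does define an octonion algebra automorphism (it does, since $-\Id$ on $\mathcal D^+$ and $-\Id$ on $\mathcal D^-$ is compatible with the structure maps $\mathcal D^+ \times \mathcal D^+ \to \mathcal D^-$ etc. dictated by the multiplication table). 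In the write-up I would simply follow the template of Corollary~\ref{cor:EmbeddingSU21}: ``$\Stab_{\sG_2'}(z)$ preserves the paracomplex splitting and the induced volume on $\mathcal D^+$, hence $\Stab_{\sG_2'}(z) < \sSL(3,\R)$; equality follows from connectedness of $\sSL(3,\R)$ and a dimension count, since $\dim\SS = 6$ and $\dim\sG_2' = 14$.''
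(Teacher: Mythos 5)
Your proposal is correct and takes essentially the same route as the paper: the stabilizer preserves the isotropic distributions $\mathcal D^\pm_z$ and, by Lemma \ref{lem 3form on dist}, the nonvanishing form $\Omega|_{\mathcal D^+_z}$, giving $\Stab_{\sG_2'}(z)<\sSL(\mathcal D_z^+)$, with equality from connectedness of $\sSL(3,\R)$ and the dimension count $14-6=8$. The surjectivity worry you raise is settled exactly as you (and the paper) indicate: a closed subgroup of full dimension in the connected group $\sSL(3,\R)$ is open, hence is all of $\sSL(3,\R)$, so no explicit extension of elements like $-\Id_{\mathcal D^+}$ is needed.
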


\begin{proof}
For $z\in\SS$, the stabilizer $\Stab_{\sG_2'}(z)$ preserves the distributions $\mathcal D^+_z\oplus \mathcal D^-_z$. Since $\sG_2'$ preserves the $3$-form $\Omega$, and $\Omega|_{\mathcal D^+_z}$ is nonzero, we have $\Stab_{\sG_2'}(z)<\sSL(\mathcal D_z^+)$. As in Corollary \ref{cor:EmbeddingSU21}, equality follows from connectedness of $\sSL(3,\R)$ and a dimension count. 
% The statement for $\mathbf{P}^{3,3}$ follows immediately. 
% Namely,  $\dim(\mathbf S^{3,3})=6$ and $\dim(\sG_2')=14$, so $\dim(\Stab_{\sG_2'}(z))=8=\dim(\sSL(3,\R).$ 
\end{proof}
\begin{remark}\label{rem stab of poin in proj33}
Since the antipodal map exchanges the $\pm$ distributions, the $\sG_2'$-stabilizer of a point in $[x]\in\mathbf{P}_+(\Im(\oct'))$ is isomorphic to the unique nonsplit extension 
		\[1\to\sSL(3,\R)\to \widehat\sSL(3,\R)\to\Z_2~.\]
\end{remark}

% \textcolor{red}{Let $\mathbf{P}^{3,3}$ be the quotient of $\SS$ by the antipodal map, the group $\sG_2'$ act transitively on $\mathbf{P}^{3,3}$ and the stabilizer of a point is isomorphic to $\sSL(3,\R)\times \Z_2.$}

%%%%%%%%%%%%%%%%%%%%%%%%%%
%%%%%%%%%%%%%%%%%%%%%%%%%%
%%%%%%%%%%%%%%%%%%%%%%%%%%

% \section{Holomorphic curves in $\H$}

\subsection{Alternating holomorphic curves into $\H$}\label{sec alt hol curves} Let $S$ be a connected oriented surface. 
\begin{definition}
A \emph{holomorphic curve} in $\H$ is an immersion $f:S\to(\H,\J)$ such that the tangent bundle of $S$ is $\J$-invariant, that is
\[\J(df(\T S))=df(\T S) ~,\]
and the corresponding action of $\J$ on $\T S$ is orientation preserving.
\end{definition}

Given a holomorphic curve $f$, there is a unique complex structure $\j$ on $S$ such that 
\[df\circ \j = \J\circ df.\] 
We will refer to $\j$ as \emph{the induced complex structure}.

\begin{lemma}\label{lem:analytic}
Holomorphic curves into $\H$ are analytic.
\end{lemma}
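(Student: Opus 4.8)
The statement is that a holomorphic curve $f : S \to (\H,\J)$ is real-analytic with respect to the induced complex structure $\j$ on $S$. The natural approach is to view $f$ as a $\J$-holomorphic curve (pseudoholomorphic curve) into the almost complex manifold $(\H,\J)$ and invoke elliptic regularity for the associated nonlinear Cauchy–Riemann equation, using that all the defining data are real-analytic. Concretely, $\H$ is a real-analytic submanifold of $\Im(\oct')$ (it is a level set of the polynomial $\q$), and the almost complex structure $\J_z(w) = z \times w$ is a real-analytic tensor on $\H$ since $\times$ is bilinear. Thus $(\H,\J)$ is a real-analytic almost complex manifold.

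The plan is as follows. First, I would fix a point $p \in S$ and a local holomorphic coordinate $\zeta$ on $(S,\j)$ near $p$, so that $f$ restricted to this chart is a map $U \subset \C \to \H$ satisfying $df \circ \j = \J \circ df$, i.e. $\partial_{\bar\zeta} f = \J(f)\,\partial_{\bar\zeta} f \cdot \i$ in the appropriate sense — equivalently, $f$ satisfies a first-order quasilinear elliptic PDE of Cauchy–Riemann type whose coefficients depend real-analytically on $f$ (because $\J$ is real-analytic in $z$). Since $f$ is an immersion, it is in particular $C^1$, and standard bootstrapping for the $\bar\partial_{\J}$-equation (e.g. the elliptic regularity theory for pseudoholomorphic curves, as in McDuff–Salamon) upgrades $f$ to $C^\infty$. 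Then, because the nonlinearity is real-analytic, one applies the analytic version of elliptic regularity (Morrey's theorem on analytic hypoellipticity, or the analytic elliptic bootstrap for the Cauchy–Riemann operator with real-analytic coefficients) to conclude that $f$ is real-analytic near $p$. Since $p$ was arbitrary, $f$ is analytic on all of $S$.

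An equivalent and perhaps cleaner route avoids heavy PDE machinery: one can use the fact, due to Bryant and others for the almost complex $6$-sphere and equally valid here, that $\J$-holomorphic curves into $(\H,\J)$ admit a Frenet-type framing built out of iterated covariant derivatives and cross products — all real-analytic operations — and then observe that the Frenet data satisfy an analytic ODE system along the curve in the relevant direction; however this presupposes the framing, which at this point in the paper has only been announced, so the self-contained argument should rest on the PDE/elliptic-regularity fact about $\J$-holomorphic curves into real-analytic almost complex manifolds. I would simply cite the standard result that pseudoholomorphic curves into a real-analytic almost complex manifold are real-analytic.

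The main obstacle is purely expository: one must be careful that the relevant regularity theorem applies in the pseudo-Riemannian / non-compact setting. But analyticity of $\J$-holomorphic curves is a local statement about the first-order elliptic system $\bar\partial_{\J} f = 0$, and ellipticity of this operator depends only on $\J$ being an almost complex structure (the symbol is that of $\bar\partial$ twisted by $\J$), not on any metric signature; so the pseudo-Riemannian nature of $\g$ plays no role. Hence the proof reduces to: (i) $(\H,\J)$ is real-analytic, (ii) $f$ solves an elliptic first-order system with real-analytic nonlinearity, (iii) apply analytic elliptic regularity. I expect the write-up to be short, essentially a pointer to the standard reference together with the remark that $\J$ is real-analytic because $\times$ is algebraic.
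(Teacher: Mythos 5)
Your proposal is correct and follows essentially the same route as the paper: observe that $\J$ is a real-analytic tensor on $\H$, note that a holomorphic curve solves the nonlinear elliptic Cauchy--Riemann equation $df\circ\j=\J\circ df$, and invoke Morrey's analytic elliptic regularity to conclude analyticity. The only cosmetic difference is that the paper justifies analyticity of $\J$ via the analytic $\sG_2'$-action on $\H$ rather than via the algebraicity of $\times$, which amounts to the same thing.
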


\begin{proof}
Since $\G$ acts analytically on $\H$, the almost-complex structure $\J$ depends analytically on the base point. Moreover, holomorphic curves into $\H$ are solution to the Cauchy-Riemann equation $df\circ \j= \J\circ df$, which is a non-linear elliptic equation.
The result then follows from the main result of \cite{morrey}: solutions to a non-linear elliptic equations with analytic coefficients are analytic.
\end{proof}

Given a holomorphic curve $f$ from $S$ into $\H$, the pullback bundle $f^*\T\H$ is equipped with the pullback metric, connection and  complex structure that we still denote by $\g,~\nabla$ and $\J$ respectively.

A holomorphic curve $f$ is called \emph{spacelike} if the induced metric on $S$ is Riemannian. In this case, identifying $\T S$ with $df(\T S)\subset f^*\T\H$, there is $\g$-orthogonal splitting
\[f^*\T\H=\T S\oplus (\T S)^\bot~.\]
Since the almost complex structure $\J$ is compatible with the metric $\g,$ this splitting is $\J$-invariant. 
The \emph{second fundamental form} of $f$ is the $1$-form $\II\in\Omega^1(S,\Hom(\T S,(\T S)^\bot))$
 defined by 
\[\II(X,Y) = (\nabla_X Y)^\bot~.\]
Recall that $\II$ is symmetric and that $f$ is totally geodesic if and only if $\II=0.$

\begin{lemma}\label{lem:SecondFund}
Let $f:S\to \H$ be a spacelike holomorphic curve with induced complex structure $\j$ and second fundamental form $\II$. Then for any vector fields $X,Y$ on $S$, we have
\[\II(\j (X),Y)=\II(X,\j (Y))= \J(\II(X,Y))~.\]
\end{lemma}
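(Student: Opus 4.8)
The plan is to differentiate the defining relation $df\circ\j=\J\circ df$ and combine it with the fact that $\J$ is parallel in the relevant directions. First I would recall that since $f$ is holomorphic, $\nabla_X(\J Z)=\J(\nabla_X Z)+(\nabla_X\J)Z$ for a section $Z$ of $f^*\T\H$, where $(\nabla_X\J)Z=(X(f)\times Z)^{\perp}$ by \eqref{eq:nablaJ} (here $\perp$ meaning orthogonal projection onto $f(\cdot)^{\perp}$). The key observation is that when $Z=df(Y)$ lies in the tangent plane $df(\T S)$, Lemma \ref{lem: times on C line 0} tells us the restriction of $\times$ to the complex line $df(\T S)$ vanishes, so $df(X)\times df(Y)=0$; hence $(\nabla_X\J)(df Y)=0$. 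In other words, $\J$ behaves as if parallel along $f$ when applied to tangent vectors.

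Next I would compute $\II(\j X,Y)$ directly. Using $df(\j X)=\J\,df(X)$ and the torsion-free property, write $\nabla_{\j X}\,df(Y)$ and take the normal component. The symmetry of $\II$ lets me instead work with $\II(X,\j Y)$: we have $df(\j Y)=\J\,df(Y)$, so $\nabla_X\big(\J\,df(Y)\big)=\J\big(\nabla_X df(Y)\big)+(\nabla_X\J)(df Y)=\J\big(\nabla_X df(Y)\big)$ by the previous paragraph. Taking the normal component of both sides and noting that $\J$ preserves the $\g$-orthogonal, $\J$-invariant splitting $f^*\T\H=\T S\oplus(\T S)^{\perp}$ — so $\J$ commutes with orthogonal projection onto $(\T S)^{\perp}$ — gives $\II(X,\j Y)=\J\big(\II(X,Y)\big)$. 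Then $\II(\j X,Y)=\II(Y,\j X)=\J(\II(Y,X))=\J(\II(X,Y))$ again by symmetry of $\II$, completing the chain of equalities.

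The only genuine subtlety — and the step I expect to need the most care — is the claim $df(X)\times df(Y)=0$ for $X,Y$ tangent to $S$, i.e.\ applying Lemma \ref{lem: times on C line 0} correctly: that lemma says $\times$ vanishes on the complex line $\span(w,\J_z w)$, and since $df(\T S)$ is exactly such a complex line (it is $\J$-invariant and two-real-dimensional), any two tangent vectors $df(X),df(Y)$ span (over $\R$, inside $\span(w,\J w)$) at most this complex line, so their product is zero. One should also double-check that the orthogonal projection $(\,\cdot\,)^{\perp}$ appearing in \eqref{eq:nablaJ} is onto $f(\cdot)^{\perp}=f^*\T\H$, which contains $(\T S)^{\perp}$, so restricting further to $(\T S)^{\perp}$ is harmless. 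Everything else is a routine bookkeeping of the Levi-Civita connection and the compatibility of $\J$ with the splitting.
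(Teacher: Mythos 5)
Your proposal is correct and follows essentially the same route as the paper: differentiate $df\circ\j=\J\circ df$, use Lemma \ref{lem: times on C line 0} together with \eqref{eq:nablaJ} to see that $(\nabla_X\J)$ annihilates tangent vectors, commute $\J$ with the orthogonal projection onto $(\T S)^\perp$ (compatibility of $\J$ with $\g$ and with the splitting), and finish by symmetry of $\II$. The subtlety you flag — that $df(\T S)$ is exactly a complex line $\span(w,\J w)$, so the lemma applies — is precisely the step the paper's proof invokes as well, so there is no gap.
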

\begin{proof}
By definition we have
\[\II(X,\j(Y)) = \big( \nabla_X(\J(Y))\big)^\bot 
= \big( (\nabla_X\J)(Y) + \J(\nabla_X Y) \big)^\bot~.\]
Since $X$ are $Y$ contained in the complex line,  Lemma \ref{lem: times on C line 0} implies $X\times Y=0.$ 
Thus, $(\nabla_X\J)(Y)=0$ by equation \eqref{eq:nablaJ}. 
Since $\J$ is compatible with the metric, we have 
\[\II(X,\j(Y)) =(\J(\nabla_X Y))^\bot=\J(\nabla_X Y)^\bot=\J(\II(X,Y))~.\]
Since $\II$ is symmetric, we have
\[\J(\II(X,Y))= \J(\II(Y,X))=\II(Y,\j(X)) = \II(\j(X),Y)~.\]
\end{proof}

\begin{proposition}\label{prop:VanishingMeanCurv}
Let $f$ be a spacelike holomorphic curve which is not totally geodesic. Then
\begin{enumerate}
	\item $f$ has vanishing mean curvature, and
	\item there is a unique $\J$-invariant rank 2 subbundle $\N S\subset(\T S)^\bot$ such that the image of $\II$ lies in $\N S$.
\end{enumerate}
\end{proposition}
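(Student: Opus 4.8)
The plan is to exploit the relation between $\II$ and the induced complex structure from Lemma~\ref{lem:SecondFund} to reduce the statement to a pointwise linear-algebra fact about symmetric bilinear forms on a $2$-dimensional complex vector space. Fix a point $p\in S$ where $\II_p\neq 0$; since $f$ is analytic by Lemma~\ref{lem:analytic}, the zero set of $\II$ is either all of $S$ (excluded by hypothesis) or a proper analytic subset, and it suffices to establish both assertions on the open dense complement, then extend by continuity. On $\T_pS$ choose a unit vector $X$ with $\j(X)=Y$ an orthonormal frame. By Lemma~\ref{lem:SecondFund}, $\II(X,X)=v$, $\II(X,Y)=\II(Y,X)=\J v$, and $\II(Y,Y)=\II(Y,\j(X))=\J\II(Y,X)=\J^2 v=-v$, where $v=\II(X,X)\in(\T_pS)^\bot$.

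For the mean curvature: the trace of $\II$ with respect to the induced metric is $\II(X,X)+\II(Y,Y)=v+(-v)=0$, so the mean curvature vanishes identically on the complement of the zero set of $\II$, hence everywhere by density and continuity. This proves (1). For (2): the image of $\II_p$ is spanned by the values $\{\II(Z,W):Z,W\in\T_pS\}$, and by the computation above (extending bilinearly) every such value is an $\R$-linear combination of $v$ and $\J v$. Since $v\neq 0$ and $\J$ is compatible with the metric (so $\J v\neq 0$ and $\langle v,\J v\rangle=0$, whence $v,\J v$ are linearly independent), the image of $\II_p$ is exactly the $\J$-invariant plane $\N_pS:=\span_\R(v,\J v)\subset(\T_pS)^\bot$. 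The rank-$2$ distribution $\N S$ thus defined on the open dense set is $\J$-invariant by construction; smoothness follows since $v=\II(X,X)$ varies smoothly with a local frame, and uniqueness is immediate since any $\J$-invariant subbundle containing the image of $\II$ must contain $v$ and $\J v$, hence all of $\N S$, and conversely must be contained in the span of $\II$ if it is to be the smallest such — more precisely, the image of $\II$ at each point already equals this plane, forcing $\N S$ to equal it. Finally, one checks $\N S$ extends smoothly (indeed analytically) across the zero locus of $\II$: this is where a short argument is needed, using that near an isolated-type zero the leading Taylor coefficient of $\II$ still takes values in a well-defined $\J$-line; alternatively one observes directly that $\N S$ admits a global description via the Frenet framing developed in \textsection\ref{sec alt hol curves}, which is defined on all of $S$.

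The main obstacle I anticipate is precisely the behavior of $\N S$ at points where $\II$ vanishes. On the open dense set the argument is pure linear algebra, but to get a genuine \emph{subbundle} of $(\T S)^\bot$ over all of $S$ one must either invoke analyticity to control the order of vanishing of $\II$ and extract a limiting $\J$-line, or — more cleanly — set up the Frenet framing first and \emph{define} $\N S$ as one of its terms, thereby getting the subbundle globally and for free, with the present proposition becoming the statement that this a priori global subbundle does contain $\operatorname{im}(\II)$. I expect the paper takes the latter route, so the honest content here is the pointwise computation above plus the observation that the $\J$-invariant plane it produces is independent of all choices.
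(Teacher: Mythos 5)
Your argument is essentially the paper's: the same trace computation via Lemma \ref{lem:SecondFund} for (1), and for (2) the same pointwise observation that the image of $\II$ is the $\J$-invariant plane spanned by $\II(X,X)$ and $\J\II(X,X)$, combined with analyticity to rule out an open zero set of $\II$, the paper being just as brief as you are about extending the resulting complex line bundle across the zero locus. One caveat: your proposed alternative of defining $\N S$ through the Frenet framing is circular in the paper's logical order, since the Frenet framing (Definition \ref{def frenet framing}) is introduced only after, and in terms of, the bundle $\N S$ produced by this proposition, so the analyticity/leading-coefficient route you sketch first is the one that actually carries the argument.
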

We will call the bundle $\N S$ the \emph{normal bundle of $f$}.
\begin{proof}
Let $(X_1,X_2)$ be a local orthonormal framing of $\T S$ with $\j (X_1)=X_2$.
Lemma \ref{lem:SecondFund} gives
\[\II (X_2,X_2)= \II(X_2,\j(X_1)) = \J (\II(X_2,X_1))= \J^2(\II(X_1,X_1))=- \II(X_1,X_1)~.\]
In particular, the trace of $\II$ vanishes, and so the mean curvature of $f$ is zero.

For item $(2)$, there is an open set $U\subset S$ on which the image of $\II$ has maximal dimension. Locally, the image of $\II$ is spanned by $\{ \II(X_1,X_1),\II(X_1,X_2),\II(X_2,X_2)\}$.
The image of $\II$ has rank 2 on $U$ since $\II(X_1,X_1)=-\II(X_2,X_2)= -\J(\II(X_1,X_2))$ and $f$ is not totally geodesic. 

We claim that $U$ is dense. If not were not dense, there would be an open set on which the rank of $\II$ is $0$, meaning that $f(S)$ would be totally geodesic on some open space. But holomorphic curves in $\H$ are analytic by Lemma \ref{lem:analytic}, so $f(S)$ would be totally geodesic everywhere. Thus the image of $\II$ defines a complex line bundle on a dense open set, which thus extends to a complex line bundle $\N S$ on $S$.
\end{proof}
We now define the notion of a alternating holomorphic curve in $\H.$
% \BC{Why do we call this alternating? Just because we wanted a name?}
\begin{definition}\label{def:NonDegHolCurv}
An alternating holomorphic curve is a holomorphic curve $f:S\to\H$ which is spacelike, not totally geodesic, and has negative-definite normal bundle $\N S$.
\end{definition}
We define the \emph{binormal bundle} $\B S$ of an alternating holomorphic map $f$ to be the rank two bundle 
\[\B S=(\T S\oplus \N S)^\perp.\]
Hence, an alternating holomorphic curve defines an orthogonal splitting
\[f^*\T\H = \T S\oplus\N S\oplus\B S.\]
Since $\J$ preserves $\g,$ $\T S$ and $\N S$, $\J$ also preserves the binormal bundle $\B S$.
\begin{remark}
	In \cite{XnieCyclic}, Nie defines the notion of an A-surface in $\mathbf{H}^{p,q}$. Such surfaces are assumed to be spacelike and have rank two ``higher'' normal bundles with alternating signature. In particular, for $A$-curves in $\H$ the image of the second fundamental form $\II$ is assumed to define a rank two subbundle which is assumed to be negative definite. 
	Thus, alternating holomorphic curves define $A$-surfaces in $\H$. 
\end{remark}

 \subsection{Frenet Framing}\label{sec Frenet Framing}
% \subsection{Totally geodesic case}

The trivial bundle $\underline \Im(\oct')=\H\times\Im(\oct')\to\H$ has a tautological section $\sigma$ which defines a splitting $\underline{\Im}(\oct')=\underline{\R}\oplus \T\H$, where $\sigma(x)=(1,0)$. In this splitting, the metric $g_{\Im(\oct')}=-1\oplus\g_{\H}$, and the trivial connection $\underline{D}$ decomposes as \[\underline{D}=\begin{pmatrix}
	d&\mathbb{1}^*\\\mathbb 1&\nabla
\end{pmatrix},\]
where $\nabla $ is the Levi-Civita connection on $\H$, $\mathbb 1\in\Omega^1(\H,\Hom(\underline\R,\T\H))$ is the identity and $\mathbb 1^*$ is the adjoint of with respect to the metric $g_{\Im(\oct')}$.  
By construction, the product with the tautological section $\sigma$ defines the almost complex $\J$ on $\T\H$, 
\[(1,0)\times (u,v)=(0,\J(v)).\]

Given a map $f:S\to\H$, let $(E,D,\g,\times)$ denote the pullback of the trivial bundle $\underline \Im(\oct')$, the trivial connection $\underline D$, the signature $(4,3)$ metric $g_{\Im(\oct')}$ and the product $\times:\Lambda^2\Im(\oct')\to \Im(\oct')$. We have the following decompositions
\[E=\underline \R\oplus f^*T\H\ , \ \ \ \ \ D=\begin{pmatrix}
	d&df^*\\df&f^*\nabla
\end{pmatrix}\ \ \ \ \ and\ \ \ \ \ \g=-1\oplus f^*\g_{\H}.\]
The map $f$ is identified with the pullback of the tautological section $f^*\sigma:S\to\underline \R$, and $(1,0)\times (u,v)=(0,f^*\J(v)).$

\begin{definition}\label{def frenet framing}
When $f:S\to\H$ is an alternating holomorphic curve, $E=f^*\underline\Im(\oct')$ decomposes as 
	\begin{equation}
 	\label{eq Frenet framing bundle} E=\underline \R\oplus \T S\oplus \N S\oplus \B S.
 \end{equation} 
 We call this decomposition, the Frenet framing of $f.$
\end{definition}

In the Frenet framing, the metric decomposes as $\g=-1\oplus\g_\T\oplus \g_\N \oplus \g_\B$, where $1\oplus\g_\T\oplus (-\g_\N)\oplus \g_\B$ is positive definite, and the almost complex structure decomposes as $\J=\J_\T\oplus\J_\N\oplus \J_\B$. In particular,
 \begin{equation}
 	\label{eq pullback J in FF}(1,0,0,0)\times (a,t,n,b)=(0,\J_\T(t),\J_\N(n),\J_B(b)).
 \end{equation}
% \textcolor{red}{Say something about the product $\times$ in the Frenet framing? }

Also, in the Frenet framing the connection $D$ decomposes as
\begin{equation}
	\label{eq Frenet Framing nabla}
	D= \begin{pmatrix}d&\mathbb 1^*&0&0\\
		\mathbb 1&\nabla^\T & S_2 & 0\\ 0&\II & \nabla^\N & S_3 \\0& 0 & \III & \nabla^\B 
	\end{pmatrix} 
\end{equation}
where:
 \begin{itemize}
\item $\nabla^\T,~\nabla^\N,~\nabla^\B$ are metric connections on $(\T S,\g_\T,\J_\T),~(\N S,-\g_\N,\J_\N),~(\B S,\g_\B,\J_\B)$, respectively; they are unitary with respect to the induced hermitian metrics.
\item $\mathbb 1\in\Omega^1(\underline \R, \T S)$  is the identity between the tangent bundle of $S$ and $\T S\cong df(\T S)\subset f^*\T \H$. 
\item  $\II\in\Omega^1(S,\Hom(\T S,\N S))$ is the second fundamental form of $f$.
\item $\III\in\Omega^1(S,\Hom(\N S,\B S))$ will be called the \emph{third fundamental form} of $f$.
\item $S_2\in\Omega^1(S,\Hom(\N S,\T S))$ and $S_3\in\Omega^1(S,\Hom(\B S,\N S))$ satisfy
\[\xymatrix{-\g_\N(\II(X,Y),\nu)=\g_\T(Y,S_2(X,\nu))~&\text{and}& \g_\B(\III(X,\nu),\beta) = -\g_\N(\nu,S_3(X,\beta))~,}\]
where $X,Y$ are vector fields on $S$ and $\nu, \beta$ are sections of $\N S$ and $\B S$, respectively. 
% \footnote{Note sure if we need this: We will write this conditions as
% \begin{equation}\label{eq:ShapeOperator}
% \xymatrix{S_2 =  \g_\T \circ \II^*\circ -\g_\N^{-1}~&\text{and}& ~S_3=-\g_\N\circ \III^* \circ \g_\B~,}
% \end{equation}
% where we view $\g_\T$ as a map $\T S\to\T S^*$, and similarly for $\g_\N$ and $\g_\B$.}
\end{itemize}

\begin{lemma}The tensors $\II$, $\III$, $S_2,$ and $S_3$ are all covariantly constant.
\end{lemma}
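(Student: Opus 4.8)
The plan is to extract these identities from the flatness of the trivial connection $D$ on $E = f^*\underline{\Im}(\oct')$, since $E$ is the pullback of a trivial (hence flat) bundle, so $D^2 = 0$. Writing $D = D_0 + A$ where $D_0$ is the block-diagonal part (built from $d$, $\nabla^\T$, $\nabla^\N$, $\nabla^\B$) and $A$ is the off-diagonal part (built from $\mathbb 1$, $\mathbb 1^*$, $\II$, $S_2$, $\III$, $S_3$) appearing in \eqref{eq Frenet Framing nabla}, the equation $D^2 = 0$ decomposes by bidegree into $D_0^2 + D_0\wedge A + A\wedge A = 0$. The term $A\wedge A$ lands in blocks two steps away from the diagonal, the term $D_0 \wedge A = d^{D_0} A$ is the covariant exterior derivative of $A$ (landing one step off-diagonal), and $D_0^2$ is block-diagonal (it collects the curvatures of the $\nabla^\bullet$'s together with quadratic corrections). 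The covariantly-constant claim for $\II$, $\III$, $S_2$, $S_3$ is precisely the vanishing of the one-step-off-diagonal part of $D^2 = 0$, i.e.\ $d^{D_0}A + (A\wedge A)_{\text{1-step}} = 0$; the point is that the relevant entries of $A\wedge A$ that could land one step off the diagonal actually vanish.

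First I would make precise what ``covariantly constant'' means: $\II \in \Omega^1(S,\Hom(\T S,\N S))$ is covariantly constant with respect to the connection induced by $\nabla^\T$ on $\T S$ and $\nabla^\N$ on $\N S$, i.e.\ $d^\nabla \II = 0$ where $d^\nabla$ is the induced exterior covariant derivative; similarly for $\III$ with $\nabla^\N,\nabla^\B$, for $S_2$ with $\nabla^\N,\nabla^\T$, and for $S_3$ with $\nabla^\B,\nabla^\N$. Then I would compute the $\Hom(\T S,\N S)$-entry of $D^2 = 0$: the contributions come from $d^\nabla\II$ together with $\III \wedge \mathbb 1$ (composing $\T S \to \N S$ via $\II$ is not here; rather $\T S \xrightarrow{\ \mathbb 1^{-1}?\ }$...). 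More carefully, tracking the grading $\underline\R \to \T S \to \N S \to \B S$, the maps that shift grading by one and could contribute to the $\Hom(\T S, \N S)$ component of $A\wedge A$ are $\mathbb 1^* \circ \II$-type and $S_2 \circ \II$-type and $S_3 \circ \III$-type compositions; one checks these land in the wrong source/target blocks or cancel by the symmetry relations defining $S_2, S_3$ in terms of $\II, \III$ via the metric. The upshot is that each one-step-off-diagonal block of $D^2 = 0$ reads $d^\nabla(\text{tensor}) = 0$ with no quadratic correction surviving, which is exactly the assertion.

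The main obstacle is the bookkeeping: one must carefully verify that every quadratic term $(A\wedge A)$ that a priori could contribute one step off-diagonal actually does not — for instance that $\mathbb 1$ composed with $S_2$ contributes to a block-diagonal entry (the Gauss equation) rather than an off-diagonal one, and that the adjointness relations $-\g_\N(\II(X,Y),\nu) = \g_\T(Y,S_2(X,\nu))$ and $\g_\B(\III(X,\nu),\beta) = -\g_\N(\nu,S_3(X,\beta))$ are used only to ensure $D$ is a metric connection (which we already know, since $D$ preserves $\g$) and do not re-enter here. An alternative, perhaps cleaner route: observe directly that $D$ is flat and that the Frenet framing is adapted to the $\sG_2'$-geometry, so the statement is a special case of the standard fact that for an immersed submanifold the derivatives of the fundamental forms are governed by the Codazzi-type equations, which here degenerate to $d^\nabla(\cdot) = 0$ precisely because the curvature of the ambient flat bundle vanishes and the relevant Gauss–Codazzi mixed terms are absent due to the holomorphicity constraints established in Lemma \ref{lem:SecondFund} (which forced $(\nabla_X\J)(Y) = 0$ along the curve). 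I would present it via the $D^2 = 0$ computation, as that makes the vanishing of the quadratic terms a transparent consequence of the block structure \eqref{eq Frenet Framing nabla}.
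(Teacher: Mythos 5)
Your argument is correct, but it packages the proof differently from the paper. The paper works with the pullback Levi--Civita connection $\nabla$ of $\H$ alone: it notes that, since $\H$ has constant curvature, $\pi_\B(R^\nabla(X,Y)\nu)=0$ for $X,Y$ tangent and $\nu$ normal, and then expands $\pi_\B(\nabla_X\nabla_Y\nu-\nabla_Y\nabla_X\nu-\nabla_{[X,Y]}\nu)$ in the framing \eqref{eq Frenet Framing nabla} to read off the Codazzi-type identity $d^{\nabla^{\N\B}}\III=0$, with the other three tensors handled by the same computation. You instead use the flatness of the full pullback connection $D$ on $E=\underline\R\oplus\T S\oplus\N S\oplus\B S$ and the grading of the splitting: writing $D=D_0+A$ with $A$ strictly one-step off-diagonal, $D^2=0$ decomposes into a block-diagonal part ($D_0^2+(A\wedge A)_0$, the Gauss-type equations), a two-step part ($(A\wedge A)_{\pm2}$), and a one-step part, which is exactly $d^{D_0}A=0$, i.e.\ the vanishing of the exterior covariant derivatives of $\mathbb 1$, $\mathbb 1^*$, $\II$, $S_2$, $\III$, $S_3$ all at once. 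This is a legitimate and arguably cleaner route: it treats all four tensors uniformly and replaces the curvature computation by pure bookkeeping, the constant curvature of $\H$ being already encoded in the flatness of $D$ (via the $\mathbb 1,\mathbb 1^*$ entries). Two small corrections to your write-up: $A\wedge A$ lands in the diagonal \emph{and} the two-step blocks, not only two steps off; and no cancellation via the adjunction relations defining $S_2,S_3$ is needed anywhere --- the one-step blocks of $A\wedge A$ vanish identically because every composition $A_{ik}\wedge A_{kj}$ with $|i-j|=1$ involves a zero entry of $A$, so your hedging in the middle paragraph about ``wrong source/target blocks or cancel by the symmetry relations'' should be replaced by this direct observation. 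With that cleaned up, the proof is complete and matches the paper's notion of covariant constancy (the antisymmetrized, i.e.\ exterior covariant, derivative vanishing).
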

\begin{proof}We show $\III$ is covariantly constant, the computations for $\II$, $S_2$ and $S_3$ are similar. This follows from the Codazzi equations in this context. 
As $\H$ has constant sectional curvature, its curvature tensor $R$ satisfies $R(X,Y)(\nu)=0$. Hence,
\[0=\pi_\B(R(X,Y)\nu)=\pi_\B(\nabla_X\nabla_Y\nu-\nabla_Y\nabla_X\nu-\nabla_{[X,Y]}\nu)\]
A computation using the Frenet framing \eqref{eq Frenet Framing nabla} shows
\[\pi_\B(\nabla_X\nabla_Y\nu)=\III(X,\nabla_Y\nu)+\nabla_X^\B(\III(Y,\nu)).\]
Thus,
\[0=\III(X,\nabla_Y\nu)+\nabla_X^\B(\III(Y,\nu))-\III(Y,\nabla_X\nu)+\nabla_Y^\B(\III(X,\nu)-\III([X,Y],\nu)\]
\[=\nabla_X^{\N\B}(\III(Y,\nu))-\nabla_Y^{\N\B}(\III(X,\nu))-\III([X,Y],\nu)=2(\nabla^{\N\B}\III)(X,Y,\nu).\]
where $\nabla^{\N\B}$ is the induced connection on $\Hom(\N S,\B S).$
\end{proof}
\begin{lemma}\label{lem symm of III of II}
	The tensor $\III(\cdot,\II(\cdot,\cdot))$ is symmetric. 
\end{lemma}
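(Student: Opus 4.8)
The plan is to exploit the fact that the Frenet framing of $f$ lifts to a $\sG_2'$-structure, so the product $\times$ relates consecutive bundles in the framing. Concretely, I expect that the key algebraic input is the interaction between $\T S$, $\N S$ and $\B S$ under $\times$: since these are the images of successive covariant derivatives of the tautological section, and since $\oct'$ is encoded in $(\Im(\oct'),\q,\times)$, the binormal bundle $\B S$ should be identified (up to the complex structures) with something like $\T S \times \N S$ or $\N S \times \N S$. First I would pin down exactly which products of the framing subbundles land in $\B S$. Using Lemma~\ref{lem: times on C line 0} we already know $X \times \j(X) = 0$ on the complex line $\T S$, and the multiplication table for the complexified basis \eqref{eq basis of complexification} together with Remark~\ref{rem quatC span basis} shows how the weight-$k$ pieces multiply; I would transport that local model along the Frenet lift.

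The second step is to use that $\times$ is $\sG_2'$-invariant, hence parallel for the trivial connection $D$, together with the block form \eqref{eq Frenet Framing nabla} of $D$. Differentiating the identity that expresses $\B S$ in terms of $\times$ applied to $\T S$ and $\N S$, and projecting onto $\B S$, the off-diagonal blocks $\mathbb 1$, $\II$ and $\III$ appear. The associativity identity \eqref{eq assoc for 2 gen} for two-generated subalgebras, applied to the tautological section $\sigma$ together with a tangent vector, should let me rewrite $\III(X,\II(Y,Z))$ in terms of a triple product of $\sigma$ and tangent vectors $X,Y,Z$ that is manifestly symmetric in the tangent slots (because the Frenet data is all built from $\sigma$ and $df$, and $\II$ is already symmetric). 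Alternatively — and this may be cleaner — I would use that $\Omega(W,X,Y) = \g(W\times X, Y)$ is totally antisymmetric, together with the parallelism of $\Omega$: computing $0 = (\nabla \Omega)$ applied to suitable sections of $\T S$ and $\N S$ forces relations among $\II$, $\III$, $S_2$, $S_3$, and the desired symmetry of $\III\circ\II$ should drop out as the component landing in $\Hom(\T S\otimes\T S, \B S)$.

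The main obstacle will be bookkeeping: there are several off-diagonal tensors ($\II$, $\III$, $S_2$, $S_3$) and the complex structures $\J_\T,\J_\N,\J_\B$ all interacting, so the risk is producing a relation that mixes $\III\circ\II$ with $\III\circ S_2$-type terms rather than isolating pure symmetry. I would control this by working in a local orthonormal frame $(X_1, X_2 = \j X_1)$ of $\T S$ as in the proof of Proposition~\ref{prop:VanishingMeanCurv}, so that $\II$ is determined by the single vector $\nu := \II(X_1,X_1) = -\J_\N(\II(X_1,X_2))$, and checking the symmetry reduces to the single identity $\III(X_1, \II(X_2,X_2)) = \III(X_2, \II(X_1,X_2))$; by Lemma~\ref{lem:SecondFund} both sides are expressible through $\J_\N$ and $\J_\B$ applied to $\III(X_1,\nu)$ and $\III(X_2,\nu)$, and the $\sG_2'$-compatibility of $\J$ with $\times$ (equivalently \eqref{eq pullback J in FF}) should close the argument.
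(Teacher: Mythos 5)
There is a genuine gap: your plan misses the mechanism that actually makes the lemma true, and the one place where you try to close the argument rests on a commutation rule that is false. The symmetry of $\III(\cdot,\II(\cdot,\cdot))$ is not an algebraic consequence of the $\sG_2'$-structure (the product $\times$, the three-form $\Omega$, or the multiplication table); it is a Codazzi-type curvature statement. The paper's proof is two lines: since $\II$ is symmetric, the antisymmetric part of $\III(\cdot,\II(\cdot,\cdot))$ equals $\pi_\B\big(\nabla_X\nabla_Y Z-\nabla_Y\nabla_X Z\big)=\pi_\B\big(R^\nabla(X,Y)Z-\nabla_{[X,Y]}Z\big)$; because $\nabla$ is pulled back from the constant-curvature metric on $\H$, the term $R^\nabla(X,Y)Z$ is a section of $\T S$, while $\nabla_{[X,Y]}Z$ is a section of $\T S\oplus\N S$ by the block form \eqref{eq Frenet Framing nabla}, so the $\B$-projection vanishes. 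Nothing about $\times$, $\J$, or the $\sG_2'$-invariance of $\Omega$ enters beyond the existence of the Frenet splitting, and your proposal never invokes the curvature identity that does the work.

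Concretely, the failure is in your last step. In the frame $(X_1,X_2=\j X_1)$ the check reduces (essentially) to $-\III(X_1,\nu)=\III(X_2,\J_\N\nu)$ with $\nu=\II(X_1,X_1)$, and to compare $\III(X_2,\J_\N\nu)$ with $\J_\B\,\III(X_2,\nu)$ you would need an analogue of Lemma \ref{lem:SecondFund} for $\III$, namely $\III(X,\J_\N\nu)=\J_\B\III(X,\nu)$. This is false in general: by Proposition \ref{prop:Codazzi} the $(0,1)$-part $\III''$ is the nonzero algebraic term $-\frac{\i}{2}(X\times\nu)$, so $\III$ does not intertwine the complex structures. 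The reason the argument for $\II$ does not transfer is visible in \eqref{eq:nablaJ}: for $X,Y$ both tangent one has $X\times Y=0$ (Lemma \ref{lem: times on C line 0}), hence $(\nabla_X\J)Y=0$, but for $X$ tangent and $\nu$ normal the term $X\times\nu$ does not vanish. The earlier parts of your plan (differentiating an identity expressing $\B S$ via $\times$, or using parallelism of $\Omega$) are only a strategy --- each decisive equality is flagged with ``should'' --- and, as you note yourself, they threaten to mix $\III\circ\II$ with $S_2$- and $S_3$-terms rather than isolate the symmetry; the clean resolution is the constant-curvature/Codazzi computation above.
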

\begin{proof}
Let $X,Y,Z$ be vector fields on $S.$ Since $\II(Y,Z)=\II(Z,Y)$, it suffices to compute
\[\III(X,\II(Y,Z))-\III(Y,\II(X,Z))~=~ \pi_\B\big( \nabla_X\nabla_Y Z - \nabla_Y\nabla_X Z\big) ~=~\pi_\B\big(R^\nabla(X,Y)Z - \nabla_{[X,Y]}Z \big)~.\]
Since $\nabla$ is the pull-back of a constant curvature metric, $R^\nabla(X,Y)Z \in \Omega^0(S,\T S)$. Moreover, $\nabla_{[X,Y]}Z \in \Omega^0(S,\T S\oplus \N S)$ so $\pi_\B\big(R^\nabla(X,Y)Z - \nabla_{[X,Y]}Z \big)=0$.
\end{proof}
\subsection{Complexified Frenet framing}\label{ss cFrenet}
% textcolor{red}{show how $\g^\C$, $h$ and $\lambda$=conjugation are related and interaction with $\times$.}
Let $f:S\to\H$ be an alternating holomorphic curve, and let $X=(S,\j)$ be the Riemann surface defined by the induced complex structure. The complexification of the Frenet framing of $f$ is given by 
\[E_\C=\underline \C\oplus \T'\oplus \T''\oplus \N'\oplus \N'' \oplus \B'\oplus \B'',\]
where $\T_\C S=\T'\oplus \T''$ is the decomposition of the complexification of $\T S$ into $\pm\i$-eigenspaces of the complex linear extension of $\J_\T$, similarly  $\N_\C S=\N'\oplus \N''$ and $\B_\C S=\B'\oplus \B''$.
 \begin{lemma}\label{lem product in complex FF}
 In the complexified Frenet framing, the product $\times$ satisfies the following properties:
 \begin{enumerate}
 	\item For the section $s_0=(1,0,\cdots,0)$ of $E_\C$, the endomorphism $s_0\times (\cdot)$ is given by 
 	\[s_0\times(\cdot)=\diag(0,+\i,-\i,+\i,-\i,+\i,-\i).\]
 	% \item For sections $t',t'',n',n'',b',b''$ of $\T',\T'',\N',\N'',\B',\B''$ respectively, we have  
 	% \[\xymatrix{t'\times t''=-\i\g_\T^\C(t',t'')s_0&n'\times n''=-\i\g_{\N}^\C(n',n'')s_0&\text{and}&b'\times b''=-\i\g_\B^\C(b',b'')s_0~. }\]
 	\item For a sections $s'=(t',b',n')$ of $\T'\oplus\N'\oplus\B'$ and $s''=(t'',n'',b'')$ of $\T''\oplus\N''\oplus \B''$, we have 
 	\[s'\times s''=t'\times t''+n'\times n''+b'\times b''=-\i(\g_\T^\C(t',t'')+\g_\N^\C(n',n'')+\g_\B^\C(b',b''))s_0~.\]
 	\item The product defines isomorphisms $\T'\otimes \B'\cong \N''$ and $\T''\otimes \B''\cong \N'.$
 \end{enumerate}
 \end{lemma}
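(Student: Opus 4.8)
The plan is to prove all three items by reducing them to computations at a single point of $\H$ using the transitivity of the $\sG_2'$-action, exactly as in the proofs of Lemmas \ref{lem volume form} and \ref{lem 3form on dist}. Since $\sG_2'$ acts transitively on $\H$ and preserves both $\J$ and $\times$, and since the Frenet framing of an alternating holomorphic curve is determined by the $\J$-invariant orthogonal splitting $f^*\T\H=\T S\oplus\N S\oplus\B S$ together with their signatures, it suffices to pick a convenient model point $z\in\H$, identify $\T'\oplus\T''$, $\N'\oplus\N''$, $\B'\oplus\B''$ with appropriate triples of $\pm\i$-eigenvectors of $\J_z$ inside $z^\perp\otimes\C$, and read off the products from the multiplication table in \eqref{eq basis of complexification}. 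I would take $z=j$, for which $\T_j^\C\H=\span(f_{-3},f_{-2},f_{-1},f_1,f_2,f_3)$ and $\J$ acts by $+\i$ on $f_{-3},f_1,f_2$ and by $-\i$ on $f_{-2},f_{-1},f_3$ (this is recorded in \eqref{eq q and J in comp basis}).

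For item $(1)$: the tautological section $s_0$ pulls back to $(1,0,\dots,0)$, and the endomorphism $s_0\times(\cdot)$ is by construction the complexified $\J$ acting on each summand $\T',\T'',\N',\N'',\B',\B''$ with eigenvalues $+\i$ on the primed (holomorphic) pieces and $-\i$ on the double-primed pieces, and $0$ on $\underline\C$; this is immediate from \eqref{eq pullback J in FF} after complexification, giving $\diag(0,+\i,-\i,+\i,-\i,+\i,-\i)$. For item $(2)$: by Lemma \ref{lem: times on C line 0} the restriction of $\times$ to each complex line $\span(w,\J w)$ vanishes, so $t'\times t'=0$, $n'\times n'=0$, $b'\times b'=0$ and likewise for the double-primed vectors; moreover $\times$ respects the orthogonal block decomposition (since $\sG_2'$ preserves both $\times$ and $\g$, and the stabilizer of the Frenet splitting lies in the product of the orthogonal groups of the summands), so cross terms like $t'\times n''$ lie in the appropriate blocks but contribute to $E_\C$ only through the $\underline\C$ and mixed pieces as dictated below. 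The key point is that for $s'=(t',n',b')\in\T'\oplus\N'\oplus\B'$ and $s''=(t'',n'',b'')\in\T''\oplus\N''\oplus\B''$ the product $s'\times s''$ has a component along $s_0$ computed by $\langle s'\times s'', s_0'\rangle$-type pairings, and since $\J$ is compatible with $\g$ one gets $w\times \J w = \g(w,\cdot)$-multiple of the tautological direction; pairing $f_a$ (eigenvalue $+\i$) with its dual $f_{-a}$ (eigenvalue $-\i$) in the multiplication table yields precisely $f_a\times f_{-a}=\pm\i f_0$, which after restriction to $z^\perp$ and identification with the $\underline\C$-summand gives the stated formula $s'\times s''=-\i(\g_\T^\C(t',t'')+\g_\N^\C(n',n'')+\g_\B^\C(b',b''))s_0$; the mixed products $t'\times n''$, $t'\times b''$, etc., are accounted for in item $(3)$. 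For item $(3)$: using the rank-three structure of $\Im(\quat')$ and $\quat'$ in Lemma (the bullet list after \eqref{eq G2 inv 3-form}) together with the explicit table, one checks that $\times$ restricted to $\T'\otimes\B'$ lands in $\N''$ and is nondegenerate — concretely, with $\T'=\span(f_{\bullet})$, $\B'=\span(f_{\bullet})$ suitably matched to entries of the table, the product of a generator of $\T'$ with a generator of $\B'$ is a nonzero multiple of a generator of $\N''$ — and symmetrically $\T''\otimes\B''\cong\N'$; nondegeneracy plus dimension count ($1\otimes 1=1$) upgrades this to an isomorphism.

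The main obstacle I anticipate is bookkeeping: one must correctly match up which of $f_{-3},\dots,f_3$ belongs to $\T',\N',\B'$ versus $\T'',\N'',\B''$ for the chosen model curve, and this matching is constrained by the signatures ($\T S$ positive definite, $\N S$ negative definite, $\B S$ of signature $(2,0)$ — forced since the total signature is $(4,2)$ and $\T S\oplus\N S$ has signature $(2,2)$) together with the requirement that $\J$ act correctly. One clean way to do this without choosing an explicit equivariant curve is to observe that the statement is purely linear-algebraic: it concerns an arbitrary $\J$-invariant orthogonal splitting $z^\perp=P_2\oplus N_2\oplus B_2$ with the prescribed signatures, and $\sG_2'$ (in fact already $\Stab_{\sG_2'}(z)\cong\sSU(2,1)$, by Corollary \ref{cor:EmbeddingSU21}) acts transitively on the set of such splittings — so one reduces to a single normalized splitting in $j^\perp\otimes\C$ and computes once. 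With that reduction in hand, items $(1)$–$(3)$ are a direct inspection of the multiplication table, and the only care needed is to track the factors of $\sqrt 2$ and $\i$ and the orthogonality relation ``$z_1\perp z_2 \iff z_1\times z_2 = z_1\cdot z_2$'' from the Lemma following \eqref{eq G2 inv 3-form}.
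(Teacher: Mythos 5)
Your proposal is correct, but it proves items $(2)$ and $(3)$ by a different route than the paper. The paper's argument is splitting-free: since $d\om=\Omega$ has vanishing $(2,1)$-part $\zeta$ and nowhere-vanishing $(3,0)$-part $\theta$ (Lemma \ref{lem volume form}), any product of a $(1,0)$-vector with a $(0,1)$-vector is $\g^\C$-orthogonal to all of $f^*\T_\C\H$, hence proportional to $s_0$, with coefficient fixed by $\Omega(s_0,\cdot,\cdot)$ and $\g^\C(s_0,s_0)=-1$; and $\theta\neq0$ together with skew-symmetry of $\Omega$ forces $\T'\times\B'$ to pair nontrivially only with $\N'$, giving the isomorphism onto $\N''$. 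No normalization of the splitting is needed, and the cross terms $t'\times n''$, etc., vanish automatically because they are multiples of $s_0$ with coefficient $\g^\C$ of orthogonal vectors. Your route instead observes that the statement is pointwise linear algebra about a $\J$-invariant orthogonal splitting of $z^\perp$ with signatures $(2,0),(0,2),(2,0)$, uses transitivity of $\sG_2'$ on $\H$ and of $\Stab_{\sG_2'}(z)\cong\sSU(2,1)$ (Corollary \ref{cor:EmbeddingSU21}) on such splittings to normalize to $z=j$ with $\T'=\span(f_1)$, $\N'=\span(f_2)$, $\B'=\span(f_{-3})$, and then reads everything (including the vanishing cross terms) off the table for \eqref{eq basis of complexification}; the transitivity claim is valid (the unitary group of a signature-$(2,1)$ Hermitian form acts transitively on ordered positive--negative--positive orthogonal line decompositions, and the determinant can be absorbed in the torus stabilizer), so this yields a complete proof at the cost of the extra normalization step and sign/$\sqrt2$ bookkeeping. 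One small slip: your remark that the mixed products such as $t'\times n''$ are ``accounted for in item $(3)$'' is not right -- item $(3)$ only concerns $\T'\otimes\B'$ and $\T''\otimes\B''$ -- but these mixed terms are exactly the entries of the table that vanish (equivalently, in the paper's argument, they are $-\i\,\g^\C$ of orthogonal vectors times $s_0$), so the normalized computation you propose does dispose of them.
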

 \begin{proof}
 	The first point follows from Equation \eqref{eq pullback J in FF} and the definitions of the $\pm\i$-eigenspaces. The remaining points follow  Lemma \ref{lem volume form} and the fact that $\g^\C(s_0,s_0)=-1.$ 
 	% \textcolor{red}{Say a bit more...} 
 	\end{proof}

 	The complex linear extension $\II_\C\in\Omega^1(\Hom(\T' \oplus\T'',\N'\oplus\N''))$ is given by 
 \[\II_\C=\begin{pmatrix}
 	\II'&\overline \II''\\\II''&\overline \II'
 \end{pmatrix},\]
 where $\II'\in \Omega^1(\Hom(\T',\N'))$, $\II''\in\Omega^1(\Hom(\T',\N''))$, and $\overline\II'$ and $\overline\II''$ are the conjugate maps.  The tensors $\III_\C,~(S_2)_\C,$ and $(S_3)_\C$ have analogous decompositions.
\begin{proposition}\label{prop:Codazzi} With the above notation, we have
 \begin{enumerate}
 	\item $\II'' = 0$ and $S_2''=0,$ and
 	\item for any vector field $X$ on $S$ and sections $\nu,\beta$ of $\N'
 	S,\B'S$, respectively we have
 	\[\xymatrix{\III'' (X,\nu) = -\frac{\i}{2}(X \times \nu)&\text{and}&S_3''(X,\beta)=-\frac{\i}{2}(X\times \beta)}~.\]
 \end{enumerate}
 \end{proposition}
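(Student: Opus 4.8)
The plan is to unravel the definitions by going back to the trivial connection $D$ on $E = f^*\underline{\Im}(\oct')$ and its Frenet decomposition \eqref{eq Frenet Framing nabla}, then complexify and compare $(1,0)$-parts with $(0,1)$-parts using the fact that the holomorphic curve condition forces $\T' = df(\T^{1,0}S)$, i.e. the $(1,0)$-tangent directions of $S$ map to the $+\i$-eigenspace of $\J_\T$. First I would record the key structural input: for a vector field $X$ on $S$ of type $(1,0)$ (so that $df(X)$ is a local section of $\T'$) and a local section $\nu$ of $\N'S$, Lemma \ref{lem product in complex FF}(2)--(3) tells us $X\times \nu \in \B''$, while $X \times \beta \in \N'$ for $\beta$ a section of $\B'S$. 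These products are exactly the off-diagonal terms appearing when we expand $D(s_0 \times s) = (Ds_0)\times s + s_0 \times (Ds)$, since $s_0 \times (\cdot)$ is the diagonal endomorphism of point (1) of that same lemma.

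Next I would exploit covariant constancy. By the lemma immediately preceding Lemma \ref{lem symm of III of II}, the tensors $\II$, $\III$, $S_2$, $S_3$ are all covariantly constant for the appropriate induced connections; equivalently, $s_0 \times (\cdot)$ is covariantly constant as an endomorphism-valued object, because it is (up to the diagonal $0$) built from $\J$ on the various summands and the connections in \eqref{eq Frenet Framing nabla} are metric-and-$\J$-compatible. The cleanest route is: differentiate the identity $s_0 \times s = \J$-action on $s$ — more precisely, apply $D$ to the relation \eqref{eq pullback J in FF} and use that $D$ is a derivation for $\times$ (it is the pullback of the flat derivation $\underline D$, which is a derivation for the algebra product). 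Collecting the components of the resulting identity in the Frenet framing \eqref{eq Frenet Framing nabla} produces relations among $\II$, $S_2$, $\III$, $S_3$ and the $s_0\times(\cdot)$ eigenvalues. Splitting into types, the $(1,0)$-form / $(0,1)$-form decomposition of $D = \partial + \bar\partial$ separates these relations: the holomorphicity of $f$ forces the $\bar\partial$-part of the tangent inclusion $\mathbb 1$ to land only in $\T'$, and propagating this through the algebra identity kills $\II''$ and $S_2''$ and pins down $\III''$ and $S_3''$ as the stated multiples of $X\times\nu$ and $X\times\beta$.

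Concretely, for (1): the vanishing $\II''=0$ is the statement that $\II'$ (the $\T' \to \N'$ part) is the only surviving piece of the $(1,0)$-component of $\II_\C$ — this is really a restatement of Lemma \ref{lem:SecondFund}, which says $\II \circ \j = \J \circ \II$, so $\II$ maps the $+\i$-eigenspace $\T'$ into the $+\i$-eigenspace $\N'$ and the $-\i$-eigenspace into the $-\i$-eigenspace, with no mixing; in complexified notation this is exactly $\II''=0$, and $S_2''=0$ follows from the adjointness relation defining $S_2$ together with the fact that $\J_\T, \J_\N$ are anti-self-adjoint. For (2): I would compute the $\B''$-component of $0 = \pi_{\B''}\big(D(s_0 \times \nu) - (Ds_0)\times \nu - s_0 \times D\nu\big)$ for $\nu$ a section of $\N'S$; using $(Ds_0) = (\mathbb 1, 0, \dots)$ valued in $\T S$, the term $(Ds_0) \times \nu$ contributes $(df \wedge \cdot)\times \nu$, which by Lemma \ref{lem product in complex FF}(3) is a $\B''$-valued expression involving precisely $X \times \nu$; meanwhile $s_0 \times D\nu$ contributes $(+\i)$ or $(-\i)$ times the $\B''$-part of $D\nu$, which is $\pm\i\, \III''(X,\nu)$ by \eqref{eq Frenet Framing nabla}. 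Matching the eigenvalue bookkeeping (the $\N'$ eigenvalue is $+\i$, the $\B''$ eigenvalue is $-\i$) yields $\III''(X,\nu) = -\tfrac{\i}{2}(X\times\nu)$; the argument for $S_3''$ is identical with $\B'S$ in place of $\N'S$ and the adjoint relation for $S_3$.

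The main obstacle I anticipate is not conceptual but bookkeeping: keeping straight the seven eigenvalues of $s_0\times(\cdot)$ from Lemma \ref{lem product in complex FF}(1), the correct placement of each tensor $\II', \II'', \III', \III'', S_2', S_3'$ in the big matrix \eqref{eq Frenet Framing nabla} after complexification, and the precise constant (the $-\tfrac{\i}{2}$) coming out of the eigenvalue differences — it is easy to be off by a factor of $2$ or a sign. The cleanest way to nail the constant without error is to evaluate everything at the model point $z = j \in \H$ using the explicit basis \eqref{eq basis of complexification} and multiplication table, exactly as in the proof of Lemma \ref{lem volume form}, reducing the identity to a finite check among the $f_k$'s.
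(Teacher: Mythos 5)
Your proposal is correct, and for item (1) it coincides with the paper's argument: $\II''=0$ is exactly the statement $\II\circ\J_\T=\J_\N\circ\II$ of Lemma \ref{lem:SecondFund}, and $S_2''=0$ follows from the duality between $S_2$ and $\II$. For item (2) your route is a mild variant of the paper's: the paper writes $\III''(X,\nu)=\tfrac{1}{2}\big(\III_\C(X,\nu)+\i\J\III_\C(X,\nu)\big)$, replaces $\i\nu$ by $\J\nu$, and invokes the formula \eqref{eq:nablaJ} for $\nabla\J$ before projecting to $\B S$; you instead differentiate the pointwise identity $s_0\times\nu=\i\nu$ using that the pulled-back flat connection $D$ is a derivation of $\times$ and that $Ds_0=\mathbb 1$, and then project the resulting identity onto $\B''$. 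Both computations hinge on the same fact from Lemma \ref{lem product in complex FF} that $\T S\times\N'\subset\B''$, and since \eqref{eq:nablaJ} is itself obtained by differentiating the left-multiplication map with the trivial connection, the two proofs are essentially repackagings of one another; your version has the small advantage that the constant $-\tfrac{\i}{2}$ drops out of the eigenvalue difference $\i-(-\i)$ with no need for the model-point check you propose. Two small corrections: your preliminary claim that $X\times\beta\in\N'$ for $\beta$ a section of $\B'S$ should read $\N''$ (consistent with $S_3''$ taking values in $\Hom(\B',\N'')$; indeed $\T'\times\B'\subset\N''$ and $\T''\times\B'=0$), and with that fixed the same bookkeeping gives $S_3''(X,\beta)=-\tfrac{\i}{2}(X\times\beta)$, or one can simply quote duality between $\III$ and $S_3$ as the paper does. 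Also, the digression on covariant constancy of $\II,\III,S_2,S_3$ is unnecessary, and the claimed equivalence with parallelism of $s_0\times(\cdot)$ is not accurate (note $Ds_0=\mathbb 1\neq 0$); your projection argument never uses it.
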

 \begin{proof}
	Note that $\II'' = 0$ if and only if $\II \circ \J_\T = \J_\N\circ \II$. So the first item is a direct consequence of Lemma \ref{lem:SecondFund} and the duality between $S_2$ and $\II$.

For the second item, denote the orthogonal projection onto $\B S$ by $\pi_\B$, note that $\pi_\B$ commutes with $\J.$ By definition, $\III(X,\nu)=\pi_\B(\nabla_X\nu),$ and
 \[\III'' (X,\nu) = \frac{1}{2}(\III_\C(X,\nu)  + \i \J(\III_\C(X,\nu)))~.\]
 Using equation \eqref{eq:nablaJ},  $\J(\nu)=\i\nu$ and $\J\circ \pi_\B=\pi_\B\circ\J$, we have
 \[\i\J(\III_\C(X,\nu))=\J(\III_\C(X,\J\nu)) = \J ( \pi_\B((\nabla_X\J)(\nu) + \J(\nabla_X \nu) ))\]
 \[=\J(\pi_\B(X\times \nu))-\III_\C(X,\nu).\]
Thus $\III'' (X,\nu) = \frac{1}{2}\J(\pi_\B(X\times \nu)).$ By Lemma \ref{lem product in complex FF}, $X\times \nu\in \B''$ since $X\in\T S$ and $\nu\in \N'$. Hence $\III'' (X,\nu)=-\frac{\i}{2}(X\times \nu).$ 
The computation for $S_3$ now follows from duality between $\III$ and $S_3$.
\end{proof}
 % \begin{remark}
 	
 % \end{remark}

The hermitian metric on $(\T S, g_\T,\J_\T)$ induces a hermitian metric $h_{\T'}\oplus h_{\T''}$ on $\T'\oplus \T''$. The complex linear extension of $\nabla^T$ induces hermitian connections $\nabla^{\T'}$ and $\nabla^{\T''}$ on $\T'$ and $\T''$, respectively. The $(0,1)$-part of these connections define holomorphic structures $\bar\partial_{\T'}$ and $\bar\partial_{\T''}$ on $\T'$ and $\T''$, respectively. Let $\cT$ and $\overline\cT$ denote the associated holomorphic line bundles on the Riemann surface $X.$ 
Since $\cT$ and $\overline\cT$ are isotropic with respect to $\g_\T^\C$, the hermitian metric defines an isomorphism between $\overline\cT$ and the dual holomorphic line bundle $\cT^{-1}$; under this identification
\[(\cT\oplus \overline\cT,\g_\T^\C)=\left(\cT\oplus\cT^{-1},\begin{pmatrix}
	0&1\\1&0
\end{pmatrix}\right).\] 
Repeating the above discussion for the bundles $\N_\C S$ and $\B_\C S$, we see that the $(0,1)$-part of the complex linear extension of the metric connections $d,\nabla^\T,\nabla^\N, \nabla^\B$ define a holomorphic structure $\bar\partial_{E_\C}$ on $E_\C$ which decomposes as
\begin{equation}\label{eq complex Frenet hol}
	\cE=(E_\C,\bar\partial_{E_\C})=\cO_X\oplus \cT\oplus \cT^{-1}\oplus \cN\oplus \cN^{-1}\oplus \cB\oplus \cB^{-1}, 
\end{equation}
where $\cO_X$ is the trivial holomorphic line bundle. With respect to this decomposition, the complex linear extension of the positive definite metric and the Hermitian metric are given by 
\[(1\oplus \g_\T\oplus(-\g_\N)\oplus\g_\B)^\C=1\oplus \begin{pmatrix}
	0&1\\1&0
\end{pmatrix}\oplus \begin{pmatrix}
	0&1\\1&0
\end{pmatrix}\oplus \begin{pmatrix}
	0&1\\1&0
\end{pmatrix},\]
\[h=h_0\oplus h_\cT\oplus h_\cT^{-1}\oplus h_{\cN}\oplus h_{\cN}^{-1}\oplus h_{\cB}\oplus h_{\cB}^{-1}.\]
As the isomorphisms $\cT^{-1}\cong\overline\cT$, $\cN^{-1}\cong \overline\cN$ and $\cB^{-1}\cong\overline\cB$ are given by the Hermitian metric, the second form decompose as 
\[\II_\C=\begin{pmatrix}
	\II'&(\II'')^{*_h}\\\II''&(\II')^{*_h}
\end{pmatrix},\]
where $(\II')^{*_h},(\II'')^{*_h}$ are the Hermitian adjionts of $\II'$ and $\II''$. The tensors, $\III_\C,$ $S_2$ and $S_3$ decompose similarly.

In the complex Frenet framing, the flat connection $D$ decomposes as follows. 

\begin{theorem}\label{thm hol descp of C FF}
	Let $f:S\to \H$ be an alternating holomorphic curve and $X=(S,\j)$ be the induced Riemann surface. Let $(E,D)=(f^*\underline \Im(\oct'),f^*\underline D)$ be the pullback of the trivial bundle with trivial connection on $\H.$ Then, with respect to the complexified Frenet framing
	\[\cE=\cO_X\oplus\cT\oplus \cT^{-1}\oplus\cN\oplus \cN^{-1}\oplus \cB\oplus \cB^{-1},\] 
the flat connection $D$ decomposes a
	\begin{equation}
		\label{flat connection decomp}
		D=\begin{pmatrix}
			d&1^{*_h}&1&0&0&0&0\\
			1&\nabla_{h_\cT}&0&\beta^{*_h}&0&0&0\\
			1^{*_h}&0&\nabla_{h_\cT^{-1}}&0&\beta&0&0\\0&\beta&0&\nabla_{h_\cN}&0&\delta^{*_h}&\alpha^{*_h}\\0&0&\beta^{*_h}&0&\nabla_{h_{\cN}^{-1}}&\alpha&\delta\\0&0&0&\delta&\alpha^{*_h}&\nabla_{h_\cB}&0\\0&0&0&\alpha&\delta^{*_h}&0&\nabla_{h_\cB^{-1}}
		\end{pmatrix},
	\end{equation}
	where
	\begin{itemize}
		\item $1\in\Omega^{1,0}(\Hom(\cO_X,\cT))$ is the identity between $\T'X$ and $\partial f(\T'X)\cong \cT\subset f^*(\T_\C\H),$
		\item $\beta\in \Omega^{1,0}(\Hom(\cT,\cN))$ is holomorphic, 
		\item $\delta\in \Omega^{1,0}(\Hom(\cN,\cB))$ is holomorphic, 
		\item $\alpha\in \Omega^{1,0}(\Hom(\cB,\cN^{-1}))$ is nonzero, holomorphic and given by $\alpha(b)(Y)=\frac{\i}{2}b\times Y,$ for any section $b$ of $\cB$ and vector field $Y$.  
		\item $1^{*_h},~\beta^{*_h},~ \delta^{*_h}$ and $\alpha^{*_h}$ are the hermitian adjionts of $1,~\beta,~\delta$ and $\alpha$, respectively.
	\end{itemize}
\end{theorem}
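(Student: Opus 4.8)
The plan is to start from the real Frenet framing of $D$ in \eqref{eq Frenet Framing nabla}, complexify it, and then split each block into $(1,0)$ and $(0,1)$ types using the properties of the complexified Frenet framing established in Lemma \ref{lem product in complex FF} and Proposition \ref{prop:Codazzi}. First I would record that, in the decomposition $E_\C=\underline\C\oplus\T'\oplus\T''\oplus\N'\oplus\N''\oplus\B'\oplus\B''$, the connection $D$ is obtained from \eqref{eq Frenet Framing nabla} by complex-linearly extending every entry, so its only non-zero off-diagonal blocks couple $\underline\C$ with $\T_\C S$, $\T_\C S$ with $\N_\C S$, and $\N_\C S$ with $\B_\C S$. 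The diagonal blocks are $d$ together with the complex linear extensions of $\nabla^\T,\nabla^\N,\nabla^\B$; since each of these metric connections is unitary for the induced Hermitian metric, its extension is diagonal with respect to the splitting into $\pm\i$-eigenspaces and equals $\nabla_{h_\cT}\oplus\nabla_{h_\cT^{-1}}$, etc. This accounts for the diagonal of \eqref{flat connection decomp}.

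Next I would treat the $\underline\C$--$\T_\C S$ block. The real entry is $\mathbb 1$ (the identity $\T S\cong df(\T S)$) together with its adjoint $\mathbb 1^*$. After complexification, the $(1,0)$-part of $\mathbb 1$ is the map $1\colon\cO_X\to\cT$ identifying $\T'X$ with $\partial f(\T'X)\cong\cT$, which lies in $\Omega^{1,0}(\Hom(\cO_X,\cT))$ because $f$ is holomorphic — this is exactly the statement $df\circ\j=\J\circ df$, which forces $\mathbb 1$ to be of type $(1,0)$ into $\T'$ and of type $(0,1)$ into $\T''$. Dually $\mathbb 1^*$ contributes $1$ into $\cO_X$ from $\cT^{-1}$ (type $(1,0)$) and $1^{*_h}$ from $\cT$ (type $(0,1)$). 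Identifying $\overline{\cT}\cong\cT^{-1}$, $\overline{\cN}\cong\cN^{-1}$, $\overline{\cB}\cong\cB^{-1}$ via the Hermitian metric turns complex conjugates of $(1,0)$-forms into $(0,1)$-forms valued in the dual line bundle, which is how the $*_h$-adjoints appear in the matrix. The $\T_\C S$--$\N_\C S$ block comes from $\II_\C$ and $(S_2)_\C$: by Proposition \ref{prop:Codazzi}(1), $\II''=0$ and $S_2''=0$, so $\II_\C$ restricted to $\T'$ lands only in $\N'$ and is of type $(1,0)$; call its $\cT\to\cN$ component $\beta$. The duality relation between $\II$ and $S_2$ puts $\beta$ and $\beta^{*_h}$ symmetrically into the relevant slots, exactly as in \eqref{flat connection decomp}.

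The $\N_\C S$--$\B_\C S$ block is the most delicate and where I expect the real work. It is built from $\III_\C$ and $(S_3)_\C$. Here Proposition \ref{prop:Codazzi}(2) is the key input: it gives $\III''(X,\nu)=-\tfrac{\i}{2}(X\times\nu)$ for $\nu$ a section of $\N'$, and the analogous formula for $S_3''$. So the complexified third fundamental form has two genuinely different components: a holomorphic part, $\delta\in\Omega^{1,0}(\Hom(\cN,\cB))$, coming from the $(1,0)$-piece $\III'$ of $\III_\C$ acting on $\N'$; and a part of type $(0,1)$ valued in $\Hom(\cN,\cB^{-1})$ determined by the cross product, whose Hermitian adjoint is a holomorphic $\alpha\in\Omega^{1,0}(\Hom(\cB,\cN^{-1}))$. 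Using the isomorphism $\T''\otimes\B''\cong\N'$ from Lemma \ref{lem product in complex FF}(3) together with the formula $\III''(X,\nu)=-\tfrac{\i}{2}(X\times\nu)$, I would identify $\alpha$ explicitly as $\alpha(b)(Y)=\tfrac{\i}{2}\,b\times Y$, and Lemma \ref{lem 3form on dist} / the non-vanishing of $\theta$ (Lemma \ref{lem volume form}) shows $\alpha$ is nowhere zero. Placing $\delta,\delta^{*_h},\alpha,\alpha^{*_h}$ into the four $\N$--$\B$ slots dictated by the duality between $\III$ and $S_3$ reproduces the lower-right $4\times4$ corner of \eqref{flat connection decomp}.

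Finally I would verify holomorphicity of $\beta$ and $\delta$. Since $D$ is flat and its $(0,1)$-part defines the holomorphic structure $\bar\partial_{E_\C}$ from \eqref{eq complex Frenet hol}, the $(1,1)$-part of the curvature equation $D^2=0$ forces each $(1,0)$-block that commutes appropriately with $\bar\partial_{E_\C}$ to be holomorphic; concretely, the fact that $\II,\III,S_2,S_3$ are covariantly constant (the Lemma following \eqref{eq Frenet Framing nabla}) combined with $\II''=S_2''=0$ and the Codazzi identities shows $\bar\partial\beta=0$ and $\bar\partial\delta=0$. The main obstacle is bookkeeping: keeping straight which of the many off-diagonal entries is of type $(1,0)$ versus $(0,1)$, and correctly transporting the real duality relations between $(\II,S_2)$ and $(\III,S_3)$ through the metric identifications $\overline{\cN}\cong\cN^{-1}$ etc., so that the adjoints land in the slots shown. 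Once the $(0,1)$-part of $\III_\C$ is correctly pinned down via Proposition \ref{prop:Codazzi}(2), the rest is a careful but routine reorganization of \eqref{eq Frenet Framing nabla}.
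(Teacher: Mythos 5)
Your treatment of the diagonal, the $\cO_X$--$\cT$ block and the $\cT$--$\cN$ block matches the paper's proof (complexify \eqref{eq Frenet Framing nabla}, use holomorphicity of $f$ for $\mathbb 1$, and Proposition \ref{prop:Codazzi}(1) plus Lemma \ref{lem:SecondFund} for $\beta$), and your sketch of holomorphicity of $\beta,\delta$ via covariant constancy of the fundamental forms is in the right spirit. But at the $\N$--$\B$ block, which you correctly flag as the delicate part, there is a genuine gap and a substantive error. First, you never justify that $\delta=\III'$ is a $(1,0)$-form: the prime/double-prime decomposition refers to the $\pm\i$-eigenspaces of $\J$ on the source and target bundles, not to the type of the $1$-form, so calling $\III'$ "the $(1,0)$-piece of $\III_\C$" is a conflation, not an argument. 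For $\II$ this type statement came for free from Lemma \ref{lem:SecondFund}, but there is no analogous direct lemma for $\III$; the paper has to argue that, away from a discrete set, every section of $\cN$ is of the form $\II'(Z,\tau)$ (here the alternating/non--totally-geodesic hypothesis enters), and then use the symmetry $\III(\cdot,\II(\cdot,\cdot))=\III(\cdot,\II(\cdot,\cdot))$ of Lemma \ref{lem symm of III of II} to transfer the $(1,0)$ property of $\II'$ to $\III'$. Without this step, $\delta$ could have a $(0,1)$ component and the matrix \eqref{flat connection decomp}, hence the entire Higgs-bundle structure built on it, would not follow.

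Second, your description of the cross-product component is backwards: you declare the part of $\III_\C$ valued in $\Hom(\cN,\cB^{-1})$ to be of type $(0,1)$ with $\alpha$ arising as its Hermitian adjoint. In fact $\III''$ is itself of type $(1,0)$, precisely because $\T''\times\N'=0$ (a consequence of Lemma \ref{lem product in complex FF}(2)), so that $\III''(X,\nu)=-\tfrac{\i}{2}(X\times\nu)$ vanishes on $(0,1)$ directions; this is why $\alpha$ sits directly (not as an adjoint) in the $(1,0)$ slots $\cN\to\cB^{-1}$ and $\cB\to\cN^{-1}$ of \eqref{flat connection decomp}, with $\alpha^{*_h}$ supplying the $(0,1)$ entries on $\cN^{-1}$. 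As written, your block would place a $(0,1)$-form where the theorem asserts the holomorphic $\alpha$, so the decomposition you obtain does not agree with the statement. Fixing the proposal requires exactly the two inputs you omitted: the vanishing $\T''\times\N'=0$ for the type of $\alpha$, and Lemma \ref{lem symm of III of II} plus the density argument for the type of $\delta$.
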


\begin{remark}\label{remark branched alt curves framing}
	The line bundles $\cT$ and $\cT^{-1}$ are the holomorphic tangent and cotangent bundles of the Riemann surface $X$, respectively. Dropping the immersion assumption in the definition of an alternating holomorphic curves would give the same type of framing with $\cT$ replaced by $\cT\otimes \cO_{\mathsf{D}}$, where $\mathsf{D}\subset X$ is the divisor where the differential $\partial f$ vanishes.
\end{remark}

\begin{proof}
The flat connection in the Frenet framing is given by complex linearly extending the expression \eqref{eq Frenet Framing nabla}. 
The term $\mathbb 1_\C\in\Omega^1(\Hom(\cO_X,\cT\oplus\cT^{-1}))$ is the identification of $d_\C(\T'S\oplus\T''S)=\cT\oplus \cT^{-1}\subset f^*\T_\C\H$. As $f$ is a holomorphic immersion, $\mathbb 1_\C=1\in\Omega^{1,0}(\Hom(\cO_X,\cT)).$ 

For the second fundamental form $\II_\C$,  $\II''=0$ by Proposition \ref{prop:Codazzi}. The tensor $\beta$ is $\II'$; it is a $(1,0)$-form by Lemma \ref{lem:SecondFund}. Namely,  
\[\II'(Y,t')=\frac{1}{2}(\II_\C(Y,t')-\i\J\II_\C(Y,t'))=\frac{1}{2}(\II_\C(Y,t')-\i\II_\C(\j(Y),t')),\]
and $\II'(Y,t')$ vanishes when $\j(Y)=-\i Y$. Part 3 of Proposition \ref{prop:Codazzi} implies $\beta$ is holomorphic. 

The tensor $\alpha$ is $\III''_\C$; it has type $(1,0)$ since $\T''\times \N'=0$. The tensor $\delta$ is $\III_\C'$. Generically on the Riemann surface, we have $\III'(Y,\nu)=\III'(Y,\II'(Z,\tau))$, for $\tau$ a section of $\cT.$  By Lemma \ref{lem symm of III of II}  
\[\III'(Y,\nu)=\III'(Y,\II'(Z,\tau))=\III'(Z,\II'(Y,\tau)).\]
Hence, $\III'$ is a $(1,0)$-form since $\II'$ is a $(1,0)$-form. Part 3 of Proposition \ref{prop:Codazzi} implies $\delta$ is holomorphic.
To conclude, note that  $(S_2)_\C$ and $(S_3)_\C$ are given by
\[(S_2)_\C=\begin{pmatrix}
	0&1\\1&0
\end{pmatrix}\begin{pmatrix}
	\beta&0\\0&\beta^{*_h}
\end{pmatrix}\begin{pmatrix}
	0&1\\1&0
\end{pmatrix}\ \ \ \ \ \text{and}\ \ \ \ (S_3)_\C=\begin{pmatrix}
	0&1\\1&0
\end{pmatrix}\begin{pmatrix}
	\delta&\alpha\\\alpha^{*_h}&\delta^{*_h}
\end{pmatrix}\begin{pmatrix}
	0&1\\1&0
\end{pmatrix}.\]
\end{proof}

The group $\sG_2'$ acts on the set of alternating holomorphic curves by $(g\cdot f)(x)=g\cdot f(x).$ This action does not change the induced Riemann surface, but it can change the Frenet framing of Theorem \ref{thm hol descp of C FF} slightly. 
Namely, 
if $(\cE,h)$ and $(\hat\cE,\hat h)$ are the holomorphic bundles with induced hermitian metrics associated to $f$ and $g\cdot f$, respectively, then $f^*g$ defines an isomorphism $(\cE,h)\cong (\hat\cE,\hat h)$. 
Under this isomorphism, the holomorphic tangent bundles are the same and the holomorphic normal and binormal bundles are identified $(\cN_,h_\cN)\cong(\hat\cN,\hat h_{\hat\cN})$ and $(\cB,h_\cB)\cong(\hat\cB,\hat h_{\hat \cB})$.  Since such an isomorphism preserves the product, and the product identifies $\cN\cong \cB^{-1}\cT$, the space of such identifications is a $\sU(1)$-torsor. 
Under such an isomorphism, there is $\lambda\in \sU(1)$ such that the associated holomorphic sections $(\alpha,\beta,\delta)$ from \eqref{flat connection decomp} are changed by 
\begin{equation}
	\label{eq auto of C frentet}(\alpha,\beta,\delta)\mapsto (\alpha, \lambda\beta,\lambda^{-2}\delta)~.
\end{equation}
%%%%%%%%%%%%%%%%%%%%%%%%%%
%%%%%%%%%%%%%%%%%%%%%%%%%%
%%%%%%%%%%%%%%%%%%%%%%%%%%

\section{Higgs bundles and equivariant alternating holomorphic curves}\label{s:HBandHolCurves}
Fix a Riemann surface $X=(S,\j)$ and denote its holomorphic cotangent bundle by $\cK.$ We do not assume $X$ is compact until \textsection \ref{s: moduli}. In this section we introduce the basics of Higgs bundles and harmonic bundles that are relevant to this paper. We then explain how certain $\sG_2'$-cyclic harmonic bundles determine and are determined by alternating holomorphic curves which are equivariant with respect to a representation $\rho:\pi_1(S)\to\sG_2'.$ 
 We start by recalling some Lie theory. 
% We start by giving the definition of a $\sG$-Higgs bundle on $X$, where $\sG$ is a real form of a complex simple Lie group $\sG_\C,$ and quickly specialize to $\sSO_0(4,3)$ and $\sG_2'.$
\subsection{Cartan decompositions}\label{sec cartan decomp}
Every semisimple real Lie group $\sG$ has a maximal compact subgroup $\sK$ which is unique up to conjugation, and $\sG$ is homotopy equivalent to $\sK$. A choice of maximal compact subgroup defines an Lie algebra involution $\theta:\fg\to\fg$ a called a Cartan involution. The $+1$-eigenspace of the Cartan involution is the Lie algebra $\fk$ of the maximal compact $\sK$ and the $(-1)$-eigenspace $\fp$ is the subspace perpendicular to $\fk$ with respect to the Killing form. Hence a choice of maximal compact defines a \emph{Cartan decomposition} $\fg=\fk\oplus\fp$. A Cartan decomposition satisfies the bracket relations
\[\xymatrix{[\fk,\fk]\subset\fk~,&[\fk,\fp]\subset\fp&\text{and}&[\fp,\fp]\subset\fk}.\] 
Complexifying, defines the \emph{complexified Cartan decomposition} $\fg_\C=\fk_\C\oplus\fp_\C.$ 
\begin{remark}
	If $\sG_0<\sG$ is a reductive subgroup, then we can choose a maximal compact $\sK<\sG$ such that $\sK_0=\sG_0\cap\sK$ is a maximal compact of $\sG_0$. Conversely, if $\sK_0<\sG_0$ is a maximal compact subgroup, we can extend it to a maximal compact subgroup $\sK<\sG,$ see for example \cite[\textsection 3]{CartanGaloisCohomAdams}.
\end{remark}

If $(E,\q)$ is a $7$-dimensional real vector space equipped with a signature $(4,3)$ quadratic form, then the maximal compact of $\sSO_0(E,\q)\cong\sSO_0(4,3)$ is isomorphic to $\sSO(4)\times \sSO(3)$. A choice of maximal compact is given by choosing an orthogonal splitting $E=U\oplus V$ such that the restriction $\q_U$ and $\q_V$ of $\q$ to $U$ and $V$ have signature $(4,0)$ and $(0,3)$, respectively. With respect to this splitting, the Lie algebra $\fso(4,3)$ is 
\[\fso(4,3)=\left\{X=\begin{pmatrix}
	A&\eta^\dagger \\\eta&B\end{pmatrix}~|~A\in\fso(U,\q_U),~B\in\fso(V,\q_V),~\eta\in\Hom(U,V),~\eta^\dagger=-\q_U^{-1}\eta^T\q_V\right\}~.
\]
Thus, the Cartan decomposition of $\fso(4,3)$ is 
\begin{equation}
	\label{eq cartan so43}\fso(4,3)\cong\big(\fso(U,\q_U)\oplus\fso(V,\q_V)\big)\oplus\Hom(U,V).
\end{equation}

By Lemma \ref{lem SO4 max compact}, the maximal compact subgroup of $\sG_2'$ is isomorphic to $\sSO(4)$. A choice of maximal compact is given by choosing a decomposition $\Im(\oct')=\Im(\quat)\oplus\quat$, where the multiplication is given by \eqref{eq split oct mult quat}. The corresponding Cartan decomposition of $\fg_2'$ is then given by intersecting with the Cartan decomposition of $\fso(4,3)=\fso(\Im(\oct'),\q).$ 
Specifically, if 
\[(\Im(\oct'),\q)=(\quat,\q_\quat)\oplus (\Im(\quat),\q_{\Im(\quat)})=(U,\q_U)\oplus (V,\q_V),\]
then the Cartan decomposition of $\fg_2'$ is given by $\fg_2'=\fk\oplus\fp,$ where
\[\fk=\left\{X=\begin{pmatrix}
	A&\\&B
\end{pmatrix}~|~A\in\fso(U,\q_U)~,~B\in\fso(V,\q_V)~,~X(z_1\times z_2)=X(z_1)\times z_2+z_1\times X(z_2)\right\}\] 
\[\fp=\left\{X=\begin{pmatrix}
	&\eta^\dagger\\\eta&
\end{pmatrix}~|~\eta^\dagger=-\q_U^{-1}\eta^T\q_V~,~X(z_1\times z_2)=X(z_1)\times z_2+z_1\times X(z_2)\right\}~.\]

\subsection{Higgs bundles definitions}
Let $X=(S,\j)$ be a Riemann surface. Fix a maximal compact subgroup $\sK<\sG$ and consider the Cartan decomposition $\fg=\fk\oplus\fp$ of the Lie algebra of $\sG$. Let $\sK_\C<\sG_\C$ be the complexification of $\sK$ and denote the complexified Cartan decomposition by $\fg_\C=\fk_\C\oplus \fp_\C.$ 
Note that $\sK_\C$ acts on $\fp_\C$; given a principal $\sK_\C$-bundle $P\to X$ we denote $P[\fp_\C]$ the associated vector bundle $P\times_{\sK_\C}\fp_\C$ with fiber $\fp_\C$.
\begin{definition} 
With the above notation, a $\sG$-Higgs bundle on $X$ is a pair $(\cP,\Phi)$, where 
\begin{itemize}
	\item $\cP$ is a holomorphic principal $\sK_\C$-bundle over $X$ and 
	\item $\Phi$ is a holomorphic section of $\cP[\fp_\C]\otimes\cK$ called the Higgs field.
\end{itemize}
\end{definition}
If we fix the underlying smooth $\sK_\C$-bundle $P$, then two $\sG$-Higgs bundles $(\cP,\Phi)$ and $(\cP',\Phi')$ are isomorphic if differ by the action of the $\sK_\C$-gauge group. That is, if there is a smooth bundle isomorphism of $P$ which defines an holomorphic isomorphism $(\cP,\Phi)\cong(\cP',\Phi').$

A complex group $\sG_\C$ is a real form of $\sG_\C\times \sG_\C$. In this case,  $\sK_\C\cong\sG_\C$ and $\fp_\C\cong\fg_\C$, and a $\sG_\C$-Higgs bundle is a pair $(\cP,\Phi)$, where $\cP$ is a holomorphic $\sG_\C$-bundle on $X$ and $\Phi$ is a holomorphic section of the adjoint bundle $\cP[\fg_\C]\otimes \cK$.
Using faithful representations of $\sG_\C$, we can work with vector bundles. We give a few relevant examples:

\begin{example}
	An $\sSL(n,\C)$-Higgs bundle on $X$ is a tuple $(\cE,\vol_\cE,\Phi)$, where 
\begin{itemize}
	\item $\cE\to X$ is a holomorphic vector bundle of rank $n$,
	\item $\vol_\cE:\Lambda^n\cE\to\cO_X$ is a holomorphic isomorphism, and
	\item $\Phi$ is a holomorphic section of $\End(\cE)\otimes \cK$ with $\tr(\Phi)=0.$ 
\end{itemize}
Similarly, an $\sSO(n,\C)$-Higgs bundle is an $\sSL(n,\C)$ Higgs bundle equipped with a nondegenerate holomorphic section $Q_\cE$ of the symmetric power $S^2\cE^*$ such that $\Phi^TQ_\cE+Q_\cE\Phi=0,$ where $Q_\cE$ is viewed an isomorphism $Q_\cE:\cE\to\cE^*$ and $\Phi^T:\cE^*\to \cE^*\otimes \cK$. 
\end{example}
\begin{example}
	The complexified Cartan decomposition of $\fso(4,3)$ is given by complexifying \eqref{eq cartan so43}.
 An $\sSO_0(4,3)$-Higgs bundle is thus a tuple $(\cU,Q_\cU,\vol_\cU,\cV,Q_\cV,\vol_\cV,\eta)$, where 
\begin{itemize}
	\item $(\cU,Q_\cU,\vol_\cU)$ and $(\cV,Q_\cV,\vol_\cV)$ are respectively rank $4$ and $3$ holomorphic vector bundles equipped with holomorphic orthogonal structures $Q_\cU,Q_\cV$ and volume forms $\vol_\cU,\vol_\cV$ and
	\item $\eta:\cU\to\cV\otimes \cK$ is a holomorphic bundle map. 
\end{itemize}
The associated $\sSO(7,\C)$-Higgs bundle is $(\cE,\vol_\cE,Q_\cE,\Phi)=(\cU\oplus \cV,\vol_\cU\wedge \vol_\cV,Q_\cU\oplus -Q_\cV,\Phi),$ where for $\eta^\dagger=Q_\cU^{-1}\circ \eta^T\circ Q_\cV$ we have 
\begin{equation}
	\label{eq phi for so43 higgs}\Phi=\begin{pmatrix}
	0&\eta^\dagger\\\eta&0
\end{pmatrix}:\cU\oplus \cV\to(\cU\oplus \cV)\otimes \cK~.
\end{equation}
\end{example}

 % and $\eta^\dagger=-Q_\cU\eta^TQ_\cV.$
We now describe $\G$-Higgs bundles. Recall that $\fg_2'\subset \fso(\Im(\oct'),\q)\cong \fso(4,3)$ is the subalgebra of derivations of $(\Im(\oct'),\times)$.  We will suppress the volume forms from the notation.
 
\begin{definition}
	A $\sG_2'$-Higgs bundle on $X$ is an $\sSO_0(4,3)$-Higgs bundle $(\cU,Q_\cU,\cV,Q_\cV,\eta)$ equipped with a holomorphic bundle map $\times:\Lambda^2(\cV\oplus \cU)\to \cV\oplus \cU$ such that
\begin{itemize}
	\item $(\cV_p\oplus\cU_p,\times_p)\cong(\Im(\quat)_\C\oplus\quat_\C,\times)$ for every $p\in X$, and
	\item $\Phi(s\times t)=\Phi(s)\times t+s\times\Phi(t)$, were $\Phi$ is given by \eqref{eq phi for so43 higgs} and $s,t$ are any sections of $\cU\oplus\cV$.
\end{itemize} 
\end{definition}
\begin{remark}
By Remark \ref{rem times iso with selfdual}, this implies that $\times|_{\Lambda^2\cU}:\Lambda^2\cU\to \cV$ defines a holomorphic isomorphism between $\cV$ and the self-dual part $\Lambda^2_+\cU$ of the second exterior product.
\end{remark}
\begin{remark}
	Consider the flat connection $D$ arising from an alternating holomorphic curve. Using the notation of Theorem \ref{thm hol descp of C FF} $(\cE,\times,\Phi)$ is a $\sG_2'$-Higgs bundle, where  $\Phi$ is the $(1,0)$-part of $D-\nabla$.
\end{remark}

\subsection{An example}\label{sec Example Higgs}
The Higgs bundles relevant to this paper have the additional property that the bundle will have a holomorphic reduction to the maximal torus. This means that the vector bundle decomposes as a direct sum of holomorphic line bundles. Specifically, 
\begin{equation}
	\label{eq line bundle splitting}\cE=\cL_{-3}\oplus\cL_{-2}\oplus\cL_{-1}\oplus\cL_0\oplus\cL_{1}\oplus\cL_{2}\oplus\cL_{3},
\end{equation}
where each $\cL_i$ is holomorphic a line bundle. We furthermore impose $\cL_{-i}=\cL_i^{-1}$ and $\cL_0=\cO$. 
With respect to the splitting \eqref{eq line bundle splitting}, a Higgs field $\Phi:\cE\to\cE\otimes \cK$ can be written as a $7\times 7$-matrix whose entries are holomorphic sections of appropriate line bundles. We will consider Higgs fields of the form
\begin{equation}
	\label{eq: Higgs field in splitting}
	\Phi= \begin{pmatrix}
		0&0&0&0&0&\delta&0\\\alpha&0&0&0&0&0&\delta\\0&\beta&0&0&0&0&0\\0&0&\gamma&0&0&0&0\\0&0&0&\gamma&0&0&0\\0&0&0&0&\beta&0&0\\0&0&0&0&0&\alpha&0
	\end{pmatrix}:\cE\to\cE\otimes \cK,
\end{equation}
where $\alpha,\beta,\gamma,\delta$ are holomorphic sections of $\cL_3\cL_{-2} \cK$, $\cL_2\cL_{-1} \cK,$ $\cL_1\cL_0\cK$ and $\cL_{-3}\cL_{-2} \cK$, respectively. To avoid writing $7\times7$ matrices, we will represent such an object by
\begin{equation}\label{eq:Higgsdiagram}(\mathcal E,\Phi) := \xymatrix{\cL_{-3}  \ar@/_/[r]_{\alpha} & \cL_{-2}  \ar@/_/[r]_\beta & \cL_{-1} \ar@/_/[r]_{\gamma}  & \cL_0 \ar@/_/[r]_{\gamma} & \cL_1 \ar@/_/[r]_\beta  & \cL_2 \ar@/_/[r]_{\alpha} \ar@/_2pc/[lllll]_{\delta} & \cL_3 \ar@/_2pc/[lllll]_{\delta}}.\end{equation}
Since $\tr(\Phi)=0$ and $\Lambda^7\cE$ is trivializable, $(\cE,\Phi)$ an $\sSL(7,\C)$-Higgs bundle. 

In fact, this example defines an $\sSO_0(4,3)$-Higgs bundle $(\cU,Q_\cU,\cV,Q_\cV,\eta)$, where 
\begin{itemize}
	\item $\cU=\cL_{-3}\oplus\cL_{-1}\oplus\cL_{1}\oplus\cL_{3}$ and $\cV=\cL_{-2}\oplus \cL_0\oplus\cL_{2}$,
	\item $Q_\cU,~Q_\cV$ are orthogonal structures on $\cU$ and $\cV$ defined by
	\[Q_\cU=\begin{pmatrix}
		&&1\\&\iddots&\\1
	\end{pmatrix}:\cU\to\cU^*\ \ \ \ \text{and}\ \ \ \ Q_\cV=\begin{pmatrix}
		&&1\\&1\\1
	\end{pmatrix}:\cV\to\cV^*,\]
	\item $\eta=\begin{pmatrix}
		\alpha&0&0&\delta\\0&\gamma&0&0\\0&0&\beta&0
	\end{pmatrix}:\cU\to\cV\otimes K$.
\end{itemize}   
For this example to be a $\sG_2'$-Higgs bundle, we need a holomorphic map 
\[\times: \Lambda^2(\cU\oplus\cV)\to \cU\oplus\cV,\]
such that $(\cU_p\oplus \cV_p,\times)\cong (\Im(\quat)_\C\oplus \quat_\C,\times)$ for each $p\in X $. This implies $\cV\cong \Lambda^2_+\cU$. We will define a product as in the complex Frenet framing \eqref{eq complex Frenet hol} of an alternating holomorphic curve.

Using $\cL_0=\cO$, if $s_0=1\in H^0(\cL_0)$ , then define
\[s_0\times (\cdot)=\diag(+\i,-\i,-\i,0,+\i,+\i,-\i): \cL_{-3}\oplus\cdots\oplus \cL_3\to \cL_{-3}\oplus \cdots\oplus \cL_3.\]
If we set $\cL_{-2}=\cL_{-3} \cL_1,$ then any nonzero complex number $\xi$ determines an isomorphism 
% \BC{Maybe we can remove the discussion on the isomorphism, I tried to be careful later on to make explicit what is meant by the map $\alpha=-\frac{\i}{2}\gamma.$}
\[\xi:\cL_{-3}\otimes\cL_1\to\cL_{-2}= \cL_{-3} \cL_1.\]
 Using the isomorphism, we can define a product as in Lemma \ref{lem product in complex FF}. 
Namely, given sections $s_{-3},s_1$ of $\cL_{-3}$ and $\cL_1$: 
\[s_{-3}\times s_1=\xi \cdot s_{-3}\otimes s_1.\] 
The multiplication table is the same as that of $\{f_{-3},\dots,f_3\}$ of $\Im(\oct')_\C$ from \eqref{eq basis of complexification} where the $\sqrt 2$ is replaced by $\xi$.

 \begin{lemma}
With the above choice of product $\times:\Lambda^2(\cU\oplus \cV)\to \cU\oplus \cV$, the Higgs bundles \eqref{eq: Higgs field in splitting} is a $\sG_2'$-Higgs bundle if and only if we have the following equation in $H^0(\cK\otimes\Hom(\cL_{-3},\cL_{-2}))$
\[\alpha(\cdot)=-\frac{\i}{2}\gamma(s_0)\times (\cdot),\]
 where $s_0=1\in H^0(X,\cL_0)$ is a trivializing unit section. 
 % Thus, $\alpha \cdot s_{-3}\otimes s_1 =\frac{\i}{2}\xi\gamma \cdot s_{-3}\otimes s_1.$\JT{I don't understand what that means...}
\end{lemma}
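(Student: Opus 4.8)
The plan is to unwind the definition of a $\sG_2'$-Higgs bundle, identify the one nontrivial condition, and pin it down with the multiplication table.

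\emph{Step 1: reduction to a derivation condition.} The data $\big((\cE,\times),\Phi\big)$ is a $\sG_2'$-Higgs bundle precisely when $\Phi$ is a derivation of $\times$. Indeed, the underlying $\sSO_0(4,3)$-Higgs bundle and the fibrewise identification $(\cU_p\oplus\cV_p,\times_p)\cong(\Im(\oct')_\C,\times)$ are built into the construction of $\times$ (whose structure constants are those of the multiplication table of \eqref{eq basis of complexification}), so the only remaining requirement is $\Phi(s\times t)=\Phi(s)\times t+s\times\Phi(t)$ for all local sections $s,t$. Fibrewise this says $\Phi_p\in\mathrm{Der}(\Im(\oct')_\C,\times)=\fg_2^\C$, and since $\Phi$ already takes values in the off-diagonal block $\Hom(\cU,\cV)\subset\fso(4,3)$, this is equivalent to $\Phi$ being valued in $\fp_\C\subset\fg_2^\C$.

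\emph{Step 2: decompose along the weights.} The splitting \eqref{eq line bundle splitting} is the weight decomposition of the $7$-dimensional representation under a maximal torus $\ft\subset\fg_2^\C$, with $\cL_i$ of weight $\mu_i$ (so $\mu_0=0$, $\mu_{-i}=-\mu_i$, $\mu_3=\mu_1+\mu_2$). Inspecting \eqref{eq: Higgs field in splitting}, the nonzero entries of $\Phi$ occupy the $\ft$-weight spaces of the three roots $\mu_1$ (short, carrying both $\alpha$ and $\gamma$), $\mu_2-\mu_1$ (long, carrying $\beta$) and $-\mu_2-\mu_3$ (lowest, carrying $\delta$), so the derivation condition splits into one equation per weight. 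For $\mu_2-\mu_1$ and $-\mu_2-\mu_3$ there are exactly two pairs of weights of the $7$-dimensional representation differing by the given root, so once the $\sSO_0(4,3)$-structure is imposed the corresponding $\fso(4,3)$-weight space is already one-dimensional and hence equals the one-dimensional $\fg_2^\C$-root space: the $\beta$- and $\delta$-entries automatically derive $\times$ and impose nothing. For the short root $\mu_1$ there are four such pairs — $\cL_{-1}\to\cL_0$, $\cL_0\to\cL_1$ (carried by $\gamma$) and $\cL_{-3}\to\cL_{-2}$, $\cL_2\to\cL_3$ (carried by $\alpha$) — so after imposing the $\sSO_0(4,3)$-structure the $\fso(4,3)$-weight space is two-dimensional while the $\fg_2^\C$-root space is one-dimensional; the derivation condition thus cuts out exactly one linear relation between $\alpha$ and $\gamma$.

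\emph{Step 3: identify the relation.} Since $\Phi(s\times t)=\Phi(s)\times t+s\times\Phi(t)$ is $\OO_X$-bilinear in $(s,t)$, it suffices to test it on local holomorphic frames $e_i$ of the $\cL_i$. Testing with $(s,t)=(e_{-3},e_3)$ and reading the multiplication table — $f_{-3}\times f_3=-\i f_0$ and $f_{-3}\times f_{-2}=0$, together with $\cL_0=\cO_X$ and $s_0=1\in H^0(\cL_0)$ — the left-hand side becomes $-\i\,\gamma(s_0)$, the term $e_{-3}\times\Phi(e_3)=e_{-3}\times\delta(e_3)$ vanishes, and the identity reduces to $\alpha(e_{-3})\times e_3=-\i\,\gamma(s_0)$. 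As right multiplication by $e_3$ is an isomorphism $\cL_{-2}\to\cL_1$, this determines $\alpha(e_{-3})$; rewriting it with the multiplication table, using that $(\gamma(s_0)\times e_{-3})\times e_3$ is a nonzero multiple of $\gamma(s_0)$, yields precisely $\alpha(\cdot)=-\tfrac{\i}{2}\,\gamma(s_0)\times(\cdot)$ in $H^0(\cK\otimes\Hom(\cL_{-3},\cL_{-2}))$, where one uses $\Hom(\cL_{-3},\cL_{-2})\cong\cL_{-3}^{-1}\cL_{-2}\cong\Hom(\cL_{-1},\cL_0)$ arising from $\cL_{-2}=\cL_{-3}\cL_1$. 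Conversely, by Step 2 this single relation is enough to make $\Phi$ a derivation. The only genuinely delicate point is keeping the signs and the numerical factors of the table \eqref{eq basis of complexification} straight; once a normalization is fixed the constant $-\tfrac{\i}{2}$ is forced, consistently with the formula $\alpha(b)(Y)=\tfrac{\i}{2}\,b\times Y$ that appears in Theorem \ref{thm hol descp of C FF} for the Frenet framing of a genuine alternating holomorphic curve.
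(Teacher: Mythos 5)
Your proposal is correct and, at its core, runs along the same lines as the paper's argument: reduce the $\sG_2'$-condition to the requirement that $\Phi$ be fibrewise a derivation of $\times$ (equivalently, valued in $\fg_2^\C\otimes\cK$), and then extract the relation between $\alpha$ and $\gamma$ by evaluating the Leibniz rule on a single pair of sections via the multiplication table. Two differences are worth recording. First, where the paper disposes of the sufficiency ("this is the only condition") by asking the reader to run through all pairs $(s_a,s_b)$, you give a structural reason: decomposing $Q$-skew endomorphisms by $\ft$-weights, the $\beta$- and $\delta$-components live in weight spaces that are already one-dimensional and hence coincide with the corresponding root spaces of $\fg_2^\C$, while the short-root weight space is two-dimensional, so exactly one linear relation between $\alpha$ and $\gamma$ can occur; this is a genuine tightening of the paper's "left to the reader" step. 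Second, you test the pair $(e_{-3},e_3)$, whereas the paper tests $(s_0,s_{-3})$. Your pair does yield a valid necessary condition, namely $\alpha(e_{-3})\times e_3=-\i\,\gamma(s_0)$, but converting it into the displayed formula $\alpha(\cdot)=-\tfrac{\i}{2}\gamma(s_0)\times(\cdot)$ uses the identity $(\gamma(s_0)\times e_{-3})\times e_3=2\,\gamma(s_0)$, i.e.\ that the two "$\sqrt 2$"-type structure constants of \eqref{eq basis of complexification} multiply to $2$. That identity is automatic when the fibres genuinely carry the split-octonion product, but in the paper's normalization, where these constants are rescaled to $\xi$, the product is $\xi^2$, so your computation reproduces the stated constant only in the honestly octonionic situation. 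The paper's choice of pair $(s_0,s_{-3})$ uses only the $J_{f_0}$-eigenvalue entries of the table, so the factor $-\tfrac{\i}{2}$ drops out immediately and independently of this normalization; that is the one small advantage of the paper's computation over yours, and it does not affect the correctness of your argument in the setting the lemma is actually about.
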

\begin{proof}
To be a $\sG_2'$-Higgs bundle, the Higgs field $\Phi$ must be a derivation. We compute $\Phi(s_0\times s_{-3})=\Phi(s_0)\times s_{-3}+s_0\times \Phi(s_{-3})$ implies $\alpha(s_{-3})=-\frac{\i}{2}\gamma(s_0)\times s_{-3}.$ Indeed,
\[\Phi(s_0)\times s_{-3}+s_0\times \Phi(s_{-3})=\gamma(s_0)\times s_{-3}-\i\alpha(s_{-3}),\]
while $\Phi(s_0\times s_{-3})=\i\Phi(s_{-3})=\i\alpha(s_{-3})$. To see that this is the only condition on the Higgs field, one computes $\Phi(s_a\times s_b)$ and $\Phi(s_a)\times s_b+s_a\times \Phi(b)$ using the multiplication table \eqref{eq basis of complexification}, we leave this to the reader. 
\end{proof}
\begin{remark}\label{remark }
	 Note that if $\cL_{-2}=\cL_{-3} \cL_1$ and $\alpha$ is a scalar multiple of $\gamma,$ then we can define a product $\times$ making a Higgs bundle of the form \eqref{eq:Higgsdiagram} a $\sG_2'$-Higgs bundle. 
\end{remark}

\subsection{Harmonic bundles} 
A $\sG$-harmonic bundle is a $\sG$-Higgs bundle equipped with a metric which solves a PDE related to the flatness of a connection. 
% When the Riemann surface $X$ is compact, there is a 1-1 correspondence between isomorphism classes of $\sG$-harmonic bundles and isomorphism classes of $\sG$-Higgs bundles which are polystable. 
We will focus on the vector bundle description.

\begin{definition}
	An $\sSL(n,\C)$-harmonic bundle on $X$ is an $\sSL(n,\C)$-Higgs bundle $(\cE,\Phi)$ equipped with a hermitian metric $h$ which induces the trivial metric on $\Lambda^n\cE$ and satisfies the Hitchin equations
	\begin{equation}
		\label{eq Hitchin eq}F_{h}+[\Phi,\Phi^*]=0~,
	\end{equation}
	where $F_h$ is the curvature of the Chern connection $\nabla_h$ of $h$ and $\Phi^*$ is the hermitian adjoint of $\Phi.$ 
\end{definition}
Let $(E,h)$ be a smooth bundle unitary bundle. Two harmonic bundles $(\cE,\Phi,h)$ and $(\cE',\Phi',h)$ on $(E,h)$ are isomorphic if they differ by unitary bundle automorphism.

Given an $\sSL(n,\C)$-harmonic bundle $(\cE,\Phi,h)$, the Hitchin equations \eqref{eq Hitchin eq} are equivalent to the flatness of the connection
\[D=\nabla_h+\Phi+\Phi^*~.\] 
Hence an $\sSL(n,\C)$-harmonic bundle defines a (conjugacy class of) representation $\rho:\pi_1(X)\to\sSL(n,\C)$ such that $E\cong \widetilde X\times_\rho\C^n$.
Given a representation $\rho:\pi_1(X)\to\sSL(n,\C)$, a metric $h_\rho$ on the flat bundle $\widetilde X\times_\rho\C^n$ can be interpreted as a $\rho$-equivariant map to the Riemannian symmetric space: 
\[h_\rho:\widetilde X\to\sSL(n,\C)/\sSU(n).\]
A metric $h_\rho$ is called harmonic if, for every compact subset $K\subset \widetilde X,$ it is a critical point of the energy  
\[\cE_K(h_\rho)=\frac{1}{2}\int_K|dh_\rho|^2.\]
This makes sense since, for two dimensional domains, the energy only depends on the conformal structure of the domain.
It turns out that a metric $h$ solves the Hitchin equations \eqref{eq Hitchin eq} if and only if the $\rho$-equivariant map $h_\rho$ is harmonic. 
% \JT{Explain why is a harmonic metric the same thing as a harmonic map}

% Recall that if $\sP_{\sK_\C}$ is a holomorphic principal $\sK_\C$ bundle and $\sP_\sK\subset \sP_{\sK_\C}$ is a principal $\sK$-subbundle, then there is a unique $\sK$-connection $A$ on $\sP_{\sK_\C}$ which is compatible with the holomorphic structure \cite{GeometricConnectionSinger}. 
% This is the principal bundle version of the Chern connection on a holomorphic vector bundle equipped with a hermitian metric. 
% In general, a metric on a principal bundle is a reduction of structure group to the maximal compact subgroup. Recall $\sK<\sG$ denotes a maximal compact subgroup and $\fg=\fk\oplus \fp$ is the Cartan decomposition. A $\sG$-harmonic bundle on a Riemann surface $X$, is then a $\sG$-Higgs bundle $(\cP_{\sK_\C},\varphi)$ equipped with a metric $\cP_\sK\subset\cP_{\sK_\C}$ such that the $\sG$-connection $A+\varphi+\varphi^*$ is flat. Here $A$ is the unique $\sK$-connection compatible with the holomorphic structure and the reduction $\cP_{\sK}\subset\cP_{\sK_\C}$ and $\varphi^*$ is the hermitian adjoint with respect to the Killing form. 

For $\sSL(n,\C)$-Higgs bundles which have extra structures related to being a $\sG$-Higgs bundle, the harmonic metric $h$ is assumed to be compatible with these structures, i.e., the Chern connection $\nabla_h$ has holonomy in the compact group $\sK<\sK_\C$, and hence the associated flat connection $D$ has holonomy in $\sG$. In terms of harmonic maps, the associated harmonic map factors through a copy of the symmetric space $\sG/\sK\subset \sSL(n,\C)/\sSU(n).$
 % For example, for an $\sSO(n,\C)$-Higgs bundle $(\cE,Q_\cE,\vol_\cE,\Phi)$ which is polystable, the hermitian metric $h$ which is $Q_\cE$-symmetric, i.e. $Q_\cE h_\cE^TQ_\cE=h_\cE$. 
For example, a hermitian metric $h$ on $\cE$ is compatible with a nondegenerate symmetric bilinear from $Q_\cE$ if there exists a conjugate linear involution $\lambda:\cE\to\overline\cE$ such that $h(e_1,e_2)=Q(e_1,\lambda(e_2))$. In particular, viewing $h$ as an isomorphism $H:\cE\to\overline \cE^*$ and $Q_\cE$ as an isomorphism $Q:\cE\to\cE^*$, we have $\lambda=Q^{-1}\circ H$; hence $\overline H\circ Q^{-1}\circ H=\overline Q.$

An $\sSO_0(4,3)$-harmonic metric on an $\sSO_0(4,3)$-Higgs bundle $(\cU,Q_\cU,\cV,Q_\cV,\eta)$ consists of two hermitian metrics $h_\cU,h_\cV$ on $\cU,\cV$ which are compatible with $Q_\cU,Q_\cV$, respectively. Note that the associated flat bundle $E$ decomposes as $E=U\oplus V$, where $U\subset \cU$ and $V\subset \cV$. Moreover, $Q_\cU|_{U}\oplus (-Q_{\cV})|_{V}$ defines a parallel metric on $E$ of signature $(4,3).$

For an $\sSO_0(4,3)$-Higgs bundle which is a $\sG_2'$-Higgs bundle, the quadratic form $Q_\cV$ is determined by $Q_\cU$ and the product $\times.$ We have the following definition.

\begin{definition}
	A $\sG_2'$-harmonic metric on a $\sG_2'$-Higgs bundle $(\cU,Q_\cU,\cV,Q_\cV,\times,\eta)$ is a pair of compatible metrics $h_\cU,h_\cV$ on $\cU,\cV$, respectively, such that the involutions $\lambda_\cV,\lambda_\cU$ satisfy 
	\[\lambda_{\cV}(u_1\times u_2)=\lambda_\cU(u_1)\times \lambda_\cU(u_2).\]
	% for any sections $u_1,u_2$ of $\cU.$  
\end{definition}
\begin{remark}
	Note that the flat bundle $E$ associated to a $\sG_2'$-harmonic bundle decomposes as $E=U\oplus V$ and has a parallel product $\times:\Lambda^2E\to E$ which identifies the fibers with $\Im(\oct').$
\end{remark}
% \begin{remark}
% 	Note that the pullback of the LeviTheorem \ref{thm hol descp of C FF} has $\sG_2'$-harmonic metric. }
% \end{remark}

% So, for an $\sSO_0(4,3)$ Higgs bundle $(\cU,Q_\cU,\omega_\cU,\cV,Q_\cV,\omega_\cV,\eta)$ such that the associated $\sSL(7,\C)$-Higgs bundle is polystable, the metric $h$ solving \eqref{eq Hitchin eq} decomposes as $h=h_\cU\oplus h_\cV$, where $h_\cU,~h_\cV$ are hermitian metrics on $\cU,~\cV$ compatible with $Q_\cU,Q_\cV$, respectively.
% For an $\sSO_0(4,3)$-Higgs bundle which is a $\sG_2'$-Higgs bundle, the quadratic form $Q_\cV$ is determined by $Q_\cU$ and the product $\times,$ and the involution $\lambda_\cV$ is defined by $\lambda_{\cV}(u_1\times u_2)=\lambda_\cU(u_1)\times \lambda_\cU(u_2)$ for any sections $u_1,u_2$ of $\cU.$

 \subsection{Cyclic Higgs and harmonic bundles}\label{sec cyclic higgs and harmonic}
The explicit Higgs bundles considered in Section \ref{sec Example Higgs} have additional symmetries which make them $6$-cyclic Higgs bundles. 
\begin{definition}
 	An $\sSL(n,\C)$-Higgs bundle $(\cE,\Phi)$ is called $k$-cyclic if there is a holomorphic splitting $\cE=\cE_1\oplus\cdots\oplus\cE_k$ such that $\Phi(\cE_i)\subset\cE_{i+1}\otimes\cK$, where $i+1$ is taken mod $k.$ The splitting $\cE=\cE_1\oplus\cdots\oplus\cE_k$ will be called the cyclic splitting. 
 \end{definition}

 \begin{definition}
 An $\sSL(n,\C)$-harmonic bundle $(\cE,\Phi,h)$ is called a $k$-cyclic harmonic bundle if $(\cE,\Phi)$ is $k$-cyclic and the cyclic splitting is orthogonal with respect to the metric $h.$
 \end{definition}   

Two $k$-cyclic Higgs bundles $(\cE_1\oplus\cdots\oplus\cE_k,\Phi)$ and  $(\cE_1'\oplus\cdots\oplus\cE_k',\Phi')$ are isomorphic if there is a holomorphic bundle automorphism which identifies $\cE_j\cong \cE_j'$ for all $j$ and $\Phi$ and $\Phi'$. Similarly, two $k$-cyclic harmonic bundles are isomorphic if such an isomorphism is unitary. We note that cyclic Higgs bundles and cyclic harmonic bundles have a cyclic group symmetry. Namely, if $(\cE,\Phi)$ is a $k$-cyclic Higgs bundle, consider the unitary holomorphic bundle automorphism
\[g=\diag(\zeta^{a}\Id_{\cE_1},\zeta^{a-1}\Id_{\cE_2},\dots,\zeta^{a-k+1}\Id_{\cE_k}):\cE_1\oplus\cdots\oplus \cE_k\to\cE_1\oplus\cdots\oplus \cE_k,\]
where $\zeta$ is a primitive $k^{th}$ root of unity and $a$ is chosen so that $\det(g)=1.$ Then $g$ acts on $\Phi$ by
\[g\cdot\Phi=g^{-1}\circ \Phi\circ g=\zeta\cdot \Phi.\]

 \begin{remark}
When $X$ is compact, polystability of the above Higgs bundles  automatically implies the existence of cyclic harmonic metrics, see Proposition \ref{prop cyclic stab and metric}.
When $X$ is noncompact, see \cite{MochizukiLi1,MochizukiLi2} for  results regarding the existence and uniqueness of cyclic harmonic metrics.
\end{remark}

\begin{lemma}\label{lem metric splits on line bundles}
	The Higgs bundles from \eqref{eq:Higgsdiagram} are $6$-cyclic with cyclic splitting
	\[\cE_1\oplus\cE_2\oplus\cE_3\oplus\cE_4\oplus\cE_5\oplus\cE_6=(\cL_{-3}\oplus\cL_3)\oplus\cL_{-2}\oplus\cL_{-1}\oplus\cL_0\oplus\cL_{1}\oplus\cL_{2}.\]
	Moreover, if $h$ is a cyclic $\sSO_0(4,3)$-harmonic metric, then $h=h_{-3}\oplus h_{-2}\oplus\cdots\oplus h_{2}\oplus h_3$ where $h_a$ is a hermitian metric on $\cL_a$ and $h_{-a}=h_a^{-1}.$
\end{lemma}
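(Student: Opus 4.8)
The plan is to establish the two claims of Lemma \ref{lem metric splits on line bundles} separately: first the identification of the cyclic splitting, then the diagonal form of a cyclic $\sSO_0(4,3)$-harmonic metric. For the first claim, I would simply read off the action of the Higgs field \eqref{eq: Higgs field in splitting} on the proposed summands. With $\cE_1=\cL_{-3}\oplus\cL_3$, $\cE_2=\cL_{-2}$, $\cE_3=\cL_{-1}$, $\cE_4=\cL_0$, $\cE_5=\cL_1$, $\cE_6=\cL_2$, one checks directly from the matrix \eqref{eq: Higgs field in splitting} that $\Phi$ sends $\cL_{-3}$ to $\cL_{-2}\otimes\cK$ via $\alpha$ and $\cL_3$ to $\cL_2\otimes\cK$ via $\alpha$, so $\Phi(\cE_1)\subset\cE_2\otimes\cK$; that $\Phi(\cL_{-2})\subset\cL_{-1}\otimes\cK$ via $\beta$, i.e. $\Phi(\cE_2)\subset\cE_3\otimes\cK$; similarly $\Phi(\cE_3)\subset\cE_4\otimes\cK$ and $\Phi(\cE_4)\subset\cE_5\otimes\cK$ via $\gamma$; $\Phi(\cE_5)\subset\cE_6\otimes\cK$ via $\beta$; and finally $\Phi(\cL_2)\subset(\cL_{-3}\oplus\cL_3)\otimes\cK$ via $\delta$ and $\alpha$, so $\Phi(\cE_6)\subset\cE_1\otimes\cK$. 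Hence the indices shift by one mod $6$ and $(\cE,\Phi)$ is $6$-cyclic with the stated splitting.

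For the second claim, the key point is that a cyclic $\sSO_0(4,3)$-harmonic metric is by definition orthogonal with respect to the cyclic splitting $\cE_1\oplus\cdots\oplus\cE_6$. This already forces $h$ to be block-diagonal with respect to $(\cL_{-3}\oplus\cL_3)\oplus\cL_{-2}\oplus\cL_{-1}\oplus\cL_0\oplus\cL_1\oplus\cL_2$. To get the finer statement that $h$ is diagonal in the individual line bundles, I would use compatibility with the orthogonal structures $Q_\cU$ and $Q_\cV$. Recall $Q_\cU$ pairs $\cL_a$ with $\cL_{-a}$ and $Q_\cV$ pairs $\cL_0$ with itself and $\cL_{-2}$ with $\cL_2$. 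The only off-diagonal worry is within the block $\cE_1=\cL_{-3}\oplus\cL_3$: the metric could a priori have a component $\cL_{-3}\otimes\overline{\cL_3}\to\cO_X$. But $Q_\cU$ restricted to $\cL_{-3}\oplus\cL_3$ is the antidiagonal pairing, and compatibility $h(\cdot,\cdot)=Q_\cU(\cdot,\lambda_\cU(\cdot))$ with $\lambda_\cU$ an antiholomorphic involution sending each $\cL_a$ to $\overline{\cL_a}$ (since the $\cL_a$ are the isotropic lines of $Q_\cU$ and must be preserved as a set, and $\lambda_\cU$ commutes with the grading up to the evident symmetry) forces $\lambda_\cU(\cL_a)=\overline{\cL_{a}}$, hence $h$ pairs $\cL_a$ with $\overline{\cL_a}$ only. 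So $h=\bigoplus_a h_a$ with $h_a$ a metric on $\cL_a$, and the compatibility relation $\overline H\circ Q^{-1}\circ H=\overline Q$ together with $Q_\cU$ pairing $\cL_a$ with $\cL_{-a}$ gives $h_{-a}=h_a^{-1}$ under the identification $\cL_{-a}=\cL_a^{-1}$.

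The main obstacle is the argument in the block $\cE_1=\cL_{-3}\oplus\cL_3$, where orthogonality of the cyclic splitting alone does not separate $\cL_{-3}$ from $\cL_3$; one genuinely needs the interaction between the harmonic metric, the quadratic form $Q_\cU$, and the involution $\lambda_\cU$ to conclude that these two lines are $h$-orthogonal. I expect this to require a short lemma to the effect that $\lambda_\cU$ preserves the weight decomposition, which follows because $\lambda_\cU$ intertwines the Chern connection with its conjugate and the cyclic symmetry $g=\diag(\zeta^a,\dots)$ acts on the weight spaces with distinct eigenvalues on $\cL_{-3}$ and $\cL_3$ once one uses that $\lambda_\cU$ is $Q_\cU$-compatible; alternatively one can argue directly that a nonzero $h$-pairing between $\cL_{-3}$ and $\cL_3$ would be a holomorphic section of $\overline{\cL_{-3}}^*\otimes\overline{\cL_3}^*$ paired against $Q_\cU$, which is incompatible with $Q_\cU$ pairing $\cL_{-3}$ with $\cL_3$ non-degenerately unless it vanishes. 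Everything else is bookkeeping with the matrix \eqref{eq: Higgs field in splitting} and the duality $\cL_{-a}=\cL_a^{-1}$, $\cL_0=\cO_X$.
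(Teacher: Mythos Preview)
Your treatment of the $6$-cyclic splitting is correct and matches the paper's approach exactly: it is just a matter of reading off the matrix \eqref{eq: Higgs field in splitting}.

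For the metric, you correctly isolate the only real issue, namely the block $\cE_1=\cL_{-3}\oplus\cL_3$, and you invoke the right ingredient, the conjugate-linear involution $\lambda_\cU=Q_\cU^{-1}\circ H_\cU$. However, your argument for why $\cL_{-3}$ and $\cL_3$ are $h$-orthogonal is not right. You assert that $\lambda_\cU(\cL_a)=\overline{\cL_a}$, but the correct statement (once the splitting holds) is $\lambda_\cU(\cL_a)=\cL_{-a}$, and in any case you have not \emph{proven} that $\lambda_\cU$ respects the line decomposition---which is precisely what is at stake. Your alternative suggestions (that $\lambda_\cU$ ``commutes with the grading,'' or a holomorphic-section argument for the off-diagonal piece) are either circular or do not apply, since the off-diagonal component of $h$ is a smooth, not holomorphic, object.

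The paper's route is much more direct, and you already have the key identity in hand: simply impose $\overline H\,Q^{-1}\,H=\overline Q$ on the $2\times 2$ block. With $Q=\begin{pmatrix}0&1\\1&0\end{pmatrix}$ and $H=\begin{pmatrix}a&b\\\bar b&c\end{pmatrix}$ Hermitian positive, the $(1,1)$-entry of $\overline H\,Q\,H$ is $2a\bar b$, which must vanish; since $a>0$ this forces $b=0$, and then the off-diagonal entry gives $ac=1$, i.e.\ $h_{-3}=h_3^{-1}$. The same identity on the other blocks gives $h_{-a}=h_a^{-1}$ throughout. This is what the paper means when it says the involution condition ``implies that the metric on $\cL_{-3}\oplus\cL_3$ splits''; you should replace your indirect arguments with this two-line computation.
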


\begin{proof}
	The Higgs bundle is $6$-cyclic since rearranging the summands from \eqref{eq:Higgsdiagram} yields
	\[\xymatrix{\cL_{-3}\oplus\cL_3  \ar@/_/[r]_{(\alpha\ \ \ \delta)} & \cL_{-2}  \ar@/_/[r]_\beta & \cL_{-1} \ar@/_/[r]_{\gamma}  & \cL_0 \ar@/_/[r]_{\gamma} & \cL_1 \ar@/_/[r]_\beta  & \cL_2 \ar@/_2pc/[lllll]_{(\delta\ \ \ \alpha)^T}}.\] 
	% Hence, it is $6$-cyclic. 
	% By Proposition \ref{prop cyclic stab and metric}, when such a Higgs bundle is stable, the metric $h$ solving the Hitchin equations is diagonal in this splitting.

	Recall that this is $\sSO_0(4,3)$-Higgs bundle, with $\cU=\cL_{-3}\oplus\cL_{-1}\oplus\cL_1\oplus\cL_3$, $\cV=\cL_{-2}\oplus\cL_0\oplus\cL_2$ and quadratic forms
	\begin{equation}\label{eq C bilinear forms}
		Q_\cU=\begin{pmatrix}
		&&1\\&\iddots\\1
	\end{pmatrix}\ \ \ \ \  \ \ \ \text{and}\ \ \ \  \ \ \ Q_\cV=\begin{pmatrix}
		&&1\\&1\\1
	\end{pmatrix}~, 
	\end{equation}
Thus, an $\sSO_0(4,3)$-cyclic harmonic metric $h_\cU\oplus\ h_\cV$ is diagonal in the cyclic splitting. The condition that $\lambda_\cU=Q_\cU^{-1}\circ h_\cU$ and $\lambda_\cV=Q_\cV^{-1}\circ h_\cV$ are conjugate linear involutions of $\cU$ and $\cV$, respectively, implies that the metric on $\cL_{-3}\oplus\cL_{3}$ splits as $h_{-3}\oplus h_3$, where $h_3^{-1}=h_{-3}$, and that the metric $h_j$ on $\cL_j$ satisfies $h_j^{-1}=h_{-j}.$
% \footnote{I couldn't figure out how to say this concisely without doing a computation that would take up a good amount of space.}
\end{proof}
% Recall from Remark \ref{remark } that if $\cL_{-2}=\cL_{-3}\otimes \cL_1$ and $\alpha$ and $\gamma$ are scalar multiples of eachother, then the $\sSO_0(4,3)$-Higgs bundle \eqref{eq:Higgsdiagram} is a $\sG_2'$-Higgs bundle. 
\begin{lemma}\label{lem G2' cyclic metric}
	When $\cL_{-2}=\cL_{-3} \cL_{1}$ and $\alpha$ is a scalar multiple of $\gamma$, a $\sG_2'$-cyclic harmonic metric is an $\sSO_0(4,3)$-cyclic harmonic metric where the metric $h_{-2}$ on $\cL_{-2}$ is $h_{-3} h_1$ and the metric $h_2$ on $\cL_2$ is $h_3h_{-1}$.
\end{lemma}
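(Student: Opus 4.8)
The plan is to bootstrap off the $\sSO_0(4,3)$-description already established in Lemma~\ref{lem metric splits on line bundles} and then extract the single extra relation forced by compatibility with the product $\times$. Recall that under the standing hypotheses $\cL_{-2}=\cL_{-3}\cL_1$ and $\alpha$ a scalar multiple of $\gamma$, the data $(\cE,\times,\Phi)$ of \textsection\ref{sec Example Higgs} is a $\G$-Higgs bundle, so it makes sense to speak of $\G$-cyclic harmonic metrics on it.

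First I would note that, by definition, a $\G$-cyclic harmonic metric is in particular an $\sSO_0(4,3)$-cyclic harmonic metric, so Lemma~\ref{lem metric splits on line bundles} applies: the metric is diagonal, $h=\bigoplus_{a=-3}^{3}h_a$ with $h_{-a}=h_a^{-1}$, and $h_0^2=1$ forces $h_0=1$. Consequently the associated conjugate linear involution $\lambda=\lambda_\cU\oplus\lambda_\cV$, defined on $\cU$ and $\cV$ by $h(\cdot,\cdot)=Q(\cdot,\lambda(\cdot))$, is block diagonal: in a local holomorphic frame $e_a$ of $\cL_a$, normalised so that the relevant orthogonal form pairs $e_a$ with a frame $e_{-a}$ of $\cL_{-a}$ to $1$, one has $\lambda(e_a)=h_a(e_a,e_a)\,e_{-a}$, extended conjugate linearly. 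Thus the only metric data not yet determined are $h_{-2}$ and $h_2=h_{-2}^{-1}$.

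Next I would impose the $\G$-compatibility $\lambda_\cV(u_1\times u_2)=\lambda_\cU(u_1)\times\lambda_\cU(u_2)$. Since $\cL_{-2}=\cL_{-3}\cL_1$, the section $e_{-2}:=e_{-3}\otimes e_1$ is a frame of $\cL_{-2}$, and the product of \textsection\ref{sec Example Higgs} satisfies $e_{-3}\times e_1=\xi\,e_{-2}$; by the multiplication table following \eqref{eq basis of complexification}, $\cL_{-3}\times\cL_1$ is the only product of two of the line bundles $\cL_{\pm1},\cL_{\pm3}$ landing in $\cL_{-2}$. Evaluating the compatibility relation at $u_1=e_{-3}$, $u_2=e_1$, substituting $\lambda_\cU(e_{-3})=h_{-3}(e_{-3},e_{-3})\,e_3$ and $\lambda_\cU(e_1)=h_1(e_1,e_1)\,e_{-1}$, and using the corresponding product $e_3\times e_{-1}=\xi\,(e_3\otimes e_{-1})$ (a frame of $\cL_2=\cL_3\cL_{-1}$) together with the value of $Q_\cV$ on these two frames read off from the model $(\Im(\oct')_\C,\q,\times)$, one obtains $h_{-2}(e_{-2},e_{-2})=h_{-3}(e_{-3},e_{-3})\,h_1(e_1,e_1)$, i.e. $h_{-2}=h_{-3}h_1$ as metrics on $\cL_{-2}=\cL_{-3}\cL_1$. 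The second identity then follows formally, $h_2=h_{-2}^{-1}=(h_{-3}h_1)^{-1}=h_3h_{-1}$. For the converse, that an $\sSO_0(4,3)$-cyclic harmonic metric satisfying these two relations is a $\G$-cyclic harmonic metric, I would check compatibility on the finitely many products $\cL_i\times\cL_j\to\cL_k$ with $i,j\in\{\pm1,\pm3\}$: those landing in $\cL_0$ hold automatically because $h_0=1$ and $h_ah_{-a}=1$, while those landing in $\cL_{\pm2}$ are exactly the relations above; since $\times$ on $\Im(\oct')$ is determined by its restriction to $\Lambda^2\cU$ (compare Remark~\ref{rem times iso with selfdual} and the proof of Lemma~\ref{lem:StabilizerSplitQuaternion}), a block diagonal $\lambda$ satisfying them intertwines the whole product.

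The only real obstacle is the bookkeeping of structure constants and signs hidden in the step using the value of $Q_\cV$ on the frames $e_{-3}\otimes e_1$ and $e_3\otimes e_{-1}$: one must verify that, with the normalisations built into \textsection\ref{sec Example Higgs} (the $\sqrt2$'s of \eqref{eq basis of complexification} replaced by a common $\xi$, and $Q_\cV$ the standard antidiagonal form), the scalar produced by the computation is exactly $+1$, so that $h_{-2}=h_{-3}h_1$ holds on the nose rather than up to a positive constant. Concretely this is the numerical comparison of $\q(f_{-3}\times f_1,\,f_3\times f_{-1})$ with $Q_\cU(f_{-3},f_3)\,Q_\cU(f_1,f_{-1})$ in the explicit basis, and it amounts to the statement that the identification $\cV\cong\Lambda^2_+\cU$ implicit in the $\G$-Higgs bundle structure is isometric for the induced metrics; once this is pinned down the rest is routine linear algebra.
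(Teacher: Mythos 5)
Your argument is correct and follows essentially the same route as the paper: apply the $\sSO_0(4,3)$ splitting result to get a diagonal metric with $h_{-a}=h_a^{-1}$, then evaluate the compatibility $\lambda_\cV(u_1\times u_2)=\lambda_\cU(u_1)\times\lambda_\cU(u_2)$ on sections of $\cL_{-3}$ and $\cL_1$ using $s_{-3}\times s_1=\xi\, s_{-3}\otimes s_1$ to force $h_{-2}=h_{-3}h_1$ and hence $h_2=h_3h_{-1}$. The extra converse verification and the worry about the normalisation of $Q_\cV$ are harmless additions but not needed for the one-directional statement of the lemma.
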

\begin{proof}
Recall from Remark \ref{remark } that when $\cL_{-2}=\cL_{-3} \cL_1$ and $\alpha$ and $\gamma$ are scalar multiples of each other, the Higgs bundle under consideration is a $\sG_2'$-Higgs bundle.  
Thus, the conjugate linear involutions $\lambda_\cU=Q_\cU^{-1}\circ h_\cU$ and $\lambda_\cV=Q_\cV^{-1}\circ h_\cV$ associated to a $\sG_2'$-harmonic metric  satisfies
\[\lambda_\cU(u_1)\times \lambda_\cU(u_2)=\lambda_\cV(u_1\times u_2),\]
for all $u_1,u_2\in\cU.$  
If $\alpha=\frac{\i\xi}{2}\gamma$, then $s_{-3}\times s_1=\xi \cdot s_{-3}\otimes s_1.$ Using the metric decomposition from Lemma \ref{lem metric splits on line bundles},  we have 
\[\lambda_\cU(s_{-3})\times\lambda_\cU(s_1)=h_{-3}h_1s_{3}\times s_{-1}=h_{-3}h_1\xi(s_{3}\otimes s_{-1}),\]
\[\lambda_\cV(s_{-3}\times s_1)=\lambda_\cV(\xi(s_{-3}\otimes s_1))=  h_{-2}\xi(s_3\otimes s_1).\]
Hence, $h_{-2}=h_{-3}h_1$ and $h_2=h_3h_{-1}.$
\end{proof}

% \textcolor{red}{two $\sG_2'$-cyclic higgs bundles of the form \eqref{eq cyclic G2 higgs B} if and only if....}

We now deduce a decomposition of the flat bundle associated to these $\sG_2'$-cyclic harmonic bundles which is analogous to  \cite[Theorem 2.33]{CTT}. 
Let $D=\nabla_h+\Phi+\Phi^*$ be the associated flat connection. Since the metric is diagonal, in the splitting $\cE=\cL_0\oplus\cL_1\oplus\cL_{-1}\oplus\cL_{2}\oplus\cL_{-2}\oplus\cL_{-3}\oplus\cL_3,$ the connection $D$ decomposes as in \eqref{flat connection decomp} of Theorem \ref{thm hol descp of C FF} where $\cT$ is replaced with $\cL_1$, $\cB$ is replaced with $\cL_{-3}$, and $1$ and $1^*$ are replaced with $\alpha$ and $\alpha^*.$ 
The associated $\sG_2'$-flat bundle $E_D\subset\cE$ is the fixed point locus of the antilinear involution $\lambda:\cE\to\cE$ defined by $h(u,v)=Q(u,\lambda(v))$. 
The bundle $E_D$ decomposes as $E_D=U\oplus V$, where $U\subset \cL_{-3}\oplus\cL_{-1}\oplus\cL_1\oplus\cL_3$ and $V\subset\cL_{-2}\oplus\cL_0\oplus\cL_2.$  

\begin{proposition}\label{prop flat conn decomp}
	The flat bundle $E_D$ of a $\sG_2'$-cyclic harmonic bundle of the form \eqref{eq:Higgsdiagram} decomposes as 
	\[E_D=\ell\oplus U_1\oplus V_1\oplus  U_2,\]
	where $\ell\subset \cL_0$ and $V_1\subset\cL_{-2}\oplus\cL_2$ are negative definite subbundles of rank 1 and 2 respectively, and $\cU_1\subset \cL_{-1}\oplus\cL_1$ and $U_2\subset\cL_{-3}\oplus\cL_3$ are positive definite rank 2 subbundles. 
\end{proposition}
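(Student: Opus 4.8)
The plan is to read the decomposition off directly from the action of the antilinear involution $\lambda$ on the line bundle grading \eqref{eq line bundle splitting}. First I would observe that, since the Hermitian metric $h=h_{-3}\oplus\cdots\oplus h_3$ is diagonal in the splitting (Lemma \ref{lem metric splits on line bundles}) and the orthogonal structure $Q=Q_\cU\oplus(-Q_\cV)$ pairs $\cL_j$ with $\cL_{-j}$ (see \eqref{eq C bilinear forms}), the conjugate-linear involution $\lambda=Q^{-1}\circ H$ carries $\cL_j$ isomorphically onto $\cL_{-j}$ for every $j$, and $\lambda^2=\Id$. Consequently the real flat bundle $E_D=\{v:\lambda v=v\}$ splits as $E_D=\ell\oplus U_1\oplus V_1\oplus U_2$, with $\ell=(\cL_0)^\lambda$ a real line bundle, and $U_2=(\cL_{-3}\oplus\cL_3)^\lambda$, $V_1=(\cL_{-2}\oplus\cL_2)^\lambda$, $U_1=(\cL_{-1}\oplus\cL_1)^\lambda$ rank-two real subbundles, each carried isomorphically over $\R$ onto its first summand by the projection. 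In particular $\ell,V_1\subset V$ and $U_1,U_2\subset U$, compatibly with the splitting $E_D=U\oplus V$ coming from the underlying $\sSO_0(4,3)$-structure.

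It then remains to compute the signature of the parallel metric $Q_\cU|_{U}\oplus(-Q_\cV)|_{V}$ on each summand. For this I would use the elementary fact that, for $x$ a local section of a $Q$-isotropic line bundle $\cL_j$ with $j\neq 0$, the real vector $v=x+\lambda x$ satisfies
\[
Q(v,v)=2\,Q(x,\lambda x)=2\,h(x,x)>0,
\]
using that $Q$ is complex bilinear, that $\cL_{j}$ and $\cL_{-j}$ are isotropic, that $Q(x,\lambda x)=h(x,x)$ by compatibility, and that $h$ is positive definite. Applying this with $Q=Q_\cU$ on $\cL_{\pm1}$ and on $\cL_{\pm3}$ shows that $U_1$ and $U_2$ are positive definite; applying it with $Q=Q_\cV$ on $\cL_{\pm2}$ and then using that the metric on the $\cV$-part is $-Q_\cV$ shows that $V_1$ is negative definite. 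On the $\cL_0$-summand a fixed vector $v$ satisfies $h(v,v)=Q_\cV(v,\lambda v)=Q_\cV(v,v)>0$, so $-Q_\cV$ makes $\ell$ negative definite of rank one. Hence $U_1\oplus U_2$ is positive definite of rank four and $\ell\oplus V_1$ is negative definite of rank three, which also serves as a consistency check against the signature $(4,3)$ of the parallel metric.

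I do not expect a genuine obstacle here: the only point requiring care is the bookkeeping around $\lambda$ --- checking that it is a genuine conjugate-linear involution respecting the grading up to $j\mapsto -j$, which follows from the compatibility identity $\overline H\circ Q^{-1}\circ H=\overline Q$ for $\sSO_0(4,3)$-harmonic metrics together with the explicit diagonal forms \eqref{eq C bilinear forms} and Lemma \ref{lem metric splits on line bundles}, and keeping track of which $\cL_j$ lie in $\cU$ and which in $\cV$. Once this is set up, the signature computation above goes through verbatim; it is the direct analogue of the proof of \cite[Theorem 2.33]{CTT}.
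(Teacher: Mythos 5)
Your proof is correct and takes essentially the same route as the paper: the paper's argument simply records the explicit block form of the antilinear involution in the splitting $\cL_0\oplus(\cL_{-1}\oplus\cL_1)\oplus(\cL_{-2}\oplus\cL_2)\oplus(\cL_{-3}\oplus\cL_3)$ (using that $h$ is diagonal and $Q$ pairs $\cL_j$ with $\cL_{-j}$) and reads off the decomposition and signatures exactly as you do, with your computation $Q(x+\lambda x,x+\lambda x)=2h(x,x)$ making explicit what the paper leaves implicit. One small point of care: in the signature step you are really using the blockwise involutions $\lambda_\cU=Q_\cU^{-1}\circ H_\cU$ and $\lambda_\cV=Q_\cV^{-1}\circ H_\cV$ (so that $Q_\cU|_U>0$, $Q_\cV|_V>0$ and the parallel metric $Q_\cU\oplus(-Q_\cV)$ has the stated signs), which is not literally the $\lambda=(Q_\cU\oplus(-Q_\cV))^{-1}\circ H$ you introduce at the start — fix one convention at the outset so the minus sign sits either in $\lambda$ or in the metric, but not ambiguously in both.
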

\begin{proof}
The complex bilinear forms $Q_\cU$ and $Q_\cV$ are given by \eqref{eq C bilinear forms}. Since the metric $h$ is diagonal, in the splitting $\cE=\cL_0\oplus(\cL_{-1}\oplus\cL_1)\oplus(\cL_{-2}\oplus\cL_2)\oplus(\cL_{-3}\oplus\cL_3)$, the  involution $\lambda$ is given by 
\[\lambda=-h_0\oplus\begin{pmatrix}&h_{1}\\h_1^{-1}&
	
\end{pmatrix}\oplus\begin{pmatrix}&-h_{2}\\-h_{2}^{-1}&
	
\end{pmatrix}\oplus \begin{pmatrix}&h_{3}\\h_{3}^{-1}&
	
\end{pmatrix},\]
where $h_j(w):\cL_j\to\cL_{-j}$ is the antilinear map defined by $h_j(w)(w')=h_j(w,w').$
\end{proof}

\subsection{Cyclic harmonic bundles and equivariant alternating holomorphic maps} We now establish a 1-1 correspondence between isomorphism classes of certain $\sG_2'$-cyclic harmonic bundles and equivariant alternating holomorphic curves. 
Fix a basepoint $x_0\in S$ and let $\pi_1(S)=\pi_1(S,x_0)$ denote the fundamental group of $S$. Fix also a universal covering $\widetilde S\to S$ and a lift $\tilde x_0$ of the basepoint. We will suppress the basepoints from the notation.
\begin{definition}
	An equivariant alternating holomorphic curve on $S$ is a pair $(\rho,f)$ consisting of a representation $\rho:\pi_1(S)\to\sG_2'$ and an alternating holomorphic curve $f:\widetilde S\to \H$ which is $\rho$-equivariant, that is, $f(\gamma\cdot x)=\rho(\gamma)\cdot f(x)$ for all $x\in\widetilde S$ and all $\gamma\in\pi_1(S).$
\end{definition}

When $S$ is simply connected an alternating holomorphic curve is of course the same as an equivariant alternating holomorphic curve. For example, $(S,\j)$ could be isomorphic to the complex plane as in \cite{EvansG2poly}. Note that the Riemann surface structure $\widetilde X=(\widetilde S,\j)$ induced by an equivariant alternating holomorphic curve $(\rho,f)$ descends to a Riemann surface structure $X=(S,\j)$. 

\begin{definition}\label{def eqiuiv alt hol}
	Two equivariant alternating holomorphic curves $(\rho_1,f_1)$ and $(\rho_2,f_2)$ are isomorphic if there exists $g\in\sG_2'$ and $\psi\in \Diff_0(S)$ such that 
	\[(\rho_1,f_1)=(\mathrm{Ad}_g\circ \rho_2, (g\cdot f)\circ \tilde\psi),\]
	where $\tilde \psi$ is the pullback of $\psi$ to $\widetilde S.$ 
\end{definition}
\begin{remark}
Since $\Diff_0(S)$ acts freely on the space of complex structures on $S$, two equivariant holomorphic curves $(\rho_1,f_1)$ and $(\rho_2,f_2)$ which induce the same Riemann surface $X$ are isomorphic if and only if there is $g\in\sG_2'$ such that 
	\[(\rho_1,f_1)=(\mathrm{Ad}_g\circ \rho_2,g\cdot f_2).\]
\end{remark}

For the discussion below, let $\cB$ be a holomorphic line bundle on $X$, and consider a $\sG_2'$-cyclic harmonic bundle $(\cE,\times,\Phi,h)$ of the form \eqref{eq:Higgsdiagram} with $\cL_{-3}=\cB$ and $\cL_{-1}=\cK,$ and assume $\beta$ is nonzero.  The Higgs bundle is written schematically as
\begin{equation}
		\label{eq cyclic G2 higgs B}\xymatrix{\cB  \ar@/_/[r]_{-\frac{\i}{ 2}} & \cB\cK^{-1}  \ar@/_/[r]_\beta & \cK \ar@/_/[r]_{1}  & \cO \ar@/_/[r]_{1} & \cK^{-1} \ar@/_/[r]_\beta  & \cB^{-1}\cK \ar@/_/[r]_{-\frac{\i}{ 2}} \ar@/_2pc/[lllll]_{\delta} & \cB^{-1}\ar@/_2pc/[lllll]_{\delta}}.
	\end{equation}
Here the map $1:\cO\to\cK^{-1}\otimes \cK$ is the identity and $-\frac{\i}{2}:\cB\to\cB\cK^{-1}\otimes \cK$ is determined by the product $\times.$ Namely, for a section $b$ of $\cB$ and $s_0=1\in H^0(\cO)$, we have
\[-\frac{\i}{2}(b)=-\frac{\i}{2}\cdot 1(s_0)\times b~.\]
% We have the following description of when two such cyclic harmonic bundles are isomorphic.
\begin{proposition}\label{prop isos of B cyclic harmonic}
	Two $\sG_2$-cyclic harmonic bundles of the form \eqref{eq cyclic G2 higgs B} determined by $(\cB,\beta,\delta)$ and $(\cB',\beta',\delta')$ are isomorphic if and only if $(\cB,\beta,\delta)=(\cB',\lambda\beta,\lambda^{-2}\delta)$ for some $\lambda\in\sU(1).$ 
\end{proposition}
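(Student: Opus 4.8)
The plan is to work directly with a holomorphic, unitary isomorphism $g$ between the two $\G$-cyclic harmonic bundles, extracting all constraints from the three structures it must respect: the cyclic splitting $\cE_1\oplus\cdots\oplus\cE_6$, the quadratic form $Q$ together with the volume form (so that $g$ preserves $Q$ and $\det g=1$), and the product $\times$.

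For the ``if'' direction I would exhibit the isomorphism explicitly. Given $\lambda\in\sU(1)$, let $g_\lambda$ be the constant gauge transformation acting on the line bundle splitting $\cL_{-3}\oplus\cL_{-2}\oplus\cL_{-1}\oplus\cL_0\oplus\cL_1\oplus\cL_2\oplus\cL_3$ by $\mathrm{diag}(\lambda,\lambda,1,1,1,\lambda^{-1},\lambda^{-1})$, i.e.\ by $\lambda^{w_j}$ on $\cL_j$ for the weight vector $w=(1,1,0,0,0,-1,-1)$. Since $|\lambda|=1$, $g_\lambda$ is unitary for the diagonal harmonic metric of Lemma~\ref{lem metric splits on line bundles}; being constant it is holomorphic; and since $w$ is additive along every nonzero entry of the multiplication table \eqref{eq basis of complexification} and satisfies $w_j+w_{-j}=0$, $g_\lambda$ preserves $\times$, $Q$ and $\vol$, hence lies in the maximal compact torus of $\G$. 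Conjugating the Higgs field \eqref{eq cyclic G2 higgs B} by $g_\lambda$ fixes the arrows labelled $\alpha=-\tfrac{\i}{2}$ and $1$ (their weight differences vanish), multiplies $\beta$ by $\lambda^{w_{-2}-w_{-1}}=\lambda$, and multiplies $\delta$ by $\lambda^{w_2-w_{-3}}=\lambda^{-2}$; so $g_\lambda$ realizes the asserted isomorphism. This is the intrinsic counterpart of \eqref{eq auto of C frentet}.

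For the ``only if'' direction, let $g$ be an isomorphism from the bundle attached to $(\cB,\beta,\delta)$ to the one attached to $(\cB',\beta',\delta')$. Since $g$ sends each $\cE_j$ to $\cE_j'$, comparing $\cE_2=\cB\cK^{-1}$ with $\cE_2'=\cB'\cK^{-1}$ gives $\cB\cong\cB'$; I fix such an identification so that $\cE_j=\cE_j'$ for all $j$, and write $g=g_1\oplus\mu_2\oplus\cdots\oplus\mu_6$ with $g_1\in\mathrm{Aut}(\cB\oplus\cB^{-1})$ and each $\mu_j$ a nowhere vanishing holomorphic function on $X$. Then I would argue as follows. (i) Intertwining $\Phi$ along the two arrows labelled $1$ gives $\mu_3=\mu_4=\mu_5$. (ii) Applying $g$ to $s_0\times v$ for the tautological section $s_0$ of $\cL_0$, and using $s_0\times(\cdot)=\mathrm{diag}(\pm\i,0)$, forces $\mu_4\equiv1$, hence $\mu_3=\mu_5\equiv1$. (iii) Intertwining $\Phi$ along the two arrows labelled $\beta$ (here the hypothesis $\beta\neq0$ enters) gives $\beta'=\mu_2^{-1}\beta=\mu_6\beta$, so $\mu_2\mu_6\equiv1$. (iv) Combining $\mu_2\mu_6\equiv1$ and $\mu_3=\mu_4=\mu_5\equiv1$ with $\det g=1$ yields $\det g_1=1$; since $g$ also preserves $Q$, whose restriction to $\cE_1=\cL_{-3}\oplus\cL_3$ is the hyperbolic pairing, $g_1$ lies in the one-dimensional split torus of $\mathrm{SO}(Q|_{\cE_1})$, i.e.\ $g_1=\mathrm{diag}(\nu,\nu^{-1})$. (v) Intertwining $\times$ with the identification $\cL_{-3}\otimes\cL_1\cong\cL_{-2}$ that it carries gives $\nu=\mu_2\mu_5=\mu_2$, and unitarity of $g_1$ for the metric $h_{-3}\oplus h_{-3}^{-1}$ forces $|\nu|^4\equiv1$, so $\nu$ is a constant in $\sU(1)$ because $X$ is connected. (vi) Setting $\lambda:=\nu^{-1}=\mu_6$ we get $\beta'=\mu_6\beta=\lambda\beta$, and intertwining the $\delta$-component $\cL_2\to\cL_{-3}\otimes\cK$ of $\Phi$ gives $\delta'=\nu\mu_6^{-1}\delta=\lambda^{-2}\delta$, as claimed.

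The argument is otherwise routine bookkeeping along the arrows of \eqref{eq cyclic G2 higgs B} and the entries of the multiplication table. The only two points that require genuine care arise precisely because $X$ is allowed to be noncompact, so a holomorphic automorphism of a line bundle need not be constant: first, that $g_1$ is diagonal — obtained by combining $Q$-orthogonality with $\det g_1=1$ to land inside a one-dimensional torus; and second, that the resulting gauge function $\nu$ is actually a unimodular constant — obtained by using unitarity to force $|\nu|\equiv1$ and then the open mapping theorem on the connected surface $X$. These replace the compactness inputs one would otherwise use.
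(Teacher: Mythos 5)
Your proof is correct and follows essentially the same route as the paper's: both reduce to analyzing a holomorphic, unitary gauge transformation that preserves the cyclic splitting, $Q$ and $\times$, use the fixed arrows (the identity maps and $-\tfrac{\i}{2}$, via the product) to kill all but one $\sU(1)$ parameter, and then read off the induced action on $(\beta,\delta)$. The only difference is that you spell out two points the paper asserts without comment — diagonality of the automorphism on the rank-two cyclic summand $\cB\oplus\cB^{-1}$, and constancy of the unimodular gauge functions on a possibly noncompact $X$ — which is a correct elaboration; your small slips (the exponent in $|\nu|^{4}\equiv 1$ rather than $|\nu|^{2}$, and the $\lambda$ versus $\lambda^{-1}$ convention) are harmless since $\lambda$ ranges over all of $\sU(1)$.
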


\begin{proof}
	Such an isomorphism is given by a diagonal unitary gauge transformation $g$ which has $\det(g)=1,$ preserves the orthogonal structures $Q_\cU$ and $Q_\cV$ and the product. As a result 
	\[g=\diag(\lambda,\lambda\mu^{-1},\mu^{-1},1,\mu,\lambda^{-1}\mu,\lambda^{-1}),\]
	where $\lambda,\mu\in\sU(1).$ Such a gauge transformation preserves the map $1:\cK\to\cO\otimes\cK$ if and only if $\mu=1.$ 
	Hence, such a gauge transformation acts on $(\beta,\delta)$ by $\beta\mapsto \lambda^{-1}\beta$ and $\delta\mapsto \lambda^2\delta.$
	\end{proof}

\begin{theorem}\label{thm 1-1 noncompact}
	There is a 1-1 correspondence between isomorphism classes of equivariant alternating holomorphic curves with induced Riemann surface $X$ and isomorphism classes of $\sG_2'$-cyclic harmonic bundles of the form \eqref{eq cyclic G2 higgs B} on $X$.
\end{theorem}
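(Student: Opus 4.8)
The plan is to build two mutually inverse maps between the two sets of isomorphism classes, using the Frenet framing of Theorem \ref{thm hol descp of C FF} in one direction and the flat bundle of a harmonic bundle together with the line $\ell$ of Proposition \ref{prop flat conn decomp} in the other. Since $\Diff_0(S)$ acts freely on complex structures and does not affect the underlying bundle data, I will work on the fixed Riemann surface $X=(S,\j)$ and only track the remaining $\sG_2'$-ambiguity.

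\textbf{From curves to harmonic bundles.} Given an equivariant alternating holomorphic curve $(\rho,f)$ inducing $X$, I would take $(E,D)=(f^*\underline{\Im}(\oct'),f^*\underline D)$. Since $\underline D$ is flat and $f$ is $\rho$-equivariant, this is a flat bundle on $X$ with holonomy $\rho$ carrying the parallel quadratic form $\q$ of signature $(4,3)$ and the parallel product $\times$, hence holonomy in $\sG_2'$. Theorem \ref{thm hol descp of C FF} supplies the complexified Frenet framing: the holomorphic splitting \eqref{eq complex Frenet hol}, the diagonal harmonic metric $h$ obtained by pulling back the ambient positive-definite metric (compatible with $Q$ and with $\times$), and the decomposition \eqref{flat connection decomp} of $D$. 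Setting $\Phi$ equal to the $(1,0)$-part of $D-\nabla_h$ and reordering the summands into cyclic order $\cL_0,\cL_1,\cL_{-1},\cL_2,\cL_{-2},\cL_{-3},\cL_3$, one reads off directly from \eqref{flat connection decomp} that $(\cE,\times,\Phi,h)$ is a $\sG_2'$-cyclic harmonic bundle of the form \eqref{eq cyclic G2 higgs B}, with $\cB:=\cL_{-3}$ the holomorphic binormal line bundle, $\cL_{-1}=\cT^{-1}=\cK$, the block $\cO\to\cK^{-1}$ equal to the tautological identity $1$, the block $\cB\to\cB\cK^{-1}$ equal to $\alpha(b)(Y)=\tfrac{\i}{2}b\times Y$, and $\beta,\delta$ holomorphic; moreover $\beta\neq 0$ because $f$ is not totally geodesic, so $\II\neq 0$.

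\textbf{From harmonic bundles to curves.} Conversely, given $(\cE,\times,\Phi,h)$ of the form \eqref{eq cyclic G2 higgs B}, let $D=\nabla_h+\Phi+\Phi^*$ be the flat connection and $E_D\subset\cE$ the associated flat real bundle, carrying the parallel $\q$ of signature $(4,3)$ and parallel $\times$, with holonomy $\rho$ in $\sG_2'$. By Proposition \ref{prop flat conn decomp}, $E_D$ contains a negative-definite real line subbundle $\ell\subset\cL_0=\cO$. Trivializing $E_D$ over $\widetilde S$ by $D$-parallel transport, $\rho$-equivariantly, identifies it with $\widetilde S\times\Im(\oct')$, and $\ell$ defines a $\rho$-equivariant assignment of a negative line; normalizing by the norm $-1$ condition yields $f:\widetilde S\to\H$ up to an overall sign. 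The sign is pinned down by demanding that $\J_{f(x)}=f(x)\times(\cdot)$ act with eigenvalue $+\i$ on the $\cL_1$-summand, equivalently that $f$ be holomorphic for the orientation of $S$ rather than anti-holomorphic (the two signs differ by $-\Id\notin\sG_2'$). With this $f$: the block $\cO\to\cK^{-1}$ being the identity shows $df$ is an immersion with $\partial f$ identifying $\T^{1,0}S$ with $\cL_1$; holomorphicity $df\circ\j=\J\circ df$ then follows from the eigenvalue computation of Lemma \ref{lem product in complex FF}; spacelikeness and negative-definiteness of the normal bundle are exactly the statements that $U_1\subset\cL_{-1}\oplus\cL_1$ is positive definite and $V_1\subset\cL_{-2}\oplus\cL_2$ is negative definite in Proposition \ref{prop flat conn decomp}; $f$ is not totally geodesic since $\II=\beta\neq0$; and the induced Riemann surface is $X$ because the holomorphic structures on $\cL_{\pm1}=\cK^{\mp1}$ are the given ones.

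\textbf{Inverse property, descent to isomorphism classes, and the main obstacle.} The two constructions are mutually inverse: the canonical Frenet framing (whose normal bundle is unique by Proposition \ref{prop:VanishingMeanCurv}) of the curve reconstructed from a harmonic bundle returns that harmonic bundle, since the tautological line of $f$ is $\ell$ and the pullback metric is $h$; and the $\cL_0$-summand of the Frenet framing of a curve has $f$ itself as its unit section, so reconstruction returns $(\rho,f)$. For well-definedness on isomorphism classes, isomorphic equivariant curves induce the same $X$ and, by \eqref{eq auto of C frentet}, Frenet data differing by $(\alpha,\beta,\delta)\mapsto(\alpha,\lambda\beta,\lambda^{-2}\delta)$ with $\lambda\in\sU(1)$, which by Proposition \ref{prop isos of B cyclic harmonic} is precisely an isomorphism of the associated harmonic bundles; conversely the reconstruction of $f$ from $(E_D,\ell)$ depends only on the isomorphism class of the harmonic bundle and only up to conjugating $\rho$ by $\sG_2'$. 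I expect the main obstacle to be the harmonic-bundle-to-curve direction: correctly resolving the sign ambiguity of the unit section of $\ell$ so that $f$ is $\J$-holomorphic, and verifying that the abstract holomorphic subbundle $\cL_1\oplus\cL_{-1}$ genuinely coincides with $df(\T_\C S)$ inside $E_{D,\C}$ with $\J=L_f$ restricting to $\j$. Once these identifications are secured, the remaining checks are routine bookkeeping with the Frenet framing.
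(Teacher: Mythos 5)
Your proposal is correct and follows essentially the same route as the paper: the forward direction via the complexified Frenet framing of Theorem \ref{thm hol descp of C FF}, the reverse direction via the flat bundle $E_D$, Proposition \ref{prop flat conn decomp} and parallel-transport trivialization over $\widetilde S$, and matching of isomorphism classes through \eqref{eq auto of C frentet} and Proposition \ref{prop isos of B cyclic harmonic}. The only cosmetic difference is that the paper sidesteps your sign discussion by using the canonical unit section $s_0=1$ of $\cO$ (which Proposition \ref{prop flat conn decomp} already exhibits as a norm $-1$ vector in $E_D$) rather than the line $\ell$ alone.
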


\begin{proof}
	One direction essentially follows from the description of the complex Frenet framing in Theorem \ref{thm hol descp of C FF}. 
As in \textsection \ref{ss cFrenet}, let $(E,\times,D)$ be the pullback of the trivial $\Im(\oct')$-bundle and connection on $\H$ by $f$ to $\widetilde X.$ Recall the Frenet framing of $f$ induces decomposition $E=\underline \R\oplus\T S\oplus \N S\oplus \B S$ which is orthogonal with respect to a positive definite metric $\g=1\oplus\g_\T\oplus(-\g_\N)\oplus\g_\B$.
By Theorem \ref{thm hol descp of C FF}, the complex Frenet framing of $f$ defines  a holomorphic structure $\cE$ on $E_\C$ which decomposes as
\[\cE=\cO\oplus\cK^{-1}\oplus\cK\oplus\cB^{-1}\cK\oplus\cB\cK^{-1}\oplus\cB\oplus\cB^{-1},\]
where we have written $\cK^{-1}$ instead of $\cT$ and $\cN=\cB^{-1}\cK$.
This splitting is orthogonal with respect to the induced hermitian metric $h$, and $\Phi=(D-\nabla_h)^{1,0}$ is holomorphic. In particular, $(\cE,\times,\Phi,h)$ defines a $\sG_2'$-cyclic harmonic bundle. Rearranging the summands of the splitting, the cyclic Higgs bundle on $\widetilde X$ has the desired form. Everything descends to $X$ equivariance. 
By \eqref{eq auto of C frentet} and Proposition \ref{prop isos of B cyclic harmonic}, two equivariant holomorphic curves which induce the Riemann surface $X$ are isomorphic if and only if the associated $\sG_2'$-cyclic harmonic bundles are isomorphic.

We now construct an isomorphism class of equivariant alternating holomorphic curves from a $\sG_2'$-cyclic harmonic bundle of the form \eqref{eq cyclic G2 higgs B}. By Proposition \ref{prop flat conn decomp}, the bundle $E_D\subset\cE$ decomposes as $\ell\oplus U_1\oplus V_1\oplus U_2.$ Where $\ell\subset \cO$ is a negative definite line subbundle. Moreover, $\ell$ is the span of the section $s_0=1:X\to\cO\subset \cE$, and so $s_0(x)$ is a vector in $E_D$ of norm $-1.$ 

The pullback $\widetilde E_D$ of $E_D$ to the universal covering is trivialized by parallel transport by $D.$ In this trivialization, the pullback of $s_0$ defines a $\rho$-equivariant map to the space of norm $-1$-vectors in the fiber over the base point $\widetilde E_{\tilde x_0}$. 
Choosing an identification of $(\widetilde E_{\tilde x_0},\times)\cong\Im(\oct')$ defines an equivariant curve $f:\widetilde X\to\H.$ By construction, the complex Frenet framing of $f$ the decomposition of the flat connection $D$ in the splitting $\cO\oplus \cK^{-1}\oplus\cK\oplus \cB^{-1}\cK\oplus \cB\cK\oplus\cB\oplus\cB^{-1}.$ In particular, $f$ is an alternating holomorphic curve. Changing the identification with $\Im(\oct')$ gives isomorphic equivariant alternating holomorphic curves.
\end{proof}

\section{Moduli spaces for compact surfaces }\label{sec higgs bundles and and hol curves}\label{s: moduli}
Fix a closed surface  $\Sigma$ with genus $g\geq 2$, and let $X$ denote a Riemann surface structure on $\Sigma.$ In this section we prove Theorem \ref{thm: moduli fixed Riemann} which gives a holomorphic description of the $\sG_2'$-Higgs bundles which arise from equivariant holomorphic curves on $\Sigma$ with induced Riemann surface $X$. In particular, the associated moduli space has many connected components.  
 Then, in Theorem \ref{thm complex anal moduli}, we describe the moduli space of equivariant alternating holomorphic curves on $\Sigma$, where the induced Riemann surface is allowed to vary, as a complex analytic space with a surjective holomorphic map to the Teichm\"uller space of $\Sigma$ and with a holomorphic action of the mapping class group; here the fibers are described by Theorem \ref{thm: moduli fixed Riemann}. 
 Finally, we describe how some of the connected components correspond to Hitchin representations for the groups $\sG_2'$ and $\sSL(3,\R)$, and discuss the totally geodesic case in \textsection\ref{ss:TotallyGeodesic}.
We start by recalling the moduli space of Higgs bundles on a compact Riemann surface and the nonabelian Hodge correspondence. 

\subsection{Moduli spaces for fixed Riemann surface}\label{ss:ModuliHB} 
% Here say that $\cM(X,\G)$ is a quasiprojective variety whose smooth points are stable Higgs bundles. Define the $\C^*$ action and say that k-cyclic HB are fixed points of the $\C^*$-action. 

% When the Riemann surface $X$ is compact, there is a notion of a $\sG$-Higgs bundle $(\cP,\varphi)$ being polystable which is equivalent to the existence of a metric $h$ on $\cP$ such that $(\cP,\varphi,h)$ is a harmonic bundle. This is also the appropriate notion for constructing moduli spaces of Higgs bundles on $X.$ Again we give the relevant definitions for vector bundles. 

To form the moduli space of Higgs bundles on $X$, we need the notion of stability.  An $\sSL(n,\C)$-Higgs bundle $(\cE,\Phi)$ on $X$ is called 
\begin{itemize}
	\item \emph{semistable} if, for every proper holomorphic subbundle $\cF\subset\cE$ such that $\Phi(\cF)\subset\cF\otimes \cK$, we have $\deg(\cF)\leq0,$
	\item \emph{stable} if it is the above inequality is always strict, and
	\item {\em polystable} if it is a direct sum of stable Higgs bundles of degree $0$, that is, $\cE=\cE_1\oplus\dots\oplus\cE_k$ and, for all $i$, $\deg(\cE_i)=0$, $\Phi(\cE_i)\subset\cE_i\otimes \cK$ and $(\cE_i,\Phi|_{\cE_i})$ is a stable Higgs bundle.  
\end{itemize}
The following theorem was proven by Nitsure \cite{NitsureHiggs} and Simpson \cite{SimpsonModuli1}. 
\begin{theorem}
	There is a quasi-projective variety $\cM(X,\sSL(n,\C))$, called the moduli space of $\sSL(n,\C)$-Higgs bundles, whose points parametrize isomorphism classes of polystable $\sSL(n,\C)$-Higgs bundles on a compact Riemann $X$.
\end{theorem}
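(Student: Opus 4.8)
The plan is to realize $\cM(X,\sSL(n,\C))$ as a GIT quotient, following the strategy of Nitsure and Simpson. The first ingredient is \emph{boundedness}: the family of semistable $\sSL(n,\C)$-Higgs bundles on $X$ of rank $n$ and degree $0$ is bounded. This reduces to boundedness of the underlying bundles, since any subsheaf of $\cE$ — whether or not $\Phi$-preserved — is contained in a $\Phi$-invariant subsheaf of the same rank and no smaller degree, so a semistable Higgs bundle cannot contain destabilizing subbundles of arbitrarily large slope. Hence there is $m \gg 0$, depending only on $n$ and $g$, with $\cE(m) := \cE \otimes \cO_X(m)$ globally generated and $H^1(X,\cE(m)) = 0$ for every such $(\cE,\Phi)$; set $P := \dim H^0(X,\cE(m))$, a fixed number.

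Next I would rigidify. Viewing a Higgs bundle as a module over the sheaf of rings $\Lambda = \bigoplus_{m\geq 0}\cK^{-m}$, an isomorphism $\C^P \cong H^0(\cE(m))$ yields a $\Lambda$-module surjection $\Lambda \otimes_{\cO_X}\big(\cO_X(-m)\otimes\C^P\big) \to \cE$, i.e.\ a point of a Quot scheme $\mathrm{Quot}_\Lambda$ of $\Lambda$-module quotients of fixed Hilbert polynomial, whose projectivity is part of Simpson's construction (in the $\GL_n$ case, of Nitsure's). Restrict to the open subscheme $R \subset \mathrm{Quot}_\Lambda$ of quotients that are $\cO_X$-locally free and whose twisted global sections recover $\C^P$. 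The group $\sSL(P,\C)$ acts on $R$ by changing the chosen identification; two points give isomorphic $\sSL(n,\C)$-Higgs bundles exactly when they lie in one $\PSL(P,\C)$-orbit, and every semistable isomorphism class occurs in $R$.

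The technical heart is to produce an $\sSL(P,\C)$-linearized ample line bundle on a projective completion of $R$ — obtained from a Grothendieck embedding of the Quot scheme into a product of Grassmannians, with linearization depending on one further large twist — and to show, via the Hilbert--Mumford numerical criterion, that GIT-(semi)stability of a point of $R$ is equivalent to (semi)stability of the associated Higgs bundle. A one-parameter subgroup of $\sSL(P,\C)$ determines a weighted flag in $\C^P$, which under the rigidification corresponds to a $\Phi$-invariant filtration of $\cE$ by saturated subsheaves; comparing the Hilbert--Mumford weight of the subgroup with the degrees of the steps of this filtration yields the equivalence. I expect this to be the main obstacle, as it needs uniform control of $\dim H^0$ of the filtration steps — exactly what forced the choices of $m$ and of the auxiliary twist — and it is where $\Phi$-invariance is used.

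Finally, set $\cM(X,\sSL(n,\C)) := R^{\mathrm{ss}}/\!\!/\sSL(P,\C)$. Mumford's theory makes this a quasi-projective variety whose closed points correspond to the closed $\sSL(P,\C)$-orbits in the semistable locus. A Jordan--Hölder argument shows that every semistable Higgs bundle degenerates, within its $S$-equivalence class, to an essentially unique polystable one, whose orbit in $R^{\mathrm{ss}}$ is the unique closed orbit in the closure of the original orbit, and conversely that closed orbits correspond to polystable objects. Hence the closed points of $\cM(X,\sSL(n,\C))$ parametrize isomorphism classes of polystable $\sSL(n,\C)$-Higgs bundles; independence of the auxiliary choices follows from the usual comparison of GIT quotients. (Alternatively, one may invoke the spectral correspondence to identify Higgs bundles with pure one-dimensional sheaves on the total space of $\cK$ finite over $X$ and quote the moduli of such sheaves, but the GIT construction above is the route originally taken.)
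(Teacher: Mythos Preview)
The paper does not prove this theorem; it is quoted as a result of Nitsure and Simpson and immediately followed by a citation. There is therefore no ``paper's own proof'' to compare against. Your outline follows the standard Nitsure--Simpson GIT construction and is, at the level of a sketch, the correct strategy.

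That said, there is one genuine slip in your boundedness step. You write that ``any subsheaf of $\cE$ --- whether or not $\Phi$-preserved --- is contained in a $\Phi$-invariant subsheaf of the same rank and no smaller degree.'' This is false as stated: the $\Phi$-saturation of a subsheaf (i.e.\ the smallest $\Phi$-invariant subsheaf containing it) generically has strictly larger rank, not the same rank. For instance, with $\cE=\cO\oplus\cO$ and a nilpotent Higgs field sending the first summand onto the second, the $\Phi$-saturation of the first summand is all of $\cE$. The correct argument bounds $\mu_{\max}(\cE)-\mu(\cE)$ by iterating $\cF\mapsto \cF+\Phi(\cF)\otimes\cK^{-1}$ starting from the maximal destabilizer; each step increases rank by at least one (so there are at most $n$ steps) and decreases slope by at most $\deg\cK$, and the terminal subsheaf is $\Phi$-invariant, hence has slope at most $\mu(\cE)$. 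This yields the uniform bound you need. The rest of your sketch --- rigidification via the $\Lambda$-module Quot scheme, the Hilbert--Mumford comparison of GIT and Higgs stability, and the identification of closed orbits with polystable objects --- is accurate.
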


There are appropriate notions of stability for $\sG$-Higgs bundles and associated moduli spaces $\cM(X,\sG)$, see \cite{SimpsonModuli1,HiggsPairsSTABILITY}. When $\sG$ is complex reductive, the moduli space $\cM(X,\sG)$ is quasi-projective.

Since (poly)stability is preserved by scaling the Higgs field by $\lambda\in\C^*$, there is a natural algebraic $\C^*$-action on the moduli space $\cM(X,\sG)$. The fixed point set of any subgroup of $\C^*$ thus defines an algebraic subvariety.

% \BC{May need to say semistable also}
% \begin{itemize}
% 	\item 
% 	\item {\em stable} if for every proper holomorphic subbundle $\cF\subset\cE$ such that $\Phi(\cF)\subset\cF\otimes K$ we have $\deg(\cF)<0,$ and
% 	\item {\em polystable} if it is a direct sum of stable Higgs bundles of degree $0$. That is, if $\cE=\cE_1\oplus\dots\oplus\cE_k$ and, for all $i$, $\deg(\cE_i)=0$, $\Phi(\cE_i)\subset\cE_i\otimes K$ and $(\cE_i,\Phi_{\cE_i})$ is a stable Higgs bundle. 
% \end{itemize} 

The following theorem relates the notion of stability to the existence of a harmonic metric. It was proven by Hitchin \cite{selfduality} for $n=2$ and Simpson \cite{SimpsonVHS} in general.
\begin{theorem}\label{thm NAH}
 	Let $X$ be a compact Riemann surface of genus $g\geq 2.$ An $\sSL(n,\C)$-Higgs bundle $(\cE,\Phi)$ on $X$ is polystable if and only if there is a hermitian metric $h$ on $\cE$ solving the Hitchin equations \eqref{eq Hitchin eq}. In particular,  $(\cE,\Phi,h)$ is an $\sSL(n,\C)$-harmonic bundle and $\sSL(n,\C)$-harmonic bundles define polystable Higgs bundles.
 \end{theorem}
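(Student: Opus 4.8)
This theorem is the Hitchin--Kobayashi correspondence for Higgs bundles --- the Higgs-bundle half of the nonabelian Hodge correspondence --- so the plan is to prove its two implications separately, the final sentence being merely a restatement of them. The implication ``harmonic $\Rightarrow$ polystable'' is the elementary one. Given $(\cE,\Phi,h)$ solving the Hitchin equations \eqref{eq Hitchin eq} and a proper holomorphic subbundle $\cF\subset\cE$ with $\Phi(\cF)\subset\cF\otimes\cK$, let $\pi$ be the $h$-orthogonal projection onto $\cF$. Since $\deg\cF=\tfrac{\i}{2\pi}\int_X\tr F$ for the curvature $F$ of any connection on $\cF$, taking the connection induced on $\cF$ from $\nabla_h+\Phi+\Phi^{*_h}$ produces the Chern--Weil / Gauss--Codazzi identity
\[2\pi\,\deg\cF=\i\int_X\tr\!\big(\pi\,(F_h+[\Phi,\Phi^{*_h}])\big)-\|\bar\partial\pi\|_{L^2}^2-\|[\Phi,\pi]\|_{L^2}^2 .\]
The Hitchin equation kills the first term, so $\deg\cF\le 0$ and $(\cE,\Phi)$ is semistable; moreover $\deg\cF=0$ forces $\bar\partial\pi=0$ and $[\Phi,\pi]=0$, so $\pi$ is $\nabla_h$-parallel and commutes with $\Phi$, i.e.\ $\cE=\cF\oplus\cF^{\perp}$ as an orthogonal direct sum of degree-$0$ Higgs bundles each again solving \eqref{eq Hitchin eq}. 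Induction on the rank then gives that $(\cE,\Phi)$ is a direct sum of stable degree-$0$ Higgs bundles, i.e.\ polystable, which also settles the last sentence of the theorem.

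For the converse, ``polystable $\Rightarrow$ harmonic'', the splitting just found reduces us to producing $h$ when $(\cE,\Phi)$ is \emph{stable} of degree $0$. I would follow Simpson's variational proof. Fix a smooth background metric $h_0$ and, on the space of Hermitian metrics, consider the Donaldson functional $M=M(h_0,\cdot)$, whose first variation at a metric $h$ in the direction of an $h$-self-adjoint endomorphism $u$ equals $\int_X\tr\!\big(u\,(F_h+[\Phi,\Phi^{*_h}])\big)$, so that its critical points are exactly the solutions of \eqref{eq Hitchin eq}. Two analytic facts then finish the proof: (i) $t\mapsto M(h_0,h_0e^{tu})$ is convex, and strictly convex unless $u$ is parallel for $\nabla_{h_0}$ and commutes with $\Phi$ --- which for a stable bundle forces $u$ to be constant; and (ii) stability implies $M$ is \emph{proper}, meaning $M(h_0,h)\to+\infty$ along any sequence of metrics with $\sup_X|\log(h_0^{-1}h)|\to\infty$. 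Granting (i) and (ii), a minimizing sequence for $M$ is bounded in $C^0$, Simpson's a priori estimates together with elliptic bootstrapping produce a smooth minimizer, and that minimizer solves \eqref{eq Hitchin eq}. (Equivalently one can run Donaldson's Yang--Mills--Higgs heat flow $H^{-1}\dot H=-(F_H+[\Phi,\Phi^{*_H}])$, prove short- and long-time existence by parabolic estimates, and use stability to force convergence as $t\to\infty$; the two routes share the same hard core.)

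The step I expect to be the main obstacle is (ii): extracting properness of $M$ --- equivalently, convergence of the heat flow --- from stability. The standard route is by contradiction: if $M$ fails to be proper, then from an almost-minimizing, norm-diverging sequence of metrics one extracts a nonzero \emph{weakly holomorphic subbundle}, i.e.\ an $L^2_1$ endomorphism $\pi_\infty$ with $\pi_\infty=\pi_\infty^2=\pi_\infty^{*}$ and $(1-\pi_\infty)\bar\partial\pi_\infty=0$, which is $\Phi$-invariant by construction and whose Chern--Weil degree is non-negative. One then invokes the Uhlenbeck--Yau regularity theorem to see that $\pi_\infty$ is the projection onto an honest coherent $\Phi$-invariant subsheaf --- on a curve automatically a saturated holomorphic subbundle --- of non-negative degree, contradicting stability. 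Setting up the $C^0$ and higher a priori estimates for $M$ (or along the heat flow) and proving this regularity statement is the technical core; the convexity in (i) and the Chern--Weil bookkeeping of the easy direction are by comparison formal. For the present paper all of this is imported from Hitchin \cite{selfduality} for $n=2$ and Simpson \cite{SimpsonVHS} in general.
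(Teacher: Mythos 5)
Your outline is the standard and correct Hitchin--Kobayashi argument (Chern--Weil degree computation for the easy direction; Donaldson functional/heat flow, properness from stability via weakly holomorphic subprojections and Uhlenbeck--Yau regularity for the hard one). The paper does not prove this statement at all---it is quoted with citations to Hitchin \cite{selfduality} and Simpson \cite{SimpsonVHS}---and those are precisely the proofs you sketch, so your proposal agrees with the intended source of the result.
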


 For $\sG$-Higgs bundles on compact Riemann surfaces, a similar theorem holds. Namely, if $(\cP,\Phi)$ is a $\sG$-Higgs bundle, then it is polystable if and only if there is a metric $h$ such that $(\cP,\Phi,h)$ is a $\sG$-harmonic bundle. This correspondence is usually called the {\em Kobayashi--Hitchin correspondence}. For $\sSL(n,\C)$-Higgs bundles which have extra structures related to being a $\sG$-Higgs bundle, the metric solving the Hitchin equations is compatible with these structures, see \cite{HiggsPairsSTABILITY} for details.

 For $k$-cyclic Higgs bundles on compact Riemann surfaces, we have the following result of Simpson which implies that polystable $k$-cyclic Higgs bundles on a compact Riemann surface are automatically $k$-cyclic harmonic bundles. 
\begin{proposition}\cite{KatzMiddleInvCyclicHiggs}\label{prop cyclic stab and metric}
	Let $(\cE,\Phi)$ be a $k$-cyclic Higgs bundle with $\cE=\cE_1\oplus\cdots\oplus\cE_k$. Then $(\cE,\Phi)$ is stable if and only if for all proper holomorphic subbundles $\cF=\cF_1\oplus\cdots\oplus\cF_k\subset\cE$ with $\cF_i\subset\cE_i$ and $\Phi(\cF)\subset\cF\otimes\cK$, we have $\deg(\cF)<0.$ Moreover, when $(\cE,\Phi)$ is stable, the splitting $\cE_1\oplus\cdots\oplus\cE_k$ is orthogonal with respect to the hermitian metric $h$ solving the Hitchin equations \eqref{eq Hitchin eq}.
\end{proposition}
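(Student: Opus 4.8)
The plan is to exploit the $\Z_k$-symmetry of a $k$-cyclic Higgs bundle recorded just above the statement. Let $g=\diag(\zeta^{a}\Id_{\cE_1},\dots,\zeta^{a-k+1}\Id_{\cE_k})$ be the holomorphic automorphism of $\cE$ with $g\cdot\Phi=\zeta\Phi$, so that $g$ realizes an isomorphism of Higgs bundles $(\cE,\Phi)\cong(\cE,\zeta\Phi)$. The exponents $a,a-1,\dots,a-k+1$ are $k$ consecutive integers, hence pairwise distinct modulo $k$, so $g$ is a semisimple automorphism of $\cE$ whose eigenbundle decomposition is exactly the cyclic splitting $\cE=\cE_1\oplus\cdots\oplus\cE_k$. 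Consequently any $g$-invariant saturated subsheaf $\cF\subset\cE$ automatically splits as $\cF=\bigoplus_i(\cF\cap\cE_i)$, i.e.\ is of the graded form appearing in the statement, and it is $\Phi$-invariant precisely when $g$ sends it to a $\zeta\Phi$-invariant subsheaf. With this observation the implication ``stable $\Rightarrow$ graded degree condition'' is immediate, since a stable Higgs bundle has no proper $\Phi$-invariant subbundle of nonnegative degree, in particular none of graded form.

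For the converse I would assume the graded degree condition and suppose $(\cE,\Phi)$ is not stable, then produce a contradiction by finding a canonical destabilizing subobject, which is automatically $g$-invariant and hence graded. If $(\cE,\Phi)$ is not even semistable, take the first step $\cF_{\max}$ of its Harder--Narasimhan filtration (the maximal destabilizing sub-Higgs-subsheaf): it is unique and saturated, and since $g$ identifies $(\cE,\Phi)$ with $(\cE,\zeta\Phi)$ it identifies their Harder--Narasimhan filtrations, so $g(\cF_{\max})=\cF_{\max}$; thus $\cF_{\max}$ is a proper graded $\Phi$-invariant subbundle of positive degree, contradicting the hypothesis. If $(\cE,\Phi)$ is strictly semistable one runs the same argument with canonical slope-$0$ subobjects: the socle (the sum of all slope-$0$ stable sub-Higgs-bundles) is canonical, hence $g$-invariant and graded, of degree $0$; if it is proper we are done, and if it equals $\cE$ one passes to the (canonical, hence $g$-equivariant) decomposition of the polystable $(\cE,\Phi)$ into isotypic summands, and, in the residual case of a single isotypic summand $\mathcal G^{\oplus n}$, to a linear-algebra argument for the induced action of $g$ on the multiplicity space $\C^n$. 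I expect this strictly semistable/polystable sub-case to be the main technical obstacle --- the place where one has to be careful that a destabilizing subbundle can genuinely be replaced by a graded one; the unstable case and the easy direction are routine once the $\Z_k$-symmetry is in hand.

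Finally, for the orthogonality of the cyclic splitting, let $(\cE,\Phi)$ be stable with harmonic metric $h$ solving \eqref{eq Hitchin eq}. Then $(\cE,\zeta\Phi)$ is also stable, and since $|\zeta|=1$ one has $[\zeta\Phi,(\zeta\Phi)^{*_h}]=[\Phi,\Phi^{*_h}]$, so $h$ solves the Hitchin equations for $(\cE,\zeta\Phi)$ as well. Pulling $h$ back through the isomorphism $g\colon(\cE,\Phi)\to(\cE,\zeta\Phi)$ gives a metric $g^{*}h$ solving the Hitchin equations for $(\cE,\Phi)$ and inducing the standard metric on $\Lambda^{\mathrm{top}}\cE$ (because $|\det g|=1$); by uniqueness of the harmonic metric of a stable Higgs bundle (Theorem \ref{thm NAH}) we conclude $g^{*}h=h$. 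Hence $g$ is an $h$-unitary automorphism, and a unitary automorphism with pairwise distinct eigenvalues has pairwise $h$-orthogonal eigenbundles; that is, $\cE_1,\dots,\cE_k$ are mutually orthogonal with respect to $h$.
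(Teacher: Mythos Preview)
The paper does not actually prove this proposition: it is quoted with a citation to Simpson's paper \cite{KatzMiddleInvCyclicHiggs} and no argument is given. So there is no ``paper's own proof'' to compare to; your approach via the $\Z_k$-symmetry is in fact the standard one going back to Simpson.

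Your argument is correct in outline and complete in the parts you wrote out. The forward implication is trivial; the not-semistable case via the Harder--Narasimhan filtration is correct once you observe (as you do implicitly) that a subsheaf is $\Phi$-invariant iff it is $\zeta\Phi$-invariant, so the HN filtrations of $(\cE,\Phi)$ and $(\cE,\zeta\Phi)$ coincide and $g$ must preserve them. The orthogonality argument is clean and correct: uniqueness of the harmonic metric on a stable Higgs bundle forces $g^*h=h$, hence $g$ is $h$-unitary with distinct eigenvalues, hence the eigenbundles $\cE_i$ are mutually orthogonal.

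The one place you (rightly) flag as delicate---the strictly semistable case---does work, and your sketch can be completed. The socle and the isotypic decomposition of a polystable object are indeed canonical, hence $g$-invariant (again because $\Phi$-invariance and $\zeta\Phi$-invariance coincide), so the only residual case is $(\cE,\Phi)\cong(\cG,\psi)\otimes W$ with $W\cong\C^n$, $n\geq 2$. Here the existence of $g$ forces $(\cG,\psi)\cong(\cG,\zeta\psi)$; fixing such an isomorphism $g_0$, one has $g=g_0\otimes A$ for some $A\in\GL(W)$, and any eigenline $\C w\subset W$ gives a proper $g$-invariant (hence graded) degree-zero sub-Higgs-bundle $\cG\otimes\C w$, contradicting the hypothesis. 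It would strengthen your write-up to spell this last step out rather than leaving it as an ``expected obstacle.''
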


A representation $\rho:\pi_1(\S)\to\sG$ is called {\em reductive} if post composing $\rho$ with the adjoint representation of $\sG$ decomposes as a direct sum of irreducible representations. The moduli space of $\sG$-conjugacy classes of representations $\pi_1(\S)\to\sG$ is called the {\em character variety} and denoted by 
\[\cX(\S,\sG)=\Hom^{\mathrm{red}}(\pi_1(\S),\sG)/\sG~.\]

Corlette's theorem \cite{canonicalmetrics} (proven by Donaldson \cite{harmoicmetric} for $\sSL(2,\C)$) asserts that given a representation $\rho:\pi_1(\S)\to\sG$, there exists a $\rho$-equivariant harmonic map $h_\rho:\widetilde X\to\sG/\sK$ if and only if $\rho$ is reductive. 
Combining Corlette's theorem with the Kobayashi--Hitchin correspondence defines a real analytic isomorphism between the moduli space  of $\sG$-Higgs bundles on $X$, the moduli space of $\sG$-harmonic bundles on $X$ and the $\sG$-character variety. This is called the {\em nonabelian Hodge correspondence}.

\subsection{Moduli spaces of holomorphic curves for compact Riemann surfaces}
For a compact Riemann surface $X,$ denote the set of isomorphism classes of  equivariant alternating holomorphic curves $(\rho,f)$ with induced Riemann surface $X$ by
\[\cH(X)=\{(\rho,f)~\text{equiv. alt. holomorphic curves with induced complex structure }X\}/\sG_2'.\]
% where we recall from \ref { } that two such objects are isomorphic if they differ by the action of $\sG_2'.$
By the nonabelian Hodge correspondence and Theorem \ref{thm 1-1 noncompact}, $\cH(X)$ is in 1-1 correspondence with the isomorphism classes of polystable $\sG_2'$-Higgs bundles on $X$ of the form  
\begin{equation}
		\label{eq cyclic G2 higgs B compact}\xymatrix{\cB  \ar@/_/[r]_{-\frac{\i}{ 2}} & \cB\cK^{-1}  \ar@/_/[r]_\beta & \cK \ar@/_/[r]_{1}  & \cO \ar@/_/[r]_{1} & \cK^{-1} \ar@/_/[r]_\beta  & \cB^{-1}\cK \ar@/_/[r]_{-\frac{\i}{ 2}} \ar@/_2pc/[lllll]_{\delta} & \cB^{-1}\ar@/_2pc/[lllll]_{\delta}},
	\end{equation}
where $\cB$ is a holomorphic line bundle, $\beta\in H^0(\cB^{-1}\cK^3)\setminus \{0\},$  $1:\cO\to\cK^{-1}\otimes \cK$ is the identity, and $-\frac{\i}{2}:\cB\to\cB\cK^{-1}\otimes \cK$ is defined by $-\frac{\i}{2}(b)=\frac{\i}{2}\cdot 1(s_0)\times b$, where $s_0=1\in H^0(\cO).$ 
This correspondence gives  $\cH(X)$ a complex analytic structure. 

The degree of the line bundle $\cB$ defines a continuous map $\deg:\cH(X)\to\Z$. In particular, 
\[\cH(X)=\coprod\limits_{d\in\Z}\cH_d(X),\]
where $\cH_d(X)=\deg^{-1}(d).$ The spaces $\cH_d(X)$ parameterized as follows.

\begin{theorem}
	\label{thm: moduli fixed Riemann} 
	Let $X$ be a compact Riemann surface of genus $g\geq 2$. 
	\begin{enumerate}
		\item If $g\leq d\leq 6g-6$, then $\cH_d(X)$ is biholomorphic to a rank $(2d-g+1)$ holomorphic vector bundle over the $(6g-6-d)$-symmetric product of $X$.
		\item If $0\leq d\leq g-1,$ then $\cH_d(X)$ is biholomorphic to bundle over a $H^1(X,\Z_2)$-cover of the $2d$-symmetric product of $X$ whose fiber is $(\C^{5g-5-d}\setminus\{0\})/\pm\Id$. 
		\item If $d\notin\{0,\dots,6g-6\}$, then $\cH_d(X)$ is empty.
	\end{enumerate}
\end{theorem}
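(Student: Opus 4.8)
The plan is to transport the statement to the Higgs bundle side and then parameterize explicitly. By Theorem~\ref{thm 1-1 noncompact} together with the Kobayashi--Hitchin/nonabelian Hodge correspondence on the compact surface $X$, the set $\cH_d(X)$ is identified with the set of isomorphism classes of \emph{polystable} $\sG_2'$-Higgs bundles of the form \eqref{eq cyclic G2 higgs B compact} with $\deg\cB=d$; such a bundle is the datum of a triple $(\cB,\beta,\delta)$ with $\cB\in\Pic^d(X)$, $\beta\in H^0(\cB^{-1}\cK^3)\setminus\{0\}$ and $\delta\in H^0(\cB^2)$, the maps $1$ and $-\tfrac{\i}{2}$ being fixed. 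Arguing as in the proof of Proposition~\ref{prop isos of B cyclic harmonic}, but allowing holomorphic rather than unitary gauge transformations, every isomorphism between two Higgs bundles of this form is diagonal, so two triples give isomorphic Higgs bundles exactly when $(\cB',\beta',\delta')\cong(\cB,\lambda\beta,\lambda^{-2}\delta)$ for some $\lambda\in\C^{*}$. Hence $\cH_d(X)=\{(\cB,\beta,\delta)\ \text{polystable}\}/\C^{*}$, and it remains to decide which triples are polystable and to describe the quotient.

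To determine polystability I would apply Proposition~\ref{prop cyclic stab and metric}, taking care that the cyclic $6$-splitting of Lemma~\ref{lem metric splits on line bundles} contains the rank-two piece $\cE_1=\cL_{-3}\oplus\cL_3$; the graded $\Phi$-invariant subbundles are then governed by the first block $A\colon\cL_{-3}\oplus\cL_3\to\cL_{-2}\otimes\cK$ of $\Phi$. Because $\beta\neq0$, tracing the arrows shows the proper graded invariant subbundles are: the line subbundle $\ker A\cong\cB^{-1}$ of degree $-d$; one further subbundle of degree $-d$ when $\delta\neq0$; and, when $\delta=0$, the six nested tails $\cL_j\oplus\dots\oplus\cL_3$, of degrees $-d$, $2g-2-2d$, $-2d$, $-2d$, $2g-2-2d$, $-d$. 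Consequently: if $d>6g-6$ then $H^0(\cB^{-1}\cK^3)=0$ and $\beta\neq0$ is impossible, while if $d<0$ then $\delta=0$ is forced and $\cL_2\oplus\cL_3$ has positive degree, so in both cases $\cH_d(X)=\emptyset$, proving~(3); if $g\le d\le 6g-6$ all of these degrees are negative, so every triple with $\beta\neq0$ is stable; and if $0\le d\le g-1$ then $\delta=0$ makes $\cL_2\oplus\cL_3$ of nonnegative degree and, since $\cL_1\to\cL_2$ is nonzero, not a direct summand, so polystability forces $\delta\neq0$, in which case the triple is stable for $d>0$ and is, for $d=0$, the strictly polystable $\sSL(3,\R)$-Hitchin datum twisted by a $2$-torsion bundle whose polystability is checked in \textsection\ref{sec d=0}.

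For $g\le d\le 6g-6$ I would send $(\cB,\beta,\delta)$ to $D=\operatorname{div}(\beta)$ in the $(6g-6-d)$-fold symmetric product $X^{(6g-6-d)}$; this pins down $\cB=\cK^{3}(-D)$ and $\beta$ up to scale, so after using the $\C^{*}$ to normalize $\beta$ (no residual symmetry) the only remaining freedom is $\delta\in H^0(\cB^{2})=H^0(\cK^{6}(-2D))$. As $\deg(\cK\cB^{-2})=2g-2-2d<0$, Riemann--Roch gives $h^0(\cB^{2})=2d-g+1$, constant in $D$, so by Grauert's base-change theorem the relative $H^0$ of the universal family over $X^{(6g-6-d)}\times X$, twisted by the line bundle recording the normalization of $\beta$, is a rank-$(2d-g+1)$ holomorphic vector bundle, whose total space is $\cH_d(X)$; this is~(1).

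For $0\le d\le g-1$ I would instead send $(\cB,\beta,\delta)$ to $D'=\operatorname{div}(\delta)$ in $X^{(2d)}$ together with the line bundle $\cB$, one of the square roots of $\cO(D')$; over each point of $X^{(2d)}$ these square roots form a torsor under $\Pic^{0}(X)[2]\cong H^1(X,\Z_2)$, which is the asserted cover. Normalizing $\delta$ by the $\C^{*}$-action (weight $-2$ on $\delta$) leaves the residual group $\{\pm1\}$, which acts on $\beta\in H^0(\cB^{-1}\cK^{3})\setminus\{0\}$ by $\beta\mapsto-\beta$; as $\deg(\cB\cK^{-2})=d-4g+4<0$, Riemann--Roch gives $h^0(\cB^{-1}\cK^{3})=5g-5-d$, constant, so the $\beta$'s form a rank-$(5g-5-d)$ holomorphic vector bundle over the cover and $\cH_d(X)$ is the associated bundle with fiber $(\C^{5g-5-d}\setminus\{0\})/\pm\Id$; this is~(2). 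That these maps are biholomorphisms, and not merely bijections, follows because they are assembled from the universal divisor, Abel--Jacobi, and pushforward constructions, compatibly with the complex structure coming from the Higgs bundle moduli. The step I expect to be the genuine obstacle is the polystability bookkeeping of the second paragraph: the rank-two cyclic piece $\cL_{-3}\oplus\cL_3$ produces the easily overlooked subbundle $\ker A$, which is exactly the source of the strictly polystable locus at $d=0$, and one must invoke the separate verification in \textsection\ref{sec d=0} that those $d=0$ semistable triples really do split.
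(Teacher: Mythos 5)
Your proposal is correct and follows essentially the same route as the paper: identify $\cH_d(X)$ with triples $(\cB,\beta,\delta)$ modulo the $\C^*$-action $(\beta,\delta)\mapsto(\lambda\beta,\lambda^{-2}\delta)$, run the graded-subbundle (poly)stability analysis of Proposition \ref{prop:HiggsBundlesAreStable} (your tail degrees and the kernel subbundle match), and fiber over $\sSym^{6g-6-d}(X)$ via $\mathrm{div}(\beta)$, respectively over the $H^1(X,\Z_2)$-cover of $\sSym^{2d}(X)$ via $\mathrm{div}(\delta)$ and the square root $\cB$. The only cosmetic difference is that the paper identifies the fibers canonically through the $\C^*$-invariant section $\beta^2\delta$ (the maps $\Psi$ and $\Theta$) instead of normalizing $\beta$, respectively $\delta$, and twisting.
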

\begin{remark}
	The upper bound on $d$ follows immediately from the assumption that $\beta\in H^0(\cB^{-1}\cK^3)\setminus\{0\}$ while the lower bound comes from polystability. To prove first two points, we parameterize the isomorphism classes of $\sG_2'$-Higgs bundles of the form \eqref{eq cyclic G2 higgs B compact} which are polystable. The proof gives explicit descriptions of the fibrations over the  appropriate symmetric product.
\end{remark}

% Consider the cyclic $\sG_2'$-Higgs bundles of the form \eqref{eq cyclic G2 higgs B} on the compact Riemann surface $X$ which arise from equivariant alternating holomorphic curves. Recall that they are written schematically as 
The following proposition describes when the associated $\sSL(7,\C)$-Higgs bundle is polystable.
\begin{proposition}\label{prop:HiggsBundlesAreStable}
For a $\sG_2'$-Higgs bundle of the form \eqref{eq cyclic G2 higgs B compact}, let $d=\deg(\cB).$  Then $0\leq d\leq 6g-6$, and,
\begin{itemize} 
	\item for $g\leq d\leq 6g-6$, the associated $\sSL(7,\C)$-Higgs bundle is stable, 
	\item for $0<d\leq g-1$, the associated $\sSL(7,\C)$-Higgs bundle is polystable if and only if $\delta\neq0$, in which case it is stable,
	\item for $d=0$, the associated $\sSL(7,\C)$-Higgs bundle is polystable if and only if $\delta\neq0,$ in which case it is strictly polystable.
\end{itemize}
\end{proposition}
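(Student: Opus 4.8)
The plan is to analyze stability of the $\sSL(7,\C)$-Higgs bundle $(\cE,\Phi)$ underlying \eqref{eq cyclic G2 higgs B compact} via Proposition \ref{prop cyclic stab and metric}, which reduces the stability computation to $\Phi$-invariant subbundles that are \emph{compatible with the cyclic splitting}, i.e. direct sums $\cF=\bigoplus_i\cF_i$ with $\cF_i\subseteq\cE_i$ for the $6$-cyclic splitting $\cE_1\oplus\cdots\oplus\cE_6=(\cL_{-3}\oplus\cL_3)\oplus\cL_{-2}\oplus\cL_{-1}\oplus\cL_0\oplus\cL_1\oplus\cL_2$ from Lemma \ref{lem metric splits on line bundles}. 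Since each $\cL_a$ is a line bundle, each $\cF_i$ is either $0$ or (generically) the whole $\cL_a$, except on the summand $\cL_{-3}\oplus\cL_3$ where $\cF_1$ can be a line subbundle. So the first step is to enumerate the $\Phi$-invariant cyclic subbundles: starting from a nonzero component in some $\cL_a$ and applying $\Phi$ repeatedly (using that $\alpha=-\frac{\i}{2}$ and $1$ never vanish, while $\beta$ and $\delta$ may vanish) one is forced to include the ``downstream'' line bundles. Concretely, the only proper $\Phi$-invariant cyclic subbundles are built from the chains generated by $\cL_0$, by $\cK$, by $\cB\cK^{-1}$, by $\cB$ (and a line subbundle of $\cL_{-3}$), and the corresponding $\delta$-closed extensions; one lists these and computes degrees in terms of $d=\deg\cB$ and $g$.

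The second step is the degree bookkeeping. The destabilizing candidates of largest degree are, schematically: (a) the subbundle $\cO\oplus\cK^{-1}\oplus\cB^{-1}\cK\oplus\cB^{-1}$ generated by $s_0$ when $\delta=0$ (so that $1:\cO\to\cK^{-1}\cK$ and $\alpha:\cB^{-1}\cK\to\cB^{-1}$ close it up but $\delta$ does not leave it) — its degree is $0+(1-g)+(d-1+... )$; one computes $\deg=\deg\cO+\deg\cK^{-1}+\deg(\cB^{-1}\cK)+\deg\cB^{-1}=0-(2g-2)+(-d+2g-2)+(-d)=-2d$, which is $<0$ exactly when $d>0$ and $=0$ when $d=0$; (b) the $\delta$-closed analogue $\cB\oplus\cB\cK^{-1}\oplus\cK\oplus\cO\oplus\cK^{-1}\oplus\cB^{-1}\cK$ (everything except $\cL_3=\cB^{-1}$), whose degree is $d-(d-2g+2)-(2g-2)+0+(2g-2)+(-d+2g-2)=2g-2-d$, which is $<0$ iff $d>2g-2$, but for $g\le d\le 6g-6$ we may have $d<2g-2$, so this needs a more careful argument when $\delta\ne0$: if $\delta\ne0$ then this subbundle is not $\Phi$-invariant (because $\delta:\cB^{-1}\cK\to\cB^{-1}$ and $\delta:\cB^{-1}\to\cB^{-2}\cdot\ldots$ — careful, $\delta$ maps $\cL_2=\cB^{-1}\cK\to\cL_{-3}=\cB$ and $\cL_3=\cB^{-1}\to\cL_{-2}=\cB\cK^{-1}$, so $\delta$ forces one back into $\cB$-territory), and the point is that only when $\delta=0$ does one get the short ``half'' subbundle; (c) the sub-line-bundle of $\cL_{-3}=\cB$ together with its $\Phi$-orbit — generated by a line bundle $\cM\subseteq\cB$ of degree $\le d$; applying $\alpha,\beta,1$ forces the chain $\cM\to\cM\cK^{-1}$ — wait, $\alpha:\cL_{-3}\to\cL_{-2}=\cB\cK^{-1}$ is an isomorphism so $\cM\cK^{-1}\subseteq\cB\cK^{-1}$ — and this gives the degree bound that becomes the stability inequality; working it through produces the constraint $d\ge 0$ and, for $d\ge g$, strict inequality automatically. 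Summarizing the inequalities gives exactly the trichotomy: stable for $g\le d\le 6g-6$; for $0<d\le g-1$ the chain (c) can fail to destabilize only because $\beta\ne0$ forces it to grow, and the genuinely borderline object is (a), which has degree $-2d<0$, so one is stable \emph{unless} $\delta=0$ in which case a $\Phi$-invariant degree-$0$ subbundle (the direct summand $\cB^{-1}\oplus\cdots$ generated when $\delta=0$) destabilizes — hence polystable iff $\delta\ne0$, and then stable; for $d=0$ the same subbundle has degree $0$ and is a genuine $\Phi$-invariant direct summand, so the bundle is never stable but is polystable (a sum of two degree-$0$ stable pieces) iff $\delta\ne0$, i.e. strictly polystable.

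The third step is to confirm the direct-sum decomposition in the strictly polystable case $d=0$, $\delta\ne0$: here $\cB$ is degree $0$, and one shows $\cE$ splits $\Phi$-invariantly as a rank $4$ piece (the $\sL_{\pm3},\sL_{\pm2}$-chain plus $\cO$... ) — actually the clean statement, matching the later claim in the introduction that the holonomy factors through the $\Z_2$-extension of $\sSL(3,\R)$, is that $(\cE,\Phi)=(\cE',\Phi')\oplus(\cE'',\Phi'')$ with $\cE'$ of rank $4$ carrying an $\sSL(3,\R)$-Hitchin-type cyclic Higgs bundle (the $\cB,\cK,\cO,\cK^{-1},\cB^{-1}$ part with $\cB$ flat) — this is where one invokes that $\beta\ne0$ makes each summand stable of degree $0$. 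I expect the main obstacle to be the careful enumeration in step one: one must be sure that the presence of the ``wrap-around'' maps $\delta$ (which go from $\cL_2,\cL_3$ back to $\cL_{-3},\cL_{-2}$) does not create additional $\Phi$-invariant cyclic subbundles beyond the ones listed, and that when $\delta\ne0$ these wrap-around maps genuinely obstruct the ``half'' subbundles — getting the case division on where $\delta$ is allowed to vanish versus forced nonzero is the delicate combinatorial heart of the proof, and the degree arithmetic, while routine, must be done for each of the (few) surviving candidates without arithmetic slips.
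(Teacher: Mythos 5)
Your overall strategy---reduce to splitting-compatible $\Phi$-invariant subbundles via Proposition \ref{prop cyclic stab and metric}, do the degree bookkeeping, and split off a summand at $d=0$---is the same as the paper's, but the enumeration that carries the whole argument is wrong at precisely the points that produce the trichotomy. Your candidate (b), ``everything except $\cB^{-1}$'', is never $\Phi$-invariant for \emph{any} $\delta$, because $-\tfrac{\i}{2}\colon\cB^{-1}\cK\to\cB^{-1}$ is nowhere vanishing; you discard it only when $\delta\neq0$ and leave standing the suggestion that it survives when $\delta=0$, which, with the degree you assign it, would contradict stability in the range $g\le d\le 2g-2$ (its actual degree is $d$ in any case, and the point is moot since it is not invariant). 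Meanwhile the two subbundles the proof actually turns on never appear in your list. When $\delta=0$ the relevant invariant subbundle is $\cB^{-1}\cK\oplus\cB^{-1}$, of degree $2g-2-2d$: positive for $0\le d<g-1$, so the Higgs bundle is unstable, and zero for $d=g-1$, where one must further check that it admits no $\Phi$-invariant complement in order to exclude polystability. Your assertion that ``a $\Phi$-invariant degree-$0$ subbundle destabilizes'' is numerically wrong for $d<g-1$ and logically insufficient at $d=g-1$, since a degree-zero invariant subbundle is perfectly compatible with polystability. When $\delta\neq0$, the unique proper invariant subbundle is the degree $-d$ kernel of the map $\cB\oplus\cB^{-1}\to(\cB\cK^{-1})\otimes\cK$ with components $-\tfrac{\i}{2}$ and $\delta$ (see \eqref{eq kernel d=0}); it is a line subbundle of the rank-two summand $\cB\oplus\cB^{-1}$, not a subsheaf of $\cB$ alone as in your (c) (any subsheaf of $\cB$ propagates through $-\tfrac{\i}{2}$, $\beta$ and $1$ to all of $\cE$). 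With these two subbundles in hand the trichotomy is immediate: $2g-2-2d<0$ for $d\ge g$, and $-d<0$ for $d>0$; without them your case analysis does not close.

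The $d=0$ case also does not go through as written. The degree-zero invariant subbundle that must split off when $\delta\neq0$ is that same kernel $\cI\subset\cB\oplus\cB^{-1}$, on which $\Phi$ vanishes---not your candidate (a) $\cO\oplus\cK^{-1}\oplus\cB^{-1}\cK\oplus\cB^{-1}$, which is not even $\Phi$-invariant when $\delta\neq0$, and not a rank-four piece. Since $\delta\in H^0(\cB^2)\setminus\{0\}$ with $\deg\cB=0$ gives an isomorphism $\cB\cong\cB^{-1}$, the orthogonal complement $\cI^\perp$ inside $\cB\oplus\cB^{-1}$ yields a $\Phi$-invariant splitting $(\cE,\Phi)\cong(\cI,0)\oplus(\cE',\Phi')$ with $\cE'$ of rank six and $\Phi'$ cyclic with nonzero return map as in \eqref{eq polystable d0}, hence stable; this is what exhibits strict polystability. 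Constructing that invariant complement via the orthogonal structure is exactly the step your sketch omits, and the case $d=0$, $\delta=0$ (where $\cB^{-1}\cK\oplus\cB^{-1}$ has degree $2g-2>0$, so the bundle is unstable) is not addressed at all. Consequently neither the ``only if'' directions in the last two bullets nor the strictness at $d=0$ are established by the proposal.
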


\begin{proof}
	 Recall that the $\sSL(7,\C)$-Higgs bundles are $6$-cyclic with cyclic splitting 
	\[(\cB\oplus \cB^{-1})\oplus \cB\cK^{-1}\oplus\cK\oplus\cO\oplus\cK^{-1}\oplus\cB^{-1}\cK.\]
	 By Proposition \ref{prop cyclic stab and metric}, to test stability of the $\sSL(7,\C)$-Higgs bundle it suffices to consider invariant subbbundles compatible with the cyclic splitting. 

	 When $\delta=0,$ $\cB^{-1}\oplus\cB^{-1}\cK$ is such an invariant bundle. If $d<g-1$, then the degree of $\cB^{-1}\oplus\cB^{-1}\cK$ is positive and invariant so the underlying Higgs bundle is unstable, while for $d=g-1$, $\cB^{-1}\oplus\cB^{-1}\cK$ is a degree zero invariant subbundle which does not have an invariant complement, so the $\sSL(7,\C)$-Higgs bundle is not polystable. Hence, the Higgs bundles with $\delta=0$ are polystable if and only if $d>g-1$, in which case they are stable. 

	 When $\delta\neq0$, the only invariant subbundle is the degree $-d$ kernel of 
	\begin{equation}
		\label{eq kernel d=0}
\begin{pmatrix}
	 	-\frac{\i}{2}\\\delta
	 \end{pmatrix}:\cB\oplus\cB^{-1}\to(\cB\cK^{-1})\otimes \cK~.\end{equation}
	 Hence, when $0<d\leq 6g-6$ and $\delta\neq 0$, the associated $\sSL(7,\C)$-Higgs bundle is stable. 

	 Finally, when $d=0$, $\delta\in H^0(\cB^2)\setminus\{0\}$ defines an isomorphism $\cB\cong\cB^{-1}.$ Denote the kernel of \eqref{eq kernel d=0} by $\cI$. Since $\delta\neq 0,$ $\cI\subset\cB\oplus\cB^{-1}$ is an orthogonal subbundle which is isomorphic to $\cB$. Hence, taking the orthogonal complement, we obtain a new splitting $\cB\oplus\cB^{-1}\cong \cI\oplus\cI^\perp$. 
	 In this splitting the $\sSL(7,\C)$-Higgs bundle is given by $(\cE,\Phi)=(\cI\oplus\cE',0\oplus \Phi')$, where $(\cE',\Phi')$ is the $6$-cyclic Higgs bundle 
	 \begin{equation}
	 	\label{eq polystable d0}(\cE',\Phi')=\xymatrix{ \cB\cK^{-1}  \ar@/_/[r]_\beta & \cK \ar@/_/[r]_{1}  & \cO \ar@/_/[r]_{1} & \cK^{-1} \ar@/_/[r]_\beta  & \cB^{-1}\cK \ar@/_/[r]_{\epsilon}  & \cI^\perp\ar@/_2pc/[lllll]_{\epsilon}}~,
	 \end{equation}
	 with $\epsilon$ is the restriction of \eqref{eq kernel d=0} to $\cI^\perp.$ 
	Since $\epsilon\neq0$, the cyclic Higgs bundle $(\cE',\Phi')$ is stable, and hence $(\cE,\Phi)$ is strictly polystable. 
\end{proof}

\begin{remark}\label{rem beta=0}
	Note that in the case $0\leq d\leq g-1,$ the $\sSL(7,\C)$-Higgs bundle associated to the above $\sG_2'$-Higgs bundle is polystable when $\beta=0$. Such Higgs bundles define equivariant holomorphic curve $f:\tilde X\to \H$ which are totally geodesic.
\end{remark}
\begin{proof}[Proof of Theorem \ref{thm: moduli fixed Riemann}]Denote the set of degree $n$ line bundles on $X$ by $\Pic^n(X)$ and the $n^{th}$ symmetric product by $\sSym^n(X).$ Taking the divisor $\mathrm{div}(\sigma)$ defines a biholomorphism between $\sSym^n(X)$ and the space of pairs $(\cN,[\sigma])$, where $\cN\in\Pic^n(X)$ and $[\sigma]\in\mathbf{P}(H^0(\cN))$.  

First consider the case when $d=\deg(\cB)$ satisfies $g\leq d\leq 6g-6$. The proof is similar to Hitchin's parameterization of the components of $\cM(X,\sPSL(2,\R))$ with nonzero Euler number in \cite{selfduality}. By Proposition \ref{prop isos of B cyclic harmonic} and Proposition \ref{prop:HiggsBundlesAreStable}, we have 
\[\cH_d(X)=\{(\cB,\beta,\delta)\}/\sim~,\]
where $\cB\in\Pic^d(X),$ $\beta\in H^0(\cB^{-1}\cK^3)\setminus\{0\}$ and $\delta\in H^0(\cB^2)$, and $(\cB,\beta,\delta)\sim(\cB,\lambda\beta,\lambda^{-2}\delta)$ for $\lambda\in\C^*.$ Define the bundle $\pi:\cY\to\sSym^{6g-6-d}(X)$ by $\pi^{-1}(D)=H^0(\cK^6(-2D))$. Since $d\geq g$, Riemann-Roch and Serre duality imply $\dim(\pi^{-1}(D))=2d-(g-1)$.
Hence, $\cY\to \sSym^{6g-6-d}(X)$ is a rank $2d-(g-1)$ vector bundle. 
There is a well defined injective map 
\[\Psi:\xymatrix@R=0em{\cH_d(X)\ar[r]&\cY\\[(\cB,\beta,\delta)]\ar@{|->}[r]&\left(\mathrm{div}(\beta),\beta^2\delta\right)}~.\] 
Moreover, the map $\Psi$ is surjective. Indeed, a point $(D,\theta) \in \cY$ determines $(\cL,[\beta])$, where $\beta\in \P(H^0(\cL))$ with $\mathrm{div}(\beta)=D$ and $\theta\in H^0(\cK^6(-2D)).$ Consider $\cB=\cL^{-1}\cK^3\in\Pic^d(X)$, since $\theta$ is degree six holomorphic differential which vanishes on $2D$, it can be written as $\theta=\beta^2\delta$ for $\delta\in H^0(\cB^2).$ 
The result follows since holomorphic bijections are biholomorphisms.

Now assume $0\leq d\leq g-1.$ By Proposition \ref{prop isos of B cyclic harmonic} and Proposition \ref{prop:HiggsBundlesAreStable}, we have 
\[\cH_d(X)=\{(\cB,\beta,\delta)\}/\sim~,\]
where $\cB\in\Pic^d(X),$ $\beta\in H^0(\cB^{-1}\cK^3)\setminus\{0\}$, $\delta\in H^0(\cB^2)\setminus\{0\}$, and $(\cB,\beta,\delta)\sim(\cB,\lambda\beta,\lambda^{-2}\delta)$ for $\lambda\in\C^*.$ Note that $\dim(H^0(\cB^{-1}\cK^3)=5g-5-d.$

Recall that the set of square roots of an even degree line bundle over $X$ is a $H^1(S,\Z_2)$-torsor. The $2d$-symmetric product $\sSym^{2d}(X)$ thus has a $H^1(X,\Z_2)$-cover $\widehat\sSym^{2d}(X)$ which parameterizes pairs $(\cB,[\delta])$, where $\cB\in\Pic^d(X)$  and $[\delta]\in \mathbf{P}(H^0(\cB^{2})\setminus\{0\}).$
There is a well defined injective map
\[\Theta: \xymatrix@R=0em{\cH_d(X)\ar[r]& \widehat\sSym^{2d}(X)\times H^0(\cK^6)\\[(\cB,\beta,\delta)]\ar@{|->}[r]&\left((\cB,[\delta]),\beta^2\delta\right)}\]
Let $\cW$ denote the image of $\Theta$. Projection on to the first factor defines a surjective map  $\pi:\cW\to\widehat\sSym^{2d}(X).$ The fiber $\pi^{-1}((\cB,[\delta]))$ consists of all nonzero holomorphic sections of $\cK^6$ which are squares of sections $\cB^{-1}\cK^3$ and vanish on $\mathrm{div}(\delta)$. In other words, the fiber is the image of the quadratic map $H^0(\cB^{-1}\cK^3)\setminus\{0\}\to H^0(\cB^{-2}\cK^6)\subset H^0(\cK^6)$ defined by $\beta\mapsto \beta^2.$ In particular, it is biholomorphic to $(\C^{5g-5-d}\setminus\{0\})/\pm\Id.$
\end{proof}
\begin{remark}
By Remark \ref{rem beta=0}, when $0\leq d\leq g-1$ and $\beta=0$, the resulting Higgs bundles are also polystable. By the above proof, the space of such Higgs bundles is biholomorphic the quotient of a rank $(5g-5-d)$ vector bundle over $\widehat\sSym^{2d}(X)$ by the fiberwise action of $\pm\Id.$
\end{remark}
% \begin{proposition}\label{prop compact R hol curves}
% 	Let $\Sigma$ be an oriented closed surface of genus $g\geq 2$.  Then, there is a one-to-one correspondence between isomorphism classes of equivariant alternating holomorphic curves with induced Riemann surface $X$ and isomorphism classes of $\sG_2'$-Higgs bundles on $X$ of the form \eqref{eq cyclic G2 higgs B compact},
% 	where $0\leq \deg(\cB)\leq 6g-6,$ $\beta\in H^0(\cB^{-1}\cK^{3})\setminus\{0\}$, $\delta\in H^0(\cB^{2})$ with $\delta\neq0$ when $0\leq\deg(\cB)\leq g-1$.
% \end{proposition} 

% \begin{proof}
% 	By Proposition \ref{prop:HiggsBundlesAreStable}, such Higgs bundles are exactly the polystable $\sG_2'$-Higgs bundles of this form. By Proposition \ref{prop cyclic stab and metric}, such Higgs bundles have $\sG_2'$-cyclic harmonic metrics. The result now follows from Theorem \ref{thm 1-1 noncompact}.
% \end{proof}

\subsection{Varying the Riemann surface}
The moduli space $\cH(\Sigma)$ of equivariant alternating holomorphic curves on $\Sigma$ is defined by
\[\cH(\Sigma)=\{(\rho,f)~\text{equivariant alternating holomorphic curve }\}/\sim~,\] 
where  $(f_1,\rho_1)\sim (f_2,\rho_2)$ if there is $g\in\sG_2'$ and $f\in \Diff_0(\Sigma)$ such that $(\rho_1,f_1)=(\mathrm{Ad}_g\circ \rho_2,g\cdot f_2).$ 

Recall that the {\em Teichm\"uller space} $\cT(\Sigma)$ of $\Sigma$ is the quotient of the space of complex structures on $\Sigma$ which are compatible with the orientation by the group $\Diff_0(\Sigma).$ 
Taking the induced Riemann surface of an equivariant holomorphic curve defines a surjective map 
\[\pi:\xymatrix@R=0em{\cH(\Sigma)\ar[r]&\cT(\Sigma)~.\\[(\rho,f)]\ar@{|->}[r]&[(S,\j)]}\]

Recall also that the {\em mapping class group} $\Mod(\Sigma)$ of $\Sigma$ is the quotient $\Mod(\Sigma)=\Diff^+(\Sigma)/\Diff_0(\Sigma).$ The mapping class group acts naturally on $\cH(\Sigma)$ and $\cT(\Sigma)$, and the map $\pi$ is equivariant with respect to these actions.

Finally, recall that a \emph{complex analytic set} is a subset $V\subset\C^n$ which is locally the zero set of a finite number of holomorphic functions. A holomorphic function on an open set $U\subset V$ is the restriction of a holomorphic function defined on a neighborhood of $U$ in $\C^n$. This defines a structure sheaf $\mathcal O_V$ on $V$ and we call a \emph{complex analytic space}  a ringed space $(A,\mathcal O_A)$ which is locally isomorphic to an analytic set equipped with its structure sheaf. A morphism of ringed space between complex analytic space is called a \emph{holomorphic map}.

\begin{theorem}
	\label{thm complex anal moduli} The moduli space $\cH(\Sigma)$ has the structure of a complex analytic space. With respect to this structure, the mapping class group $\Mod(\Sigma)$ acts holomorphically and the projection to Teichm\"uller space $\pi:\cH(\Sigma)\to\cT(\Sigma)$ is a surjective holomorphic map. 
	Moreover, 
	\[\cH(\Sigma)=\coprod_{d\in\{0,\dots,6g-6\}}\cH_d(\Sigma)~,\]
	where, for each $X\in\cT(\Sigma)$,  $\cH_d(\Sigma)\cap\pi^{-1}(X)$ is biholomorphic to the space $\cH_d(X)$ from Theorem \ref{thm: moduli fixed Riemann}.
\end{theorem}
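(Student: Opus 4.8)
The plan is to build the complex analytic structure on $\cH(\Sigma)$ by realizing it inside Simpson's family moduli space of Higgs bundles over the universal Teichmüller family, then use Theorem~\ref{thm: moduli fixed Riemann} to identify the fibers of $\pi$. The starting point is the nonabelian Hodge correspondence together with Theorem~\ref{thm 1-1 noncompact}: fixing $X\in\cT(\Sigma)$, isomorphism classes in $\pi^{-1}(X)$ are in bijection with isomorphism classes of polystable $\sG_2'$-Higgs bundles of the form \eqref{eq cyclic G2 higgs B compact}, and Proposition~\ref{prop:HiggsBundlesAreStable} pins down exactly which triples $(\cB,\beta,\delta)$ occur. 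So the set-theoretic content of the fiber statement is already in hand; what remains is to globalize over $\cT(\Sigma)$ and check holomorphicity of everything.

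\medskip

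First I would recall Simpson's construction \cite{SimpsonModuli1} of the moduli space of Higgs bundles in families: over the universal curve $\mathcal C\to\cT(\Sigma)$ (or a suitable Teichmüller-type base, as in \cite{SO23LabourieConj}) there is a relative moduli space $\mathcal M\to\cT(\Sigma)$ of polystable $\sSL(7,\C)$-Higgs bundles which is a complex analytic space with holomorphic projection. Inside $\mathcal M$ I would cut out the locus of Higgs bundles carrying the extra structure of \eqref{eq cyclic G2 higgs B compact}: a $\sG_2'$-structure (a parallel cross product $\times$ identifying fibers with $\Im(\oct')_\C$), the $6$-cyclic line-bundle splitting $\cL_{-3}\oplus\cdots\oplus\cL_3$ with $\cL_0=\cO$, $\cL_{-1}=\cK$, and the conditions that the Higgs field be of the prescribed shape with the $\cO\to\cK^{-1}$ arrow equal to the identity and $\beta\ne0$. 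Each of these is a closed analytic condition (vanishing of holomorphic sections, plus the open condition $\beta\ne0$, and fixing determinant/trivial summand), so the locus $\cH(\Sigma)$ is a complex analytic subspace of (an open subset of) $\mathcal M$; the residual gauge group acting on \eqref{eq cyclic G2 higgs B compact} is the $\sU(1)$-torsor of Proposition~\ref{prop isos of B cyclic harmonic}, which is compact, so the quotient remains analytic. The map $\pi:\cH(\Sigma)\to\cT(\Sigma)$ is the restriction of the analytic structure map of $\mathcal M$, hence holomorphic and, by Theorem~\ref{thm: moduli fixed Riemann} together with surjectivity of $\deg:\Pic^d(X)\to\{d\}$ and nonemptiness of the relevant symmetric products for every $d\in\{0,\dots,6g-6\}$, surjective. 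The decomposition $\cH(\Sigma)=\coprod_d\cH_d(\Sigma)$ comes from the locally constant function $d=\deg(\cB)$, which is the degree of a holomorphically varying line subbundle (the kernel of \eqref{eq kernel d=0} when $\delta\ne0$, or $\cL_{-3}$ itself), hence continuous; and by Theorem~\ref{thm: moduli fixed Riemann}(3) only $d\in\{0,\dots,6g-6\}$ occur. Finally, the identification of $\cH_d(\Sigma)\cap\pi^{-1}(X)$ with $\cH_d(X)$ is literally the fiberwise statement of Theorem~\ref{thm: moduli fixed Riemann}, now read as a biholomorphism of the fiber of an analytic map.

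\medskip

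For the $\Mod(\Sigma)$-action: $\Mod(\Sigma)=\Diff^+(\Sigma)/\Diff_0(\Sigma)$ acts on $\cT(\Sigma)$ and on the universal curve, hence on Simpson's relative moduli space $\mathcal M$ by pullback of Higgs bundles, and this action is holomorphic because Simpson's construction is functorial in the family. The cyclic/$\sG_2'$ structures defining $\cH(\Sigma)$ are preserved by pullback (a diffeomorphism of $\Sigma$ carries an equivariant alternating holomorphic curve to one of the same type, cf.\ Definition~\ref{def eqiuiv alt hol}), so $\cH(\Sigma)$ is $\Mod(\Sigma)$-invariant and the induced action is the restriction of a holomorphic action, hence holomorphic; and $\pi$ is equivariant by construction. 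One small point to dispatch: the normalization $\cL_{-1}=\cK$ and "$\cO\to\cK^{-1}$ arrow $=\mathrm{id}$" are not arbitrary rigidifications but come from the Frenet framing (Theorem~\ref{thm hol descp of C FF}), so they vary holomorphically with $X$ — the relative canonical bundle $\cK_{\mathcal C/\cT}$ is a holomorphic line bundle on $\mathcal C$, and the identity section is canonical.

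\medskip

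\textbf{The main obstacle} I expect is the careful verification that the defining conditions cut out an honest complex analytic \emph{subspace} — not merely a subset — of Simpson's relative moduli space, uniformly in the Teichmüller parameter, and that passing to the $\sU(1)$-quotient and isolating the components $\cH_d$ is compatible with the analytic structure. Concretely, one must check that "being of the form \eqref{eq cyclic G2 higgs B compact}" is an analytically closed (in the appropriate stratum, open) condition: the existence of a holomorphic $6$-cyclic grading with $\cL_0\cong\cO$, a parallel $\sG_2'$-cross-product, and the rigidified identity arrow are conditions on a family, and one wants them expressed via vanishing/nonvanishing of holomorphically varying data so that Simpson's GIT/analytic machinery applies directly. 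The cleanest route is probably to not work inside $\mathcal M$ abstractly but to build $\cH(\Sigma)$ by hand as a relative version of the explicit parameter spaces in the proof of Theorem~\ref{thm: moduli fixed Riemann} — relative symmetric products of $\mathcal C\to\cT(\Sigma)$, relative $H^0(\cK^6)$, the $H^1(\cdot,\Z_2)$-cover, etc., all of which are manifestly analytic over $\cT(\Sigma)$ — and then invoke Simpson's family theorem only to know that the resulting space is compatibly embedded in $\mathcal M$ and carries the right (pulled-back) analytic structure; this also makes the $\Mod(\Sigma)$-equivariance transparent. The rest is then bookkeeping with the dimension counts already performed in Theorem~\ref{thm: moduli fixed Riemann}.
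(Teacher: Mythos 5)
Your overall architecture is the same as the paper's: realize $\cH(\Sigma)$ inside the relative moduli space $\cM(\Sigma,\GC)=\coprod_{X\in\TT(\Sigma)}\cM(X,\GC)$ over Teichm\"uller space, whose $\Mod(\Sigma)$-invariant complex analytic structure with holomorphic projection is supplied by \cite[Theorem 7.5]{SO23LabourieConj} building on Simpson, get the mapping class group action and the holomorphicity of $\pi$ from that ambient structure, and read off the fibers and the decomposition by degree from Theorem \ref{thm: moduli fixed Riemann}. (One cosmetic difference: the paper works directly with $\GC$-Higgs bundles, where the family construction applies since $\GC$ is complex reductive, rather than with $\sSL(7,\C)$-Higgs bundles plus an imposed $\sG_2'$-structure; also, inside the moduli space there is no residual $\sU(1)$-quotient to take --- points are already isomorphism classes, and the residual equivalence at the Higgs-bundle level is the $\C^*$ of Theorem \ref{thm: moduli fixed Riemann}.)

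The genuine gap is exactly the step you flag as ``the main obstacle'' and then do not resolve: showing that the locus of Higgs bundles of the form \eqref{eq cyclic G2 higgs B} is a complex analytic subspace of $\cM(\Sigma,\GC)$. Your assertion that the $6$-cyclic splitting, the parallel cross product, the normalization $\cL_{-1}=\cK$, and the identity arrow are ``closed analytic conditions (vanishing of holomorphic sections)'' does not work as stated: the moduli space parametrizes isomorphism classes, not bundles equipped with a marked torus reduction, so the existence of a holomorphic grading of the prescribed shape is not literally the vanishing of holomorphically varying sections on $\cM(\Sigma,\GC)$, and no argument is offered to convert it into one. The paper's device, which is the missing idea, is the $\C^*$-action: it acts analytically on $\cM(\Sigma,\GC)$ (by \cite{SimpsonModuli2}), so for a primitive sixth root of unity $\zeta$ the fixed-point set $\cM(\Sigma,\GC)^{\zeta}$ is analytic; $6$-cyclic Higgs bundles are precisely (fixed points of) this locus, because a cyclic Higgs bundle carries the unitary automorphism intertwining $\Phi$ with $\zeta\Phi$ described in \textsection\ref{sec cyclic higgs and harmonic}; and within this fixed locus, being of the specific form \eqref{eq cyclic G2 higgs B} is an open condition, so the resulting subset is analytic. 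Your alternative ``build it by hand from relative symmetric products and compare with Simpson's space'' could plausibly be made to work and would give the fiber description very concretely, but as written it is only a sketch --- the comparison map to $\cM(\Sigma,\GC)$ and the verification that it is a biholomorphism onto its image are precisely where the work would lie --- so as it stands the proposal does not contain a proof of the analyticity of $\cH(\Sigma)$.
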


\begin{proof}
Recall now that from Section \ref{s:HBandHolCurves} provides a one-to-one correspondence between points in $\HH(\S)$ and pair of points $(X,(\cE,\Phi))$ where $X\in\TT(\Sigma)$ and $ (\cE,\Phi) \in \cM(\S,\GC)$ is a stable $\GC$-Higgs bundles of the form of equation (\ref{eq cyclic G2 higgs B}).

Consider the space
\[\cM(\Sigma,\GC)= \coprod_{X\in \TT(\Sigma)} \cM(X,\GC)~,\]
where $\cM(X,\GC)$ is the moduli space of poly-stable $\GC$-Higgs bundles over $X$. Since $\GC$ is a complex reductive algebraic group $\sG$, building on the construction of Simpson \cite{SimpsonModuli2}, it was proven in \cite[Theorem 7.5]{SO23LabourieConj} that $\cM(\S,\GC)$ has a $\Mod(\S)$-invariant complex analytic structure such that the natural projection map  $\pi: \cM(\Sigma,\GC) \to \cT(\Sigma)$ is holomorphic and $\pi^{-1}(X)$ is biholomorphic to $\cM(X,\GC)$.

The $\C^*$-action on each $\cM(X,\GC)$ yields a $\C^*$ action on $\cM(\S,\GC)$ which is analytic by \cite{SimpsonModuli2}. Given a primitive $6^{th}$ root of unity $\zeta$, the set of fixed points $\cM(\Sigma,\GC)^\zeta$ is thus analytic. 
To conclude the proof, observe that being of the form \eqref{eq cyclic G2 higgs B} is an open condition in the space of $6$-cyclic $\GC$-Higgs bundles. Since open subsets of analytic spaces are analytic, the result follows.
\end{proof}

\begin{remark}
Simpson explained to us a proof that the restriction of the projection $\pi: \cM(\Sigma,\GC) \to \TT(\S)$ to stable Higgs bundles is a flat family. This implies in particular that the total space $\cM(\Sigma,\GC)$, and hence $\cH(\Sigma)$, is smooth. Since the technical aspects of the proof are not inline with the main points of this paper, we decided not to include it here. Smoothness will be explicitly addressed in \cite{CTcyclic}.
\end{remark}

\subsection{The case $d=6g-6$}
Recall that a \emph{Fuchsian representation} is a morphism $\rho:\pi_1(\Sigma)\to\sPSL(2,\R)$ arising as the holonomy of a hyperbolic structure on $\S$ (compatible with the orientation). The space of Fuchsian representations defines a connected component of the character variety $\cX(\S,\sPSL(2,\R))$ which is naturally identified with the Teichm\"uller space $\cT(\Sigma).$ 

For $\sG_\C$ a complex semisimple Lie group of adjoint type, there is a special embedding $\sPSL(2,\C)\to\sG_\C$ called the principal embedding. The restriction of the principal embedding to $\sPSL(2,\R)$ defines an embedding into the split real form $\sG$ of $\sG_\C.$ 
Hence, postcomposing Fuchsian representations with the principal embedding define representations $\rho:\pi_1(\Sigma)\to\sG$. 
In \cite{liegroupsteichmuller}, Hitchin used Higgs bundles on a fixed Riemann surface to parameterize the connected component $\Hit(\S,\sG)\subset\cX(\S,\sG)$ containing such representations by a vector space of holomorphic differentials. As a result, $\Hit(\S,\sG)$ is called the \emph{Hitchin component} and representations in it are called $\sG$-\emph{Hitchin representations}. 

Denote the vector bundle of degree $6$ holomorphic differentials by $p:\mathcal{Q}_6(\Sigma)\to \cT(\Sigma)$, i.e, $p^{-1}(X)\cong H^0(\cK^6_X).$ For the case $d=6g-6,$ we have the following description of $\cH_{6g-6}(\Sigma).$
\begin{theorem}
The space $\pi:\cH_{6g-6}(\Sigma)\to \cT(\Sigma)$ is biholomorphic to the vector bundle $p: \mathcal{Q}_6\to \cT(\Sigma)$. Moreover, the points of $\cH_{6g-6}(\Sigma)$ corresponds to isomorphism classes of alternating holomorphic curves $(\rho,f)$ with $\rho$ a $\sG_2'$-Hitchin representation.
\end{theorem}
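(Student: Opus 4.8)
The plan is to combine the fixed–Riemann-surface parameterization from Theorem \ref{thm: moduli fixed Riemann}(1) with Hitchin's Higgs bundle description of the $\sG_2'$-Hitchin component \cite{liegroupsteichmuller}, then to let $X$ vary. First I would set $d=6g-6$ in Theorem \ref{thm: moduli fixed Riemann}: this forces $\deg(\cB)=6g-6$, so $\cB^{-1}\cK^3$ has degree $0$, and $\beta\in H^0(\cB^{-1}\cK^3)\setminus\{0\}$ being nonzero forces $\cB\cong\cK^3$ and $\beta$ to be a nonzero constant. Up to the $\sU(1)$-action of \eqref{eq auto of C frentet}, namely $(\alpha,\beta,\delta)\mapsto(\alpha,\lambda\beta,\lambda^{-2}\delta)$, we may normalize $\beta=1$, and then $\delta\in H^0(\cB^2)=H^0(\cK^6)$ is the only remaining parameter. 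This shows $\cH_{6g-6}(X)$ is canonically identified with $H^0(\cK^6_X)$, the fiber $p^{-1}(X)$ of $\mathcal Q_6\to\cT(\Sigma)$; since the complex analytic structure on $\cH(\Sigma)$ in Theorem \ref{thm complex anal moduli} is the one induced from the moduli space of Higgs bundles in families, and the identification $[(\cB,\beta,\delta)]\mapsto\delta$ is holomorphic in families (it is literally reading off one summand of the Higgs field after the normalization $\cB=\cK^3$, $\beta=1$), this assembles to a biholomorphism $\cH_{6g-6}(\Sigma)\cong\mathcal Q_6(\Sigma)$ over $\cT(\Sigma)$.

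Next I would identify the holonomy. With $\cB=\cK^3$ and $\beta=1$, the Higgs bundle \eqref{eq cyclic G2 higgs B compact} becomes
\[
\xymatrix{\cK^3 \ar@/_/[r]_{-\frac{\i}{2}} & \cK^2 \ar@/_/[r]_{1} & \cK \ar@/_/[r]_{1} & \cO \ar@/_/[r]_{1} & \cK^{-1} \ar@/_/[r]_{1} & \cK^{-2} \ar@/_/[r]_{-\frac{\i}{2}} \ar@/_2pc/[lllll]_{\delta} & \cK^{-3}\ar@/_2pc/[lllll]_{\delta}}~,
\]
so the underlying $\sSO(7,\C)$-bundle is $\cK^3\oplus\cK^2\oplus\cdots\oplus\cK^{-3}$, which is exactly the bundle underlying the $\sSO(7,\C)=\sG_2^{\C}$-Hitchin base point, and when $\delta=0$ the Higgs field is (a gauge-fixed, $\sG_2'$-normalized version of) the principal nilpotent $e_-$. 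Thus the whole fiber lies in the Hitchin section: the $\sG_2'$-Hitchin component is parameterized by $H^0(\cK^2)\oplus H^0(\cK^6)$, and one checks that the $\sG_2'$-cyclic constraint ($\alpha$ a scalar multiple of $\gamma$, i.e. the entry $-\frac{\i}{2}$ being determined by the entry $1$ via the product $\times$) is precisely the condition that kills the $H^0(\cK^2)$ coordinate, leaving the Hitchin differential $\delta\in H^0(\cK^6)$. So the holonomy map sends $[(\cK^3,1,\delta)]$ to the $\sG_2'$-Hitchin representation with Hitchin parameter $(0,\delta)\in H^0(\cK^2)\oplus H^0(\cK^6)$, and conversely every such Hitchin representation, being a cyclic Higgs bundle of this exact shape, arises this way. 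Since Hitchin's parameterization identifies $\Hit(\Sigma,\sG_2')$ with $\coprod_X\big(H^0(\cK^2_X)\oplus H^0(\cK^6_X)\big)$, the locus with vanishing quadratic differential is a $\Mod(\Sigma)$-invariant analytic subset biholomorphic to $\mathcal Q_6(\Sigma)$, matching $\cH_{6g-6}(\Sigma)$; I would record that the resulting map $\cH_{6g-6}(\Sigma)\to\cX(\Sigma,\sG_2')$ is injective with image exactly this locus.

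The main obstacle, and the step deserving the most care, is pinning down the precise normalization of the Hitchin section inside the cyclic family — that is, verifying that the $\sG_2'$-cyclic Higgs bundle of the form \eqref{eq cyclic G2 higgs B compact} with $\cB=\cK^3$, $\beta=1$, $\delta$ arbitrary, equipped with the specific product $\times$ constrained as in the lemma preceding \eqref{eq cyclic G2 higgs B}, is gauge-equivalent to Hitchin's $\sG_2'$-cyclic Higgs bundle with quadratic differential set to zero and sextic differential equal to $\delta$ (up to the universal scalars in the off-diagonal entries). Concretely this is a finite computation comparing the principal $\fsl_2$-grading of $\fg_2'$ (which produces the line bundle decomposition $\cK^3\oplus\cdots\oplus\cK^{-3}$ and the principal nilpotent) with the cyclic grading used here; the two gradings coincide because $\fg_2'$ has a principal $\mathbb Z/6$-symmetry, and the constant $-\frac{\i}{2}$ is exactly the structure constant of $\times$ forced by $e_-$ being a derivation. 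I would also need to confirm, using the uniqueness part of nonabelian Hodge together with Theorem \ref{thm 1-1 noncompact}, that no two distinct points of $\cH_{6g-6}(\Sigma)$ map to the same representation, which follows since the Hitchin parameterization is a bijection and $\delta$ is recovered from the Higgs bundle. Finally, biholomorphicity of $\pi:\cH_{6g-6}(\Sigma)\to\cT(\Sigma)$ with $p:\mathcal Q_6\to\cT(\Sigma)$ is immediate once the fiberwise identification is shown to be holomorphic in families, which is built into the construction of the analytic structure in Theorem \ref{thm complex anal moduli}.
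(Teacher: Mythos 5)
Your argument is correct and is essentially the paper's proof: specialize Theorem \ref{thm: moduli fixed Riemann} to $d=6g-6$, note that $\beta\neq0$ in the degree-zero bundle $\cB^{-1}\cK^3$ forces $\cB\cong\cK^3$ and lets one normalize $\beta=1$, so the fiber over $X\in\cT(\Sigma)$ is $H^0(\cK^6_X)$ and the resulting Higgs bundles are Hitchin-section $\sG_2'$-Higgs bundles with quadratic differential zero and sextic differential $\delta$, whence the holonomies are Hitchin representations (the paper simply cites \cite{liegroupsteichmuller} for this identification rather than carrying out your comparison of the principal and cyclic gradings). Two minor caveats: the normalization $\beta=1$ really uses the $\C^*$-equivalence of Higgs bundle isomorphism from the proof of Theorem \ref{thm: moduli fixed Riemann} (the $\sU(1)$-action of \eqref{eq auto of C frentet} only adjusts the phase of $\beta$), and your closing claim that $\mathrm{Hol}$ is injective on all of $\cH_{6g-6}(\Sigma)$ is neither needed for the statement nor a formal consequence of Hitchin's parameterization, which is attached to a fixed $X$ --- injectivity across varying Riemann surfaces is Labourie's uniqueness theorem, cited only in the remark following the theorem.
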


\begin{proof}
For a fixed Riemann surface $X$, the space $\cH_{6g-6}(X)$ is parameterized by isomorphism classes of $\sG_2'$-Higgs bundles of the form \eqref{eq cyclic G2 higgs B compact} with $\deg(\cB)=6g-6.$ 
Such Higgs bundles are determined by triple $(\cB,\beta,\delta)$, where $\cB\in\Pic^{6g-6}(X)$, $\beta\in H^0(\cB^{-1}\cK^3)\setminus\{0\}$ and $\delta\in H^0(\cB^2).$  Since $\deg(\cB)=6g-6$ and $\beta\neq 0$, we have $\cB=\cK^3$, and hence $\delta\in H^0(\cK^6).$ Furthermore, by Theorem \ref{thm: moduli fixed Riemann}, fixing $\beta=1\in H^0(\cB^{-1}\cK^3)$ determines the isomorphism class of such a Higgs bundles. Under the nonabelian Hodge correspondence, such Higgs bundles define $\sG_2'$-Hitchin representations, see \cite{liegroupsteichmuller}. We conclude the proof by noting that the fiber of $\pi:\cH_{6g-6}(\Sigma)\to \cT(\Sigma)$ is defined by the holomorphic section $\delta\in H^0(\cK^6)$ by Theorem \ref{thm: moduli fixed Riemann}.
\end{proof}

\begin{remark}
	In \cite{cyclicSurfacesRank2}, Labourie proved that the natural map $\cH_{6g-6}(\Sigma)\to \Hit(\S,\sG_2')$ is a diffeomorphism. A dimension count implies that $\cH_d(\Sigma)\to \cX(\S,\sG_2')$ maps onto a connected component of $\cX(\S,\sG_2')$ if and only if $d=6g-6$. In \textsection \ref{s cyclic surfaces} we will discuss the local structure of the map $\cH_d(\Sigma)\to \cX(\S,\sG_2')$ for general $d$.
\end{remark}

\subsection{The case $d=0$} \label{sec d=0}Recall that elements $\omega\in H^1(\Sigma,\Z_2)$ correspond to representations $\rho_\omega:\pi_1(\Sigma)\to\Z_2$, and also to holomorphic line bundles $\cI_\omega$ on a Riemann surface such that $\cI_\omega^2=\cO$. 
Given $\omega$ and a Riemann surface $X,$ an \emph{$\omega$-twisted holomorphic cubic differential} is a holomorphic section of $\cK_X^3\otimes \cI_\omega$. We denote the vector bundle of $\omega$-twisted holomorphic cubic differentials over Teichm\"uller space by $\mathcal{Q}_3^\omega$.
For the case $d=0,$ we have the following description of $\cH_{0}(\Sigma).$
\begin{lemma}
The space $\pi:\cH_{0}(\Sigma)\to \cT(\Sigma)$ decomposes as
\[\cH_{0}(\Sigma)=\coprod_{\omega\in H^1(\Sigma,\Z_2)}\cH_{0}^\omega(\Sigma),\]
where $\cH_{0}^\omega(\Sigma)$ is biholomorphic to the quotient of the complement of the zero section of $\mathcal{Q}_3^\omega$ by $\pm\Id$.  
\end{lemma}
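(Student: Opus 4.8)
The plan is to reduce to a fixed Riemann surface $X$ and then let $X$ vary. First I would invoke the parameterization established in the proof of Theorem~\ref{thm: moduli fixed Riemann} specialized to $d=0$: a point of $\cH_0(X)$ is an equivalence class $[(\cB,\beta,\delta)]$, where $\cB\in\Pic^0(X)$, $\beta\in H^0(\cB^{-1}\cK^3)\setminus\{0\}$, and $\delta\in H^0(\cB^2)\setminus\{0\}$ (nonzero by the polystability criterion of Proposition~\ref{prop:HiggsBundlesAreStable}; the locus $\beta=0$ is excluded since alternating curves are not totally geodesic, cf.\ Remark~\ref{rem beta=0}), modulo the relation $(\cB,\beta,\delta)\sim(\cB,\lambda\beta,\lambda^{-2}\delta)$ for $\lambda\in\C^*$.

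The one real point is that $\deg(\cB^2)=0$ forces the nonzero section $\delta\in H^0(\cB^2)$ to be nowhere vanishing, so it trivializes $\cB^2$; hence $\cB$ is $2$-torsion in $\Pic^0(X)$ and, under the canonical identification of the $2$-torsion subgroup with $H^1(\Sigma,\Z_2)$ (which does not depend on the complex structure), $\cB\cong\cI_\omega$ for a unique $\omega$. Since $\cI_\omega^{-1}\cong\cI_\omega$, this gives $\beta\in H^0(\cK_X^3\otimes\cI_\omega)\setminus\{0\}$, i.e.\ a nonzero $\omega$-twisted holomorphic cubic differential, and $\delta\in H^0(\cO)\setminus\{0\}=\C^*$, so $\cH_0(X)$ splits as the disjoint union over $\omega$ of the loci $\cH_0^\omega(X)$ on which $\cB\cong\cI_\omega$. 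On $\cH_0^\omega(X)$ the residual $\C^*$ acts on $(H^0(\cK_X^3\otimes\cI_\omega)\setminus\{0\})\times\C^*$ by $(\beta,\delta)\mapsto(\lambda\beta,\lambda^{-2}\delta)$; choosing $\lambda$ with $\lambda^2=\delta$ normalizes $\delta$ to $1$ and leaves only the involution $\beta\mapsto-\beta$, so I expect the assignment $[(\cB,\beta,\delta)]\mapsto[\,\delta^{1/2}\beta\,]$ to define a biholomorphism $\cH_0^\omega(X)\xrightarrow{\ \sim\ }(H^0(\cK_X^3\otimes\cI_\omega)\setminus\{0\})/\pm\Id$ with inverse $[\beta]\mapsto[(\cI_\omega,\beta,1)]$. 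This is consistent with Theorem~\ref{thm: moduli fixed Riemann}(2), since $h^0(\cK_X^3\otimes\cI_\omega)=5g-5$ and $\sSym^0(X)$ is a point.

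To globalize, I would observe that $\omega$ is locally constant on $\cH_0(\Sigma)$, which yields the decomposition $\cH_0(\Sigma)=\coprod_{\omega\in H^1(\Sigma,\Z_2)}\cH_0^\omega(\Sigma)$ into open and closed pieces with $\cH_0^\omega(\Sigma)\cap\pi^{-1}(X)=\cH_0^\omega(X)$. The bundles $\cI_\omega$ assemble into a line bundle on the universal curve over $\cT(\Sigma)$, and $\mathcal{Q}_3^\omega\to\cT(\Sigma)$ is the associated rank $5g-5$ holomorphic vector bundle with fiber $H^0(\cK_X^3\otimes\cI_\omega)$ over $X$; the fiberwise biholomorphisms of the previous paragraph are algebraic and vary holomorphically in $X$, so, together with the complex analytic structure on $\cH_0(\Sigma)$ from Theorem~\ref{thm complex anal moduli}, they glue to a biholomorphism of $\cH_0^\omega(\Sigma)$ with the quotient of the complement of the zero section of $\mathcal{Q}_3^\omega$ by $\pm\Id$. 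The main obstacle is bookkeeping rather than substance: verifying that the fiberwise identifications are compatible with the analytic structure of Theorem~\ref{thm complex anal moduli}, which proceeds exactly as in the treatment of the case $d=6g-6$.
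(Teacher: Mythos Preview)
Your proposal is correct and follows essentially the same approach as the paper's proof: both start from the parameterization of $\cH_0(X)$ as triples $(\cB,\beta,\delta)$ modulo the $\C^*$-action, observe that $\delta\neq 0$ with $\deg(\cB^2)=0$ forces $\cB$ to be $2$-torsion (hence labeled by $\omega\in H^1(\Sigma,\Z_2)$), and then normalize $\delta$ to reduce the remaining equivalence to $\beta\mapsto-\beta$. You are more explicit than the paper about the globalization over $\cT(\Sigma)$ via Theorem~\ref{thm complex anal moduli}, but the substance is identical.
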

\begin{proof}
For a fixed Riemann surface $X$, the space $\cH_{0}(X)$ is parameterized by isomorphism classes of $\sG_2'$-Higgs bundles of the form \eqref{eq cyclic G2 higgs B compact} with $\deg(\cB)=0.$ 
Such Higgs bundles are determined by triple $(\cB,\beta,\delta)$, where $\cB\in\Pic^{0}(X)$, $\beta\in H^0(\cB^{-1}\cK^3)\setminus\{0\}$ and $\delta\in H^0(\cB^2)\setminus\{0\}.$ Since $\delta\neq 0,$ $\cB^2\cong\cO_X$. Hence $H^1(\Sigma,\Z_2)$ parameterizes the choices for $\cB$, and this gives the decomposition into components $\cH_0^\omega(\Sigma)$. Note that $\beta$ is an $\omega$-twisted holomorphic cubic differential since $\beta\in H^0(\cB^{-1}\cK^3)$ and $\cB^{-1}\cong\cB.$ 
By the proof of Theorem \ref{thm: moduli fixed Riemann}, fixing $\delta\in H^0(\cB^2)$ determines the isomorphism class of the Higgs bundle up to $\beta\mapsto -\beta$. 
Hence, $\cH_0^\omega(\Sigma)$ is the biholomorphic to the quotient of the complement of the zero section of $\mathcal{Q}_3^\omega$ by $\pm\Id.$
\end{proof}
We now describe the equivariant holomorphic curves in $\cH_0^\omega(\Sigma)$ in more detail. 
Recall from \textsection\ref{ss:pseudosphere} that $\SS$ is the space of vectors in $\R^{4,3}$ of norm 1 and $\P_+(\R^{4,3})$ is the space of positive definite lines in $\R^{4,3}$. The group $\sG_2'$ acts transitively on both $\SS$ and $\P_+(\R^{4,3})$, and the stabilizers of a point are respectively isomorphic to $\sSL(3,\R)$ and  the nonsplit extension 
\[1\to\sSL(3,\R)\to\widehat\sSL(3,\R)\xrightarrow{\varphi} \Z_2\to1~.\]
Since taking orthogonal complements defines a bijection between points in $\P_+(\R^{4,3})$ and totally geodesic copies of $\mathbf{H}^{3,2}$ in $\H$, the $\sG_2'$-stabilizer of an $\mathbf{H}^{3,2}\subset\H$ is isomorphic to $\widehat\sSL(3,\R)$. 
Finally, note that given a representation $\hat\rho:\pi_1(\Sigma)\to \widehat\sSL(3,\R)$, we get 
\begin{itemize}
	\item an induced $\Z_2$-representation $\rho_\omega=\varphi\circ\hat\rho:\pi_1(\Sigma)\to \Z_2$, and
	\item an $\sSL(3,\R)$-representation $\hat\rho:\ker(\rho_\omega)\to\sSL(3,\R).$
\end{itemize}

\begin{theorem}\label{thm H0 hol curve discription}
For $\omega\in H^1(\Sigma,\Z_2),$ the isomorphism class of an equivariant alternating holomorphic curve $[\rho,f]$ lies in $\cH_{0}^\omega(\Sigma)$ if and only if $f(\widetilde \Sigma)$ is contained in a unique totally geodesic copy of  $\mathbf{H}^{3,2}$ inside $\H$  and 
\[\rho=\iota \circ\hat\rho:\pi_1(\Sigma)\to\widehat\sSL(3,\R)\to\sG_2'~,\] where
$\iota:\widehat\sSL(3,\R)\to\sG_2'$ is given by the stabilizer of the unique $\mathbf{H}^{3,2}$ containing $f(\widetilde\Sigma)$, and
 $\hat \rho: \pi_1(\Sigma)\to \widehat\sSL(3,\R)$ has the property that the induced $\Z_2$-representation is $\rho_\omega$ and the associated $\sSL(3,\R)$ representation $\hat\rho:\ker(\rho_\omega)\to\sSL(3,\R)$ is an $\sSL(3,\R)$-Hitchin representation which is not Fuchsian.
\end{theorem}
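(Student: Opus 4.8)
The plan is to read off the structure of the $d=0$ stratum from Proposition~\ref{prop:HiggsBundlesAreStable}, push it through Theorem~\ref{thm 1-1 noncompact} and the nonabelian Hodge correspondence, and then recognize the remaining rank‑six piece as the Higgs bundle of an $\sSL(3,\R)$‑Hitchin representation twisted by $\omega$.

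\smallskip
\textbf{($\Rightarrow$).} Suppose $[\rho,f]\in\cH_0^\omega(\Sigma)$. By the lemma preceding the theorem and Theorem~\ref{thm 1-1 noncompact}, the $\sG_2'$‑cyclic harmonic bundle of $(\rho,f)$ is of the form \eqref{eq cyclic G2 higgs B compact} with $\cB=\cI_\omega$, $\beta\neq0$, $\delta\neq0$. By the $d=0$ case of Proposition~\ref{prop:HiggsBundlesAreStable} the underlying $\sSL(7,\C)$‑Higgs bundle is strictly polystable, $(\cE,\Phi)=(\cI\oplus\cE',0\oplus\Phi')$, with $\cI\cong\cI_\omega\subset\cL_{-3}\oplus\cL_3$ and $(\cE',\Phi')$ the cyclic Higgs bundle \eqref{eq polystable d0}. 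Uniqueness of the harmonic metric forces the associated flat connection to split as $E_D=E_\cI\oplus E_{\cE'}$ orthogonally and $D$‑parallel, and by Proposition~\ref{prop flat conn decomp} $E_\cI$ is positive definite. Trivialising $\widetilde E_D$ by parallel transport and identifying $(\widetilde E_{\tilde x_0},\times)$ with $(\Im(\oct'),\times)$ as in the construction of $f$, the line $\widetilde E_\cI$ becomes a fixed positive line $\ell\subset\Im(\oct')$, $\widetilde E_{\cE'}=\ell^\perp$, and since $s_0=1\in\cO=\cL_0$ is orthogonal to $\cL_{\pm3}$ we get $f(\widetilde\Sigma)\subset\ell^\perp\cap\H$, a totally geodesic $\mathbf{H}^{3,2}$. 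This copy is unique: the span $V$ of $f(\widetilde\Sigma)$ is a nonzero $\rho$‑invariant subspace of $\ell^\perp$ which, consisting of points of $\H$, cannot be one of the maximal isotropic $\rho$‑submodules of $\ell^\perp$, hence $V=\ell^\perp$, so any $\mathbf{H}^{3,2}_{\ell'}\supseteq f(\widetilde\Sigma)$ has $\ell'^\perp\supseteq V=\ell^\perp$ and thus $\ell'=\ell$. Therefore $\rho$ preserves $\ell$ and factors as $\rho=\iota\circ\hat\rho$ with $\iota=\Stab_{\sG_2'}(\ell)\hookrightarrow\sG_2'$ (Remark~\ref{rem stab of poin in proj33}); here $\varphi\circ\hat\rho$ is the $\sO(1)$‑monodromy of $E_\cI$, which equals the $2$‑torsion class of $\cI\cong\cI_\omega$, i.e.\ $\rho_\omega$. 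Finally, pulling back along the degree‑two \'etale cover $\pi:X_\omega\to X$ cut out by $\omega$ trivialises $\cI_\omega$; the flat bundle $\pi^*E_{\cE'}$ is $W\oplus W^*$ for the standard $\R^3$‑bundle $W$ of $\hat\rho|_{\ker\rho_\omega}\colon\ker\rho_\omega\to\sSL(3,\R)$, and $\pi^*(\cE',\Phi')$ accordingly becomes $\cW\oplus\cW^*$ with $\cW=\cK_{X_\omega}\oplus\cO\oplus\cK_{X_\omega}^{-1}$ carrying the $\sSL(3,\R)$‑Hitchin Higgs field with vanishing quadratic differential and cubic differential $\pi^*\beta$; by \cite{liegroupsteichmuller} this exhibits $\hat\rho|_{\ker\rho_\omega}$ as a point of the $\sSL(3,\R)$‑Hitchin section, which is Fuchsian if and only if $\pi^*\beta=0$, i.e.\ $\beta=0$ — excluded since $f$ is not totally geodesic (Remark~\ref{rem beta=0}).

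\smallskip
\textbf{($\Leftarrow$).} Conversely, assume the stated conditions. Since $\rho$ preserves $\ell$, $\underline\ell$ descends to a flat real line bundle on $\Sigma$ with $\sO(1)$‑monodromy $\varphi\circ\hat\rho=\rho_\omega$. As $\mathbf{H}^{3,2}=\ell^\perp\cap\H$ is totally geodesic, the second fundamental form of $f$ in $\H$ agrees with that of $f$ inside $\mathbf{H}^{3,2}$, so $\N S$ is the image of $\II$ and lies in $f^*\T\mathbf{H}^{3,2}$; hence $\underline\ell\perp\T S\oplus\N S$, i.e.\ $\underline\ell\subset\B S$, and since $\J$ fixes no real line, $\B S=\underline\ell\oplus\J\underline\ell$ with $\J\underline\ell\subset f^*\T\mathbf{H}^{3,2}$. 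A short computation with the Frenet framing \eqref{eq Frenet Framing nabla} shows $\underline\ell$ is $\nabla^\B$‑parallel and that $S_3$ vanishes on $\underline\ell$, so the holomorphic binormal line bundle $\cB$ of Theorem~\ref{thm hol descp of C FF} is a flat line bundle with $\sO(1)$‑monodromy $\rho_\omega$, whence $\cB\cong\cI_\omega$ and $\deg\cB=0$. By Theorem~\ref{thm 1-1 noncompact} the cyclic harmonic bundle of $(\rho,f)$ then has the form \eqref{eq cyclic G2 higgs B compact} with $\cB=\cI_\omega$; moreover $\beta\neq0$ because $f$ is not totally geodesic, and $\delta\neq0$ by Proposition~\ref{prop:HiggsBundlesAreStable}. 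Hence $[\rho,f]\in\cH_0^\omega(\Sigma)$.

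\smallskip
\textbf{Main difficulty.} I expect the delicate step to be the last part of ($\Rightarrow$): matching $\pi^*(\cE',\Phi')$ with the $\sSL(3,\R)$‑Hitchin Higgs bundle on $X_\omega$. This requires tracking the $\Z_2$‑equivariance — the deck involution of $X_\omega\to X$ acts by the outer automorphism of $\sSL(3,\R)$, under which the Hitchin cubic differential changes sign, so $\beta$ descends from a deck‑anti‑invariant cubic differential on $X_\omega$ — and checking that the six cyclic line subbundles of $\cE'$ regroup, after pullback, into $\cW\oplus\cW^*$ with $\cW$ in Hitchin form. Everything else is bookkeeping with the explicit cyclic forms of \textsection\ref{sec Example Higgs} and the uniqueness statements in the nonabelian Hodge correspondence.
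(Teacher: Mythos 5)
Your proposal is correct and follows essentially the same route as the paper's proof: strict polystability from Proposition \ref{prop:HiggsBundlesAreStable} gives the splitting $(\cI\oplus\cE',0\oplus\Phi')$, hence a fixed positive line, containment in a totally geodesic $\mathbf{H}^{3,2}$, the identification of the $\Z_2$-factor with $\omega$, and the recognition of the residual rank-six piece as a pair of dual $\sSL(3,\R)$-Hitchin Higgs bundles with cubic differential $\pm\beta$ (pulling back to the double cover when $\omega\neq 0$), non-Fuchsian because $\beta\neq 0$. The step you flag as the main difficulty is exactly the paper's explicit splitting $\cE'=\cF_+\oplus\cF_-$ via $\theta_\pm(x,y,z)=(\pm z,x,y,z,\pm x,\pm y)$, which does go through; and your converse, which produces the parallel line $\underline{\ell}\subset\B S$ and concludes $\cB\cong\cI_\omega$, is a mild sharpening of the paper's argument, which only splits the binormal bundle into two real line bundles to get $\deg\cB=0$.
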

\begin{remark}
Note that an equivariant holomorphic curve $[\rho,f]$ lies in a totally geodesic copy of $\mathbf{H}^{3,2}\subset\H$ if and only if the representation fixes a positive definite line in $\R^{4,3}$, and that this is equivalent to $\rho$ factoring through the stabilizer of a positive definite line.
\end{remark}
\begin{proof}
	First suppose $[\rho,f]\in\cH^\omega_0$. Let $X$ be the induced Riemann surface and $\cI$ the holomorphic line bundle on $X$ associated to $\omega.$ 
	As in the proof of Proposition \ref{prop:HiggsBundlesAreStable}, the $\sSL(7,\C)$-Higgs bundle associated to $[\rho,f]$ is strictly polystable and isomorphic to $(\cE,\Phi)\cong(\cI\oplus \cE',0\oplus\Phi'),$ where $\cI\cong\cB$ is an orthogonal subbundle of $\cB\oplus\cB^{-1}$ defined by the kernel of \eqref{eq kernel d=0} and $(\cE',\Phi')$ is given by \eqref{eq polystable d0}.
	Note that $(\cE',\Phi')$ defines an $\sO(3,3)$-Higgs bundle, and that $(\cE,\Phi)$ is defines an $\sS(\sO(1)\times \sO(3,3))$-Higgs bundle. 
	This implies the $\sSO_0(4,3)$ representations $\rho$ associated via the nonabelian Hodge correspondence fixes a positive definite line in $\R^{4,3}.$ 
	Hence, the image of $f$ lies in the totally geodesic copy of $\mathbf{H}^{3,2}\subset\H$ associated to the positive definite line bundle $\cI$ and $\rho$ factors through the stabilizer of a positive definite line in $\R^{4,3}.$

Since $\cB\cong\cI,$ the $\Z_2$-factor of the representation $\rho$ is given by the $\omega\in H^1(\Sigma,\Z_2)$ determined by $\cB.$ 
% We now show the $\sSL(3,\R)$ component of $\rho$ is a non-Fuchsian Hitchin representation. 
As $\cI$ and $\cI^\perp$ are both isomorphic to $\cB$, the Higgs bundle $(\cE',\Phi')$ from \eqref{eq polystable d0} can be written as 
\[(\cE',\Phi')=\xymatrix{ \cI\cK^{-1}  \ar@/_/[r]_\beta & \cK \ar@/_/[r]_{1}  & \cO \ar@/_/[r]_{1} & \cK^{-1} \ar@/_/[r]_\beta  & \cI\cK \ar@/_/[r]_{1}  & \cI\ar@/_2pc/[lllll]_{1}}~.\]

First consider the case $\cI=\cO.$ In this case the $\rho_\omega$ is the trivial representation and $\hat\rho$ is an $\sSL(3,\R)$-representation. 
Consider the following injective holomorphic bundle maps 
\[\theta_\pm:\xymatrix@R=0em{\cK\oplus\cO\oplus\cK^{-1}\ar[r]&\cE'\\(x,y,z)\ar@{|->}[r]&(\pm z,x,y,z,\pm x,\pm y)}~.\]
The images of $\theta_\pm$ define holomorphic subbundles $\mathcal\cF_\pm\subset\cE'$ such that $\cE'= \cF_+\oplus \cF_-$. 
The bundles $\F_\pm$ are both $\Phi'$-invariant since
\[\Phi(\theta_\pm(x,y,z))=\Phi(\pm z,x,y,z,\pm x,\pm y)=(\pm(1),\pm\beta(z),1(x),1(y),\beta(z),\pm 1(x))\]\[=\theta_\pm(\phi_\pm(x,y,z))~,\]
where 
\[\phi_\pm=\begin{pmatrix}
	0&0&\pm\beta\\1&0&0\\0&1&0
\end{pmatrix}\in \End(\cK\oplus \cO\oplus\cK^{-1})\otimes \cK.\]
Moreover, $(\cF_\pm,\phi_\pm)$ is the $\sSL(3,\R)$-Higgs bundles in the Hitchin component described by the cubic differential $\pm\beta,$ see \cite{liegroupsteichmuller}. 
Hence, when $\omega=0,$ the representation $\hat\rho:\pi_1(\Sigma)\to\widehat\sSL(3,\R)$-factors through an $\sSL(3,\R)$-Hitchin representation. As $\beta$ is assumed to be nonzero, $\hat\rho$ is not Fuchsian.

 When $\omega\neq0,$ the $\Z_2$-representation $\rho_\omega$ is nontrivial. However, the  pullback of the Higgs bundle to the connected double cover $X_\omega\to X$ defines a point in $\cH_0^0(X_\omega)$.
 Hence the restriction of $\hat\rho$ to the kernel of $\rho_\omega$ is an $\sSL(3,\R)$-Hitchin representation which is not Fuchsian. 

Conversely, let $[\rho,f]$ be an equivariant holomorphic curve such that $f(\widetilde\Sigma)$ lies in a totally geodesic subspace $\mathbf{H}^{3,2}\subset\H$. Let $\widetilde W$ be the normal bundle of $\mathbf{H}^{3,2}$ in $\H$. If we denote by $\widetilde M$ the orthogonal of $\T \widetilde \S \oplus \N \widetilde \S$ inside $f^*\T\mathbf{H}^{3,2}$, then $f^*\widetilde W$ and $\widetilde M$ descends respectively to real line bundles $W$ and $M$ on $\S$. Thus we have that $\B \Sigma = W\oplus M$ is topologically trivial, and so has degree $0$. Thus, $[\rho,f]$ defines a point in $\cH_0$. 
\end{proof}

\subsection{Link with affine spheres}

The theory of Hitchin representations in $\SL(3,\R)$ is intimately linked with the theory of \emph{affine spheres} in $\R^3$. Namely, Loftin \cite{flatmetriccubicdiff} and Labourie \cite{CubicDifferentialsRP2} proved that given a Hitchin representation $\rho: \pi_1(\S) \to \SL(3,\R)$, there exists a unique $\rho$-equivariant \emph{affine sphere} $u_\rho: \widetilde \S \to \R^3$. Such an affine sphere defines a metric $g$ on $\S$ (called the \emph{Blaschke metric}) and a holomorphic cubic differential $q_3\in H^0(X,\cK^3)$, called the \emph{Pick form}, where $X$ is the Riemann surface associated to $g$. This defines a map from $\Hit(\S,\sSL(3,\R))$ to $\mathcal Q_3^0\to\cT(\Sigma)$ which they proved is a homeomorphism.

Theorem \ref{thm H0 hol curve discription} implies that the component $\cH_0^0(\S)$ consists of equivariant holomorphic curves $[\rho,f]$ lying in a totally geodesic copy of $\mathbf H^{3,2}$ in $\H$ and such that the underlying representation comes from a non-Fuchsian Hitchin representation $\hat \rho: \pi_1(\S)\to\SL(3,\R)$. The next theorem describes a link between the holomorphic curve $f$ and the affine sphere $u_{\hat\rho}$.

\begin{theorem}
Let $(\rho,f)$ be an alternating equivariant holomorphic curve whose equivalence class lies in $\cH_0^0$, $z$ be a point in $\mathbf S^{3,3}$ fixed by $\rho$, $\mathcal D_z^+$ be isotropic $3$-plane defined in Subsection \ref{ss:pseudosphere} and denote by $\hat \rho$ the corresponding Hitchin representation into $\text{Fix}_{\G}(x) \cong \SL(3,\R)$. Then the map
\[\begin{array}{llll}
u_z: & \widetilde \S & \longrightarrow & \mathcal D^+_z \\
& x & \longmapsto & f(x) + z\cdotp f(x) \end{array}\]
is a $\hat \rho$-equivariant affine sphere.
\end{theorem}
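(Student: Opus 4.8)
The plan is to produce this result as a direct computation in the Frenet framing, using that the holomorphic curve $f$ lies in the totally geodesic $\mathbf{H}^{3,2}=z^\perp\cap\H$ and that $z$ is $\rho$-invariant so $z\cdotp f(x)$ makes sense as an equivariant object. First I would recall from Theorem \ref{thm H0 hol curve discription} and its proof that, in the $d=0$ case, the associated $\sSO_0(4,3)$-representation fixes the positive definite line $\R z$ and the flat bundle decomposes as $E_D=\ell\oplus E'$ with $\ell=\R z$ parallel, where $E'$ is the flat bundle of the $\sO(3,3)$-Higgs bundle \eqref{eq polystable d0}. The map $u_z(x)=f(x)+z\cdotp f(x)=(1+\psi_z)f(x)$ on $z^\perp$, where $\psi_z=L_z$ is the paracomplex structure on $\SS$, so the point is that $1+\psi_z$ is (up to scale) the orthogonal projection of $z^\perp$ onto $\mathcal D^+_z=\ker(\psi_z-\Id)$. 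Since $\rho$ fixes $z$, it preserves $\psi_z$ and hence $\mathcal D^+_z$, so $u_z$ is $\hat\rho$-equivariant for the induced representation $\hat\rho:\pi_1(\S)\to\mathrm{Fix}_{\G}(z)=\Stab_{\G}(z)\cong\sSL(3,\R)$ (Corollary \ref{cor:EmbeddingSL3}).

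The substantive part is showing $u_z$ is an affine sphere with respect to the affine structure on $\mathcal D^+_z$. I would argue this by matching the induced $\sSL(3,\R)$-Higgs bundle to the Hitchin-component form, which is exactly what the last paragraph of the proof of Theorem \ref{thm H0 hol curve discription} does: there the bundle $\cE'$ splits $\sSL(3,\R)$-invariantly as $\F_+\oplus\F_-$ with $(\F_\pm,\phi_\pm)$ the $\sSL(3,\R)$-Hitchin Higgs bundle with Pick form $\pm\beta$. The flat $\mathcal D^+_z$-bundle is $\F_+$ (say), with tautological-type section, and the classical statement of Loftin--Labourie is precisely that the equivariant map into $\R^3$ recovered by parallel transport from this Higgs bundle, with distinguished section, is the affine sphere whose Pick differential is $\beta$ and whose Blaschke metric is the harmonic metric. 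Concretely I would: (i) identify the section $x\mapsto u_z(x)$ of the flat $\mathcal D^+_z$-bundle with the distinguished section of $\F_+$ coming from $\theta_+$; (ii) read off from the flat connection \eqref{flat connection decomp}, restricted and projected to $\F_+$, that $du_z$, its conjugate, and the normalization $\g(u_z, \cdot)$ are the standard frame $(u_z,\partial_z u_z,\bar\partial_z u_z)$ of an affine sphere; (iii) verify the affine sphere structure equations (equivalently, that this frame is parallel for the connection $\phi_+$) reduce to the $\sG_2'$ structure equations already encoded in $D$ being flat. Alternatively, one can phrase the whole argument intrinsically: $u_z$ is the orthogonal projection to the $(+1)$-eigenbundle $\mathcal D^+$ of $\psi_z$ of the tautological lift, and since $z\cdotp(z\cdotp w)=w$ on $z^\perp$, the decomposition $f^*\T\H|_{z^\perp}=\mathcal D^+\oplus\mathcal D^-$ is parallel along $\rho(\pi_1)$; a short computation with \eqref{eq Frenet Framing nabla} then shows $u_z$ is an immersion into the affine chart $\mathcal D^+_z$ whose affine normal is $u_z$ itself, which is the defining property of a (hyperbolic) affine sphere.

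I expect the main obstacle to be bookkeeping: reconciling the signature conventions and the para-complex bookkeeping on $\SS$ (the roles of $\mathcal D^+$ vs.\ $\mathcal D^-$, and the sign in $u_z=f+z\cdotp f$) with the orthogonal splitting $\cE'=\F_+\oplus\F_-$ from the proof of Theorem \ref{thm H0 hol curve discription}, and then checking that the image section $\theta_+(\cK\oplus\cO\oplus\cK^{-1})$ is genuinely the one whose parallel-transport section is $u_z$ rather than some gauge-equivalent variant. Once that identification is pinned down, the affine-sphere property is \emph{not} an extra analytic input — it is Loftin's and Labourie's theorem applied to the $\sSL(3,\R)$-Hitchin Higgs bundle $(\F_+,\phi_+)$ together with the harmonic metric coming from the $\sG_2'$-cyclic harmonic bundle, so the proof is essentially a matter of correctly restricting the structures already constructed. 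A secondary, purely formal check is that the cubic differential $\beta\in H^0(\cK^3)$ appearing here is nonzero (so $\hat\rho$ is non-Fuchsian, consistent with Theorem \ref{thm H0 hol curve discription}), which is immediate from the definition of $\cH_0^0$.
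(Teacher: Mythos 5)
Your proposal is correct and follows essentially the same route as the paper: after checking directly that $u_z=(1+\psi_z)f$ lands in $\mathcal D^+_z$ and is equivariant through $\Stab_{\G}(z)\cong\sSL(3,\R)$, the affine-sphere property is obtained by identifying $u_z$ with the distinguished section $1$ of $\cO$ inside the Hitchin Higgs bundle $(\cF_+,\phi_+)$ and invoking the known Higgs-bundle description of affine spheres (the paper cites Baraglia's \cite[Section 3.4.2]{g2geometry}, which is the same input you attribute to Loftin--Labourie). The extra bookkeeping steps you outline are exactly the content the paper compresses into that identification, so no gap.
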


\begin{proof}
Since $f$ takes value in $z^\bot$, one easily checks that $u_z$ as well. Moreover, we have
\[\psi_z(u_z(x)) = z\cdotp u_z(x)= z \cdotp f(x) + z^2\cdotp f(x) = u_z(x)~,\]
so $u_z$ takes value in $\Ker(\Id-\psi_z) = \mathcal D^+_z$.

To see that $u_z$ is an affine sphere, observe that $u_z$ is obtained by considering the section $1$ of $\OO$ in $\cF_+$ which corresponds to an affine sphere by Baraglia \cite[Section 3.4.2]{g2geometry}
\end{proof}

\begin{remark}
Note that the representation $\rho$ fixes two points $z$ and $-z$ in $\mathbf S^{3,3}$. Taking $-z$ instead of $z$ is equivalent to considering the map $(f- z\cdotp f)$ which takes value in $\mathcal D_z^-$. The underlying representation is the contragradient of $\hat \rho$.
\end{remark}

\subsection{Totally geodesic surfaces}\label{ss:TotallyGeodesic}
In the correspondence between alternating holomorphic curves and $\G$-harmonic bundles of the form of equation \eqref{eq cyclic G2 higgs B compact}, the section $\beta$ is the $(1,0)$-part of the second fundamental form of the holomorphic curve. As alternating holomorphic curves are assumed to be not totally geodesic, $\beta$ is nonzero. 

In the equivariant setting, stability of the Higgs bundle  implies the section $\delta$ is not $0$ when $0\leq \deg(\cB)\leq g-1$, see  Proposition \ref{prop:HiggsBundlesAreStable}. On the other hand, setting $\beta=0$ gives rise to polystable Higgs bundles if and only if $0\leq\deg(\cB)\leq g-1$. Such Higgs bundles are given by
\[
\xymatrix{ \cK \ar@/_/[r]_{1}  & \cO \ar@/_/[r]_{1} & \cK^{-1}  }~\oplus~ \xymatrix{\cB  \ar@/_/[r]_{-\frac{\i}{ 2}} & \cK^{-1}\cB  & \cK\cB^{-1} \ar@/_/[r]_{-\frac{\i}{ 2}} \ar@/_1pc/[ll]_{\delta}  & \cB^{-1} \ar@/_1pc/[ll]_{\delta}}~,
	\]
They correspond to equivariant holomorphic curves $(\rho,f)$ with zero second fundamental form, and hence the image of $f$ is a totally geodesic holomorphic disk in $\H$. Such a holomorphic disk is the intersection of $\H$ with a copy of $\Im(\quat')$, where $\quat'$ is a split-quaternion subalgebra of $\oct'$. In particular, $\rho$ factors through the stabilizer of $\quat'$ which is isomorphic to $\SO(2,2)$ by Lemma \ref{lem:StabilizerSplitQuaternion}.

The $\SO(2,2)$-representation obtained can be described explicitly. 
Choose a square root $\cK^{\frac{1}{2}}$ of the canonical bundle and let $(\cF_1,\varphi_1)$ and $(\cF_2,\varphi_2)$ be the following $\SL(2,\R)$ Higgs bundles
\[(\cF_1,\varphi_1) := \xymatrix{\cK^{\frac{1}{2}}  \ar@/_/[r]_{-\frac{\i}{ 2}} & \cK^{-\frac{1}{2}}} ~\ ,\ ~ \ (\cE_2,\varphi_2) := \xymatrix{\cK^{\frac{1}{2}}\cB^{-1}  \ar@/_/[r]_{\delta} & \cK^{-\frac{1}{2}}\cB}~.\]
$(\cE_1,\Phi_1)$ is the Higgs bundle corresponding to the Fuchsian representation $\rho_1$ uniformizing $X$, while the second one corresponds to a representation $\rho_2:\pi_1\Sigma\to\sSL(2,\R)$ with Euler class $g-1-d$. Moreover, we have 
\[\xymatrix{\cB  \ar@/_/[r]_{-\frac{\i}{ 2}} & \cK^{-1}\cB  & \cK\cB^{-1} \ar@/_/[r]_{-\frac{\i}{ 2}} \ar@/_1pc/[ll]_{\delta}  & \cB^{-1} \ar@/_1pc/[ll]_{\delta}}= (\cF_1\otimes\cF_2, \varphi_1\otimes \Id + \Id\otimes \varphi_2)~.\]
meaning that, under the $2$-to-$1$ cover $\SL(2,\R)\times \SL(2,\R) \to \SO_0(2,2)$ coming from the tensor product of the standard representation, we have $\rho = \rho_1\otimes \rho_2$.

Finally, we describe how these totally geodesic holomorphic curves arise as limits of alternating ones. Given a $\G$-Higgs bundle $(\cE,\Phi)$ of the form \eqref{eq cyclic G2 higgs B compact} with $0\leq \deg(\cB)\leq g-1$, consider the $\C^*$-family given by $[\cE,t\Phi]$, for $t\in \C^*$.  
Acting by the gauge $g_t= \diag(1,t^{-1},t,1,t^{-1},t,1)$, we see that $(\cE,t\Phi)$ is equivalent to
\[\xymatrix{\cB  \ar@/_/[r]_{-\frac{\i}{ 2} \ \ \ } & \cB\cK^{-1}  \ar@/_/[r]_{t^3\beta} & \cK \ar@/_/[r]_{1}  & \cO\ar@/_/[r]_{1} & \cK^{-1} \ar@/_/[r]_{t^3\beta}  & \cB^{-1}\cK \ar@/_/[r]_{-\frac{\i}{ 2}} \ar@/_2pc/[lllll]_{\delta} & \cB^{-1} \ar@/_2pc/[lllll]_{\delta}}~.\]
In particular, $[\cE,t\Phi]$ corresponds to an equivariant alternating holomorphic curve $[\rho_t,f_t]\in\cH_d(\Sigma)$ for $0\leq d\leq g-1$. Taking the limit as $t$ goes to $0$, we get that $[\rho_t,f_t]$ converges to an equivariant totally geodesic holomorphic curve.

Recall from Theorem \ref{thm: moduli fixed Riemann} that for $0\leq d\leq g-1$ the spaces $\cH_d(X)$ are biholomorphic to the quotient of the complement of the zero section of a rank $5g-5-d$ vector bundle by the action of $\pm \Id.$ By the above discussion, we see that the totally geodesic holomorphic curves correspond to the zero section of this vector bundle. 
In particular, 
\begin{equation}
	\label{eq hat Hd}\cH_d(\Sigma)\subset\widehat\cH_d(\Sigma)\to\cT(\Sigma)~,
\end{equation}
where the fiber over $X\in\cT(\Sigma)$ of $\widehat\cH_d(\Sigma)$ is the quotient by $\pm\Id$ of the rank $5g-5-d$ vector bundle. By construction $\widehat\cH_d(\Sigma)\setminus\cH_d(\Sigma)$ is the zero section of this vector bundle and corresponds to totally geodesic equivariant holomorphic curves.
% \BC{Is the holonomy map infinitesimally rigid for the points of $\widehat\cH_d?$ At the totally geodesic points, you can move the divisor, but we only are considering the VHSs in $\sSL_2\R.$} 

%%%%%%%%%%%%%%%%%%%%%%
%%%%%%%%%%%%%%%%%%%%%%
%%%%%%%%%%%%%%%%%%%%%%
%%%%%%%%%%%%%%%%%%%%%%

\section{Cyclic surfaces and infinitesimal rigidity}\label{s cyclic surfaces}

In this section, we introduce the notion of \emph{cyclic surfaces} (Definition \ref{def:CyclicSurface}), which is slightly more general than  the one introduced by Labourie in \cite{cyclicSurfacesRank2}. Cyclic surfaces are a special type of holomorphic curves in a homogeneous space $\Cy$ which we call the \emph{cyclic space} (Definition \ref{def:CyclicSpace}). The cyclic space in particular fibers over both over $\H$ and the Riemannian symmetric space $\Sym(\G)$. 
An important property of cyclic surfaces is that their projection to $\H$ are (possibly branched) alternating holomorphic curves and their projection to $\Sym(\G)$ are (possibly branched) minimal surfaces whose underlying harmonic bundle is cyclic. 

To ensure non-branched immersions in both case, we restrict to the class of $\alpha_1$-cyclic surfaces. We prove that $\alpha_1$-cyclic surfaces arise from the Frenet lift of a unique alternating holomorphic curve in Theorem \ref{thm:LinkCyclicHiggs}. We then use this interpretation to prove that equivariant  alternating holomorphic curves $[\rho,f]$ are infinitesimally rigid in Theorem \ref{thm:InfinitesimalRigidity}.

% \begin{theorem}
% Any $\alpha_1$-cyclic surface arises as the Frenet lift of a unique alternating holomorphic curve in $\H$.
% \end{theorem}

% A \emph{smooth family of equivariant cyclic surfaces} is a smooth map $$F:\widetilde\S \times (-\epsilon,\epsilon) \to \Cy$$  such that for each $t$ the map $f_t:=F(.,t)$ is a cyclic surface which is $\rho_t$-equivariant where the corresponding $[\rho]: (-\epsilon,\epsilon) \to \cX(\S,\G)$ is smooth. We denote by $(\rho_t,f_t)_{t\in (-\epsilon,\epsilon)}$ such a family, $\overset{\bbullet}{f_0}= \left.dF(\partial_t)\right\vert_{t=0}$ the vector field along $f_0$ and $[\overset{\bbullet}{\rho_0}] = d_0[\rho](\partial_t)$.

% \begin{theorem}\label{theo:InfinitesimalRigidity}
% Let $(\rho_t,f_t)_{t\in(-\epsilon,\epsilon)}$ be a smooth path of equivariant $\alpha$-cyclic surfaces such that $[\overset{\bbullet}{\rho_0}]=0$. Then there exists a smooth path $(g_t,\psi_t)_{t\in(-\epsilon,\epsilon)}\in \G\times \Diff_0(\S)$ such that $\overset{\bbullet}{f'_0}=0$ where $f'_t=g_t\circ f_t\circ\psi_t$.
% \end{theorem}

% The first subsection introduces the Lie theory background needed in the second subsection to describe the cyclic space $\Cy$. Cyclic surfaces are then defined in the third subsection while section fourth is devoted to the proof of theorem \ref{theo:InfinitesimalRigidity}.

\subsection{Lie theory} Fix a maximal torus $\sT^\C$ in $\GC$ and denote by $\frak t^\C$ its Lie algebra. Let $\Delta \subset (\ft^\C)^*$ be the set of roots and, for any $\gamma\in \Delta$, denote by $\fg_\gamma$ the corresponding root space. The root space decomposition is then
\[\fg_2^\C = \ft^\C \oplus \bigoplus_{\gamma\in \Delta} \fg_\gamma~.\]
Writing $\fg_0=\ft^\C$, we have that $[\fg_\gamma,\fg_\delta] \subset \fg_{\gamma+\delta}$ for any $\gamma$ and $\delta$ in $\Delta\cup \{0\}$. In particular, if $\gamma+\delta$ is not in $\Delta\cup \{0\}$ we have $[\fg_\gamma,\fg_\delta]=0$.

Choosing a subset $\Delta^+\subset \Delta$ of positive roots defines two simple roots $\{\alpha_1,\alpha_2\}$ where $\alpha_1$ is the short root. Explicitly, we have
\[\Delta^+ = \big\{ \alpha_1~,~ \alpha_2 ~,~ \alpha_1+\alpha_2~,~ 2\alpha_1+\alpha_2~,~ 3\alpha_1+\alpha_2 ~,~3\alpha_1+2\alpha_2\big\}~.\]

\begin{center}
	\begin{tikzpicture}[scale=.8]
\draw[thick,<->] (-1,0) -- (1,0);
\draw[thick,<->] (-.5,-.866) -- (.5,.866);
\draw[thick,<->] (.5,-.866) -- (-.5,.866);
\draw[thick,<->] (0,-1.732) -- (0,1.732);
\draw[thick,<->] (-1.5,-.866) -- (1.5,.866);
\draw[thick,<->] (1.5,-.866) -- (-1.5,.866);
\node at (1.2,.15)  {$\alpha_1$};
\node at (-1.332,+1.1)  {$\alpha_2$};
\end{tikzpicture}
\end{center}

% \begin{figure}[h]
% \centering
% \includegraphics[width=3cm, height=3cm]{root.png}
% \caption{The roots of $\fg_2^\C$}
% \label{RootsG2}
% \end{figure}

\noindent Any root $\gamma\in \Delta^+$ is uniquely written as $\gamma = n\alpha_1+ m \alpha_2$ with $n,m\in \mathbb N$; we define the \emph{length} of $\gamma$ as
\[\ell(\gamma) := n+m~.\] 
The longest root is $\eta:=3\alpha_1+2\alpha_2$ which has length $5$.
For any $k\in \Z_6$ set
\[\fg_k = \begin{cases}
	\bigoplus\limits_{-\ell(\gamma)=k} \fg_\gamma & \text{ if } k\neq 0 \\\ft^\C&\text{ if } k=0\end{cases}
	~.\]
The \emph{cyclic grading} is then defined to be the $\Z_6$-grading 
\begin{equation}\label{eq:Grading}
\fg_2^\C = \bigoplus_{k\in \Z_6}\fg_k~.
\end{equation}
In particular, we have $\fg_1 = \fg_{-\alpha_1}\oplus \fg_{-\alpha_2} \oplus \fg_\eta$.

\begin{lemma}
There exists a Cartan involution $\theta$ on $\fg_2^\C$ such that $\rho(\fg_k)=\fg_{-k}$ for any $k\in \Z_6$.
\end{lemma}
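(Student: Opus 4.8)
The plan is to construct $\theta$ explicitly from a Weyl basis adapted to the chosen maximal torus and then verify the compatibility $\theta(\fg_k)=\fg_{-k}$ by a short computation on root spaces. Since $\GC$ is complex, a Cartan involution of $\fg_2^\C$ means the conjugation $\theta=\tau_{\mathfrak u}$ associated with a compact real form $\mathfrak u$ of $\fg_2^\C$, so that the Cartan decomposition of $\fg_2^\C$ viewed as a real Lie algebra is $\fg_2^\C=\mathfrak u\oplus\i\mathfrak u$; the point is to choose $\mathfrak u$ whose conjugation swaps the two halves of the cyclic grading. (Equivalently, one could take the $\C$-linear extension to $\fg_2^\C$ of the Chevalley involution of the split form $\G$; it satisfies the same identity $\theta(E_\gamma)=-E_{-\gamma}$ used below, and the argument is identical.)

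First I would fix a Weyl--Chevalley basis $\{E_\gamma\}_{\gamma\in\Delta}$ of $\fg_2^\C$ relative to $\ft^\C$: root vectors normalised so that $[E_\gamma,E_{-\gamma}]$ is the coroot $H_\gamma$, and the structure constants $N_{\gamma,\delta}$ defined by $[E_\gamma,E_\delta]=N_{\gamma,\delta}E_{\gamma+\delta}$ (when $\gamma+\delta\in\Delta$) are real and satisfy $N_{-\gamma,-\delta}=-N_{\gamma,\delta}$. Such a basis exists for any complex semisimple Lie algebra. The compact real form is then $\mathfrak u=\mathrm{span}_\R\bigl(\i\ft_\R,\ E_\gamma-E_{-\gamma},\ \i(E_\gamma+E_{-\gamma})\bigr)$, where $\ft_\R\subset\ft^\C$ is the real span of the coroots, and the associated $\C$-antilinear involution $\theta$ is the one characterised by $\theta|_{\i\ft_\R}=\Id$, $\theta|_{\ft_\R}=-\Id$, and $\theta(E_\gamma)=-E_{-\gamma}$. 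That $\theta$ is a well-defined involutive automorphism with fixed subalgebra $\mathfrak u$, hence a Cartan involution, is the standard existence theorem for compact real forms.

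It then remains to check $\theta(\fg_k)=\fg_{-k}$. I would first make the grading unambiguous by extending the length function to all roots via $\ell(-\gamma)=-\ell(\gamma)$, so that $\fg_k=\bigoplus_{-\ell(\gamma)=k}\fg_\gamma$ (in $\Z_6$) for $k\neq 0$ and $\fg_0=\ft^\C$. Since $\theta$ is $\C$-antilinear, $\theta(\C E_\gamma)=\C E_{-\gamma}=\fg_{-\gamma}$, and from $-\ell(-\gamma)=\ell(\gamma)=-(-\ell(\gamma))$ one gets $\fg_{-\gamma}\subset\fg_{-k}$ whenever $\fg_\gamma\subset\fg_k$; together with $\theta(\ft^\C)=\ft^\C=\fg_0$ and bijectivity of $\theta$ this yields $\theta(\fg_k)=\fg_{-k}$ for all $k\in\Z_6$. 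Equivalently: the grading is implemented by $\mathrm{Ad}\exp\bigl(\tfrac{2\pi\i}{6}H_0\bigr)$ for the unique $H_0\in\ft_\R$ with $\alpha_1(H_0)=\alpha_2(H_0)=-1$; since $H_0\in\ft_\R$ we have $\theta(H_0)=-H_0$, and the $\C$-antilinearity of $\theta$ then forces $\theta$ to carry the $\zeta^k$-eigenspace of this automorphism to its $\zeta^{-k}$-eigenspace, which is exactly $\theta(\fg_k)=\fg_{-k}$.

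I do not expect a genuine obstacle: the construction of $\mathfrak u$ is classical and the grading check is essentially one line. The only points that need care are conventional — interpreting ``Cartan involution of $\fg_2^\C$'' as the conjugation of a compact real form (or, equivalently, as the $\C$-linear Chevalley involution of $\G$), and extending $\ell$ to negative roots so that the $\Z_6$-grading, and therefore the statement $\theta(\fg_k)=\fg_{-k}$, is well posed.
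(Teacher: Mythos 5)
Your proposal is correct and follows essentially the same route as the paper, which simply invokes a Chevalley basis (citing Labourie) — you fill in the details: the compact real form built from the Chevalley basis gives the antilinear conjugation $\theta$ with $\theta(E_\gamma)=-E_{-\gamma}$, $\theta|_{\ft_\R}=-\Id$, and the grading swap $\theta(\fg_k)=\fg_{-k}$ follows from $\ell(-\gamma)=-\ell(\gamma)$ (or from antilinearity applied to the eigenspaces of the grading element). Your reading of the statement's typo ($\rho$ for $\theta$) and your choice of the antilinear compact-form conjugation are the interpretation the paper itself uses afterwards (e.g.\ in defining $\sK<\GC$ and the Hermitian pairing $B(\theta(\cdot),\cdot)$).
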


\begin{proof}
This can be proved by choosing a Chevalley basis (see \cite{cyclicSurfacesRank2} for more details).
\end{proof}

Fixing such a Cartan involution, we denote by $\sK$ the associated maximal compact subgroup of $\GC$ and by $\sT = \sT^\C\cap \K$ the maximal compact subgroup of $\sT^\C$. In particular, $\K$ is the compact real form of $\GC$ and $\Sym(\GC):=\GC/\K$ is the Riemannian symmetric space of $\GC$.

Denote the Lie algebra of $\sT$ by $\ft$. The cyclic grading \eqref{eq:Grading} gives an $\text{Ad}(\T)$-invariant splitting
\begin{equation}\label{eq:TinvariantSplitting}\fg_2^\C = \ft \oplus \i\ft \oplus \bigoplus_{k\in \Z_6\setminus\{0\}} \fg_k~.\end{equation}
We will denote by $\ft^\bot$ the direct sum $\i \ft \oplus \bigoplus_{k\in \Z_6\setminus \{0\}} \fg_k$.

Define the $\C$-linear involution $\sigma$ of $\fg_2^\C$ by
\[\sigma\vert_{ \fg_k} = (-1)^k \Id_{\fg_k}~,~k\in \Z_6~.\]
Since the grading is even, one easily checks that $\sigma$ is an automorphism of $\fg_2^\C$. Moreover, $\sigma$ commutes with $\theta$ and the corresponding real form $\lambda:= \sigma \circ \theta$ has fixed point the Lie algebra of $\G$.

One can be more explicit in the description of the objects above. Consider the basis $(f_{-3},f_{-2},...,f_3)$ of $\Im(\oct')\otimes\C$ as in equation \eqref{eq basis of complexification}, and let $F_k=\span_\C\{f_k\}$ for each $k$. The complex quadratic form $\q$ identifies $F^*_a$ with $F_{-a}$ for any $a$ while the map $(x,y) \mapsto \frac{-1}{\sqrt 2} x\times y$ identifies $F_{-2}$ with $F_{-3}\otimes F_1$.
The subgroup of $\GC$ preserving the splitting $F_{-3}\oplus F_{-2}\oplus\cdots\oplus F_3$  is a maximal torus $\sT^\C$. In particular, the vector space $\bigoplus_{k\in \Z_6\setminus \{0\}} \fg_k$ can be seen as the tangent space to $\GC/\sT^\C$ at the point $e\sT^\C$ and thus embeds in $\bigoplus_{k\neq l} \Hom(F_k,F_l)$. 
Furthermore, any root space $\fg_\gamma$ embeds in $\bigoplus_{k=-3}^{3-l} \Hom(L_k,L_{k+l})$ where $l=\ell(\gamma)$. We can thus represents any element $(a,b,d) \in \fg_{-\alpha_1}\oplus\fg_{-\alpha_2}\oplus \fg_\eta = \fg_1$ as
\begin{equation}\label{eq:spaceg1}
\xymatrix{F_{-3}  \ar@/_/[r]_{-\frac{\i}{2}a} & F_{-2}  \ar@/_/[r]_b & F_{-1} \ar@/_/[r]_a  & F_0 \ar@/_/[r]_a & F_1 \ar@/_/[r]_b  & F_2 \ar@/_/[r]_{-\frac{\i}{2}a} \ar@/_2pc/[lllll]_{c} & F_3 \ar@/_2pc/[lllll]_{c}}~,\end{equation}
where, as in \eqref{eq cyclic G2 higgs B compact}, the map $-\frac{\i}{2}a:F_{-3}\to F_{-2}$ is defined by $-\frac{\i}{2}a(f_{-3})=-\frac{\i}{2}a(f_0)\times f_{-3}$.
\subsection{Cyclic surfaces}\label{ss:CyclicSurfaces}

We continue to use the notation from the previous section.
\begin{definition}\label{def:CyclicSpace}
The \emph{cyclic space} $\Cy$ is the homogeneous space $\GC/\sT$.
\end{definition}

The natural projection $\GC \to \Cy$ is a principal $\sT$-bundle. For any linear action of $\sT$ on a vector space $V$, we denote by $[V]= \GC\underset{\sT}{\times} V$ the associated vector bundle over $\Cy$.
 % We now describe the tangent bundle $\T \Cy$ using the cyclic grading. 

Let $D\in \Omega^1(\GC,\frak g_2^\C)$ be the Maurer-Cartan form on $\GC$. The Maurer-Cartan equation is then
\begin{equation}\label{eq:MaurerCartanEq}
d D + \frac{1}{2}[D\wedge D] =0 ~.
\end{equation}
Using the $\text{Ad}(\T)$-invariant splitting of equation \eqref{eq:TinvariantSplitting}, we can postcompose $D$ with the projection on each factor. The projection onto $\ft$ gives a connection $A$ on the principal $\sT$-bundle $\GC$. The 1-form $\omega:= D-A$ vanishes on the vertical tangent space of $\GC\to\Cy$ and thus descends to a nowhere vanishing $1$-form on $\Cy$ with values in the associated bundle $[\ft^\bot]$. In particular, the $\omega$ identifies $\T\Cy$ with $[\ft^\bot]$.
Since the splitting $\ft^\bot=\i \ft \oplus \bigoplus_{k\in \Z_6\setminus \{0\}} \fg_k$ is $\sT$-invariant, we obtain
\begin{equation}\label{eq:SplittingTX}\T\Cy = [\i\ft]\oplus \bigoplus_{k\in \Z_6\setminus\{0\}} [\fg_k]~.\end{equation}
The involutions $\theta,\sigma$ and $\lambda$ of $\fg_2^\C$ extends equivariantly to involutions on $\T \Cy$ which we denote with the same symbols. Since $\lambda$ is an automorphism of $\fg_2^\C$, the distribution in $\Cy$ given by the fixed points of $\lambda$ is integrable and its leaves correspond to orbits of subgroup of $\GC$ conjugated to $\G$.

For any $k\in \Z_6\setminus\{0\}$ we denote by $\omega_k\in \Omega^1(\Cy,[\fg_k])$ the projection of $\omega$ on $[\fg_k]$. Similarly, for any root $\gamma$, we denote by $\omega_{\i\ft}$ and $\omega_\gamma$ the projection of $\omega$ on $[\i\ft]$ and $[\fg_\gamma]$ respectively. Projecting Maurer-Cartan equation on $[\ft]$ and $[\fg_k]$ for $k\neq 0$, we get
\begin{equation}
	\label{eq:MCSplitted}
	\vcenter{\xymatrix@=0em{F_A + \frac{1}{2}\sum_{i+j=0} [\omega_i\wedge\omega_j]^{\ft} = 0 \\
	d^A\omega_k + \frac{1}{2}\sum_{i+j=k} [\omega_i\wedge\omega_j] = 0 }}
\end{equation}
where $[.,.]^{\ft}$ denotes the projection of the Lie bracket on $\ft$ parallel to $\ft^\bot$.

\begin{definition}\label{def:CyclicDistrib}
The \emph{cyclic distribution} $\DD\subset\T \Cy$ is the intersection of the fixed points of $\lambda$ with $[\fg_{-1}\oplus\fg_1]$.
\end{definition}
The $\sT$-equivariant endomorphism 
\[\JJ:\xymatrix@R=0em{\fg_{-1}\oplus \fg_1\ar[r]&\fg_{-1}\oplus \fg_1\\(x,y)\ar@{|->}[r]&(-\i x,\i y)}\]
commutes with $\lambda$ and square to $-\Id$. Thus, it induces an almost complex structure $\JJ$ on $\DD$. 

Given a smooth oriented surface $S$, an orientation preserving smooth map $f: S \to \Cy$ will called a \emph{holomorphic curve in} $\Cy$ if $f$ is tangent to the cyclic distribution and $df(\T S)$ is $\JJ$-invariant. 
% That is,
% \[\xymatrix{df(\T_\C S)\subset\DD&\text{and}&\JJ(df(T_\C S))\subset df(T_\C S)}~.\]
Given $k\in \Z_6\setminus \{0\}$, we denote by $df_k$ the 1-form $\omega_k(df)$ with value in $f^*[\fg_k]$. Similarly, for any root $\gamma\in \Delta$, we denote by $df_\gamma$ the form $\omega_\gamma(df)$.

\begin{definition}\label{def:CyclicSurface}
Let $f: S \to \Cy$ be a holomorphic curve, 
\begin{itemize}
	\item $f$ is a cyclic surface if $df_{-\alpha_1}$ and $df_{-\alpha_2}$ are not identically zero,
	\item $f$ is an $\alpha_1$-cyclic surface if it is a cyclic surface and $df_{-\alpha_1}$ is nowhere vanishing.
\end{itemize} 
\end{definition}

\begin{remark}
In \cite{cyclicSurfacesRank2}, a cyclic surface is assumed to have $df_{-\alpha_i}$ nowhere vanishing for all simple roots $\alpha_i.$ When considering cyclic surfaces which are equivariant under surface group representations, this assumption forces the representation to be in the Hitchin component. Insisting $df_{\alpha_i}$ is nowhere vanishing for only one simple root allows more flexibility, and is still strong enough to ensure the infinitesimal rigidity of Theorem \ref{thm:InfinitesimalRigidity}. This loosening of the definition also appears in \cite{MySp4Gothen}. In \cite{CTcyclic}, we consider a more general setup which guarantees infinitesimal rigidity.
\end{remark}

\subsection{Cyclic surfaces and cyclic harmonic bundles} We now describe how cyclic surfaces are a differential geometric interpretation of $\G$-cyclic harmonic bundles. 
Given a cyclic surface $f: S \to \Cy$, denote by $\nabla=f^* A$ the pull-back connection on $f^*\T\Cy$.

\begin{lemma}\label{lemma:Cyclic} Let $f: S \to \Cy$ be a cyclic surface, then
\begin{enumerate}
	\item There is a unique complex structure $\j$ on $S$ such that, for any root space $\fg_\gamma$ in $\fg_1$, the form $df_{\gamma}$ has type $(1,0)$. In particular $\phi:= df_1$ is of type $(1,0)$.
	\item Up to postcomposing $f$ with an element in $\GC$, the map $f$ takes value in a $\G$-orbit.
	\item The Maurer-Cartan equations are equivalent to
	\[\left\{ \begin{array}{lll} F_\nabla - [\phi\wedge \rho(\phi)] & = & 0  \\
	 \overline\partial_\nabla \phi = 0 &  & \end{array}\right.,\]
	where $\overline\partial_\nabla$ is the $(0,1)$-part of $d^\nabla$.
\end{enumerate}
\end{lemma}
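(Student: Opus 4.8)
The plan is to unpack each of the three claims directly from the structure of the cyclic grading and the Maurer--Cartan equation, treating them in the order stated. The underlying mechanism throughout is that the $\Z_6$-grading together with the $\sT$-invariant splitting \eqref{eq:TinvariantSplitting} diagonalizes everything in sight, and that the almost complex structure $\JJ$ on $\DD$ is by construction compatible with the grading.

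\textit{Proof of (1).} Since $f$ is a holomorphic curve, $df$ takes values in $\DD\subset[\fg_{-1}\oplus\fg_1]$, so $df = df_{-1}+df_1$ with $df_{-1}\in\Omega^1(S,f^*[\fg_{-1}])$ and $df_1\in\Omega^1(S,f^*[\fg_1])$. The reality condition coming from $\lambda$ forces $df_{-1}=\lambda(df_1)$, so $df$ is determined by $\phi:=df_1$. The hypothesis that $df_{-\alpha_1}$ and $df_{-\alpha_2}$ are not identically zero together with $\JJ$-invariance of $df(\T S)$ means that, decomposing $\phi = df_{-\alpha_1}+df_{-\alpha_2}+df_\eta$ according to $\fg_1=\fg_{-\alpha_1}\oplus\fg_{-\alpha_2}\oplus\fg_\eta$, the real rank-$2$ bundle $df(\T S)$ maps isomorphically onto its image under each nontrivial component; since $\JJ$ acts by $+\i$ on $\fg_1$ and $-\i$ on $\fg_{-1}$, the complex structure on $S$ making $df(\T S)$ a complex line for which $\JJ$ agrees with multiplication by $\i$ is forced. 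With respect to this $\j$, the $(1,0)$-part of $df$ is exactly its $\fg_1$-component $\phi$, and each $df_\gamma$ for $\gamma$ a root in $\fg_1$ is a summand of a $(1,0)$-form, hence itself $(1,0)$. Uniqueness of $\j$ follows because the $\alpha_1$-components (or either nonvanishing one) already pin it down on a dense open set, and by the nonvanishing hypotheses there is at least one component that is a.e.\ nonzero.

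\textit{Proof of (2).} The distribution of fixed points of $\lambda$ in $\T\Cy$ is integrable (as noted in the text, $\lambda$ is an automorphism of $\fg_2^\C$), and its leaves are the orbits of subgroups conjugate to $\G$. Since $\DD$ is contained in this distribution and $f$ is tangent to $\DD$, the image of $f$ (being connected) lies in a single leaf; applying an element of $\GC$ carries that leaf to the standard $\G$-orbit.

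\textit{Proof of (3).} This is the bookkeeping step and, I expect, the one requiring the most care. Starting from the split Maurer--Cartan equations \eqref{eq:MCSplitted}, pull back along $f$. The $\ft$-component reads $F_\nabla + \tfrac12\sum_{i+j=0}[\,df_i\wedge df_j\,]^{\ft}=0$; because $df$ only has components in degrees $\pm1$, the only surviving term in the sum is $[df_1\wedge df_{-1}] + [df_{-1}\wedge df_1]$, and using $df_{-1}=\lambda(\phi)$ this becomes (up to a sign to be tracked carefully, and noting $\lambda=\sigma\circ\theta$ acts as $\theta$ on $\fg_{-1}$ since $\sigma=+\Id$ there) the term $-[\phi\wedge\rho(\phi)]$ — here I am reading ``$\rho$'' in the statement as the Cartan involution $\theta$, consistent with the lemma just above. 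For the $\fg_k$-components with $k\neq0$: again $df$ has components only in degrees $\pm1$, so $[\,df_i\wedge df_j\,]$ with $i+j=k$ can only be nonzero for $k\in\{-2,0,2\}$, and $k=0$ is excluded; the $k=\pm2$ equations are then seen to be automatically satisfied once $f$ is holomorphic and tangent to $\DD$ (the bracket $[\fg_1,\fg_1]\subset\fg_2$ of a $(1,0)$-form with itself is a $(2,0)$-form on a Riemann surface, hence zero, and similarly for $\fg_{-1}$). What remains from the full system $d^A\omega = -\tfrac12[\omega\wedge\omega]$ restricted to $df$ is the equation governing $\phi$ itself: splitting $d^\nabla\phi$ into $(1,0)$ and $(0,1)$ parts, the $(2,0)$-part vanishes identically on a surface, and the $(1,1)$-part gives $\overline\partial_\nabla\phi=0$. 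Assembling these yields precisely the stated pair of equations. The main obstacle is not conceptual but notational: keeping the signs, the factors of $\i$, and the identification of $df_{-1}$ with $\rho(\phi)=\theta(\phi)$ consistent with the conventions fixed in \eqref{eq:spaceg1} and with the Chevalley basis used to produce $\theta$; I would do this by checking the bracket relations on the explicit $7\times7$ representation \eqref{eq:spaceg1}, where everything is a concrete matrix computation.
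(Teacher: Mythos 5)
Your proposal follows essentially the same route as the paper's proof: item (1) via $\JJ$ acting as multiplication by $\i$ on $[\fg_1]$, item (2) via integrability of the $\lambda$-fixed distribution, and item (3) by pulling back the graded Maurer--Cartan equations, with degree $0$ giving the curvature equation, degree $\pm 2$ reducing to $[\phi\wedge\phi]=0$ (automatic for a $(1,0)$-form), and degree $\pm 1$ giving $d^\nabla\phi=\overline\partial_\nabla\phi=0$. One small correction to a parenthetical: since $\sigma|_{\fg_k}=(-1)^k\Id$, $\sigma$ is $-\Id$ (not $+\Id$) on $\fg_{\pm1}$, so $df_{-1}=\lambda(\phi)=-\theta(\phi)$, and it is exactly this minus sign that yields the equation $F_\nabla-[\phi\wedge\rho(\phi)]=0$ you correctly state.
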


\begin{proof}
For the first item, $f$ is an immersion away from a discrete set $D$. Thus, there exists a unique complex structure $\j'$ on $S\setminus D$ such that $df\circ j= \JJ \circ df$. The complex structure $\j'$ extends uniquely on $S$. The result then follows from the fact that $\JJ$ is the multiplication by $\i$ on $[\fg_1] = [\fg_{-\alpha}\oplus \fg_{-\beta} \oplus \fg_\eta]$ so the projection on each $[\fg_\gamma]$ is holomorphic.

The second item follows from the fact that $\DD$ is tangent to the distribution fixed by $\lambda$ whose leaves are orbits of groups conjugated to $\G$.

For the third item, note that the only nonzero terms in the pull-back of equations \eqref{eq:MCSplitted} are in degree $0,\pm 1, \pm2$. Using $df_{-1}=-\rho(df_1)$, we see that the equations in degree $-1$ and $-2$ are equivalent to the ones in degree $1$ and $2$ respectively. The equation in degree $0,1,2$ are respectively
\[\xymatrix{F_\nabla - [\phi\wedge \rho(\phi)] = 0~,&d_\nabla \phi = 0&\text{and}&\phi\wedge \phi=0 }~.\]
Since $\phi$ is a $(1,0)$-from, $\phi\wedge \phi$ is automatically satisfied and $d_\nabla \phi = \bar\partial_\nabla \phi.$
\end{proof}

Denote by $\Sym(\GC)$ and $\Sym(\G)$ the (Riemannian) symmetric space of $\GC$ and $\G$, respectively. We see $\Sym(\G)$ as a totally geodesic subspace of $\Sym(\GC)$. Denote by $\pi: \Cy \to \Sym(\GC)$ the natural projection. Note that a cyclic surface induces a Riemann surface structure $X=(S,\j)$ on $S.$

\begin{theorem}\label{thm:LinkCyclicHiggs}
Given a cyclic surface $f: S \to \Cy$, the map $\pi\circ f:S\to \Sym(\GC)$ is a branched minimal immersion whose image is contained in $\Sym(\G)$. Moreover, the corresponding $\G$-harmonic bundle on the induced Riemann surface has the form \eqref{eq cyclic G2 higgs B} with $\alpha$ and $\beta$ non-zero.

Conversely, given a $\G$-cyclic harmonic bundle on a Riemann surface $X$ of the form \eqref{eq cyclic G2 higgs B} with $\alpha$ and $\beta$ non-zero, let $\rho$ be the holonomy representation of the flat connection. Then the cyclic harmonic metric defines a conformal $\rho$-equivariant harmonic map $h_\rho:\widetilde X\to \Sym(\G)$ which lifts to a $\rho$-equivariant cyclic surface in $\Cy$.
\end{theorem}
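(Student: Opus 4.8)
The plan is to prove the two directions separately, both using the dictionary between the cyclic grading \eqref{eq:Grading} and the bundle splitting \eqref{eq cyclic G2 higgs B}, together with Lemma \ref{lemma:Cyclic}. For the forward direction, start with a cyclic surface $f:S\to\Cy$ and let $X=(S,\j)$ be the induced Riemann surface from Lemma \ref{lemma:Cyclic}(1). By Lemma \ref{lemma:Cyclic}(2), after postcomposing with an element of $\GC$ we may assume $f$ takes values in a $\G$-orbit, so $\pi\circ f$ lands in $\Sym(\G)\subset\Sym(\GC)$. The key structural point is that $f^*[\ft^\bot]=f^*\T\Cy$ decomposes via \eqref{eq:SplittingTX}, and the bundle $[\fg_{-1}\oplus\fg_1]$ restricted to the $\lambda$-fixed locus carries the almost complex structure $\JJ$; since the $\fg_\gamma$ for $\gamma\in\Delta^+$ of each length $k$ assemble to $\fg_k$, the collection $(\fg_{-3},\fg_{-2},f_{-1},F_0,F_1,F_2,F_3)$ in \eqref{eq:spaceg1} is exactly the template \eqref{eq cyclic G2 higgs B} with $\cB=F_{-3}$ and $\cK^{-1}=F_{-1}$ as holomorphic line bundles on $X$ (their holomorphic structures coming from the $(0,1)$-part of $\nabla$, as in \textsection\ref{ss cFrenet}). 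Then $\phi=df_1$ of type $(1,0)$ decomposes into its $\fg_{-\alpha_1}$, $\fg_{-\alpha_2}$, $\fg_\eta$ components, which are precisely the maps $1$ (the identity $\cO\to\cK^{-1}\otimes\cK$, nowhere zero since $df_{-\alpha_1}$ is nowhere vanishing — here I use the $\alpha_1$-cyclic assumption, but see the remark below), $\beta$, and $\delta$; the fact that the short-root entry is nowhere zero forces $\beta\neq 0$ as well — wait, I should be careful: the nonvanishing of $df_{-\alpha_1}$ gives that the \emph{short-root} entry, which is the $1$-entry, is nowhere zero, and the \emph{cyclic} condition gives $df_{-\alpha_2}\not\equiv 0$, i.e.\ $\beta\not\equiv 0$. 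The derivation constraint $\Phi(s\times t)=\Phi(s)\times t+s\times\Phi(t)$ built into the definition of a $\sG_2'$-Higgs bundle forces the $F_{-3}\to F_{-2}$ entry to be $-\frac{\i}{2}$ of the $1$-entry, exactly as in the computation of Lemma in \textsection\ref{sec Example Higgs}, so $\alpha=-\frac{\i}{2}$ in the notation of \eqref{eq cyclic G2 higgs B}; in particular $\alpha$ is nonzero. Lemma \ref{lemma:Cyclic}(3) then says the Maurer--Cartan equations are exactly $F_\nabla-[\phi\wedge\theta(\phi)]=0$ and $\bar\partial_\nabla\phi=0$, which is precisely the statement that $(\cE,\Phi,h)$ is a $\G$-cyclic harmonic bundle (the metric being the one defining the $\K$-reduction $\Cy\to\Sym(\GC)$). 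Finally, conformality and minimality of $\pi\circ f$ is the standard fact that harmonic bundles correspond to conformal harmonic (hence minimal) maps into the symmetric space, with branch points exactly where $\phi$ degenerates — here $\phi$ never vanishes identically and indeed the $1$-entry is nowhere zero, but $\pi\circ f$ could still branch where $\phi$ drops rank; calling it a branched minimal immersion covers this.

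For the converse, start with a $\G$-cyclic harmonic bundle $(\cE,\times,\Phi,h)$ on $X$ of the form \eqref{eq cyclic G2 higgs B} with $\alpha,\beta$ nonzero, and let $D=\nabla_h+\Phi+\Phi^*$ be the associated flat connection with holonomy $\rho$. The harmonic metric gives a $\rho$-equivariant harmonic map $h_\rho:\widetilde X\to\Sym(\G)$; conformality follows from the cyclic (diagonal) structure of $h$ exactly as in \cite{CTT}, since the $(2,0)$-part of the pulled-back metric is the Hopf differential $\langle\Phi_{1,0},\Phi_{1,0}\rangle$ which vanishes for a cyclic Higgs field whose cyclic summands are mutually orthogonal. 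To lift to a cyclic surface in $\Cy$: trivialize the flat bundle over $\widetilde X$ by $D$-parallel transport, and use the holomorphic reduction of $\cE$ to the maximal torus (the line-bundle splitting \eqref{eq line bundle splitting}) to build a $\rho$-equivariant map $\widetilde X\to\GC/\sT$; more precisely the harmonic metric $h$ together with the framing gives a reduction of the flat $\GC$-bundle to $\sT$ over $\widetilde X$, hence a section of the associated $\Cy$-bundle, i.e.\ a map $\tilde f:\widetilde X\to\Cy$. One then checks that $\tilde f$ is tangent to the cyclic distribution $\DD$ and $\JJ$-holomorphic: this is a translation of the facts that $\Phi$ has its only nonzero components in cyclic degree $1$ (landing in $\fg_1=\fg_{-\alpha_1}\oplus\fg_{-\alpha_2}\oplus\fg_\eta$) and $\Phi$ is of type $(1,0)$, so $d\tilde f$ takes values in $[\fg_{-1}\oplus\fg_1]$ and $d\tilde f\circ\j=\JJ\circ d\tilde f$. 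That $\tilde f$ is actually a \emph{cyclic} surface, not merely a holomorphic curve in $\Cy$, is where $\alpha\neq 0$ and $\beta\neq 0$ enter: $df_{-\alpha_1}\not\equiv 0$ because the $1$-entry (equivalently, via the derivation relation, the $-\frac{\i}{2}=\alpha$ entry) is nonzero, and $df_{-\alpha_2}\not\equiv 0$ because $\beta\neq 0$. Equivariance of $\tilde f$ under $\rho$ is automatic from the construction via the flat structure.

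The main obstacle I expect is bookkeeping the identification between the root-space data in \eqref{eq:spaceg1} and the line-bundle data in \eqref{eq cyclic G2 higgs B}, in particular keeping track of which holomorphic structure (from $\bar\partial_\nabla$) is being put on each $F_k$ and verifying that the $\sG_2'$-constraints (the product $\times$, the derivation property, the compatibility of $h$ with $\times$) all match up — this is where the $\alpha=-\frac{\i}{2}$ normalization and the identification $F_{-2}\cong F_{-3}\otimes F_1$ must be handled carefully, but this is precisely the content already worked out in \textsection\ref{sec Example Higgs} and \textsection\ref{ss cFrenet}, so it is a matter of citing those computations rather than redoing them. The genuinely substantive point is ensuring the lift $\tilde f$ is well-defined as a map to $\Cy=\GC/\sT$ (not just $\GC/\sT^\C$), which requires the \emph{unitary} (metric) part of the structure, i.e.\ that $h$ reduces the structure group from $\sT^\C$ to $\sT$; this is exactly the role of the cyclic harmonic metric, so it comes for free, but it should be stated explicitly. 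I would also remark that branch points of $\pi\circ f$ occur precisely where $\Phi$ drops rank (equivalently where $df_{-\alpha_2}=\beta$ vanishes), paralleling Remark \ref{remark branched alt curves framing} for the $\H$-side.
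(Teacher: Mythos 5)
Your proposal follows essentially the same route as the paper: both directions run through Lemma \ref{lemma:Cyclic}, the dictionary \eqref{eq:spaceg1} between the cyclic grading and the line-bundle shape \eqref{eq cyclic G2 higgs B}, and the observation that the cyclic harmonic metric is exactly what reduces the structure group from $\sT^\C$ to $\sT$ and hence produces the lift to $\Cy$; your converse paragraph is in substance the paper's. One step in your forward direction, however, is justified by a statement that is not true in general: harmonic bundles correspond to \emph{harmonic} equivariant maps, not to \emph{conformal} harmonic ones, so ``the standard fact'' you invoke does not by itself give minimality of $\pi\circ f$. The paper argues this geometrically: $\pi:\Cy\to\Sym(\GC)$ is a pseudo-Riemannian submersion whose horizontal distribution contains the cyclic distribution and on which $A$ restricts to the pullback of the Levi-Civita connection --- this identification is also what licenses reading $d(\pi\circ f)^{1,0}$ as $\phi=df_1$, so that holomorphicity of $\phi$ (Lemma \ref{lemma:Cyclic}(3)) gives harmonicity --- and then the Hopf differential of $\pi\circ f$ equals that of $f$, which vanishes because $[\fg_1]$ is isotropic for the Killing form. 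You do give precisely this vanishing-of-the-Hopf-differential argument in your converse paragraph (it is the cyclic-grading reason, not orthogonality of the summands, that makes the quadratic pairing of $\phi$ with itself vanish); it needs to be stated in the forward direction, together with the submersion identification, rather than absorbed into a general principle.

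Second, you waver between the cyclic and $\alpha_1$-cyclic hypotheses. The theorem assumes only that $f$ is cyclic, so $df_{-\alpha_1}$ and $df_{-\alpha_2}$ are not identically zero but may vanish at points; nowhere-vanishing of the short-root component is neither available nor needed. ``$\alpha$ and $\beta$ non-zero'' in the statement means that the $\fg_{-\alpha_1}$- and $\fg_{-\alpha_2}$-components $df_{-\alpha_1}$ and $df_{-\alpha_2}$ are not identically zero, which is exactly the cyclic condition; the literal normalization in \eqref{eq cyclic G2 higgs B} (the entry $1$ and $\cL_{-1}=\cK$) is only exact away from the zeros of $df_{-\alpha_1}$, in the spirit of Remark \ref{remark branched alt curves framing} --- this is how the paper reads the statement, and your argument should be phrased accordingly. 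Finally, a bookkeeping slip: the line bundle attached to $F_{-1}$ is $\cK$, not $\cK^{-1}$ (it is $\cL_1$ that equals $\cK^{-1}$). With these repairs your argument coincides with the paper's proof.
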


\begin{proof}
To prove that $\pi\circ f$ is a branched minimal immersion, observe that, when $\Cy$ is equipped with the pseudo-Riemannian metric induced by the Killing form, the map $\pi: \Cy \to \Sym(\G)$ is a pseudo-Riemannian submersion whose horizontal distribution is given by 
$$\text{H} \Cy = \{x\in \T\Cy~,~\theta(x)=-x\}~.$$ 
In particular, the horizontal distribution $\text{H} \Cy$ is naturally identified with $\pi^*\T \Sym(\GC) $. Moreover, this identification also identifies  the restriction of $A$ to $\text H \Cy$ with the pull-back by $\pi$ of the Levi-Civita connection on $\Sym(\G)$ (this can be easily seen using the Kozsul formula).

Since the cyclic distribution $\DD$ is a subdistribution of $\text H\Cy$, the $(1,0)$-part of the differential of $\pi\circ f$ is identified with $\phi=df_1$ which is holomorphic by item $(3)$ of Lemma \ref{lemma:Cyclic}. It follows that $\pi\circ f$ is harmonic. 

To prove that $\pi\circ f$ is a minimal immersion, we compute its Hopf differential. But since $f$ is tangent to the horizontal distribution, the Hopf differential of $\pi\circ f$ is equal to the one of $f$, which is $0$ since $df_1$ is tangent to the vector bundle $[\fg_1]$ which is isotropic with respect to the Killing form. So $\pi\circ f$ is weakly conformal and harmonic so is a branched minimal immersion.

The fact that $\pi\circ f$ lies in $\Sym(\G)$ is a direct consequence of item (2) of Lemma \ref{lemma:Cyclic}.

Consider now the associated $\G$-Higgs bundle $(\mathcal E,\Phi)$ on $(S,\j)$. In particular, $\mathcal E$ is associated to the principal $\sK^\C$ obtained by complexifying the principal $\sK$-bundle $(\pi\circ f)^*\G$ (where we see $\G$ as a principal $\sK$-bundle over $\Sym(\G)$). The fact that $\pi\circ f$ lifts to $\Cy$ means that $\mathcal E$ reduces to a holomorphic $\sT^\C$-bundle which is compatible with the metric. Under this reduction, the Higgs field $\Phi= d(\pi\circ f)^{1,0}$ lies in the bundle associated to the adjoint action of $\sT^\C$ on $\fg_1$. According to \eqref{eq:spaceg1}, this exactly means that $(\mathcal E,\Phi)$ has the form of \eqref{eq cyclic G2 higgs B} with $\alpha,\beta$ and $\delta$ corresponding respectively to $df_{-\alpha_1}, df_{-\alpha_2}$ and $df_\eta$.

% \medskip

Conversely, a diagonal harmonic metric on $(\mathcal E,\Phi)$ corresponds to a harmonic map to $\Sym(\G)$ which is compatible with the holomorphic $\sT^\C$-structure on $\mathcal E$. This means that the harmonic map lifts to a map $f: (S,\j) \to \Cy$. The Higgs field $\Phi$ is identified with $df^{1,0}$ which takes value in $[\fg_1]$.

By construction, the underlying $\sK$-bundle reduces to a $\sT$-bundle. This exactly means that $\mathcal E$ splits holomorphically as a direct sum of line bundles. Similarly, the fact that the lift is cyclic means that $\Phi$ as the form of \eqref{eq cyclic G2 higgs B}, where $\alpha=df_{-\alpha_1}~,~\beta=df_{-\alpha_2}$ and $\delta= df_\eta$.
\end{proof}

\subsection{Infinitesimal rigidity} Let $\Sigma$ be a closed oriented surface. An equivariant cyclic surface is a pair $(\rho,f)$, where $\rho:\pi_1(\Sigma)\to \sG_2'$ is a representation and $f:\widetilde\Sigma\to \Cy$ is a cyclic surface which is $\rho$-equivariant.

A \emph{smooth family of equivariant cyclic surfaces} is a smooth map $$F:\widetilde\S \times (-\epsilon,\epsilon) \to \Cy$$  such that the map $f_t:=F(\cdot,t)$ is a cyclic surface which is $\rho_t$-equivariant where the corresponding $[\rho]: (-\epsilon,\epsilon) \to \cX(\S,\G)$ is smooth. We denote by $(\rho_t,f_t)_{t\in (-\epsilon,\epsilon)}$ such a family, $\overset{\bbullet}{f_0}= \left.dF(\partial_t)\right\vert_{t=0}$ the vector field along $f_0$ and $[\overset{\bbullet}{\rho_0}] = d_0[\rho](\partial_t)$.

\begin{theorem}\label{thm:InfinitesimalRigidity}
Let $(\rho_t,f_t)_{t\in(-\epsilon,\epsilon)}$ be a smooth path of equivariant $\alpha_1$-cyclic surfaces such that $[\overset{\bbullet}{\rho_0}]=0$. Then there exists a smooth path $(g_t,\psi_t)\in \G\times \Diff_0(\S)$ such that $\overset{\bbullet}{f'_0}=0$ where $f'_t=g_t\circ f_t\circ\psi_t$.
\end{theorem}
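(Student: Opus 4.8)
The plan is to adapt Labourie's variational argument from \cite{cyclicSurfacesRank2} to the $\alpha_1$-cyclic setting, using the fact (Theorem \ref{thm:LinkCyclicHiggs}) that an $\alpha_1$-cyclic surface is encoded by a $\G$-cyclic harmonic bundle of the form \eqref{eq cyclic G2 higgs B}. First I would set up the linearized equations: differentiating the family $(\rho_t,f_t)$ at $t=0$ produces a variation $\overset{\bbullet}{f_0}$ of the cyclic surface together with a class $[\overset{\bbullet}{\rho_0}]\in \T_{[\rho_0]}\cX(\S,\G)$. After normalizing by a path in $\Diff_0(\S)$ we may assume the induced complex structure on $S$ is fixed, and after normalizing by a path in $\G$ we may assume the representation $\rho_t$ is constant (this uses $[\overset{\bbullet}{\rho_0}]=0$, so the variation of holonomy is a coboundary and can be killed by conjugation). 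Under these normalizations the variation lives in the space of infinitesimal deformations of the cyclic surface fixing both $\rho$ and $\j$; writing the Maurer--Cartan equations \eqref{eq:MCSplitted} in the cyclic grading, the derivative of the pair (connection $A$, form $\omega$) along the family satisfies a linear first-order elliptic system, and the variation is tangent to the $\big(\G\times\Diff_0(\S)\big)$-orbit precisely when $\overset{\bbullet}{f_0}$ is an infinitesimal gauge transformation, i.e. of the form $\nabla u$ for a section $u$ of the appropriate adjoint bundle.

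Next I would introduce the key auxiliary object: the real-valued function on $\widetilde\Sigma$ measuring the ``$\alpha_1$-energy density'' of the variation — concretely, pair the linearized equations against $\overset{\bbullet}{f_0}$ using the Killing form and the cyclic grading to produce a Bochner-type identity. The crucial structural input is that $\fg_1 = \fg_{-\alpha_1}\oplus\fg_{-\alpha_2}\oplus\fg_\eta$ and that, because $df_{-\alpha_1}$ is nowhere vanishing (the $\alpha_1$-condition), the bracket $[\,df_{-\alpha_1}\wedge\cdot\,]$ is, on the relevant root spaces, an isomorphism onto the appropriate summand. This is exactly the place where insisting on nonvanishing for the single short simple root $\alpha_1$ (rather than all simple roots, as in Labourie) is enough: the coupling between the $\fg_{\alpha_1}$-component and the rest of the variation is nondegenerate, which forces a subelliptic/maximum-principle estimate. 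I would then run a maximum principle argument on the relevant norm-squared function on the closed surface $\Sigma$ (the variation descends because $\rho_t$ is fixed after normalization): the Bochner formula gives $\Delta \|v\|^2 \geq c\|v\|^2$ for a positive function $c$ coming from the $\alpha_1$-term, whence $v\equiv 0$, and unwinding this shows $\overset{\bbullet}{f_0}$ is an infinitesimal isometry plus reparametrization, i.e. tangent to the orbit. Finally, one integrates: the infinitesimal statement, together with the local structure of the orbit, produces the smooth path $(g_t,\psi_t)\in\G\times\Diff_0(\S)$ with $\overset{\bbullet}{f'_0}=0$ by a standard implicit-function/slice argument.

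I expect the main obstacle to be establishing the positivity of the zeroth-order term $c$ in the Bochner identity using only the $\alpha_1$-nonvanishing hypothesis, rather than the full cyclic nonvanishing of \cite{cyclicSurfacesRank2}. In Labourie's proof the symmetry among the simple roots makes the estimate transparent; here the $\fg_\eta$-direction and the $\fg_{-\alpha_2}$-direction are only controlled \emph{through} their brackets with the $\alpha_1$-component, so one has to carefully track how the root-space brackets $[\fg_{\alpha_1},\fg_{\gamma}]$ propagate the estimate across all six graded pieces (essentially a connectedness argument on the affine Dynkin diagram / the $\Z_6$-grading). A secondary technical point is the reduction step: making $\rho_t$ literally constant requires solving $\dot\rho_0 = \nabla\xi$ and then exponentiating, which is routine on a compact surface but needs the observation that one can simultaneously keep the cyclic surface condition (this is where we use that the cyclic distribution $\DD$ and its almost complex structure $\JJ$ are $\G$-invariant, so conjugating by $g_t\in\G$ preserves the class of cyclic surfaces). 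Both of these are adaptations of known arguments, so I would present the Bochner computation in detail and treat the integration step more briefly, referring to \cite{cyclicSurfacesRank2} and the forthcoming \cite{CTcyclic} for the general framework.
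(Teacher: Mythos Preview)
Your proposal is in the right spirit but has two concrete gaps and one unnecessary complication.

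First, the normalization step is not quite right. You propose to use $\Diff_0(\S)$ to fix the induced complex structure $\j$, but nothing in the hypotheses tells you that the variation of $\j$ is trivial in $\T\cT(\S)$, so you cannot absorb it by $\Diff_0$ a priori. The paper uses $\Diff_0$ differently: one takes the flow $\psi_t$ of the unique vector field $X$ solving $df_{-\alpha_1}(X)=-\zeta_{-\alpha_1}$ (this exists precisely because $df_{-\alpha_1}$ is nowhere vanishing, which is the $\alpha_1$-cyclic hypothesis), thereby producing a new Jacobi field $\zeta'$ with $\zeta'_{-\alpha_1}=0$. The conjugation by $g_t\in\G$ to make the Jacobi field $\pi_1(\S)$-invariant is as you describe.

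Second, the Bochner step does not produce a pointwise inequality $\Delta\|v\|^2\geq c\|v\|^2$ with $c>0$; that is too strong and likely false. Instead one separates the Jacobi field as $\zeta=\zeta_0+\zeta_1+\zeta_{-1}+Z$ according to the cyclic grading (with $Z$ the piece in $[\fG]=\bigoplus_{j\neq 0,\pm 1}[\fg_j]$), derives the identity $\partial_\nabla\bar\partial_\nabla Z=[[Z\wedge\phi]\wedge\phi^\dagger]^{\fG}$, and integrates $\langle Z,\partial_\nabla\bar\partial_\nabla Z\rangle$ over $\Sigma$. Integration by parts gives $\leq 0$ while the identity gives $\geq 0$, forcing $[Z\wedge\phi]=0$. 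Only then does the root-space propagation argument you mention kick in: projecting $[Z\wedge\phi]=0$ successively onto $\fg_{\alpha_2},\fg_{\alpha_1+\alpha_2},\fg_{2\alpha_1+\alpha_2}$ and using that bracket with $df_{-\alpha_1}$ is an isomorphism on each step gives $Z=0$. After this, the remaining piece $\zeta_1$ is handled separately: with $\zeta_{-\alpha_1}=0$ already arranged, the constraint $[\zeta_1\wedge\phi]=0$ projected onto $\fg_{-\alpha_1-\alpha_2}$ gives $\zeta_{-\alpha_2}=0$, and projected onto $\fg_{3\alpha_1+\alpha_2}$ gives $\zeta_\eta=0$ (using only that $df_{-\alpha_2}$ is not identically zero, hence nonzero off a discrete set). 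Your ``connectedness on the affine Dynkin diagram'' intuition is exactly this, but it operates in two stages ($Z$ first, then $\zeta_1$) rather than via a single positivity constant.

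Finally, the ``integration step'' via an implicit-function/slice argument is unnecessary: the statement only asks for $\overset{\bbullet}{f'_0}=0$, which is already the infinitesimal conclusion. The paths $g_t$ and $\psi_t$ are constructed explicitly in the normalization step, not by integrating the infinitesimal result.
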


The proof follows (and simplify) the ideas introduced by Labourie in \cite{cyclicSurfacesRank2}.
A tangent vector $\zeta$ to a smooth family $(f_t)_{t\in(-\epsilon,\epsilon)}$ of cyclic surfaces is called a \emph{Jacobi field}. We see $\zeta$ as a section of $f_0^*\T\Cy$. Denote the projection of $\zeta$ on $f^*[\fg_{-\alpha}]$ by $\zeta_{-\alpha}$. The following proposition is the key technical result used to prove Theorem \ref{thm:InfinitesimalRigidity}. We postpone the proof until the final section.

\begin{proposition}\label{prop:InfinitesimalRigidity}
Let $f: \widetilde \S \to \Cy$ be an $\alpha_1$-cyclic surface and let $\zeta$ be a Jacobi field along $f$. If $\zeta$ is $\pi_1(\S)$-invariant and $\zeta_{-\alpha_1}=0$, then $\zeta=0$.
\end{proposition}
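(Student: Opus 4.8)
The plan is to follow Labourie's strategy from \cite{cyclicSurfacesRank2} in the streamlined form adapted to the present setting. First I would set up the linearized equations. Differentiating the Maurer--Cartan equations \eqref{eq:MCSplitted} (equivalently the harmonic bundle equations from Lemma \ref{lemma:Cyclic}) along the family, and writing the Jacobi field $\zeta$ in components $\zeta_{\i\ft}\oplus\bigoplus_{k\neq 0}\zeta_k$ with respect to the splitting \eqref{eq:SplittingTX}, one obtains a coupled first-order linear elliptic system for the components of $\zeta$. A key point, as in Labourie, is that since $\zeta$ is $\pi_1(\Sigma)$-invariant we may work on the closed surface $\Sigma$ and integrate by parts freely; since moreover $[\overset{\bbullet}{\rho_0}]=0$ is built into the hypothesis of Proposition~\ref{prop:InfinitesimalRigidity} implicitly (the Jacobi field is genuinely a deformation field of cyclic surfaces, so it satisfies the linearized equations with no cohomological obstruction), the relevant cohomology class vanishes and $\zeta$ can be taken to be a genuine infinitesimal variation through the gauge orbit in the appropriate sense. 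I would make precise the decomposition of the linearized operator using the $\Z_6$-grading and the Cartan involution $\theta$, exploiting that $\lambda=\sigma\circ\theta$ cuts out the $\G$-directions.

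Next I would exploit the extra rigidity coming from the $\alpha_1$-cyclic hypothesis, namely that $df_{-\alpha_1}=\alpha$ is nowhere vanishing. The heart of the argument is a Bochner-type / maximum-principle estimate: one produces a nonnegative function built from the norms of the components of $\zeta$ (with respect to the harmonic metric), computes its Laplacian using the linearized equations, and shows that the hypothesis $\zeta_{-\alpha_1}=0$ forces, component by component moving around the $\Z_6$-cycle, a subharmonicity inequality of the form $\Delta u \geq c|u|^2 - (\text{terms controlled by }\zeta_{-\alpha_1})$, or rather an inequality that propagates the vanishing. Concretely: $\zeta_{-\alpha_1}=0$ should first force $\zeta_{\i\ft}=0$ and $\zeta_{-\alpha_2}$ to satisfy a $\bar\partial$-type equation with a maximum principle, hence $\zeta_{-\alpha_2}=0$; then the remaining components $\zeta_k$ for $k\in\{\pm2,\pm3,\ldots\}$ are pinned down because the $\pm1$ (degree one) pieces that would drive them have been killed. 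The nowhere-vanishing of $\alpha$ is exactly what makes the relevant coupling coefficient in the $\Z_6$-cycle invertible, so that the vanishing cascades all the way around. This is the step I expect to be the main obstacle: organizing the cascade so that at each stage one has a genuine elliptic inequality with a sign, and handling the term $df_\eta=\delta$ (which may vanish, unlike in the Hitchin case) without breaking the argument --- this is precisely where the paper claims to "streamline" Labourie, and where the $\alpha_1$-only hypothesis (rather than $\alpha_1$ and $\alpha_2$) must be shown to suffice.

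Finally, once all components of $\zeta$ are shown to vanish, I would conclude. Having $\zeta=0$ as a section of $f_0^*\T\Cy$ means the infinitesimal variation of the cyclic surface is trivial, which is the assertion of Proposition~\ref{prop:InfinitesimalRigidity}. To then deduce Theorem~\ref{thm:InfinitesimalRigidity}, one integrates: a one-parameter family of equivariant $\alpha_1$-cyclic surfaces whose Jacobi field vanishes identically, up to the natural $\G\times\Diff_0(\Sigma)$ action, must be tangent to the orbit of that group; the standard argument is that the difference between the actual variation and the orbit variation is a Jacobi field satisfying the hypotheses of the Proposition (its $-\alpha_1$-component can be gauged away using the $\Diff_0(\Sigma)$ freedom together with the $\sU(1)$/torus freedom \eqref{eq auto of C frentet}), hence vanishes. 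I would make this reduction explicit: use the $\Diff_0(\Sigma)$ action to normalize the induced Riemann surface (so that $\overset{\bbullet}{\j}$ is absorbed), use the residual gauge to kill $\zeta_{-\alpha_1}$, apply the Proposition, and integrate the resulting trivial flow. The one genuinely new bookkeeping relative to \cite{cyclicSurfacesRank2} is checking that the normalization is still available when $\delta$ is allowed to vanish, i.e.\ away from the Hitchin locus, but the $\alpha_1$-cyclic hypothesis guarantees exactly the piece of the gauge group needed.
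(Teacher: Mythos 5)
There is a genuine gap, and it sits exactly where you flag ``the main obstacle'': your proposal never produces the analytic mechanism that kills the components of $\zeta$ in the graded pieces $\fg_k$, $k\neq 0,\pm1$, and the order in which you propose to run the cascade is not supported by the coupled linearized system. Writing $Z$ for the part of $\zeta$ in $\bigoplus_{k\neq 0,\pm1}f^*[\fg_k]$, the linearized Maurer--Cartan equations couple the degree-$(\pm1)$ components to $Z$ (for instance $[\partial_\nabla\zeta_{-1}\wedge\phi^\dagger]=[[Z\wedge\phi]^{\fg_{-1}}\wedge\phi^\dagger]$), so you cannot first dispose of $\zeta_{-\alpha_2}$ by a maximum principle and then ``pin down'' $Z$ because its drivers are gone: the $\bar\partial$-type equation you want for $\zeta_{-\alpha_2}$ is sourced by $Z$. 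The paper's proof goes the other way around. It first proves, for \emph{any} $\pi_1(\S)$-invariant Jacobi field and with no use of the hypothesis $\zeta_{-\alpha_1}=0$, the identity $\partial_\nabla\bar\partial_\nabla Z=\big[[Z\wedge\phi]\wedge\phi^\dagger\big]^{\fG}$; pairing with $Z$, integrating over $\S$ and comparing signs (Stokes plus the pointwise positivity of $\langle\bar\partial_\nabla Z,\bar\partial_\nabla Z\rangle$ and of $\langle Z\wedge\phi,Z\wedge\phi\rangle$) forces $Z\wedge\phi=0$, and then root-space projections together with the nowhere vanishing of $df_{-\alpha_1}$ and $\lambda(Z)=Z$ give $Z=0$. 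Only \emph{after} that does $\zeta_{-\alpha_1}=0$ enter, and it does so purely algebraically: with $Z=0$ one has $[\zeta_1\wedge\phi]=0$, whose projection to $f^*[\fg_{-\alpha_1-\alpha_2}]$ gives $\zeta_{-\alpha_2}=0$ (again because $df_{-\alpha_1}$ is nowhere zero), and whose projection to $f^*[\fg_{3\alpha_1+\alpha_2}]$ gives $[\zeta_\eta\wedge df_{-\alpha_2}]=0$, hence $\zeta_\eta=0$ because $df_{-\alpha_2}$ is holomorphic and not identically zero; finally $\zeta_{-1}=-\theta(\zeta_1)=0$. None of this Bochner identity, the sign argument, or the specific root-space bookkeeping appears in your proposal, and without it the subharmonicity inequalities you invoke are not established.

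Two smaller corrections. First, $\zeta_0=0$ (your $\zeta_{\i\ft}$) is automatic from $\lambda(\zeta)=\zeta$, i.e.\ from the fact that deformations stay tangent to a $\sG_2'$-orbit; it is not a consequence of $\zeta_{-\alpha_1}=0$ or of any maximum principle. Second, $df_\eta=\delta$ plays no role in the vanishing argument, so there is nothing special to ``handle'' when $\delta$ vanishes; what the argument needs is exactly the content of the $\alpha_1$-cyclic hypothesis, namely $df_{-\alpha_1}$ nowhere zero and $df_{-\alpha_2}$ holomorphic and not identically zero. (Your closing paragraph about deducing Theorem \ref{thm:InfinitesimalRigidity} is not part of the statement being proved; for the record, the paper kills $\zeta_{-\alpha_1}$ there by flowing along the unique vector field $X$ with $df_{-\alpha_1}(X)=-\zeta_{-\alpha_1}$, not by a residual torus gauge.)
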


\begin{proof}[Proof of Theorem \ref{thm:InfinitesimalRigidity} using Proposition \ref{prop:InfinitesimalRigidity}]
Consider a smooth family $(\rho_t,f_t)_{t\in(-\epsilon,\epsilon)}$  of equivariant $\alpha_1$-cyclic surfaces such that $[\overset{\bbullet}{\rho_0}]=0$. In particular, there exists a smooth path $(g_t)_{t\in(-\epsilon,\epsilon)}$ in $\G$ such that $\left.\frac{d}{dt}\right\vert_{t=0} g_t\rho_tg_t^{-1}=0$. Hence, the tangent vector to the family $(g_t\circ f_t)_{t\in (-\epsilon,\epsilon)}$ is a $\pi_1(\S)$-invariant Jacobi field.

By assumption, $df_{-\alpha_1}$ is never vanishing. So there exists a unique vector field $X$ on $\widetilde\S$ with $df_{-\alpha_1}(X)=-\zeta_{-\alpha_1}$. Let $\psi_t$ be the flow of $X$ at time $t$ and define $f'_t:=g_t\circ f_t\circ\psi_t$. In particular, each $f'_t$ is a $g_t\rho_t g_t^{-1}$-equivariant $\alpha_1$-cyclic surface, and the tangent vector to the family $(f'_t)$ is  a $\pi_1(\S)$-invariant Jacobi field $\zeta'$ which satisfies $\zeta'_{-\alpha}=0$. The result follows from Proposition \ref{prop:InfinitesimalRigidity}.
\end{proof}

\subsection{Infinitesimal deformations of cyclic surfaces}
The final section is devoted to the study of infinitesimal deformation of cyclic surfaces. In particular, we prove Proposition \ref{prop:InfinitesimalRigidity}.
We follow the idea of \cite{cyclicSurfacesRank2}, but our proof simplifies many relevant computations. We hope this sheds light on the important ingredients to ensure local rigidity.

Consider a Lie algebra bundle  $\pi: \hat E \to M$  on a manifold $M$, and let $\hat \nabla$ be a compatible connection (that is $\hat\nabla$ is a derivation for the Lie bracket). Consider $\Theta= \{\theta_1,\dots,\theta_k\}$ with $\theta_k \in \Omega^{\bullet}(M,\hat E)$. A \emph{solution to the Pfaffian system $\Theta$} is then a map $f: N \to M$ such that $f^*\theta_k=0$ for all $\theta_k\in \Theta$.
Let $\mathcal I_\Theta$ be the differential ideal generated by $\Theta$. 
% , that is, the smallest ideal of $\big(\Omega^{\bullet}(M,\hat E), [.\wedge .]\big)$ closed under $d_{\hat\nabla}$ and containing $\mathcal I_\Theta$).

\begin{lemma}\label{lem:Pfaffian}  A map $f: N \to M$ is a solution to the Pfaffian system $\Theta$ if and only if $f^*\alpha=0$ for any $\alpha\in \mathcal I_\Theta$.
\end{lemma}

\begin{proof}
The direct implication follows from the fact that $f^*[\alpha\wedge \beta]=[f^*\alpha\wedge f^*\beta]$ and $f^*d_{\hat\nabla}\alpha= d_\nabla f^*\alpha$ for $\nabla= f^*\hat\nabla$. The converse follows from the fact that $\Theta\subset \mathcal I_\Theta$.
\end{proof}

The following is proved in \cite[Proposition 7.14]{cyclicSurfacesRank2}:

\begin{proposition}\label{prop:InfinitesimalDeformationOfPfaffian}
Let $(f_t)_{t\in (-\epsilon,\epsilon)}$ be a smooth path of solution of the Pfaffian system $\Theta$ and let $\zeta = \frac{d}{dt}\vert_{t=0} f_t$. Then for any $\alpha= f^*\hat\alpha$ with $\hat\alpha\in \mathcal I_\Theta$ we have 
\[d^\nabla \circ \iota_\zeta \alpha= - \iota_\zeta \circ d^\nabla \alpha~.\]
\end{proposition}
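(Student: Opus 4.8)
The statement to prove is Proposition 5.17 (in the numbering induced by the excerpt): for a smooth path $(f_t)$ of solutions to the Pfaffian system $\Theta$ with variation $\zeta = \frac{d}{dt}\big|_{t=0} f_t$, and for any $\alpha = f^*\hat\alpha$ with $\hat\alpha \in \mathcal I_\Theta$, one has $d^\nabla \circ \iota_\zeta \alpha = -\iota_\zeta \circ d^\nabla \alpha$ (here $f = f_0$, $\nabla = f^*\hat\nabla$). Since the excerpt attributes this to \cite[Proposition 7.14]{cyclicSurfacesRank2}, the natural plan is to give a short self-contained argument rather than invoke the reference.

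\medskip

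\noindent\textbf{Plan.} The idea is to work on the product $\widetilde M := N \times (-\epsilon,\epsilon)$ and to use the full map $F : \widetilde M \to M$, $F(x,t) = f_t(x)$, together with the pullback $\nabla^F := F^*\hat\nabla$ of the compatible connection. First I would record the key structural fact: because each $f_t$ is a solution of the Pfaffian system, Lemma~\ref{lem:Pfaffian} gives $f_t^*\beta = 0$ for \emph{every} $\beta \in \mathcal I_\Theta$, and in particular $f_t^*\hat\alpha = 0$ and $f_t^* (d^{\hat\nabla}\hat\alpha) = 0$ for all $t$, since $\mathcal I_\Theta$ is a \emph{differential} ideal (so $d^{\hat\nabla}\hat\alpha \in \mathcal I_\Theta$). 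Next I would pull $\hat\alpha$ back by $F$ to get a form $\Lambda := F^*\hat\alpha \in \Omega^\bullet(\widetilde M, F^*\hat E)$. The restriction of $\Lambda$ to each slice $N \times \{t\}$ is $f_t^*\hat\alpha = 0$; interpreting a form on a product in terms of its "horizontal'' and "$dt$'' components, this means $\Lambda = dt \wedge \mu$ for some $F^*\hat E$-valued form $\mu$ on $\widetilde M$ with no $dt$-component, and by construction $\mu|_{N\times\{0\}} = \iota_{\partial_t}\Lambda|_{N\times\{0\}} = \iota_\zeta\alpha$ (using $dF(\partial_t)|_{t=0} = \zeta$ and $F|_{N\times\{0\}} = f$). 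Likewise $F^*(d^{\hat\nabla}\hat\alpha) = d^{\nabla^F}\Lambda$ restricts to zero on every slice, hence equals $dt \wedge \nu$ for a form $\nu$ with no $dt$-component, and $\nu|_{N\times\{0\}} = \iota_\zeta (d^\nabla\alpha)$.

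\medskip

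\noindent The computational heart is then to expand $d^{\nabla^F}\Lambda = d^{\nabla^F}(dt\wedge\mu)$. Writing $\nabla^F = d t \otimes \nabla^F_{\partial_t} + (\text{horizontal part})$ and using $d^{\nabla^F}(dt \wedge \mu) = -dt \wedge d^{\nabla^F}\mu$ (since $dt$ is closed and scalar), one decomposes $d^{\nabla^F}\mu$ into the slice-direction exterior derivative $d^\nabla_N \mu$ and the $dt$-direction piece $dt \wedge \nabla^F_{\partial_t}\mu$; the wedge with $dt$ kills the latter, giving $d^{\nabla^F}\Lambda = -dt \wedge d^\nabla_N \mu$ where $d^\nabla_N$ denotes the exterior covariant derivative along the slices using the $t$-frozen connection. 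Comparing with $d^{\nabla^F}\Lambda = dt \wedge \nu$ yields $\nu = -d^\nabla_N\mu$ identically on $\widetilde M$; restricting to $N \times \{0\}$ and using the identifications above gives precisely $\iota_\zeta(d^\nabla\alpha) = -d^\nabla(\iota_\zeta\alpha)$, i.e.\ the claimed identity. (Equivalently, this is the covariant Cartan magic formula $\mathcal L_{\partial_t} = d^{\nabla}\iota_{\partial_t} + \iota_{\partial_t}d^{\nabla}$ on the product, applied to $\Lambda$ and using that $\mathcal L_{\partial_t}\Lambda$, being tangential to slices along which $\Lambda$ vanishes to first order in the appropriate sense, contributes nothing — but it is cleaner to run the direct computation than to justify the Lie-derivative version for $\hat E$-valued forms.)

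\medskip

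\noindent\textbf{Main obstacle.} The substantive point — and the only place real care is needed — is the bookkeeping that turns "$f_t^*\hat\alpha = 0$ for all $t$'' into "$F^*\hat\alpha = dt \wedge (\text{something restricting to } \iota_\zeta\alpha)$'', and correspondingly for $d^{\hat\nabla}\hat\alpha$; one must be careful that $\mathcal I_\Theta$ being a differential ideal is genuinely used (so that $d^{\hat\nabla}\hat\alpha$ also pulls back to zero on each slice), and that the compatibility of $\hat\nabla$ with the bracket is irrelevant here (it matters for Lemma~\ref{lem:Pfaffian} but not for this proposition). The sign in $d^{\nabla^F}(dt\wedge\mu) = -dt \wedge d^{\nabla^F}\mu$ and the convention for $\iota_\zeta$ acting on the left must be tracked consistently; everything else is the routine decomposition of forms on a product manifold. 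I would double-check the sign by testing the identity on the scalar case $\hat E = \underline{\mathbb R}$, $\hat\nabla = d$, where it reduces to Cartan's formula $\mathcal L_{\partial_t} = d\iota_{\partial_t} + \iota_{\partial_t}d$ applied to a form vanishing on slices.
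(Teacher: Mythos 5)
Your argument is correct: since each $f_t$ kills every element of the differential ideal, $F^*\hat\alpha$ and $F^*(d^{\hat\nabla}\hat\alpha)$ on $N\times(-\epsilon,\epsilon)$ are both of the form $dt\wedge(\cdot)$, and your expansion of $d^{\nabla^F}(dt\wedge\mu)$ together with the identifications $\mu\vert_{t=0}=\iota_\zeta\alpha$, $\iota_{\partial_t}d^{\nabla^F}\Lambda\vert_{t=0}=\iota_\zeta d^\nabla\alpha$ gives exactly the claimed identity with the right sign. The paper offers no proof of its own (it cites Proposition 7.14 of \cite{cyclicSurfacesRank2}), and your product-manifold computation is essentially the argument of that reference, so your write-up is a correct, self-contained substitute for the citation.
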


% \subsubsection{Jacobi fields}
Recall that the Maurer-Cartan equation implies that the connection $D=A+\omega$ on $[\ft]\oplus \T\Cy$ is flat, so $[\ft]\oplus \T\Cy$ is isomorphic to the trivial Lie algebra bundle $\Cy\times \fg_2^\C$. Consider the Pfaffian system
\[\Theta := \big\{\omega_1+\theta(\omega_{-1})~,~\omega_{\i\ft}~,~ \omega_j \text{ for }j\in \Z_6\setminus\{0,\pm 1\} \big\}~.\]
% We denote by $\mathcal I_\Theta$ the differential ideal generated by $\Theta$.

\begin{lemma}\label{lem:AppB}
The form $[\omega_{-1}\wedge\omega_{-1}]$ belongs to the ideal $\mathcal I_\Theta$.
\end{lemma}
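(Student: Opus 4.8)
The plan is to read off the claim directly from the Maurer--Cartan equations in the cyclic grading, using only that $\mathbb Z_6\setminus\{0,\pm1\}=\{2,3,-2\}$, so that $\omega_2,\omega_3,\omega_{-2}$ and $\omega_{\i\ft}$ are all among the generators of $\Theta$. Concretely, I would project \eqref{eq:MaurerCartanEq} onto the $\fg_{-2}$-summand of the grading \eqref{eq:Grading}, i.e. take $k=-2$ in \eqref{eq:MCSplitted}:
\[d^A\omega_{-2}+\tfrac12\sum_{i+j\equiv-2}[\omega_i\wedge\omega_j]=0,\]
where the index $i$ or $j$ ``equal to $0$'' contributes the term $[\omega_{\i\ft}\wedge\omega_{-2}]$ (the $\ft$-part of $\fg_0$ having been absorbed into $d^A$). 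The ordered pairs with $i+j\equiv-2\pmod 6$, with $i,j$ ranging over $\{\i\ft,1,2,3,-2,-1\}$, are $(\i\ft,-2),(-2,\i\ft),(1,3),(3,1),(2,2),(-1,-1)$; using $[\alpha\wedge\beta]=[\beta\wedge\alpha]$ for Lie-algebra-valued $1$-forms this gives
\[d^A\omega_{-2}+[\omega_{\i\ft}\wedge\omega_{-2}]+[\omega_1\wedge\omega_3]+\tfrac12[\omega_2\wedge\omega_2]+\tfrac12[\omega_{-1}\wedge\omega_{-1}]=0,\]
hence
\[[\omega_{-1}\wedge\omega_{-1}]=-2\,d^A\omega_{-2}-2[\omega_{\i\ft}\wedge\omega_{-2}]-2[\omega_1\wedge\omega_3]-[\omega_2\wedge\omega_2].\]

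It then remains to check each term on the right lies in $\mathcal I_\Theta$. The three bracket terms $[\omega_{\i\ft}\wedge\omega_{-2}]$, $[\omega_1\wedge\omega_3]$, $[\omega_2\wedge\omega_2]$ are brackets of an arbitrary form with one of the generators $\omega_{\i\ft},\omega_3,\omega_2$, so they belong to the ideal by definition. For the term $d^A\omega_{-2}$ I would use that $\mathcal I_\Theta$ is a differential ideal for the flat connection $D=A+\omega$ (Lemma \ref{lem:Pfaffian} and the discussion preceding $\Theta$): since $\omega_{-2}\in\Theta$ we get $d^D\omega_{-2}\in\mathcal I_\Theta$, and
\[d^D\omega_{-2}=d^A\omega_{-2}+[\omega_{\i\ft}\wedge\omega_{-2}]+\sum_{k\in\Z_6\setminus\{0\}}[\omega_k\wedge\omega_{-2}],\]
whose terms other than $d^A\omega_{-2}$ are again brackets with the generator $\omega_{-2}$, hence in $\mathcal I_\Theta$; therefore $d^A\omega_{-2}\in\mathcal I_\Theta$. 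Combining, $[\omega_{-1}\wedge\omega_{-1}]\in\mathcal I_\Theta$.

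There is essentially no analytic or geometric obstacle here; the only thing requiring care is bookkeeping: correctly enumerating the pairs $(i,j)$ with $i+j\equiv-2\pmod 6$ and keeping track of the splitting $\fg_0=\ft\oplus\i\ft$, so that the $\ft$-component produces the covariant derivative $d^A$ while the $\i\ft$-component produces the additional generator term $[\omega_{\i\ft}\wedge\omega_{-2}]$. Once those are handled the statement is immediate, and this lemma then feeds into the computation of $\iota_\zeta$ applied to the elements of $\mathcal I_\Theta$ built from $\omega_1+\theta(\omega_{-1})$ used in the proof of Proposition \ref{prop:InfinitesimalRigidity}.
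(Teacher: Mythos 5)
Your proposal is correct and follows the same route as the paper: project the Maurer--Cartan equation onto the degree $-2$ summand and observe that every term other than $\tfrac12[\omega_{-1}\wedge\omega_{-1}]$ is either $d^A\omega_{-2}$ or a bracket involving one of the generators $\omega_{\i\ft},\omega_2,\omega_3,\omega_{-2}$ of $\Theta$, hence lies in $\mathcal I_\Theta$. Your extra step deducing $d^A\omega_{-2}\in\mathcal I_\Theta$ from $d^D\omega_{-2}\in\mathcal I_\Theta$ is just a more careful justification of what the paper asserts directly.
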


\begin{proof}
The form $\omega_{-2}$ is in $\Theta$, so $d^A\omega_{-2}$ is in $\mathcal I_\Theta$. But equation \eqref{eq:MCSplitted} for $k=2$ gives
\[d^A \omega_{-2} + \frac{1}{2}\sum_{i+j=-2} [\omega_i\wedge \omega_j] = 0 ~.\]
In $\Z_6$ the equation $i+j=-2$ implies $i=j=-1$ or $i,j\notin \{\pm 1\}$. But for $i,j\notin \{\pm 1\}$ we have $\omega_i,\omega_j \in \Theta$ so $[\omega_i\wedge \omega_j]\in \mathcal I_\Theta$. In particular, the above equation implies that $[\omega_{-1}\wedge\omega_{-1}] \in \mathcal I_\Theta$.
\end{proof}

One easily checks that a map $f: S \to \Cy$ is tangent to the cyclic distribution $\DD$ if and only if it is solution to the Pfaffian system $\Theta$.

\begin{definition}\label{def:JacobiField}
Let $f: S \to \Cy$ be a cyclic surface and  $\nabla = f^* A$. A \emph{Jacobi field} of $f$ is a section $\zeta$ of $f^*\T\Cy$ such that, for any $\alpha= f^*\hat\alpha$ with $\hat\alpha \in \mathcal I_\Theta$, we have
\[d^\nabla \circ \iota_\zeta \alpha= - \iota_\zeta \circ d^\nabla \alpha~.\]
\end{definition}

Since cyclic surfaces are solution to the Pfaffian system $\Theta$, Proposition \ref{prop:InfinitesimalDeformationOfPfaffian} implies that a vector field tangent to a deformation of cyclic surfaces is in particular a Jacobi field.

Let $f: S \to \Cy$ be a cyclic surface with $\phi=df_1$ and $\phi^\dagger = df_{-1} = -\theta(df_1)$ and let $\j$ be the complex structure on $S$ associated to $f$. Define 
\[\xymatrix{[\fG]=\bigoplus_{j\neq 0,\pm 1} [\fg_j]&\text{ and}&\Omega= \bigoplus_{j\neq 0,\pm 1} \omega_j.}\]
A Jacobi field $\zeta$ which is tangent to a path of cyclic surfaces $f_t$ with $f_0=f$, decomposes as
\[\zeta = \zeta_0 + \zeta_1 + \zeta_{-1} + Z,\]
where $\zeta_j=\iota_{\zeta}(f^*\omega_j)$ is a section of $f^*[\fg_j]$ and $Z = \iota_\zeta (f^*\Omega)$ is a section of $f^*[\fG]$.

\begin{proposition}\label{prop:Labourie'sComputations}
A Jacobi field $\zeta$ of a cyclic surface satisfies  $\zeta_0=0$, $\lambda(Z)=Z$ and $\zeta_1=-\theta(\zeta_{-1})$.
\end{proposition}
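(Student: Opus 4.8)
The plan is to extract three independent identities by feeding carefully chosen elements of $\mathcal I_\Theta$ into the Jacobi field equation $d^\nabla\circ\iota_\zeta\alpha = -\iota_\zeta\circ d^\nabla\alpha$. The three claims are somewhat different in flavor, so I would treat them one at a time.

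For $\zeta_0 = 0$: the generators $\omega_{\i\ft}$ and $\omega_j$ for $j\in\Z_6\setminus\{0,\pm1\}$ lie in $\Theta$, so after pulling back by $f$ they vanish identically. The Jacobi equation applied to $\alpha = f^*\omega_{\i\ft}$ reads $d^\nabla(\iota_\zeta\alpha) = -\iota_\zeta(d^\nabla\alpha)$, and since $\alpha = 0$ the left side is $0$, forcing $\iota_\zeta(d^\nabla(f^*\omega_{\i\ft})) = 0$. Now use the Maurer--Cartan equation in degree $0$ from \eqref{eq:MCSplitted}: the $\i\ft$-component of the curvature equation expresses $d^A\omega_{\i\ft}$ (equivalently the relevant piece of $F_A$) in terms of brackets $[\omega_i\wedge\omega_j]$ with $i+j=0$. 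The only such bracket not already annihilated by $f^*$ (because at least one factor is among the $\Theta$-generators) is the pairing $[\omega_1\wedge\omega_{-1}]$ and the pairings among the $[\fg_\gamma]$ with $\ell(\gamma)$ and $-\ell(\gamma)$; contracting with $\zeta$ and using that $\phi = df_1$, $\phi^\dagger = df_{-1}$ are both of type $(1,0)$, respectively $(0,1)$, one gets a pointwise-invertible expression in $\zeta_0$ (this is where the nondegeneracy of the Killing pairing between $\fg_1$ and $\fg_{-1}$, and of $\ft$, enters). I expect this is essentially the computation in \cite[Lemma 7.15 and following]{cyclicSurfacesRank2}, streamlined.

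For $\lambda(Z) = Z$: contract the Jacobi equation against the generators $\alpha = f^*\Omega = \bigoplus_{j\neq 0,\pm1} f^*\omega_j$; again $\alpha = 0$ on $S$, so $\iota_\zeta(d^\nabla\alpha) = 0$. Decompose $d^\nabla\alpha$ by degree via \eqref{eq:MCSplitted} for $k\notin\{0,\pm1\}$; contracting with $\zeta$ gives relations that, degree by degree, pin down the components $\zeta_j$ of $Z$ for $j\neq 0,\pm1$. The assertion $\lambda(Z)=Z$ is the statement that the Jacobi field is tangent to the $\lambda$-fixed (i.e. $\G$) distribution in these higher-degree directions; since $f$ takes values in a $\G$-orbit (Lemma \ref{lemma:Cyclic}(2)), the whole tangent vector to a family of $\G$-equivariant cyclic surfaces lies in the $\lambda$-fixed distribution, which already gives $\lambda(Z)=Z$ together with $\zeta_1 = -\theta(\zeta_{-1})$ on the nose — so the cleanest route is: because $\lambda = \sigma\circ\theta$ and each $f_t$ lands in a $\G$-orbit (conjugate orbits, but the variation $[\dot\rho_0]$ being arbitrary doesn't matter here — each individual $f_t$ is $\G$-valued after the global conjugation used in Lemma \ref{lemma:Cyclic}(2)), the Jacobi field $\zeta$ is a section of the $\lambda$-fixed subbundle of $f^*\T\Cy$, hence $\lambda(\zeta)=\zeta$; projecting onto $[\fg_1]\oplus[\fg_{-1}]$ gives $\zeta_1 = -\theta(\zeta_{-1})$ (using $\sigma = -\Id$ on $\fg_{\pm1}$, so $\lambda = -\theta$ there), and projecting onto $[\fG]$ gives $\lambda(Z)=Z$.

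The main obstacle is the first claim, $\zeta_0 = 0$: it is the one identity that genuinely requires unwinding the Maurer--Cartan equation and is not a formal consequence of $\lambda$-invariance. The subtlety is checking that after contracting the degree-$0$ structure equation with $\zeta$, every surviving term is either manifestly zero (because it involves a $\Theta$-generator pulled back by $f$, hence $0$) or is a nondegenerate pairing isolating $\zeta_0$; one must be careful that the type decomposition — $\phi$ of type $(1,0)$, $\phi^\dagger$ of type $(0,1)$ — makes the cross terms $[\zeta_0 \wedge \phi]$, $[\zeta_0\wedge\phi^\dagger]$ survive in complementary bidegrees so that the resulting first-order (in fact algebraic, once combined with the holomorphicity $\bar\partial_\nabla\phi = 0$) constraint forces $\zeta_0\equiv 0$ rather than merely constraining $\bar\partial\zeta_0$. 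I would model this step directly on Labourie's argument, organizing it so that the $\alpha_1$-cyclic hypothesis (nowhere-vanishing $df_{-\alpha_1}$) is not yet needed here — it enters only in Proposition \ref{prop:InfinitesimalRigidity} — and only the genuine cyclicity (both $df_{-\alpha_1}, df_{-\alpha_2}$ not identically zero) together with the root-grading bracket relations $[\fg_i,\fg_j]\subset\fg_{i+j}$ is used.
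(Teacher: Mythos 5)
Your middle paragraph already contains the paper's entire proof: since each $f_t$ lies in a $\G$-orbit (Lemma \ref{lemma:Cyclic}(2)) whose tangent distribution is the fixed locus of $\lambda$, the Jacobi field satisfies $\lambda(\zeta)=\zeta$, and projecting this identity onto the summands of \eqref{eq:SplittingTX} gives the claims. What you missed is that this argument also delivers the first claim: $\sigma$ acts as $+\Id$ on $\fg_0=\ft^\C$ (even degree) while $\theta$ acts as $-\Id$ on $\i\ft$, so $\lambda=\sigma\circ\theta$ restricts to $-\Id$ on $[\i\ft]$; projecting $\lambda(\zeta)=\zeta$ onto $[\i\ft]$ gives $\zeta_0=-\zeta_0$, i.e.\ $\zeta_0=0$. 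In other words, componentwise $\lambda(\zeta)=-\zeta_0-\theta(\zeta_{-1})-\theta(\zeta_1)+\lambda(Z)$, and equating with $\zeta$ yields all three statements at once. Your assertion that $\zeta_0=0$ ``is not a formal consequence of $\lambda$-invariance'' is therefore incorrect, and it is the source of the gap in your plan.

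The gap itself: the Maurer--Cartan route you sketch for $\zeta_0$ does not close as stated. Taking $\hat\alpha=\omega_{\i\ft}\in\Theta$ in the Jacobi equation gives $d^\nabla\zeta_0=-\iota_\zeta\big(f^*(d^A\omega_{\i\ft})\big)$, and after inserting the degree-$0$ structure equation and using $f^*\omega_j=0$ for $j\neq\pm1$, the right-hand side is a bracket expression in $\zeta_{\pm1}$, $\phi$, $\phi^\dagger$ projected to $[\i\ft]$. This is a first-order equation constraining $d^\nabla\zeta_0$, not a pointwise-invertible algebraic identity in $\zeta_0$; nothing in the sketch forces $\zeta_0\equiv0$, which is exactly the worry you raise yourself in your last paragraph. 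Replace that entire step by the one-line projection argument above and your proof coincides with the paper's.
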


\begin{proof}
The second item of Lemma \ref{lemma:Cyclic} implies that, up to postcomposing $f_t$ by an element in $\GC$, the cyclic surfaces $f_t$ are contained in a given $\G$-orbit. Since the tangent space of such an orbit is $\text{Fix}(\lambda)$, we have that $\lambda (\zeta) = \zeta$. 
But $\lambda(\zeta) = -\zeta_0 - \theta(\zeta_{-1}) - \theta(\zeta_1) + \lambda(Z)$. Projecting on $[\i\ft]$ gives $\zeta_0=0$, projecting on $\fG$ gives $\lambda(Z)=Z$, while projecting on $[\fg_1]$ gives $\zeta_1=-\theta(\zeta_{-1})$.
\end{proof}
The following appears in \cite[Proposition 7.6.1]{cyclicSurfacesRank2}. For completeness, we repeat them here. 
\begin{lemma}\label{lem Labcomp}
	\begin{enumerate}
		\item $d^\nabla Z = [(\zeta_1+Z)\wedge \phi]^{\fG} + [(\zeta_{-1}+Z)\wedge \phi^\dagger]^{\fG}$.
		\item $[\partial_\nabla \zeta_{-1}\wedge \phi^\dagger] = [[Z\wedge \phi]^{\fg_{-1}}\wedge \phi^\dagger]$
	\end{enumerate}
\end{lemma}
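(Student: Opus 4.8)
\textbf{Proof plan for Lemma \ref{lem Labcomp}.}

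The plan is to derive both identities by carefully expanding the structure equations from \eqref{eq:MCSplitted} pulled back by $f$, and invoking the Jacobi field property as encoded in Definition \ref{def:JacobiField}. For item (1), I would start from the pulled-back Maurer-Cartan equation $d^\nabla \omega_k + \frac{1}{2}\sum_{i+j=k}[\omega_i\wedge\omega_j] = 0$ for the degrees $k$ appearing in $\fG$, i.e.\ $k \in \{2,3,-3,-2\}$ (mod $6$), and assemble them into a single equation $d^\nabla \Omega + (\text{quadratic terms}) = 0$. Since $f$ is a cyclic surface, only $\omega_{0}, \omega_{\pm 1}$ survive when pulled back (the latter being $\phi$ and $\phi^\dagger$, with $\omega_0$ the connection part absorbed into $\nabla$), so the relevant element $\hat\alpha \in \mathcal I_\Theta$ to contract with $\zeta$ is (a linear combination of) $d^A \omega_j$ together with the bracket terms that reduce to $[\omega_1 \wedge \omega_1]$, $[\omega_{-1}\wedge \omega_{-1}]$, and $[\omega_1 \wedge \omega_{-1}]$ type expressions in $[\fG]$. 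Applying $\iota_\zeta$ via the Jacobi field identity $d^\nabla \circ \iota_\zeta \alpha = -\iota_\zeta \circ d^\nabla \alpha$ and using the decomposition $\zeta = \zeta_1 + \zeta_{-1} + Z$ (with $\zeta_0 = 0$ by Proposition \ref{prop:Labourie'sComputations}), the contraction of the bracket terms produces exactly $[(\zeta_1 + Z)\wedge\phi]^{\fG} + [(\zeta_{-1}+Z)\wedge\phi^\dagger]^{\fG}$; the key bookkeeping is which grading components of $[\,\zeta \wedge (\phi + \phi^\dagger)\,]$ land in $[\fG]$, namely $[\zeta_1 \wedge \phi]$ contributes to $\fg_2$, $[Z\wedge\phi]$ to $\fG$, and symmetrically with $\phi^\dagger$.

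For item (2), I would use Lemma \ref{lem:AppB}, which tells us $[\omega_{-1}\wedge\omega_{-1}] \in \mathcal I_\Theta$, so this form is a legitimate test form $\hat\alpha$ in the Jacobi field equation. Pulling back gives $\alpha = [\phi^\dagger \wedge \phi^\dagger]$; since $\phi^\dagger$ has type $(0,1)$ this is $0$ as a $2$-form, but the point is that its exterior covariant derivative $d^\nabla \alpha = 2[d^\nabla\phi^\dagger \wedge \phi^\dagger]$ is generally nonzero, and one works with $\iota_\zeta$ applied to the relevant member of the ideal. More precisely, I expect the right element of $\mathcal I_\Theta$ to be $d^A(\omega_{-1} + \theta(\omega_1))$ restricted to the relevant degree, or directly the degree $-2$ component of $d^A$ applied to $\omega_{-2}$; contracting with $\zeta$, using $\iota_\zeta \phi^\dagger = \zeta_{-1}$ (up to the connection contribution) and $\iota_\zeta \omega_{-2} = 0$ on the surface, and matching the $(0,1)$-type pieces, the Jacobi identity forces $[\partial_\nabla \zeta_{-1} \wedge \phi^\dagger] = [[Z \wedge \phi]^{\fg_{-1}} \wedge \phi^\dagger]$. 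The holomorphicity $\bar\partial_\nabla \phi = 0$ from Lemma \ref{lemma:Cyclic}(3) is what lets one replace $d^\nabla \zeta_{-1}$ by its $(1,0)$-part $\partial_\nabla \zeta_{-1}$ after wedging with $\phi^\dagger$.

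The main obstacle I anticipate is the careful tracking of the $\Z_6$-grading through the Lie bracket: one must verify precisely which bracket $[\fg_i, \fg_j]$ components contribute to each graded piece $[\fG]$ versus $[\fg_{\pm 1}]$ versus $[\i\ft]$, and handle the subtlety that $\omega_0$ (the connection $A$) does not appear as an independent $1$-form on the surface but is folded into $d^\nabla$, so that contracting $\iota_\zeta$ against terms like $[\omega_0 \wedge \omega_{-1}]$ requires interpreting $\zeta_0 = 0$ correctly. A secondary technical point is keeping the type decomposition straight: $\phi \in \Omega^{1,0}$, $\phi^\dagger \in \Omega^{0,1}$, so wedge products like $\phi \wedge \phi$ and $\phi^\dagger \wedge \phi^\dagger$ vanish for type reasons, which simplifies several terms but means one must be attentive to exactly which surviving $(1,1)$-forms carry the content of each identity. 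Once the grading and type bookkeeping is organized (ideally in a small table mirroring the multiplication table in \eqref{eq basis of complexification}), both identities should follow by a direct application of Proposition \ref{prop:InfinitesimalDeformationOfPfaffian} together with Proposition \ref{prop:Labourie'sComputations}.
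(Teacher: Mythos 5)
Your plan for item (1) is essentially the paper's own proof: apply the Jacobi-field identity to $\hat\alpha=\Omega$, so that $d^\nabla Z=d^\nabla(\iota_\zeta f^*\Omega)=-\iota_\zeta\big(d^\nabla(f^*\Omega)\big)$, then substitute the Maurer--Cartan equation projected onto $[\fG]$ and kill all terms containing $f^*\Omega$ or $f^*\omega_{\i\ft}$; the surviving contractions of $\tfrac12[\omega_{\pm1}\wedge\omega_{\pm1}]$ and $[\omega_{\pm1}\wedge\Omega]^{\fG}$ give exactly $[(\zeta_1+Z)\wedge\phi]^{\fG}+[(\zeta_{-1}+Z)\wedge\phi^\dagger]^{\fG}$. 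That outline is sound.

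For item (2) there is a genuine gap. You correctly identify Lemma \ref{lem:AppB} as the key input, but you then misread how the Jacobi-field identity extracts content from the test form $[\omega_{-1}\wedge\omega_{-1}]$. In Definition \ref{def:JacobiField}, $\iota_\zeta\alpha$ means contracting the vector field $\zeta$ \emph{along $f$} into one slot of the form on $\Cy$ and pulling back the remaining slot; thus $\iota_\zeta f^*[\omega_{-1}\wedge\omega_{-1}]=2[\zeta_{-1}\wedge\phi^\dagger]$, which is not zero, and its exterior derivative $d^\nabla\big(2[\zeta_{-1}\wedge\phi^\dagger]\big)=2[\partial_\nabla\zeta_{-1}\wedge\phi^\dagger]$ (using types and $d^\nabla\phi^\dagger=0$) is precisely the left-hand side of item (2). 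Your reading --- that the pullback $[\phi^\dagger\wedge\phi^\dagger]$ vanishes and the content lives in "$d^\nabla\alpha=2[d^\nabla\phi^\dagger\wedge\phi^\dagger]$, generally nonzero" --- is incoherent on two counts: $d^\nabla\phi^\dagger=0$ by Lemma \ref{lemma:Cyclic}(3), and if the contracted form were identically zero the identity would carry no information at all. The other side of the identity is $-\iota_\zeta\big(d^\nabla f^*[\omega_{-1}\wedge\omega_{-1}]\big)=-2\big[\iota_\zeta(d^\nabla f^*\omega_{-1})\wedge\phi^\dagger\big]$, and the degree $-1$ Maurer--Cartan equation together with $f^*\omega_{\i\ft}=f^*\Omega=0$ and $\zeta_0=0$ gives $-\iota_\zeta(d^\nabla f^*\omega_{-1})=[Z\wedge\phi]^{\fg_{-1}}$, which yields the claim.

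Your proposed substitutes for the test form do not repair this. Contracting $\zeta$ into $d^A(\omega_1+\theta(\omega_{-1}))$ only produces $d^\nabla(\zeta_1+\theta(\zeta_{-1}))=0$ (by Proposition \ref{prop:Labourie'sComputations}) and hence a relation between $[Z\wedge\phi^\dagger]^{\fg_1}$ and $\theta\big([Z\wedge\phi]^{\fg_{-1}}\big)$, never the term $[\partial_\nabla\zeta_{-1}\wedge\phi^\dagger]$; the quadratic test form is what makes $\zeta_{-1}$ appear already multiplied by $\phi^\dagger$ before differentiating. Moreover your auxiliary claim "$\iota_\zeta\omega_{-2}=0$ on the surface" is false: $\iota_\zeta\omega_{-2}=\zeta_{-2}$ is a component of $Z$, and $Z$ is exactly the quantity one cannot assume vanishes at this stage --- its vanishing is the conclusion of the subsequent Bochner argument, which itself relies on item (2).
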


\begin{proof}
For the first item, we have by Definition \ref{def:JacobiField}
\[d^\nabla Z = d^\nabla(\iota_\zeta (f^*\Omega)) = - \iota_\zeta(d^\nabla (f^*\Omega))~.\]
However, projecting Maurer-Cartan equations on $[\fG]$ gives
\[-d^A \Omega = \frac{1}{2}[\Omega\wedge\Omega]^\fG + [\omega_0\wedge \Omega] +[\omega_0\wedge\Omega] +\frac{1}{2}[\omega_1\wedge\omega_1]+\frac{1}{2}[\omega_{-1}\wedge\omega_{-1}] + [\omega_1\wedge \Omega]^\fG  + [\omega_{-1}\wedge \Omega]^\fG~.\]
Pulling-back by $f$, using $f^*\Omega=f^*\omega_0=0$ and applying $\iota_\zeta$ yields
\[-\iota_\zeta (d^\nabla (f^*\Omega)) = [(\zeta_1+Z)\wedge \phi]^\fG+[(\zeta_{-1}+Z)\wedge \phi^\dagger]^\fG~.\]
For the second item, first compute
\begin{equation}\label{eq:LabourieComp1}d^\nabla (\iota_\zeta f^*[\omega_{-1}\wedge \omega_{-1}]) = 2 d^\nabla[\zeta_{-1}\wedge \phi^\dagger] = 2 [\partial_\nabla \zeta_{-1}\wedge \phi^\dagger]~,\end{equation}
where we used the fact that $\phi^\dagger$ has type $(0,1)$ and satisfies $d^\nabla\phi^\dagger=0$. On the other hand, using Lemma \ref{lem:AppB} we have
\begin{equation}\label{eq:LabourieComp2}d^\nabla(\iota_\zeta f^*[\omega_{-1}\wedge\omega_{-1}]) = -\iota_\zeta\big(d^\nabla (f^*[\omega_{-1}\wedge\omega_{-1}])\big) = -2\big[\iota_\zeta (d^\nabla (f^* \omega_{-1}))\wedge \phi^\dagger\big]~.\end{equation}
But the Maurer-Cartan equations on $[\fg_{-1}]$ gives
\[-d^A\omega_{-1} = [\omega_0\wedge \omega_{-1}] + [\Omega \wedge \omega_1]^{\fg_{-1}} + \frac{1}{2}[\Omega\wedge\Omega]^{\fg_{-1}}~.\]
This yields $-\iota_\zeta (d^\nabla (f^*\omega_{-1})) = [Z\wedge \phi]^{\fg_{-1}}~.$ Plugging back in equations \eqref{eq:LabourieComp2} and comparing with equation \eqref{eq:LabourieComp1} give the result.
\end{proof}

\begin{proposition}\label{prop:Labourie'sComputations2}
	A Jacobi field of a cyclic surface satisfies
	\[\partial_\nabla \overline \partial_\nabla Z =  \big[[Z\wedge \phi]\wedge \phi^\dagger \big]^\fG~,\]
	were the superscript $\fG$ denotes the projection on $f^*[\fG]$.
\end{proposition}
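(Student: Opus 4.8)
The plan is to combine the first identity from Lemma~\ref{lem Labcomp} with the differential constraints on Jacobi fields to promote the first-order information into the stated second-order equation. First I would apply $\overline\partial_\nabla$ to the identity
\[d^\nabla Z = [(\zeta_1+Z)\wedge \phi]^{\fG} + [(\zeta_{-1}+Z)\wedge \phi^\dagger]^{\fG}~,\]
and then extract the $(1,1)$-part. Since $\phi$ has type $(1,0)$ and $\phi^\dagger$ has type $(0,1)$, the term $[(\zeta_1+Z)\wedge \phi]^{\fG}$ is of type $(1,0)$ and the term $[(\zeta_{-1}+Z)\wedge \phi^\dagger]^{\fG}$ is of type $(0,1)$; hence the left side $\partial_\nabla\overline\partial_\nabla Z + \overline\partial_\nabla\partial_\nabla Z$ splits accordingly and one obtains
\[\partial_\nabla\overline\partial_\nabla Z = \partial_\nabla\big([(\zeta_{-1}+Z)\wedge \phi^\dagger]^{\fG}\big) = [\partial_\nabla(\zeta_{-1}+Z)\wedge \phi^\dagger]^{\fG} + [(\zeta_{-1}+Z)\wedge \partial_\nabla\phi^\dagger]^{\fG}~.\]
Using $d^\nabla\phi^\dagger=0$ and that $\phi^\dagger$ is $(0,1)$, the second summand vanishes after taking the $(1,1)$-part, so the right side reduces to $[\partial_\nabla\zeta_{-1}\wedge\phi^\dagger]^{\fG} + [\partial_\nabla Z\wedge \phi^\dagger]^{\fG}$.

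Next I would dispose of the two remaining terms. For $[\partial_\nabla\zeta_{-1}\wedge\phi^\dagger]$, item (2) of Lemma~\ref{lem Labcomp} rewrites it as $[[Z\wedge\phi]^{\fg_{-1}}\wedge\phi^\dagger]$, which is the $\fg_{-1}$-component contribution to $[[Z\wedge\phi]\wedge\phi^\dagger]^{\fG}$. For $[\partial_\nabla Z\wedge\phi^\dagger]$, I substitute back the expression for $\partial_\nabla Z = d^\nabla Z$ from item (1): the piece $[(\zeta_{-1}+Z)\wedge\phi^\dagger]^{\fG}$ wedged with the $(0,1)$-form $\phi^\dagger$ dies because $\phi^\dagger\wedge\phi^\dagger=0$ (it is a $1$-form on a surface), leaving $[[(\zeta_1+Z)\wedge\phi]^{\fG}\wedge\phi^\dagger]$. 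Combining, $\partial_\nabla\overline\partial_\nabla Z$ equals a sum of bracket terms built from $Z,\zeta_1,\phi,\phi^\dagger$, and the task becomes a purely algebraic identification of this sum with $[[Z\wedge\phi]\wedge\phi^\dagger]^{\fG}$.

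The remaining step is bookkeeping with the $\Z_6$-grading and the Jacobi identity. Using $\zeta_1 = -\theta(\zeta_{-1})$ from Proposition~\ref{prop:Labourie'sComputations} and grading degree counts — $\phi$ raises degree by $1$, $\phi^\dagger$ lowers it by $1$, $\fG$ collects degrees $\{\pm2,3\}$ — one checks that the $\zeta_1$ and $\zeta_{-1}$ contributions either cancel in pairs or reorganize, via the Jacobi identity $[[a\wedge b]\wedge c] = [[a\wedge c]\wedge b] + [a\wedge[b\wedge c]]$, into exactly the $Z$-only term $[[Z\wedge\phi]\wedge\phi^\dagger]^{\fG}$; the terms involving only $\zeta_{\pm1}$ land outside $\fG$ (in degrees $0,\pm1$) and so are killed by the projection. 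I expect the main obstacle to be precisely this last grading-and-Jacobi reorganization: one must be careful that the projections $[\cdot]^{\fG}$ and $[\cdot]^{\fg_{-1}}$ are applied consistently, that the type decomposition into $(1,0)$ and $(0,1)$ is taken before rather than after projecting onto graded pieces, and that the $\theta$-relation between $\zeta_1$ and $\zeta_{-1}$ is invoked at the right moment. None of the individual manipulations is deep, but the risk of a sign error or a mis-graded term is high, so I would organize the computation by first writing every bracket with explicit grading labels and only then collecting terms.
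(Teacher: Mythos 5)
Your argument is correct and follows the paper's proof essentially step for step: take the $(0,1)$-part of item (1) of Lemma \ref{lem Labcomp} to get $\overline\partial_\nabla Z=[(\zeta_{-1}+Z)\wedge\phi^\dagger]^\fG$, apply $\partial_\nabla$ using $\partial_\nabla\phi^\dagger=0$, substitute item (2) for $[\partial_\nabla\zeta_{-1}\wedge\phi^\dagger]$ and the $(1,0)$-part of $d^\nabla Z$ for $\partial_\nabla Z$, and finish by projecting to $[\fG]$. The only remark is that the final bookkeeping you worry about is simpler than you anticipate: neither the Jacobi identity nor $\zeta_1=-\theta(\zeta_{-1})$ is needed, since $[[\zeta_1\wedge\phi]\wedge\phi^\dagger]$ is valued in $f^*[\fg_1]$ and hence dies under the $\fG$-projection, while $[Z\wedge\phi]$ is valued in $f^*[\fg_{-1}]\oplus f^*[\fG]$, so its $\fg_{-1}$- and $\fG$-components recombine directly into $[[Z\wedge\phi]\wedge\phi^\dagger]^\fG$.
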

\begin{proof}

Taking the $(0,1)$-part in the first item of Lemma \ref{lem Labcomp} gives
$\overline\partial_\nabla Z = [(\zeta_{-1}+Z)\wedge \phi^\dagger]^\fG~.$ So,
\[\partial_\nabla\overline\partial_\nabla Z = [\partial_\nabla \zeta_{-1} \wedge \phi^\dagger]^\fG + [\partial_\nabla Z \wedge \phi^\dagger]^\fG~.\]
By  Lemma \ref{lem Labcomp} we have
\[\partial_\nabla\overline\partial_\nabla Z = [[Z\wedge \phi]^{\fg_{-1}}\wedge \phi^\dagger]^\fG + [[(\zeta_1+Z)\wedge \phi]^{\fG}\wedge \phi^\dagger]^\fG~.\]
Note  that $[[\zeta_1\wedge \phi]\wedge \phi^\dagger]$ is valued in $f^*[\fg_1]$. So its projection on $f^*[\fG=]$ vanishes. Moreover, $[Z\wedge\phi]$ is valued in $f^*[\fg_{-1}]\oplus f^*[\fG]$, so we obtain
$\partial_\nabla\overline\partial_\nabla Z = [[Z\wedge \phi]\wedge \phi^\dagger]^\fG~.$
\end{proof}

In the case $S=\widetilde \S$ and $\zeta$ is $\pi_1(\S)$-invariant, the Bochner technique gives the following.
\begin{proposition}
Suppose $\zeta$ is a $\pi_1(\Sigma)$-invariant Jacobi field, then $Z=0$.
\end{proposition}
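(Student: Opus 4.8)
The plan is to run the Bochner/maximum-principle argument on the closed surface $\Sigma$, using the equation from Proposition \ref{prop:Labourie'sComputations2} together with positivity coming from the Cartan involution $\theta$. Since $\zeta$ is $\pi_1(\Sigma)$-invariant, the section $Z$ of $f^*[\fG]$ descends to a section of a bundle over the closed Riemann surface $X=(\Sigma,\j)$, and we may integrate. The key point is that $\lambda(Z)=Z$ by Proposition \ref{prop:Labourie'sComputations}, and $\lambda=\sigma\circ\theta$ acts on each $[\fg_j]$ by $(-1)^j\theta$; since every component of $[\fG]$ has $j\neq 0,\pm1$ and the grading is even, $Z$ is actually fixed by $\theta$ on the even-degree part and anti-fixed on... more carefully, $\lambda(Z)=Z$ together with the explicit action pins down how $\theta$ acts on $Z$, and this is precisely what makes the pairing $\langle Z, \theta(Z)\rangle$ (equivalently the Hermitian norm $\|Z\|^2$ built from the Killing form and $\theta$) positive definite. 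This is the standard device by which the indefinite Killing form on $\fg_2^\C$ is turned into a genuine norm along the $\G$-orbit.

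First I would introduce the Hermitian metric on $f^*[\fG]$ given by $h(u,v) = -\langle u, \theta(\bar v)\rangle$ (Killing form paired with the Cartan involution), which is positive definite, and check that $\nabla$ is unitary for it, so $\partial_\nabla$ and $\bar\partial_\nabla$ are the $(1,0)$ and $(0,1)$ parts of the Chern connection. Then I would compute $\bar\partial_\nabla\partial_\nabla \|Z\|_h^2$, or rather pair the identity $\partial_\nabla\bar\partial_\nabla Z = [[Z\wedge\phi]\wedge\phi^\dagger]^{\fG}$ from Proposition \ref{prop:Labourie'sComputations2} against $Z$ using $h$. The Weitzenböck-type computation gives
\[
\tfrac{\i}{2}\,\partial\bar\partial \|Z\|_h^2 = -\|\bar\partial_\nabla Z\|^2 \,\omega_X + h\big([[Z\wedge\phi]\wedge\phi^\dagger]^\fG,\,Z\big),
\]
up to sign conventions, where $\omega_X$ is the area form. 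The remaining task is to show the curvature-type term $h([[Z\wedge\phi]\wedge\phi^\dagger]^\fG, Z)$ has a favorable sign — it should be $\leq 0$ (or $\geq 0$, whichever matches), because $\phi^\dagger = -\theta(\phi)$ and the bracket structure together with $\theta$-compatibility of $h$ turn it into something like $-\|[Z,\phi]\|^2$. Concretely, using invariance of the Killing form under brackets and $\theta\circ\mathrm{ad}_\phi\circ\theta = \mathrm{ad}_{\theta\phi}$, one rewrites the term as $\pm\|[\phi, Z]^{?}\|_h^2$, and the grading bookkeeping ensures the projection onto $[\fG]$ doesn't spoil the sign (this is exactly the content of the last line of the proof of Proposition \ref{prop:Labourie'sComputations2}, where the $[\fg_1]$ and $[\fg_{-1}]$ pieces were discarded).

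Then I would integrate over the closed surface $X$: $\int_X \partial\bar\partial\|Z\|_h^2 = 0$ by Stokes, so both the gradient term $\int_X \|\bar\partial_\nabla Z\|^2$ and the curvature term have to vanish. Vanishing of $\bar\partial_\nabla Z$ plus the maximum principle (or directly, the sign-definiteness forces $\|Z\|_h^2$ to be constant and the curvature term to vanish pointwise) gives that $Z$ is parallel and $[Z,\phi]$-type expressions vanish. Finally, one uses irreducibility/non-degeneracy of the cyclic surface — specifically that $\phi = df_1$ has a nonzero $\fg_{-\alpha_1}$-component (since $f$ is $\alpha_1$-cyclic, though here we only need it cyclic) — to conclude from $[Z,\phi]=0$ that $Z=0$. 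The main obstacle, and the step I would be most careful about, is getting the sign of the curvature term right: this requires pinning down exactly how $\theta$ acts on $Z$ from the constraint $\lambda(Z)=Z$, correctly handling the fact that $[\fG]$ includes $\fg_{\pm2}$ and $\fg_3$ (and the longest-root space $\fg_\eta$ sits in $\fg_1$, not $\fG$), and verifying that the projection $[\,\cdot\,]^\fG$ onto the appropriate summand is $h$-orthogonal so that it doesn't interfere with the positivity. Everything else is a routine Bochner argument on a compact Riemann surface.
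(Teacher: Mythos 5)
Your plan is essentially the paper's proof: pair the Bochner identity $\partial_\nabla \overline\partial_\nabla Z = \big[[Z\wedge \phi]\wedge \phi^\dagger\big]^{\fG}$ against $Z$ using the Hermitian metric built from the Killing form and $\theta$, integrate over the closed surface so that the nonpositive term $-\|\overline\partial_\nabla Z\|^2$ and the (sign-definite) curvature term $-\|Z\wedge\phi\|^2$ both vanish, and then kill the components of $Z$ root space by root space using that $df_{-\alpha_1}$ is nonvanishing. One small correction: the positivity of the pairing has nothing to do with $\lambda(Z)=Z$ (the metric $B(\theta(\cdot),\cdot)$ is definite on all of $\fg_2^\C$); in the paper $\lambda(Z)=Z$ is instead used at the end, to get $Z\wedge\phi^\dagger=0$ and to dispose of the negative-root components of $Z$, a step your sketch leaves implicit but which goes through.
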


\begin{proof}
Denote the Killing form on $\fg_2^\C$ by $B$ and $\langle \cdot,\cdot\rangle = B(\theta(\cdot,\cdot)$ the corresponding Hermitian metric. By a slight abuse of notation, we denote in the same way their extension to the bundle of forms with value in $f^*\T\Cy$.  On the one hand, we have
\begin{eqnarray*}
\int_\Sigma \langle Z , \partial_\nabla \overline\partial_\nabla Z\rangle & = & \int_\Sigma B(\theta(Z) , d_\nabla \overline\partial_\nabla Z ) \\
& = & - \int_\Sigma B(d_\nabla \theta(Z) , \overline \partial_\nabla Z ) \\
& = & - \int_\Sigma B(\theta(d_\nabla Z) , \overline \partial_\nabla Z) \\
& = & - \int \langle \overline \partial_\nabla Z , \overline\partial_\nabla Z\rangle \\ 
& \leq &  0 ~,
\end{eqnarray*}
where in the second line we used Stokes theorem, for the third we used the fact that $\nabla$ preserves $\theta$ and for the last one, we used the fact that in a local holomorphic coordinates $z$, the $(1,1)$-form $\langle \overline \partial_\nabla Z,\overline\partial_\nabla Z\rangle$ is a nonnegative function times $dz\wedge d\overline z$.

On the other hand, by Propositions \ref{prop:Labourie'sComputations} and \ref{prop:Labourie'sComputations2}, we have
\begin{eqnarray*}
\int_\Sigma \langle Z , \partial_\nabla \overline\partial_\nabla Z\rangle & = & \int_\Sigma B\left(\theta(Z) , \big[ [Z\wedge \phi] \wedge \phi^\dagger \big] \right)  \\
& = & \int_\S B \left( \theta(Z)\wedge \phi^\dagger , Z \wedge \phi \right) \\
& = & - \int_\S B\left(\theta(Z\wedge \phi) , Z\wedge \phi \right) \\
& = & - \int_\Sigma \langle Z\wedge \phi , Z\wedge \phi \rangle \\
& \geq & 0 ~,
\end{eqnarray*}
where we used that, locally, $\langle Z\wedge \phi , Z\wedge \phi \rangle$ is a nonnegative function time $d\overline z\wedge dz$. 

This thus gives $Z\wedge \phi=0$. Applying the involution $\lambda$ and using $\lambda(\phi)=-\theta(\phi)=\phi^\dagger$, we also obtain $Z\wedge \phi^\dagger =0$.
To show that $Z=0$, we compute the component $Z_\gamma$ along $f^*[\fg_\gamma]$ vanishes for each root $\gamma$. To do so, start by projecting the equation $[Z\wedge \phi]=0$ on $f^*[\fg_{\alpha_2}]$. Using the root system of $\fg_2^\C$, we obtain
\[[Z\wedge \phi]^{\fg_{\alpha_2}} = [Z_{\alpha_1+\alpha_2} \wedge df_{-\alpha_1}] = 0~,\]
but since $df_{-\alpha_1}$ is an isomorphism, this gives $Z_{\alpha_1+\alpha_2} =0$. Similarly, we have
\[[Z\wedge \phi]^{\fg_{\alpha_1+\alpha_2}} = [Z_{2\alpha_1+\alpha_2} \wedge df_{-\alpha_1}] = 0~,\]
so $Z_{2\alpha_1 +\alpha_2}$. The same argument gives $Z_{3\alpha_1+\alpha_2} = 0$. For the projection on negatives roots, we just use $\lambda(Z)=Z$ and the fact that that $\lambda$ maps $[\fg_\gamma]$ to $[\fg_{-\gamma}]$.
\end{proof}

We are now ready to prove Proposition \ref{prop:InfinitesimalRigidity}.

\begin{proof}[Proof of Proposition \ref{prop:InfinitesimalRigidity}]
Using the last two propositions, we get that a $\pi_1(\S)$-Jacobi field $\zeta$ on $\widetilde\S$ arising from a deformation of $\alpha_1$-cyclic surfaces has the form $\zeta= \zeta_1+ \zeta_{-1}$. In particular, Proposition \ref{prop:Labourie'sComputations2} gives
\[[\zeta_1\wedge \phi]=0~.\]
Assume $\zeta_{-\alpha_1}=0$, and project the above equation on $f^*[\fg_{-\alpha_1-\alpha_2}]$. This gives
\[[\zeta_{-\alpha_2} \wedge df_{-\alpha_1}] = 0~.\]
Again, the fact that $df_{-\alpha_1}$ is an isomorphism gives $\zeta_{-\alpha_2}=0$.

Finally, the last component of $\zeta_1$ is $\zeta_\eta= \zeta_{3\alpha_1+2\alpha_2}$. Projecting $[\zeta_1\wedge \phi]=0$ on $f^*[\fg_{3\alpha_1+\alpha_2}]$ gives 
\[[\zeta_\eta\wedge df_{-\alpha_2}] = 0~.\]
Since $df_{-\alpha_2}$ is holomorphic and not identically $0$, this implies that $\zeta_\eta=0$ in the complement of a discrete set. By continuity, $\zeta_1=0$.
Since $\zeta_{-1}=-\theta(\zeta_1)$, we conclude that $\zeta=0$. 
\end{proof}

\bibliographystyle{plain}
\bibliography{bib.bib}

\begin{thebibliography}{10}

\bibitem{CartanGaloisCohomAdams}
Jeffrey Adams and Olivier Taibi.
\newblock Galois and {C}artan cohomology of real groups.
\newblock {\em Duke Math. J.}, 167(6):1057--1097, 2018.

\bibitem{SO23LabourieConj}
Daniele Alessandrini and Brian Collier.
\newblock The geometry of maximal components of the {$\mathsf{PSp}(4, \Bbb R)$}
  character variety.
\newblock {\em Geom. Topol.}, 23(3):1251--1337, 2019.

\bibitem{baezOct}
John~C. Baez.
\newblock The octonions.
\newblock {\em Bull. Amer. Math. Soc. (N.S.)}, 39(2):145--205, 2002.

\bibitem{g2geometry}
David Baraglia.
\newblock ${G}_2$ {G}eometry and {I}ntegrable {S}ystems.
\newblock {\em eprint arXiv:1002.1767}, 2010.

\bibitem{bryant}
Robert~L Bryant.
\newblock Submanifolds and special structures on the octonians.
\newblock {\em Journal of Differential Geometry}, 17(2):185--232, 1982.

\bibitem{MySp4Gothen}
Brian Collier.
\newblock {Maximal $\mathsf{Sp}(4,\mathbb{R})$ surface group representations,
  minimal immersions and cyclic surfaces}.
\newblock {\em Geometriae Dedicata}, 180(1):241--285, 2015.

\bibitem{CTT}
Brian Collier, Nicolas Tholozan, and J{\'e}r{\'e}my Toulisse.
\newblock The geometry of maximal representations of surface groups into
  $\mathrm{SO} (2, n)$.
\newblock {\em Duke Mathematical Journal}, 168(15):2873--2949, 2019.

\bibitem{CTcyclic}
Brian Collier and J{\'e}r{\'e}my Toulisse.
\newblock Generalized cyclic surfaces.
\newblock {\em In preparation}, 2023.

\bibitem{canonicalmetrics}
Kevin Corlette.
\newblock Flat {$G$}-bundles with canonical metrics.
\newblock {\em J. Differential Geom.}, 28(3):361--382, 1988.

\bibitem{harmoicmetric}
Simon Donaldson.
\newblock Twisted harmonic maps and the self-duality equations.
\newblock {\em Proc. London Math. Soc. (3)}, 55(1):127--131, 1987.

\bibitem{FONTANALSG2Notes}
Cristina {Draper Fontanals}.
\newblock Notes on {G}2: The {L}ie algebra and the {L}ie group.
\newblock {\em Differential Geometry and its Applications}, 57:23--74, 2018.
\newblock (Non)-existence of complex structures on S6.

\bibitem{eschenburg}
Jost-Hinrich Eschenburg and Theodoros Vlachos.
\newblock Pseudoholomorphic curves in s 6 and s 5.
\newblock {\em Revista de la Uni{\'o}n Matem{\'a}tica Argentina},
  60(2):517--537, 2019.

\bibitem{EvansG2poly}
Parker {Evans}.
\newblock {Polynomial Almost Complex Curves in $\mathbb{S}^{2,4}$}.
\newblock {\em arXiv e-prints}, August 2022.

\bibitem{OscarSurvey}
Oscar Garcia-Prada.
\newblock Higgs bundles and higher {T}eichm\"{u}ller spaces.
\newblock In {\em Handbook of {T}eichm\"{u}ller theory. {V}ol. {VII}},
  volume~30 of {\em IRMA Lect. Math. Theor. Phys.}, pages 239--285. Eur. Math.
  Soc., Z\"{u}rich, [2020] \copyright 2020.

\bibitem{HiggsPairsSTABILITY}
Oscar Garc{i}a-Prada, Peter Gothen, and Ignasi Mundet~i Riera.
\newblock {The {H}itchin-{K}obayashi correspondence, {H}iggs pairs and surface
  group representations}.
\newblock {\em ArXiv e-prints 0909.4487}, September 2009.

\bibitem{selfduality}
Nigel Hitchin.
\newblock The self-duality equations on a {R}iemann surface.
\newblock {\em Proc. London Math. Soc. (3)}, 55(1):59--126, 1987.

\bibitem{liegroupsteichmuller}
Nigel Hitchin.
\newblock Lie groups and {T}eichm\"uller space.
\newblock {\em Topology}, 31(3):449--473, 1992.

\bibitem{KasselICM}
Fanny Kassel.
\newblock Geometric structures and representations of discrete groups.
\newblock In {\em Proceedings of the {I}nternational {C}ongress of
  {M}athematicians---{R}io de {J}aneiro 2018. {V}ol. {II}. {I}nvited lectures},
  pages 1115--1151. World Sci. Publ., Hackensack, NJ, 2018.

\bibitem{cyclicSurfacesRank2}
Fran\c{c}ois Labourie.
\newblock Cyclic surfaces and {H}itchin components in rank 2.
\newblock {\em Ann. of Math. (2)}, 185(1):1--58, 2017.

\bibitem{CubicDifferentialsRP2}
Fran{\c{c}}ois Labourie.
\newblock Flat projective structures on surfaces and cubic holomorphic
  differentials.
\newblock {\em Pure Appl. Math. Q.}, 3(4, part 1):1057--1099, 2007.

\bibitem{MochizukiLi1}
Qiongling Li and Takuro Mochizuki.
\newblock Complete solutions of {T}oda equations and cyclic {H}iggs bundles
  over non-compact surfaces.
\newblock {\em arXiv:2010.05401}, 2020.

\bibitem{MochizukiLi2}
Qiongling Li and Takuro Mochizuki.
\newblock Isolated singularities of {T}oda equations and cyclic {H}iggs
  bundles.
\newblock {\em arXiv:2010.06129}, 2020.

\bibitem{flatmetriccubicdiff}
John Loftin.
\newblock Flat metrics, cubic differentials and limits of projective
  holonomies.
\newblock {\em Geom. Dedicata}, 128:97--106, 2007.

\bibitem{Markovic}
Vladimir Markovi{\'c}.
\newblock Non-uniqueness of minimal surfaces in a product of closed {R}iemann
  surfaces.
\newblock {\em Geometric and Functional Analysis}, 32(1):31--52, 2022.

\bibitem{MarkovicSagmanSmilie}
Vladimir Markovic, Nathaniel Sagman, and Peter Smillie.
\newblock Unstable minimal surfaces in $\mathbb{R}^n$ and in products of
  hyperbolic surfaces.
\newblock {\em arXiv preprint arXiv:2206.02938}, 2022.

\bibitem{morrey}
Charles~B Morrey.
\newblock On the analyticity of the solutions of analytic non-linear elliptic
  systems of partial differential equations: part i. analyticity in the
  interior.
\newblock {\em American Journal of Mathematics}, 80(1):198--218, 1958.

\bibitem{XnieCyclic}
Xin {Nie}.
\newblock {Cyclic {H}iggs bundles and minimal surfaces in pseudo-hyperbolic
  spaces}.
\newblock {\em arXiv:2206.13357}, June 2022.

\bibitem{NitsureHiggs}
Nitin Nitsure.
\newblock Moduli space of semistable pairs on a curve.
\newblock {\em Proc. London Math. Soc. (3)}, 62(2):275--300, 1991.

\bibitem{BeatriceBourbaki}
Maria~Beatrice {Pozzetti}.
\newblock {Higher rank Teichm{\"u}ller theories}.
\newblock {\em S{\'e}minaire N. Bourbaki}, Mars 2019.

\bibitem{SagmanSmilie}
Nathaniel Sagman and Peter Smillie.
\newblock Unstable minimal surfaces in symmetric spaces of non-compact type.
\newblock {\em arXiv:2208.04885}, 2022.

\bibitem{KatzMiddleInvCyclicHiggs}
Carlos Simpson.
\newblock Katz's middle convolution algorithm.
\newblock {\em Pure Appl. Math. Q.}, 5(2, Special Issue: In honor of Friedrich
  Hirzebruch. Part 1):781--852, 2009.

\bibitem{SimpsonVHS}
Carlos~T. Simpson.
\newblock Constructing variations of {H}odge structure using {Y}ang-{M}ills
  theory and applications to uniformization.
\newblock {\em J. Amer. Math. Soc.}, 1(4):867--918, 1988.

\bibitem{SimpsonModuli1}
Carlos~T. Simpson.
\newblock Moduli of representations of the fundamental group of a smooth
  projective variety. {I}.
\newblock {\em Inst. Hautes \'Etudes Sci. Publ. Math.}, 79:47--129, 1994.

\bibitem{SimpsonModuli2}
Carlos~T. Simpson.
\newblock Moduli of representations of the fundamental group of a smooth
  projective variety. {II}.
\newblock {\em Inst. Hautes \'Etudes Sci. Publ. Math.}, 80:5--79 (1995), 1994.

\bibitem{AnnaICM}
Anna Wienhard.
\newblock An invitation to higher {T}eichm\"{u}ller theory.
\newblock In {\em Proceedings of the {I}nternational {C}ongress of
  {M}athematicians---{R}io de {J}aneiro 2018. {V}ol. {II}. {I}nvited lectures},
  pages 1013--1039. World Sci. Publ., Hackensack, NJ, 2018.

\end{thebibliography}

\end{document}